\newcommand\Tstrut{\rule{0pt}{2.6ex}}         
\newcommand\Bstrut{\rule[-0.9ex]{0pt}{0pt}}   
\newenvironment{primetheorem}[1]
  {\innerprimetheorem}
  {\endinnerprimetheorem}
\newenvironment{repeattheorem}[1]
  {\innerrepeattheorem}
  {\endinnerrepeattheorem}
\newenvironment{primecorollary}[1]
  {\innerprimecorollary}
  {\endinnerprimecorollary}
\newtheorem{theorem}{Theorem}[section]
\newtheorem{lemma}[theorem]{Lemma}
\newtheorem{proposition}[theorem]{Proposition}
\newtheorem{corollary}[theorem]{Corollary}
\newtheorem{claim}{Claim}
\newtheorem*{claim*}{Claim}
\newenvironment{claimproof}[1]{\par\noindent\textit{Proof of the claim:}\space#1}{\hfill $\blacksquare$}
\newtheorem{maintheorem}{Theorem}
\newtheorem{maincorollary}[maintheorem]{Corollary}
\theoremstyle{definition}
\newtheorem{definition}[theorem]{Definition}
\newtheorem*{definition*}{Definition}
\newtheorem*{remark*}{Remark}
\newcommand{\suchthat}{\;\ifnum\currentgrouptype=16 \middle\fi|\;}
\newcommand{\bigslant}[2]{{\raisebox{.2em}{$#1$}\left/\raisebox{-.2em}{$#2$}\right.}}
\DeclareMathOperator{\Aut}{\mathrm{Aut}}
\DeclareMathOperator{\PSL}{\mathrm{PSL}}
\DeclareMathOperator{\SL}{\mathrm{SL}}
\DeclareMathOperator{\Fix}{\mathrm{Fix}}
\DeclareMathOperator{\id}{\mathrm{id}}
\DeclareMathOperator{\Sym}{\mathrm{Sym}}
\DeclareMathOperator{\Alt}{\mathrm{Alt}}
\DeclareMathOperator{\proj}{\mathrm{proj}}
\DeclareMathOperator{\Inn}{\mathrm{Inn}}
\DeclareMathOperator{\Rist}{\mathrm{Rist}}
\DeclareMathOperator{\Out}{\mathrm{Out}}
\DeclareMathOperator{\im}{\mathrm{im}}
\DeclareMathOperator{\C}{\mathbf{C}}
\DeclareMathOperator{\F}{\mathbf{F}}
\DeclareMathOperator{\D}{\mathbf{D}}
\DeclareMathOperator{\N}{\mathbf{Z}_{\geq 0}}
\DeclareMathOperator{\Nz}{\mathbf{Z}_{> 0}}
\DeclareMathOperator{\sgn}{\mathrm{sgn}}
\DeclareMathOperator{\Sgn}{\mathrm{Sgn}}
\DeclareMathOperator{\Diag}{\Delta}
\title{A classification theorem for boundary $2$-transitive automorphism groups of trees\footnotetext{2010 Mathematics Subject Classification: 20E08, 20E32, 20E42, 20B22, 22D05, 20F65.}}
\author{Nicolas Radu\thanks{F.R.S.-FNRS Research Fellow.}}
\affil{UCLouvain, 1348 Louvain-la-Neuve, Belgium}
\date{November 30, 2016}
\begin{document}


\maketitle


\begin{abstract}
Let $T$ be a locally finite tree all of whose vertices have valency at least~$6$. We classify, up to isomorphism, the closed subgroups of $\Aut(T)$ acting $2$-transitively on the set of ends of $T$ and whose local action at each vertex contains the alternating group. The outcome of the classification for a fixed tree $T$ is a countable family of groups, all containing two remarkable subgroups: a simple subgroup of index $\leq 8$ and (the semiregular analog of) the universal locally alternating group of Burger--Mozes (with possibly infinite index). We also provide an explicit example showing that the statement of this classification fails for trees of smaller degree.
\end{abstract}


\tableofcontents


\section{Introduction}

Let $T$ be the $(d_0, d_1)$-semiregular tree, with $d_0, d_1 \geq 6$. The goal of this paper is to provide a complete classification of the closed subgroups of $\Aut(T)$ acting $2$-transitively on the set of ends $\partial T$, and whose local action at each vertex $v$ contains the alternating group on the set $E(v)$ of all edges incident to $v$. As we shall see, it is a consequence of the Classification of the Finite Simple Groups that this condition on the local action is automatic for almost all values of $d_0$ and $d_1$. Typical examples of such groups are the full group $\Aut(T)$, its subgroup $\Aut(T)^+$ preserving the canonical bipartition of $V(T)$ (which is of index $2$ when $d_0 = d_1$) and, when $d_0=d_1$, the universal groups $U(\Alt(d_0))$ and $U(\Alt(d_0))^+$ first defined and studied by Burger and Mozes in~\cite{Burger}. The outcome of our classification is a countably infinite family of locally compact groups, most of which are new, each containing an open abstractly simple subgroup of index at most $8$. Our work provides in particular what seems to be the first instance of a classification of a significant family of compactly generated simple non-discrete locally compact groups beyond the case of linear groups.

The assumption that $T$ is semiregular is not restrictive. Indeed, given a (locally finite) tree whose vertices have valency at least~$3$, the existence of a group acting on it such that the induced action on the set of its ends is $2$-transitive already implies that the tree must be semiregular (see Lemma~\ref{corollary:2-transitive} below).

The closed subgroups of $\Aut(T)$, and especially those whose action on $\partial T$ is $2$-transitive, are known to be subjected to a rich structure theory which parallels to a large extent the theory of semisimple Lie groups, although many of those tree automorphism groups are non-linear. This paradigm received its most spectacular illustration in the groundbreaking work of Burger and Mozes, see \cite{Burger} and \cite{Burger2}. Since then, locally compact groups acting on trees have constantly served as a pool of examples and test cases in the framework of the development of a general structure theory of totally disconnected locally compact groups (see~\cite{Willis} and references therein). This became even more true in recent years, during which the case of compactly generated topologically simple groups received special attention (see~\cite{CRW} and references therein).


\subsection*{Main results}

In order to present a precise formulation of our main results, let us henceforth allow the $(d_0,d_1)$-semiregular tree $T$ to have degrees $d_0,d_1 \geq 4$. Let $V(T) = V_0(T) \sqcup V_1(T)$ be the canonical bipartition of $V(T)$ so that each vertex \textbf{of type $t \in \{0,1\}$} (i.e. in $V_t(T)$) is incident to $d_t$ edges. We write $H \leq_{cl} G$ to mean that $H$ is a \textit{closed subgroup} of $G$ and define the sets
$$\mathcal{H}_T := \{H \leq_{cl} \Aut(T) \suchthat H \text{ is $2$-transitive on $\partial T$}\}$$
and
$$\mathcal{H}_T^+ := \{H \in \mathcal{H}_T \suchthat H \leq \Aut(T)^+\}.$$
Note that when $d_0 \neq d_1$, all automorphisms of $T$ are type-preserving so that $\mathcal{H}_T^+ = \mathcal{H}_T$.

Consider a group $H \in \mathcal{H}_T$. For each vertex $v \in V(T)$, one can look at the action of the stabilizer $H(v)$ of $v$ in $H$ on the set $E(v)$. The image of $H(v)$ in $\Sym(E(v))$ is denoted by $\underline{H}(v)$. Since $H$ is $2$-transitive on $\partial T$, it is transitive on $V_0(T)$ and on $V_1(T)$ (see Lemma~\ref{corollary:2-transitive}). Hence, all the groups $\underline{H}(v)$ with $v \in V_0(T)$ (resp. $v \in V_1(T)$) are permutation isomorphic to the same group $F_0 \leq \Sym(d_0)$ (resp. $F_1 \leq \Sym(d_1)$). In this context of finite permutation groups, we use the symbol $\cong$ to mean \textit{permutation isomorphic}. The goal of this paper is to provide a full classification of the groups $H \in \mathcal{H}_T$ such that $F_0 \geq \Alt(d_0)$ and $F_1 \geq \Alt(d_1)$, under the assumption that $d_0,d_1 \geq 6$.

\begin{figure}[b]
\centering
\begin{pspicture*}(-4.5,-2.9)(4.5,2.3)
\fontsize{10pt}{10pt}\selectfont
\psset{unit=1.3cm}

\psline(0,0)(0,-1)
\psline(0,0)(0.866,0.5)
\psline(0,0)(-0.866,0.5)

\psline(0,-1)(0,-1.7)
\psline(0,-1)(0.606,-1.35)
\psline(0,-1)(-0.606,-1.35)
\psline(0.866,0.5)(1.472,0.85)
\psline(0.866,0.5)(1.472,0.15)
\psline(0.866,0.5)(0.866,1.2)
\psline(-0.866,0.5)(-1.472,0.85)
\psline(-0.866,0.5)(-1.472,0.15)
\psline(-0.866,0.5)(-0.866,1.2)

\psline(0,-1.7)(0.28,-1.98)
\psline(0,-1.7)(-0.28,-1.98)
\psline(0.606,-1.35)(0.986,-1.25)
\psline(0.606,-1.35)(0.706,-1.73)
\psline(-0.606,-1.35)(-0.986,-1.25)
\psline(-0.606,-1.35)(-0.706,-1.73)

\psline(1.472,0.85)(1.852,0.75)
\psline(1.472,0.85)(1.572,1.23)
\psline(0.866,1.2)(1.146,1.48)
\psline(0.866,1.2)(0.586,1.48)
\psline(1.472,0.15)(1.852,0.25)
\psline(1.472,0.15)(1.572,-0.23)

\psline(-1.472,0.85)(-1.852,0.75)
\psline(-1.472,0.85)(-1.572,1.23)
\psline(-0.866,1.2)(-1.146,1.48)
\psline(-0.866,1.2)(-0.586,1.48)
\psline(-1.472,0.15)(-1.852,0.25)
\psline(-1.472,0.15)(-1.572,-0.23)

\psdot[dotstyle=*,dotsize=5pt](0,0)
\rput(0,0.2){$1$}
\rput(0.2,-0.1){$v$}

\psdot[dotstyle=square,dotsize=5pt,fillcolor=black](0,-1)
\psdot[dotstyle=square,dotsize=5pt,fillcolor=black](0.866,0.5)
\psdot[dotstyle=square,dotsize=5pt,fillcolor=black](-0.866,0.5)
\rput(0.15,-0.85){$1$}
\rput(0.816,0.3){$2$}
\rput(-0.816,0.3){$3$}

\psdot[dotstyle=*,dotsize=5pt](0,-1.7)
\psdot[dotstyle=*,dotsize=5pt](0.606,-1.35)
\psdot[dotstyle=*,dotsize=5pt](-0.606,-1.35)
\psdot[dotstyle=*,dotsize=5pt](1.472,0.85)
\psdot[dotstyle=*,dotsize=5pt](1.472,0.15)
\psdot[dotstyle=*,dotsize=5pt](0.866,1.2)
\psdot[dotstyle=*,dotsize=5pt](-1.472,0.85)
\psdot[dotstyle=*,dotsize=5pt](-1.472,0.15)
\psdot[dotstyle=*,dotsize=5pt](-0.866,1.2)
\rput(0.15,-1.55){$3$}
\rput(0.6,-1.15){$2$}
\rput(-0.6,-1.15){$4$}
\rput(1.472,0.65){$3$}
\rput(1.322,0.05){$4$}
\rput(0.716,1.05){$2$}
\rput(-1.472,0.65){$3$}
\rput(-1.322,0.05){$2$}
\rput(-0.716,1.05){$4$}

\psdot[dotstyle=square,dotsize=5pt,fillcolor=black](0.28,-1.98)
\psdot[dotstyle=square,dotsize=5pt,fillcolor=black](-0.28,-1.98)
\psdot[dotstyle=square,dotsize=5pt,fillcolor=black](0.986,-1.25)
\psdot[dotstyle=square,dotsize=5pt,fillcolor=black](0.706,-1.73)
\psdot[dotstyle=square,dotsize=5pt,fillcolor=black](-0.986,-1.25)
\psdot[dotstyle=square,dotsize=5pt,fillcolor=black](-0.706,-1.73)
\rput(0.43,-2.13){$2$}
\rput(-0.43,-2.13){$3$}
\rput(1.17,-1.15){$2$}
\rput(0.85,-1.85){$3$}
\rput(-1.17,-1.15){$3$}
\rput(-0.85,-1.85){$2$}

\psdot[dotstyle=square,dotsize=5pt,fillcolor=black](1.852,0.75)
\psdot[dotstyle=square,dotsize=5pt,fillcolor=black](1.572,1.23)
\psdot[dotstyle=square,dotsize=5pt,fillcolor=black](1.146,1.48)
\psdot[dotstyle=square,dotsize=5pt,fillcolor=black](0.586,1.48)
\psdot[dotstyle=square,dotsize=5pt,fillcolor=black](1.852,0.25)
\psdot[dotstyle=square,dotsize=5pt,fillcolor=black](1.572,-0.23)
\rput(2.002,0.6){$3$}
\rput(1.722,1.38){$1$}
\rput(1.296,1.63){$3$}
\rput(0.436,1.63){$1$}
\rput(1.712,-0.38){$3$}
\rput(2.022,0.15){$1$}

\psdot[dotstyle=square,dotsize=5pt,fillcolor=black](-1.852,0.75)
\psdot[dotstyle=square,dotsize=5pt,fillcolor=black](-1.572,1.23)
\psdot[dotstyle=square,dotsize=5pt,fillcolor=black](-1.146,1.48)
\psdot[dotstyle=square,dotsize=5pt,fillcolor=black](-0.586,1.48)
\psdot[dotstyle=square,dotsize=5pt,fillcolor=black](-1.852,0.25)
\psdot[dotstyle=square,dotsize=5pt,fillcolor=black](-1.572,-0.23)
\rput(-2.002,0.6){$1$}
\rput(-1.722,1.38){$2$}
\rput(-1.296,1.63){$1$}
\rput(-0.436,1.63){$2$}
\rput(-1.712,-0.38){$1$}
\rput(-2.022,0.15){$2$}

\psdot[dotstyle=*,dotsize=5pt](2.5,-1.5)
\psdot[dotstyle=square,dotsize=5pt,fillcolor=black](2.5,-2)
\rput(3,-1.5){$V_0(T)$}
\rput(3,-2){$V_1(T)$}
\end{pspicture*}
\caption{A legal coloring of $B(v,3)$ in the $(3,4)$-semiregular tree.}\label{picture:coloring}
\end{figure}

Let us first describe some key examples of groups in $\mathcal{H}_T^+$. In~\cite{Burger}*{Section~3.2}, the notion of a \textit{legal coloring} of a $d$-regular tree is defined, and consists in coloring the edges of the tree with $d$ colors. For our purposes, we need to generalize this notion to a $(d_0,d_1)$-semiregular tree, and a way to do so is to color the vertices instead of the edges. A \textbf{legal coloring} $i$ of $T$ consists of two maps $i_0 \colon V_0(T) \to \{1, \ldots, d_1\}$ and $i_1 \colon V_1(T) \to \{1, \ldots, d_0\}$ such that $i_0|_{S(v, 1)} \colon S(v,1) \to \{1, \ldots, d_1\}$ is a bijection for each $v \in V_1(T)$ and $i_1|_{S(v,1)} \colon S(v,1) \to \{1, \ldots, d_0\}$ is a bijection for each $v \in V_0(T)$. Here, $S(v,r)$ is the set of vertices of $T$ at distance $r$ from $v$. The map $i$ is defined on $V(T)$ by $i|_{V_0(T)} = i_0$ and $i|_{V_1(T)} = i_1$ (see Figure~\ref{picture:coloring}). Given $g \in \Aut(T)$ and $v \in V(T)$, one can look at the \textbf{local action} of $g$ at the vertex $v$ by defining
$$\sigma_{(i)}(g, v) := i|_{S(g(v), 1)} \circ g \circ i|_{S(v,1)}^{-1} \in \begin{cases}
\Sym(d_0) & \text{if $v \in V_0(T),$}\\
\Sym(d_1) & \text{if $v \in V_1(T).$}
\end{cases}$$
In the particular case where $d_0=d_1$, there is a natural correspondence between our definition of a legal coloring and the definition given in \cite{Burger}. One should however note that, with our definition, the group of all automorphisms $g \in \Aut(T)$ such that $\sigma_{(i)}(g,v) = \id$ for each $v \in V(T)$ is \textit{not} vertex-transitive (and even not transitive on $V_0(T)$), while the universal group $U(\id)$ defined in the same way in \cite{Burger}*{Section~3.2} is vertex-transitive. One must therefore be careful when comparing \cite{Burger} with the present paper. Another definition of legal colorings for semiregular trees was given by Smith in \cite{Smith}*{Section~3}: it is equivalent to ours.

The notion of a legal coloring allows us to define the following groups.



\begin{definition*}
Let $T$ be the $(d_0,d_1)$-semiregular tree and let $i$ be a legal coloring of $T$. When $v \in V(T)$ and $Y$ is a subset of $\N$, we set $S_Y(v) := \bigcup_{r \in Y}S(v,r)$. For all (possibly empty) finite subsets $Y_0$ and $Y_1$ of $\N$, define the group
$$G_{(i)}^+(Y_0, Y_1) := \left\{g \in \Aut(T)^+ \suchthat \begin{array}{c} \prod_{w \in S_{Y_0}(v)} \sgn( \sigma_{(i)}(g,w)) = 1 \text{ for each $v \in V_{t_0}(T)$,} \\ \prod_{w \in S_{Y_1}(v)} \sgn( \sigma_{(i)}(g,w)) = 1 \text{ for each $v \in V_{t_1}(T)$} \end{array}
\right\},$$
where $t_0 := (\max Y_0) \bmod 2$, $t_1 := (1 + \max Y_1) \bmod 2$ and $\max(\varnothing) := 0$.
\end{definition*}

The choice of $t_0$ and $t_1$ in this definition is made in such a way that, in each set $S_{Y_t}(v)$ under consideration, the vertices at maximal distance from $v$ are of type $t$ (i.e. $S(v, \max Y_t) \subseteq V_t(T)$), for $t \in \{0,1\}$.

Remark that $G_{(i)}^+(\varnothing,\varnothing) = \Aut(T)^+$ and that all the groups $G_{(i)}^+(Y_0, Y_1)$ contain the group $G_{(i)}^+(\{0\}, \{0\})$, which we also denote by $\Alt_{(i)}(T)^+$ and satisfies
$$\Alt_{(i)}(T)^+ = \{g \in \Aut(T)^+ \suchthat \sigma_{(i)}(g,v) \text{ is even for each $v \in V(T)$}\}.$$
When $T$ is the $d$-regular tree, i.e. when $d_0=d_1=d$, it can be seen that $\Alt_{(i)}(T)^+$ is conjugate to the universal group $U(\Alt(d))^+$ of Burger-Mozes.

Our first result describes various properties of the groups defined above. We denote by $N_G(H)$ the normalizer of $H$ in $G$ and write $\C_2$ and $\D_8$ for the cyclic group of order~$2$ and the dihedral group of order $8$, respectively.


\begin{maintheorem}\label{maintheorem:simple}
Let $T$ be the $(d_0,d_1)$-semiregular tree with $d_0,d_1 \geq 4$ and let $i$ be a legal coloring of~$T$. Let $Y_0$ and $Y_1$ be finite subsets of $\N$.
\begin{enumerate}[(i)]
\item $G_{(i)}^+(Y_0,Y_1)$ belongs to $\mathcal{H}_T^+$.
\item $G_{(i)}^+(Y_0,Y_1)$ is abstractly simple.
\item We have
$$\bigslant{N_{\Aut(T)^+}(G_{(i)}^+(Y_0,Y_1))}{G_{(i)}^+(Y_0,Y_1)} \cong (\C_2)^k \text{ with $k = |\{t \in \{0,1\} \suchthat Y_t \neq \varnothing\}|$}.$$
If $d_0 = d_1$ and $Y_0 = Y_1 =: Y$ with $Y \neq \varnothing$, then $$\bigslant{N_{\Aut(T)}(G_{(i)}^+(Y,Y))}{G_{(i)}^+(Y,Y)} \cong \D_8.$$
\end{enumerate}
\end{maintheorem}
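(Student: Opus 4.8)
The plan is to prove the three assertions in order, treating each as a separate task that builds on the structure of the groups $G_{(i)}^+(Y_0,Y_1)$ as local-sign-constrained subgroups of $\Aut(T)^+$.

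\textbf{Membership in $\mathcal{H}_T^+$ (part (i)).} First I would verify that $G := G_{(i)}^+(Y_0,Y_1)$ is a closed subgroup of $\Aut(T)^+$. Closedness is immediate since each defining condition is a product of signs of local actions, and the local action $\sigma_{(i)}(g,w)$ depends only on the restriction of $g$ to a ball of fixed radius around $w$; hence each condition is an open-and-closed condition, and $G$ is an intersection of clopen subgroups. That $G$ is a subgroup follows from the cocycle-type identity $\sigma_{(i)}(gh,v) = \sigma_{(i)}(g,h(v))\,\sigma_{(i)}(h,v)$, together with multiplicativity of $\sgn$, which shows each product-of-signs map is a homomorphism to $\{\pm 1\}$ on the relevant stabilizers. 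The remaining and genuinely substantive point is $2$-transitivity on $\partial T$. Here I expect the key to be showing that $G$ still contains enough local freedom: because $Y_0,Y_1$ are \emph{finite}, the constraints only involve finitely many spheres, so on balls of large radius one retains the full alternating local action (indeed $G \supseteq \Alt_{(i)}(T)^+$, and the latter is already boundary-$2$-transitive, being conjugate to $U(\Alt(d))^+$ in the regular case). I would reduce $2$-transitivity on ends to the standard criterion that $G$ acts transitively on $V_0(T)$, on $V_1(T)$, and that a vertex stabilizer acts transitively on the ends through each remaining direction; all of these follow once one exhibits, for any two geodesic rays sharing an initial segment, an element of $\Alt_{(i)}(T)^+ \le G$ swapping the two continuations, which is possible as soon as the local alternating groups act transitively on $E(v)$ (true for $d_t \ge 3$).

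\textbf{Abstract simplicity (part (ii)).} The natural tool is the Tits independence property (property $P$) combined with a non-abelianity and transitivity input, giving simplicity of the subgroup generated by edge-stabilizer pointwise fixators, followed by an argument that $G$ equals this subgroup. Concretely, I would (a) check that $G$ satisfies Tits' independence property along edges — this should follow from the local, product-of-signs nature of the constraints, since fixing a half-tree decouples the sign conditions on the two sides except across finitely many spheres near the separating edge, which can be absorbed; (b) verify that $G$ is generated by pointwise stabilizers of half-trees (equivalently that $G = G^{+\!+}$ in the Tits sense), using boundary $2$-transitivity from part (i); and (c) conclude via the Tits simplicity theorem that $G$ is abstractly simple, provided $G$ does not fix an end or stabilize a point/pair — which is ruled out by $2$-transitivity on the infinite end space. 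The main obstacle here is step (a): the finitely many ``boundary'' spheres where $Y_0$ or $Y_1$ meet the separating edge create sign-coupling between the two half-trees, so I would need to argue that any prescribed independent data on the two sides can be realized by correcting with an element of the pointwise fixator of a slightly larger ball, exploiting that $d_t \ge 4$ gives room to adjust a single local sign without affecting the far-away spheres.

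\textbf{The normalizer quotient (part (iii)).} The strategy is to show that $N_{\Aut(T)^+}(G)$ is exactly the stabilizer of the pair of sign-conditions, i.e. that conjugation permutes the defining homomorphisms and an element normalizes $G$ iff it preserves each constraint up to the ambiguity allowed. I would first observe that $G$ is the kernel of the continuous homomorphism $\Phi \colon \Aut(T)^+ \to (\C_2)^k$ sending $g$ to the tuple of values $\prod_{w\in S_{Y_t}(v)}\sgn(\sigma_{(i)}(g,w))$ for a chosen base vertex of each relevant type $t$ with $Y_t \ne \varnothing$; transitivity of $\Aut(T)^+$ on each $V_t(T)$ shows this value is independent of the base vertex on $G$-cosets, making $\Phi$ well defined and surjective, so $\Aut(T)^+/G \cong (\C_2)^k$ and in particular $G \trianglelefteq \Aut(T)^+$. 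Since $\Aut(T)^+ \le N_{\Aut(T)^+}(G)$, the normalizer is all of $\Aut(T)^+$ and the quotient is $(\C_2)^k$ as claimed. For the final $\D_8$ assertion when $d_0=d_1$ and $Y_0=Y_1=Y\ne\varnothing$, I would enlarge to $\Aut(T)$, where the type-swapping automorphisms enter: here $\Aut(T)/\Aut(T)^+ \cong \C_2$, and conjugation by a type-swap interchanges the two sign-constraints (the $V_0$-condition with the $V_1$-condition), so $N_{\Aut(T)}(G)/G$ is an extension of $\C_2$ (type-swap) by $(\C_2)^2$ in which the $\C_2$ acts by swapping the two factors; identifying this wreath-type extension $\C_2 \wr \C_2 \cong \D_8$ is the concluding computation. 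The subtle point to get right is that a type-swapping automorphism genuinely normalizes $G$ (it maps the $Y_0$-constraint to the $Y_1$-constraint, which coincide since $Y_0=Y_1$) and that the resulting action is nontrivial, so that the extension is the nonabelian $\D_8$ rather than $(\C_2)^3$.
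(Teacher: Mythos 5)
Your part (i) is fine and matches the paper (Lemma~\ref{lemma:U(Alt)transitive}: reduce to $\Alt_{(i)}(T)^+$ and apply the Burger--Mozes criterion of Lemma~\ref{lemma:2-transitive}). The fatal error is in part (iii): your map $\Phi$ is not well defined, and the conclusion $G \trianglelefteq \Aut(T)^+$ is false. By the cocycle identity (Lemma~\ref{lemma:sigma}) one has $\Sgn_{(i)}(gh,S_{Y_t}(v)) = \Sgn_{(i)}(g,S_{Y_t}(h(v)))\cdot \Sgn_{(i)}(h,S_{Y_t}(v))$, so the ``value at a base vertex'' is multiplicative only if $\Sgn_{(i)}(g,S_{Y_t}(v))$ is \emph{independent of $v$} --- and that independence is exactly the defining condition of the proper subgroup $G_{(i)}^+(Y_0^*,Y_1^*)$ of Definition~\ref{definition:groups++}, not a property of all of $\Aut(T)^+$: for generic $g$ the function $v \mapsto \Sgn_{(i)}(g,S_{Y_t}(v))$ is wildly non-constant (one can prescribe the labels sphere by sphere), and multiplying by $h \in G$ only replaces it by $v \mapsto \Sgn_{(i)}(g,S_{Y_t}(h(v)))$, so it is not constant on $G$-cosets either. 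Moreover $\Aut(T)^+ = G_{(i)}^+(\varnothing,\varnothing)$ is itself abstractly simple (part (ii) with $Y_0 = Y_1 = \varnothing$, classically Tits), so it cannot have a proper normal subgroup of index $2^k$. The paper's Lemma~\ref{lemma:normalizer} instead shows $N_{\Aut(T)^+}(G_{(i)}^+(Y_0,Y_1)) = G_{(i)}^+(Y_0^*,Y_1^*)$, the hard inclusion being that any $\tau$ with $\tau G \tau^{-1} \supseteq \Alt_{(i)}(T)^+$ has all its sign-values equal (proved by conjugating elements of $\Alt_{(i)}(T)^+$ carrying $x \mapsto z \mapsto y$); the quotient $(\C_2)^k$ records the common sign for each nonempty $Y_t$. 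Your $\C_2 \wr \C_2 \cong \D_8$ computation survives in corrected form, but the normalizer in $\Aut(T)$ is $G_{(i)}(Y^*,Y^*)$, not $\Aut(T)$.

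Part (ii) also has a real gap, though your instincts are better there. You correctly sense that Tits' property P along an edge fails because the spheres $S_{Y_t}(v)$ couple the two half-trees; the paper's fix is the Banks--Elder--Willis property $P_M$, obtained almost for free from the observation that $H$ equals its $M$-closure $H^{(M)}$ with $M = \max(\max Y_0,\max Y_1)+1$ (Proposition~\ref{proposition:BEW}), rather than your proposed hands-on absorption argument. But your step (b) --- that $G$ is generated by pointwise stabilizers of half-trees ``using boundary $2$-transitivity'' --- is not a consequence of $2$-transitivity: that hypothesis only yields $H = H^{+_1}$, generation by edge fixators (Lemma~\ref{lemma:H+1}). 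The upgrade to half-tree fixators is the genuinely substantive Lemma~\ref{lemma:simple}: given $h$ fixing an edge $[v,w]$, one constructs $g \in H$ with $g|_{T_{v,w}} = h|_{T_{v,w}}$ and fixing an entire half-tree, by extending the $e$/$o$-labelling of $T_{w,v}$ sphere by sphere, always placing any forced odd label at the vertex reached along edges numbered $1$, and then proving by induction that the all-$e$ balls $B(s_j,j)$ grow along the direction-$2$ ray, so an all-even half-tree survives. Without this (or an equivalent idea), the BEW theorem only gives simplicity of the a priori proper subgroup $H^{+_M}$, not of $H$.
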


Using the fact that the pointwise stabilizers of half-trees are non-trivial in these groups $G_{(i)}^+(Y_0,Y_1)$, one can also show that they are not linear over a local field, and even not locally linear (as defined in~\cite{Stulemeijer}).

For any group $H \in \mathcal{H}_T$, Burger and Mozes proved that the subgroup $H^{(\infty)}$ of $H$ defined as the intersection of all normal cocompact closed subgroups of $H$ is such that $H^{(\infty)} \in \mathcal{H}_T^+$ and $H^{(\infty)}$ is topologically simple (see~\cite{Burger}*{Proposition~3.1.2}). Our main classification theorem reads as follows. Note that two groups in $\mathcal{H}_T$ are topologically isomorphic if and only if they are conjugate in $\Aut(T)$ (see Proposition~\ref{proposition:isomorphic} in Appendix~\ref{appendix:isomorphic}), so this is a classification up to topological isomorphism.


\begin{maintheorem}[Classification]\label{maintheorem:classification}
Let $T$ be the $(d_0,d_1)$-semiregular tree with $d_0,d_1 \geq 4$ and let $i$ be a legal coloring of $T$. Let $\underline{\mathcal{S}}_{(i)}$ be the set of groups $G_{(i)}^+(Y_0,Y_1)$ where $Y_0$ and $Y_1$ are finite subsets of $\N$ satisfying the following condition: if $Y_0$ and $Y_1$ are both non-empty, then for each $y \in Y_t$ (with $t \in \{0,1\}$), if $y \geq \max Y_{1-t}$ then $y \equiv \max Y_t \bmod 2$.
\begin{enumerate}[(i)]
\item Two groups $G_{(i)}^+(Y_0,Y_1)$ and $G_{(i)}^+(Y'_0,Y'_1)$ belonging to $\underline{\mathcal{S}}_{(i)}$ are conjugate in $\Aut(T)$ if and only if $(Y_0,Y_1) = (Y'_0,Y'_1)$ or $d_0 = d_1$ and $(Y_0,Y_1) = (Y'_1,Y'_0)$.
\item Suppose that $d_0,d_1 \geq 6$. Let $H \in \mathcal{H}_T^+$ be such that  $\underline{H}(x) \cong F_0 \geq \Alt(d_0)$ for each $x \in V_0(T)$ and $\underline{H}(y) \cong F_1 \geq \Alt(d_1)$ for each $y \in V_1(T)$. Then $[H : H^{(\infty)}] \in \{1,2,4\}$ and $H^{(\infty)}$ is conjugate in $\Aut(T)^+$ to a group belonging to $\underline{\mathcal{S}}_{(i)}$.
\end{enumerate}
\end{maintheorem}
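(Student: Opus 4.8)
The plan is to prove the two parts of Theorem~\ref{maintheorem:classification} somewhat separately, since part~(i) is a concrete computation about the normal form of the index sets while part~(ii) is the genuine classification statement.

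\medskip

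\noindent\textbf{Part (i).} First I would establish that the defining conditions of $G_{(i)}^+(Y_0,Y_1)$ are partially redundant, in the sense that modifying $Y_0$ and $Y_1$ in a controlled way yields the same group. The key observation is that the product $\prod_{w \in S(v,r)} \sgn(\sigma_{(i)}(g,w))$ over a single sphere is not an independent constraint for all $r$: there are relations coming from the fact that $g$ is a tree automorphism. Concretely, for $g \in \Aut(T)^+$ and a vertex $v$, the signs of the local actions at the vertices of $S(v,r)$ and at the vertices of $S(v,r+1)$ that lie ``below'' them are linked, so the sphere-sign functionals $s_r \colon g \mapsto \prod_{w \in S(v,r)} \sgn(\sigma_{(i)}(g,w))$ satisfy multiplicative relations modulo~$2$. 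I would make these relations explicit (they are what dictate the parity condition $y \equiv \max Y_t \bmod 2$ appearing in the definition of $\underline{\mathcal{S}}_{(i)}$), and deduce that every $G_{(i)}^+(Y_0,Y_1)$ equals $G_{(i)}^+(\tilde Y_0,\tilde Y_1)$ for a \emph{unique} pair $(\tilde Y_0,\tilde Y_1)$ in the normal form defining $\underline{\mathcal{S}}_{(i)}$. Having reduced to normal form, conjugacy in $\Aut(T)$ can only permute the two types of vertices (when $d_0=d_1$) or fix them (when $d_0\neq d_1$, as all automorphisms are type-preserving), and the statement follows because a conjugation by an element of $\Aut(T)$ that does not swap types preserves each sphere-sign functional up to re-coloring, hence preserves $(Y_0,Y_1)$, while a type-swap exchanges $Y_0 \leftrightarrow Y_1$.

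\medskip

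\noindent\textbf{Part (ii).} Here I would start from an arbitrary $H \in \mathcal{H}_T^+$ with full local action $F_t \geq \Alt(d_t)$ and pass to $H^{(\infty)}$, which by the quoted result of Burger--Mozes (\cite{Burger}*{Proposition~3.1.2}) lies in $\mathcal{H}_T^+$ and is topologically simple; the index bound $[H:H^{(\infty)}]\in\{1,2,4\}$ I would extract from the structure of $H/H^{(\infty)}$, which is an abelian $2$-group governed by the ``type-$0$'' and ``type-$1$'' sign data (one factor of $\C_2$ per type, explaining the value~$4$). The heart of the matter is to show that $H^{(\infty)}$ is conjugate to some $G_{(i)}^+(Y_0,Y_1)$. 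The strategy is to encode $H^{(\infty)}$ by the family of its \emph{sign-type} constraints: for each type $t$ and each radius $r$, one records whether the sphere-sign functional $s_r$ is identically $1$ on the stabilizer of a type-$t$ vertex in $H^{(\infty)}$. Because $H^{(\infty)}$ contains the full alternating local action and is topologically simple with nontrivial rigid stabilizers, I expect to show that $H^{(\infty)}$ is \emph{exactly} the group cut out by those sign constraints that it satisfies, i.e. it is equal to $G_{(i)}^+(Y_0,Y_1)$ for the sets $Y_t$ recording the radii at which a nontrivial constraint is present. The two essential inputs are: (a) an \emph{upper} containment $H^{(\infty)} \subseteq G_{(i)}^+(Y_0,Y_1)$, which is immediate once $Y_0,Y_1$ are defined as the radii of satisfied constraints; and (b) the reverse \emph{lower} containment $G_{(i)}^+(Y_0,Y_1) \subseteq H^{(\infty)}$, which is where all the work lies.

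\medskip

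The main obstacle is step~(b): proving that the constraints recorded in $Y_0,Y_1$ are the \emph{only} constraints satisfied by $H^{(\infty)}$, so that no further shrinking occurs. I would attack this by a local-to-global argument. Using that $F_t \geq \Alt(d_t)$ and $d_t \geq 6$ (so that $\Alt(d_t)$ is simple, $2$-transitive, and not contained in any proper imprimitivity structure), I would show that the rigid stabilizer of a half-tree in $H^{(\infty)}$ is as large as the corresponding rigid stabilizer in $G_{(i)}^+(Y_0,Y_1)$; this is a standard but delicate independence argument showing that the local actions at distinct vertices can be prescribed independently subject only to the recorded global sign relations. The hypothesis $d_t \geq 6$ is precisely what rules out the exceptional small-degree phenomena (and matches the explicit counterexample promised in the abstract for smaller degree): for $d_t \in \{4,5\}$ the alternating group admits extra transitive subgroups or outer automorphisms that would allow additional, non-sign constraints to survive, breaking the clean description. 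Once the rigid stabilizers coincide, topological simplicity of both $H^{(\infty)}$ and $G_{(i)}^+(Y_0,Y_1)$ (the latter from Theorem~\ref{maintheorem:simple}(ii)), together with the fact that both project onto the same full local action and preserve the bipartition, forces the two groups to coincide after conjugating the coloring $i$ to match the ambient structure of $H^{(\infty)}$.
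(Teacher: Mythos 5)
Your part (ii) assumes the conclusion at the step you label (b), and moreover the invariant you propose to record is too coarse for even step (a) to be sharp. The defining conditions of the groups $G_{(i)}^+(Y_0,Y_1)$ are sign products over \emph{unions} of spheres $S_{Y_t}(v)=\bigcup_{r\in Y_t}S(v,r)$, not over single spheres: for $H=G_{(i)}^+(\{0,2\},\varnothing)$ neither $s_0$ nor $s_2$ (where $s_r\colon g\mapsto\prod_{w\in S(v,r)}\sgn(\sigma_{(i)}(g,w))$) is identically $1$ on vertex stabilizers — only the product $s_0s_2$ is, as the labelling constructions of Subsection~\ref{subsection:definitions} show — so your recorded data would be empty and the group ``cut out by the satisfied constraints'' would be $\Aut(T)^+$, strictly larger than $H$. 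Even after repairing this by recording all characters $\prod_{r\in Y}s_r$ killed by $H^{(\infty)}$, the assertion that a closed group with full alternating local action is \emph{exactly} the group cut out by the sign constraints it satisfies is the entire theorem, and your ``standard but delicate independence argument'' on rigid stabilizers is precisely where the paper spends all of Section~\ref{section:classification}: it sets up the diagram invariants $\alpha_k$, the numerical invariants $c(t),K(t)$ and the algebraic invariants $f^t_v$, proves these form a complete system (Theorem~\ref{maintheorem:completeinvariants'}), and then \emph{counts}, matching an upper bound on the number of groups per invariant tuple (Proposition~\ref{proposition:upperbound}, which needs Lemmas~\ref{lemma:2-2}, \ref{lemma:2-2:AB}, \ref{lemma:special} and \ref{lemma:forms2} to control the interaction between the two vertex types) against the explicit list (Proposition~\ref{proposition:table}). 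Two further gaps: you silently presuppose a legal coloring $i$ for which the constraints are sign constraints, but producing one with $H\supseteq\Alt_{(i)}(T)^+$ is the paper's Theorem~\ref{maintheorem:Alt}, occupying Sections~\ref{section:localtoglobal}--\ref{section:commonsubgroup} (Proposition~\ref{proposition:S^c(k)} plus the rooted-coloring Lemma~\ref{lemma:rooted}), and it is hardest exactly when $F_t=\Sym(d_t)$; and your index bound is circular, since $[H:H^{(\infty)}]\in\{1,2,4\}$ is read off \emph{after} classifying $H$ itself — including the non-simple starred groups such as $G_{(i)}^+(X_0^*,X_1^*)$ and $G_{(i)}^+(X_0,X_1)^*$, whose constraints are agreements of sign products rather than vanishing, with $H^{(\infty)}=s(H)$ and quotient $(\C_2)^k$ or of order $2$ computed from the list — not something one can assert beforehand about $H/H^{(\infty)}$.

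In part (i) the mechanism you invoke is wrong in one place and unproven in another. For a single $g\in\Aut(T)^+$ there are \emph{no} multiplicative relations between the functionals $s_r$ at different radii: sign patterns of local actions can be prescribed freely (again, this is exactly the freedom exploited in Subsection~\ref{subsection:definitions}). The redundancy that forces the compatibility condition in $\underline{\mathcal{S}}_{(i)}$ comes instead from the interaction of the type-$0$ and type-$1$ condition families, e.g. $G_{(i)}^+(X_0,X_1)=G_{(i)}^+(X_0,X_1\triangle X_0)$ when $\max X_0<\max X_1$ and $\max X_0\not\equiv\max X_1\bmod 2$, as noted before Definition~\ref{definition:classification}. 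And the final step — that type-preserving conjugacy forces $(Y_0,Y_1)=(Y'_0,Y'_1)$ — cannot be dispatched by ``preserves each sphere-sign functional up to re-coloring'': the paper needs both the pairwise distinctness of the normal-form groups (Proposition~\ref{proposition:different}, proved by explicit element constructions) and the normalizer computation (Lemma~\ref{lemma:normalizer}(i): any $\tau\in\Aut(T)^+$ with $\tau H\tau^{-1}\supseteq\Alt_{(i)}(T)^+$ lies in $n^+(H)$), with a color-preserving, type-exchanging $\nu$ accounting for the swap $(Y_0,Y_1)\leftrightarrow(Y_1,Y_0)$ when $d_0=d_1$. In short, your proposal identifies the right objects, but the two genuinely hard steps — constructing the legal coloring and excluding any constraints beyond the listed ones — are missing, and the single-radius encoding on which your reconstruction rests is demonstrably insufficient.
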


We actually give, in the text, the exact description of all groups $H \in \mathcal{H}_T^+$ satisfying the hypotheses of Theorem~\ref{maintheorem:classification} (ii) (see Theorem~\ref{maintheorem:classification'}). The condition $d_0,d_1 \geq 6$ is used several times in our proof and is actually necessary. Indeed, due to the exceptional isomorphisms $\PSL_2(\F_3) \cong \Alt(4)$ and $\PSL_2(\F_4) = \SL_2(\F_4) \cong \Alt(5)$, the linear groups $\PSL_2(\F_3(\!(X)\!))$ and $\PSL_2(\F_4(\!(X)\!))$, which act on their respective Bruhat-Tits trees $T_4$ and $T_5$ (where $T_d$ is the $d$-regular tree), are elements of $\mathcal{H}_{T_4}^+$ and $\mathcal{H}_{T_5}^+$ respectively whose local action at each vertex is the alternating group.
This shows that Theorem~\ref{maintheorem:classification} (ii) fails when $d_0=d_1 \in \{4,5\}$. In Section~\ref{section:counterexample}, we also give a non-linear counterexample when $d_0 = 4$ and $d_1 \geq 4$.

As a corollary of Theorem~\ref{maintheorem:classification}, we find the corresponding result for $H \in \mathcal{H}_T \setminus \mathcal{H}_T^+$ (when $d_0=d_1$, so that $\mathcal{H}_T^+ \subsetneq \mathcal{H}_T$). In this case, $H$ is automatically transitive on $V(T)$.


\begin{maincorollary}\label{maincorollary:classification}
Let $T$ be the $d$-regular tree with $d \geq 6$ and let $i$ be a legal coloring of~$T$. Let $H \in \mathcal{H}_T \setminus \mathcal{H}_T^+$ be such that $\underline{H}(v) \cong F \geq \Alt(d)$ for each $v \in V(T)$. Then $[H : H^{(\infty)}] \in \{2,4,8\}$ and $H^{(\infty)}$ is conjugate in $\Aut(T)^+$ to $G_{(i)}^+(Y,Y)$ for some finite subset $Y$ of $\N$.
\end{maincorollary}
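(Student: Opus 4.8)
The plan is to transfer everything to $\mathcal{H}_T^+$ through the index-two subgroup $H^+ := H \cap \Aut(T)^+$ and then feed the result into Theorem~\ref{maintheorem:classification}. Since $d_0 = d_1$, the subgroup $\Aut(T)^+$ is closed of index $2$ in $\Aut(T)$; as $H \notin \mathcal{H}_T^+$ we have $H \not\leq \Aut(T)^+$, so $H^+$ is a closed subgroup of index $2$ in $H$ and $H$ contains a type-swapping element. First I would check that $H^+$ meets the hypotheses of Theorem~\ref{maintheorem:classification}(ii). By the Burger--Mozes result quoted above, $H^{(\infty)} \in \mathcal{H}_T^+$, hence $H^{(\infty)} \leq \Aut(T)^+$ and therefore $H^{(\infty)} \leq H^+$; since $H^{(\infty)}$ is already $2$-transitive on $\partial T$, so is $H^+$, giving $H^+ \in \mathcal{H}_T^+$. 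Moreover every vertex stabiliser in $\Aut(T)$ is type-preserving, so $H^+(v) = H(v)$ and thus $\underline{H^+}(v) = \underline{H}(v) \cong F \geq \Alt(d)$ for every $v \in V(T)$. Hence Theorem~\ref{maintheorem:classification}(ii) applies to $H^+$.

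Next I would establish the identity $H^{(\infty)} = (H^+)^{(\infty)}$. The inclusion $(H^+)^{(\infty)} \leq H^{(\infty)}$ holds because $H^{(\infty)}$ is a normal, cocompact, closed subgroup of $H^+$ (it is normal and cocompact in $H$, and $H^+$ is closed of finite index in $H$). For the reverse inclusion, note that $(H^+)^{(\infty)}$, being the intersection of all normal cocompact closed subgroups of $H^+$, is characteristic in $H^+$; as $H^+ \trianglelefteq H$, conjugation by any element of $H$ restricts to a topological automorphism of $H^+$ and therefore fixes $(H^+)^{(\infty)}$, so $(H^+)^{(\infty)} \trianglelefteq H$, and it is cocompact and closed in $H$. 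This gives $H^{(\infty)} \leq (H^+)^{(\infty)}$, hence equality. With this identity in hand, Theorem~\ref{maintheorem:classification}(ii) yields that $H^{(\infty)} = (H^+)^{(\infty)}$ is conjugate in $\Aut(T)^+$ to some $G_{(i)}^+(Y_0,Y_1) \in \underline{\mathcal{S}}_{(i)}$, and the index splits as $[H:H^{(\infty)}] = [H:H^+]\,[H^+:(H^+)^{(\infty)}] = 2\cdot[H^+:(H^+)^{(\infty)}] \in \{2,4,8\}$, which is the asserted index statement.

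It remains to show that one may take $Y_0 = Y_1$. After conjugating $H$ by a suitable element of $\Aut(T)^+$, I may assume $H^{(\infty)} = G := G_{(i)}^+(Y_0,Y_1)$; since conjugation by an element of $\Aut(T)^+$ preserves the existence of a type-swapping element, I fix a type-swapping $g \in H$. As $H^{(\infty)} \trianglelefteq H$, the element $g$ normalises $G$. I would then invoke the behaviour of these groups under conjugation by a type-swapping automorphism, namely that such a conjugation interchanges the two parameter sets while altering the coloring, so that $gGg^{-1}$ is a group of the form $G_{(i')}^+(Y_1,Y_0)$ with $i' = i \circ g^{-1}$. Together with $gGg^{-1} = G$ this exhibits the single subgroup $G$ with the two descriptions $G_{(i)}^+(Y_0,Y_1) = G_{(i')}^+(Y_1,Y_0)$. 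Because such a group in normal form determines its \emph{ordered} pair of parameters — the bipartition $V(T) = V_0(T) \sqcup V_1(T)$ being intrinsic to $G \leq \Aut(T)^+$, so that the ``type-$0$-flavoured'' and ``type-$1$-flavoured'' sign conditions cannot be confused — we are forced to have $(Y_0,Y_1) = (Y_1,Y_0)$, i.e. $Y_0 = Y_1 =: Y$. Thus $H^{(\infty)}$ is conjugate in $\Aut(T)^+$ to $G_{(i)}^+(Y,Y)$, as claimed.

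The main obstacle is exactly this last step: pinning down how the pair $(Y_0,Y_1)$ transforms under a type-swapping conjugation, and upgrading the fixed-coloring conjugacy classification of Theorem~\ref{maintheorem:classification}(i) to a coloring-independent \emph{equality}-rigidity that recovers the ordered pair of parameters. One must take care that the swap case in Theorem~\ref{maintheorem:classification}(i) records that $G_{(i)}^+(Y_0,Y_1)$ and $G_{(i)}^+(Y_1,Y_0)$ are merely \emph{conjugate}, not equal, so that equality genuinely forces $Y_0 = Y_1$. This is corroborated by Theorem~\ref{maintheorem:simple}(iii): a type-swapping normaliser becomes available precisely when $Y_0 = Y_1$, reflected in the enlargement of $N_{\Aut(T)^+}(G)/G \cong (\C_2)^2$ to $N_{\Aut(T)}(G)/G \cong \D_8$.
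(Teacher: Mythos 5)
Your overall strategy is sound and genuinely different from the paper's. The paper (proof of Corollary~\ref{maincorollary:classification'}) first pins down $H^+ := H \cap \Aut(T)^+$ \emph{exactly}: it applies the classification to $H^+$, uses a type-swapping $\nu \in H \setminus H^+$ normalizing $H^+$ to force the situation at the two vertex types to agree (so $H^+$ equals some $G_{(i')}^+(Y,Y)$ with $Y \in \{\varnothing, X, X^*\}$ or $G_{(i')}^+(X,X)^*$ for a suitable coloring $i'$), then reconstructs $H$ inside the normalizer of $H^+$ via Lemma~\ref{lemma:normalizer}(ii), obtaining the explicit list $\mathcal{G}'_{(i)}$, from which the index and conjugacy statements of the corollary are read off case by case. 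You instead shortcut through $H^{(\infty)}$: the identity $H^{(\infty)} = (H^+)^{(\infty)}$ is correctly argued in both directions (note that the cocompactness of $H^{(\infty)}$ in $H$, which you use without comment, does hold because the quoted Burger--Mozes result gives $H^{(\infty)} \in \mathcal{H}_T^+$, hence edge-transitivity), and combined with Theorem~\ref{maintheorem:classification}(ii) applied to $H^+$ it yields $[H:H^{(\infty)}] = 2\,[H^+:(H^+)^{(\infty)}] \in \{2,4,8\}$ at once. Your route is leaner but yields less than the paper's: it proves the corollary as stated without recovering the full description of $H$ given in Corollary~\ref{maincorollary:classification'}.

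The one step you have not actually proved is the final rigidity claim, and the reason you offer for it is not the right one. Intrinsicness of the bipartition rules out confusing the type-$0$ and type-$1$ sign conditions \emph{for a fixed coloring}, but it does not exclude that two different legal colorings encode the same group with different parameter pairs --- which is precisely what your identity $G_{(i)}^+(Y_0,Y_1) = G_{(i')}^+(Y_1,Y_0)$ would need excluded; the paper's Proposition~\ref{proposition:different} is a fixed-coloring statement and does not apply as is. The repair is available from the paper's own results and avoids changing the coloring altogether: choose $\nu \in \Aut(T) \setminus \Aut(T)^+$ color-preserving (i.e. $i \circ \nu = i$) and write your type-swapping element as $g = \mu\nu$ with $\mu \in \Aut(T)^+$; since $\nu G_{(i)}^+(Y_0,Y_1)\nu^{-1} = G_{(i)}^+(Y_1,Y_0)$ (as used in the paper's proof of Theorem~\ref{maintheorem:classification'}(i)), the equality $gGg^{-1} = G$ shows that $G_{(i)}^+(Y_0,Y_1)$ and $G_{(i)}^+(Y_1,Y_0)$ are conjugate in $\Aut(T)^+$ --- and both lie in $\underline{\mathcal{S}}_{(i)}$, the compatibility condition being symmetric under swapping the pair. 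By Lemma~\ref{lemma:normalizer}(i), two different members of $\underline{\mathcal{G}}_{(i)}$ are never conjugate in $\Aut(T)^+$, and by Proposition~\ref{proposition:different} they are pairwise distinct, so $(Y_0,Y_1) = (Y_1,Y_0)$, i.e. $Y_0 = Y_1$. With this substitution for your heuristic, the argument is complete.
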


Here again, a full description of all groups $H \in \mathcal{H}_T \setminus \mathcal{H}_T^+$ satisfying the hypotheses of Corollary~\ref{maincorollary:classification} is given in the text (see Corollary~\ref{maincorollary:classification'}).

When $H \in \mathcal{H}_T$, the fact that $H$ is $2$-transitive on $\partial T$ implies that $\underline{H}(v)$ is a $2$-transitive permutation group for each $v \in V(T)$ (see Lemma~\ref{corollary:2-transitive}). The finite $2$-transitive permutation groups have been classified, using the Classification of the Finite Simple Groups, and the set of integers
$$\Theta := \{m \geq 6 \suchthat \text{each finite $2$-transitive group on $\{1,\ldots, m\}$ contains $\Alt(m)$}\}$$
is known (see Proposition~\ref{proposition:theta} in Appendix~\ref{appendix:theta}). The ten smallest numbers in $\Theta$ are $34$, $35$, $39$, $45$, $46$, $51$, $52$, $55$, $56$ and $58$. Moreover, $\Theta$ is asymptotically dense in $\Nz$ (see Corollary~\ref{corollary:dense}). When $d_0, d_1 \in \Theta$, the hypotheses of Theorem~\ref{maintheorem:classification} (ii) and Corollary~\ref{maincorollary:classification} (if $d_0=d_1$) are always satisfied (by definition) and we get the following result.


\begin{maincorollary}\label{maincorollary:Theta}
Let $T$ be the $(d_0,d_1)$-semiregular tree with $d_0,d_1 \in \Theta$, let $i$ be a legal coloring of $T$ and let $H \in \mathcal{H}_T$. If $H \in \mathcal{H}_T^+$, then $[H : H^{(\infty)}] \in \{1,2,4\}$ and $H^{(\infty)}$ is conjugate in $\Aut(T)^+$ to a group belonging to $\underline{\mathcal{S}}_{(i)}$ (defined in Theorem~\ref{maintheorem:classification}). If $d_0 = d_1$ and $H \not \in \mathcal{H}_T^+$, then $[H : H^{(\infty)}] \in \{2,4,8\}$ and $H^{(\infty)}$ is conjugate in $\Aut(T)^+$ to $G_{(i)}^+(Y,Y)$ for some finite subset $Y$ of $\N$.
\end{maincorollary}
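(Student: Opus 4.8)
The plan is to deduce Corollary~\ref{maincorollary:Theta} as an immediate specialization of Theorem~\ref{maintheorem:classification}~(ii) and Corollary~\ref{maincorollary:classification}, the only genuine content being to check that, once $d_0,d_1 \in \Theta$, the hypotheses on the local action are satisfied \emph{automatically} for every $H \in \mathcal{H}_T$. First I would observe that, by the definition of $\Theta$, we have $\Theta \subseteq \{m \geq 6\}$, so $d_0,d_1 \geq 6$ and the degree constraints of both invoked results are met from the outset.

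Next I would handle the local action. Given any $H \in \mathcal{H}_T$, the assumption that $H$ acts $2$-transitively on $\partial T$ forces each local action $\underline{H}(v)$ to be a $2$-transitive permutation group on $E(v)$, by Lemma~\ref{corollary:2-transitive}. Hence for $x \in V_0(T)$ the group $\underline{H}(x) \cong F_0$ is $2$-transitive on the $d_0$-element set $E(x)$; since $d_0 \in \Theta$, the defining property of $\Theta$ (itself a consequence of the Classification of the Finite Simple Groups, recorded in Proposition~\ref{proposition:theta}) yields $F_0 \geq \Alt(d_0)$. The identical argument for $y \in V_1(T)$ gives $F_1 \geq \Alt(d_1)$. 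Thus the conditions ``$\underline{H}(x) \cong F_0 \geq \Alt(d_0)$'' and ``$\underline{H}(y) \cong F_1 \geq \Alt(d_1)$'' appearing in Theorem~\ref{maintheorem:classification}~(ii) hold for \emph{every} $H \in \mathcal{H}_T$ with no further assumption.

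With the hypotheses verified, I would conclude by splitting into the two cases of the statement. If $H \in \mathcal{H}_T^+$, then Theorem~\ref{maintheorem:classification}~(ii) applies verbatim and gives $[H : H^{(\infty)}] \in \{1,2,4\}$ together with $H^{(\infty)}$ conjugate in $\Aut(T)^+$ to a member of $\underline{\mathcal{S}}_{(i)}$. If instead $d_0 = d_1 =: d$ and $H \notin \mathcal{H}_T^+$, then $H \in \mathcal{H}_T \setminus \mathcal{H}_T^+$; in this situation $H$ is transitive on $V(T)$, so all the local actions $\underline{H}(v)$ are permutation isomorphic to a single group $F$, which is $2$-transitive on $d$ points and therefore satisfies $F \geq \Alt(d)$ because $d \in \Theta$. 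Hence Corollary~\ref{maincorollary:classification} applies and yields $[H : H^{(\infty)}] \in \{2,4,8\}$ with $H^{(\infty)}$ conjugate in $\Aut(T)^+$ to $G_{(i)}^+(Y,Y)$ for some finite $Y \subseteq \N$.

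I do not expect any real obstacle here: the entire strength of the statement is inherited from Theorem~\ref{maintheorem:classification} and Corollary~\ref{maincorollary:classification}, and the sole purpose of the hypothesis $d_0,d_1 \in \Theta$ is precisely to make the local-action assumption redundant. The only point requiring a moment of care is to route the case $d_0 = d_1$, $H \notin \mathcal{H}_T^+$ through the companion Corollary~\ref{maincorollary:classification} rather than through the theorem itself, and to note there that transitivity on $V(T)$ supplies the common local group $F$.
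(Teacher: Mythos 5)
Your proposal is correct and follows essentially the same route as the paper: verify via Lemma~\ref{corollary:2-transitive} that each $\underline{H}(v)$ is $2$-transitive, invoke the membership $d_0,d_1 \in \Theta$ to conclude $F_0 \geq \Alt(d_0)$ and $F_1 \geq \Alt(d_1)$ automatically, and then apply Theorem~\ref{maintheorem:classification}~(ii) in the type-preserving case and Corollary~\ref{maincorollary:classification} in the case $d_0 = d_1$, $H \notin \mathcal{H}_T^+$. One cosmetic remark: the containment $F_t \geq \Alt(d_t)$ follows directly from the \emph{definition} of $\Theta$, not from Proposition~\ref{proposition:theta} (which, via the CFSG, only identifies which integers lie in $\Theta$), but this does not affect the argument.
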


It has also been proven by Burger and Mozes in \cite{Burger}*{Propositions~3.3.1 and~3.3.2} that if $H \leq_{cl} \Aut(T)$ is vertex-transitive and if $\underline{H}(v) \cong F \geq \Alt(d)$ with $d \geq 6$, then $H$ is either discrete or $2$-transitive on $\partial T$. We can therefore combine this result with Corollary~\ref{maincorollary:classification} to obtain the following.


\begin{maincorollary}\label{maincorollary:vertextransitive}
Let $T$ be the $d$-regular tree with $d \geq 6$, let $i$ be a legal coloring of $T$ and let $H$ be a vertex-transitive closed subgroup of $\Aut(T)$. If $\underline{H}(v) \cong F \geq \Alt(d)$ for each $v \in V(T)$, then either $H$ is discrete or $[H : H^{(\infty)}] \in \{2,4,8\}$ and $H^{(\infty)}$ is conjugate in $\Aut(T)^+$ to $G_{(i)}^+(Y,Y)$ for some finite subset $Y$ of $\N$.
\end{maincorollary}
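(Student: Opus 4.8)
The plan is to reduce Corollary~\ref{maincorollary:vertextransitive} to the two results it explicitly cites, so the work is essentially one of combining hypotheses correctly rather than proving anything new from scratch. Let $H \leq_{cl} \Aut(T)$ be vertex-transitive with $\underline{H}(v) \cong F \geq \Alt(d)$ for each $v$, where $d \geq 6$. The first step is to invoke the Burger--Mozes dichotomy (\cite{Burger}*{Propositions~3.3.1 and~3.3.2}, quoted just before the statement): under exactly these hypotheses, $H$ is either discrete or $2$-transitive on $\partial T$. If $H$ is discrete we are in the first alternative and there is nothing more to prove, so I would immediately dispose of that case and assume henceforth that $H$ is $2$-transitive on $\partial T$, i.e. $H \in \mathcal{H}_T$.

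The second step is to feed this into Corollary~\ref{maincorollary:classification}. That corollary is stated for $H \in \mathcal{H}_T \setminus \mathcal{H}_T^+$, whereas here we only know $H \in \mathcal{H}_T$; so I must split according to whether $H$ preserves the bipartition. If $H \notin \mathcal{H}_T^+$, then Corollary~\ref{maincorollary:classification} applies verbatim and yields $[H:H^{(\infty)}] \in \{2,4,8\}$ with $H^{(\infty)}$ conjugate in $\Aut(T)^+$ to some $G_{(i)}^+(Y,Y)$, which is exactly the desired conclusion. If instead $H \in \mathcal{H}_T^+$, I would observe that a vertex-transitive subgroup of $\Aut(T)^+$ cannot exist: since $d_0 = d_1 = d$ here, the group $\Aut(T)^+$ preserves the canonical bipartition $V(T) = V_0(T) \sqcup V_1(T)$ (it consists of the type-preserving automorphisms, of index $2$ in $\Aut(T)$), so any $H \leq \Aut(T)^+$ stabilizes $V_0(T)$ and $V_1(T)$ setwise and is therefore \emph{not} transitive on $V(T)$, contradicting vertex-transitivity. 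Hence the case $H \in \mathcal{H}_T^+$ is vacuous and only the $\{2,4,8\}$ conclusion survives, matching the statement.

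The one point requiring a little care, and the closest thing to an obstacle, is the handoff between the two cited results: Burger--Mozes guarantees $2$-transitivity on $\partial T$ but I should check their hypotheses ($H$ closed, vertex-transitive, local action containing $\Alt(d)$ with $d \geq 6$) coincide precisely with what is available, and in particular that the conclusion ``$2$-transitive on $\partial T$'' is literally the defining membership condition for $\mathcal{H}_T$. Likewise I should confirm that vertex-transitivity forces us out of $\mathcal{H}_T^+$ via the bipartition-preservation argument above, so that Corollary~\ref{maincorollary:classification} (rather than Theorem~\ref{maintheorem:classification}, which covers the $\mathcal{H}_T^+$ side) is the correct tool and the index set $\{2,4,8\}$ is the right one. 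Once these compatibility checks are made, the proof is a short two-case argument with no genuine computation.
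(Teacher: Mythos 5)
Your proposal is correct and follows essentially the same route as the paper: the paper's proof likewise invokes the Burger--Mozes dichotomy (\cite{Burger}*{Propositions~3.3.1 and~3.3.2}) to get that $H$ is discrete or $2$-transitive on $\partial T$, and then applies Corollary~\ref{maincorollary:classification'}~(ii). The only difference is that you spell out the (correct) observation that vertex-transitivity forces $H \notin \mathcal{H}_T^+$ because $\Aut(T)^+$ preserves the bipartition -- a step the paper leaves implicit -- which is a worthwhile detail but not a different argument.
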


For $d \in \Theta$, the condition $\underline{H}(v) \cong F \geq \Alt(d)$  can be replaced by requiring $\underline{H}(v)$ to be $2$-transitive. Note that the result of Burger and Mozes stated above is not true if we replace vertex-transitivity by edge-transitivity. Indeed, the group
$$H = \left\{g \in \Aut(T)^+ \suchthat \text{$\forall v \in V_0(T)$, $\forall x,y \in S(v,2)$: $i(x)=i(y) \Rightarrow i(g(x))=i(g(y))$}\right\}$$
where $i$ is a legal coloring of $T$ is an example of a closed subgroup of $\Aut(T)$ with $\underline{H}(v) \cong \Sym(d)$ for each $v \in V(T)$ and which is edge-transitive but such that $H$ is non-discrete and the action of $H$ on $\partial T$ is not $2$-transitive.

\begin{remark*}
Vladimir Trofimov pointed out to me that the hypothesis that $T$ is a tree is not even necessary in Corollary~\ref{maincorollary:vertextransitive}. Indeed, Trofimov proved that if $\Gamma$ is a connected $d$-regular graph with $d \geq 6$ and $G \leq \Aut(\Gamma)$ is vertex-transitive with $\underline{G}(v) \geq \Alt(d)$ for each $v \in V(\Gamma)$, then either $G$ is discrete or $\Gamma$ is the $d$-regular tree (see \cite{Trofimov}*{Proposition~3.1}).
\end{remark*}

In order to prove the classification, we first needed to generalize some results of \cite{Burger} to the case of non-vertex-transitive groups. This led us to the following side result, which is an analog of \cite{Burger}*{Proposition~3.3.1}.

\begin{maintheorem}\label{maintheorem:S}
Let $T$ be the $(d_0,d_1)$-semiregular tree and let $F_0 \leq \Sym(d_0)$ and $F_1 \leq \Sym(d_1)$. Let $H \in \mathcal{H}_T^+$ be such that $\underline{H}(x) \cong F_0$ for each $x \in V_0(T)$ and $\underline{H}(y) \cong F_1$ for each $y \in V_1(T)$. Suppose that, for each $t \in \{0,1\}$, the stabilizer $F_t(1)$ of $1$ in $F_t$ is simple non-abelian. Then there exists a legal coloring $i$ of $T$ such that $H$ is equal to the group
$$U_{(i)}^+(F_0,F_1) := \left\{g \in \Aut(T)^+ \suchthat \begin{array}{c}\sigma_{(i)}(g,x) \in F_0 \text{ for each $x \in V_0(T)$,} \\ \sigma_{(i)}(g,y) \in F_1 \text{ for each $y \in V_1(T)$}\end{array} \right\}.$$
\end{maintheorem}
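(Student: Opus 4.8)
The plan is to fix, once and for all, a legal coloring $i$ of $T$ adapted to $H$, and then to establish the two inclusions $H \subseteq U_{(i)}^+(F_0,F_1)$ and $U_{(i)}^+(F_0,F_1) \subseteq H$. For the coloring: since $H$ is transitive on $V_0(T)$ and on $V_1(T)$ (Lemma~\ref{corollary:2-transitive}) and $\underline{H}(v) \cong F_{t}$ for every $v \in V_t(T)$, I would build $i$ inductively, starting from an arbitrary legal coloring of a ball around a base edge and propagating it outward along $T$ using elements of $H$, arranging that $\sigma_{(i)}(g,v) \in F_{t}$ for every $g \in H$ and every $v \in V_t(T)$. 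This is the semiregular analogue of the construction of \cite{Burger}*{Section~3.2}; the only thing to verify is that the propagation is consistent, which follows from the transitivity of $H$. With such an $i$ fixed, the inclusion $H \subseteq U_{(i)}^+(F_0,F_1)$ holds by definition, so the real content is the reverse inclusion.

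Write $U := U_{(i)}^+(F_0,F_1)$. Both $H$ and $U$ are closed subgroups of $\Aut(T)^+$, and $U$ is characterised by finite local data, namely $g \in U$ if and only if $\sigma_{(i)}(g,v) \in F_{t(v)}$ for all $v$, a condition visible on finite balls. Hence, to prove $U \subseteq H$ it is enough to show that $H$ is dense in $U$: fixing a base vertex $o$, I would show that for every $n \geq 1$ the stabilizer $H(o)$ and $U(o)$ have the same image $U_n$ in $\Aut(B(o,n))$, where $U_n$ denotes the group of automorphisms of the rooted ball $B(o,n)$ whose local action at each interior vertex lies in the appropriate $F_t$. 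Since $H$ is closed, this density forces $H = U$.

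I would argue by induction on $n$. The case $n=1$ is precisely the hypothesis $\underline{H}(o) \cong F_{t(o)}$. Assume $H(o)|_{B(o,n)} = U_n$. The restriction map $U_{n+1} \to U_n$ is surjective with kernel $N = \prod_{w \in S(o,n)} F_{t(w)}(1)$, indexed by the sphere $S(o,n)$, the factor at $w$ acting on the $d_{t(w)}-1$ forward neighbours of $w$. By hypothesis each $F_{t(w)}(1)$ is simple non-abelian, so $N$ is a product of non-abelian simple groups. Writing $G_{n+1} := H(o)|_{B(o,n+1)}$, the inductive hypothesis gives that $G_{n+1}$ surjects onto $U_n$, whence $U_{n+1} = G_{n+1}N$, and it remains only to prove that $M := G_{n+1} \cap N$ equals $N$.

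The subgroup $M$ is normal in $G_{n+1}$ and invariant under the conjugation action of $U_n$ on $N$; because $F_t$ is $2$-transitive (Lemma~\ref{corollary:2-transitive}), $U_n$ permutes the factors of $N$ transitively, and its vertex stabilizers induce all inner automorphisms on each factor. Consequently $M$ projects onto each simple factor as a normal subgroup, so $M$ is either trivial or subdirect; in the subdirect case the structure theory of subdirect products of non-abelian simple groups shows that $M$ is a direct product of full diagonals attached to the blocks of some partition of $S(o,n)$. Establishing $M = N$ therefore amounts to two points: that $M \neq 1$, and that there is no nontrivial \emph{diagonal fusion} identifying the forward actions at distinct vertices $w \neq w'$ of the sphere. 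This second point is the heart of the matter and the main obstacle, since $U_n$-invariance alone does not settle it: the action of $U_n$ on $S(o,n)$ is imprimitive (it respects the ancestor block structure of the sphere), so diagonal invariant subgroups are a priori possible. To exclude them I would use the $2$-transitivity of $H$ on $\partial T$ to transport any such fusion isomorphism by suitable elements of $H$ around the branch point of $w$ and $w'$; the resulting compatibility conditions force the isomorphism to factor through a nontrivial homomorphism valued in the simple group $F_t(1)$, which cannot exist. It is exactly here that non-abelianness is indispensable: for abelian (sign) coefficients such fusion data do exist and produce precisely the proper subgroups $G_{(i)}^+(Y_0,Y_1) \subsetneq U$, so the hypothesis that the point stabilizers are non-abelian simple is sharp. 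Once $M = N$ is proved, the induction closes, $H$ is dense in $U$, and closedness yields $H = U_{(i)}^+(F_0,F_1)$.
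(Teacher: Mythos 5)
Your skeleton matches the paper's: the coloring step is exactly Lemma~\ref{lemma:S}, and your induction over balls is a root-anchored repackaging of Proposition~\ref{proposition:S^c(k)}, with the subdirect-product analysis corresponding to Lemmas~\ref{lemma:finite2} and~\ref{lemma:subdiagonal} and the exclusion of diagonal fusion done in the paper by a block-decomposition argument killed by two-point transitivity (Lemma~\ref{corollary:2-transitive}): a block of size $\geq 2$ meets each branch in at most one vertex, and an element of $H(x)$ fixing one vertex of the block while moving another contradicts the block structure. Your fusion-exclusion sketch, though vaguer than this, is completable along those lines, and your remark that abelian coefficients genuinely admit fusion (producing the groups $G_{(i)}^+(Y_0,Y_1)$) is a correct sanity check. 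One bookkeeping caveat: to know the per-branch projections $I'_y$ are full you must run the induction for \emph{all} vertices simultaneously, not just around one root $o$, since the paper obtains this from $H_k(x) \unlhd H_{k-1}(y)$ together with the induction hypothesis at the neighbour $y$.

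The genuine gap is the point you dismiss in half a sentence: proving $M \neq 1$. This, not diagonal fusion, is the heart of the matter, and it occupies most of Section~2.2 of the paper. A priori nothing prevents the degenerate scenario in which all the ``depth'' of $H$ is carried by one vertex type: case (A) of Lemma~\ref{lemma:two_poss}, i.e. $H_1(x,y)=H_2(x)$ for every $x \in V_0(T)$, with case (B) at $V_1(T)$ --- Lemma~\ref{lemma:trivial} only rules out (A) at \emph{both} types. Excluding the mixed case is Lemma~\ref{lemma:S^d}, whose proof needs real inputs your proposal never touches: the Schreier conjecture (solvability of $\Out(S)$) to show $\tilde{H}\cdot C_G(\tilde{H})$ has index at most $2$ in $G = H(x)/H_2(x)$; the analysis of the orbits $O_1,\ldots,O_r$ of $C_{\Sym(d_1-1)}(S_1)$, where $r=1$ is excluded because $S_1$ is not simply transitive on $\{2,\ldots,d_t\}$ --- which for Theorem~\ref{maintheorem:S} rests on Zassenhaus's classification of sharply $2$-transitive groups --- and $r=2$ by simplicity; and finally an orbit-counting contradiction with $2$-transitivity forcing $r \geq 3$ to be impossible. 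Moreover, even once nontriviality holds at level $1$, it must be \emph{propagated} to all levels and both types, which is Lemma~\ref{lemma:HkHk+1} and uses a further trick (choosing $h$ with $h^2 \notin H_k(x)$, possible since $S_t$ is not a $2$-group) that has no counterpart in your outline. Without all of this your induction cannot start, let alone close, so as written the proposal omits the essential content of the theorem.
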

The attentive reader will have noticed that $U_{(i)}^+(\Alt(d_0),\Alt(d_1)) = \Alt_{(i)}(T)^+$.



\subsection*{Structure of the paper}
 
The proof of the classification is divided into different main steps. The first step, which is the subject of Section~\ref{section:localtoglobal} (where Theorem~\ref{maintheorem:S} is also proved) and Section~\ref{section:commonsubgroup}, consists in showing that the groups satisfying the hypotheses of Theorem~\ref{maintheorem:classification} (ii) all contain, up to conjugation, the group $\Alt_{(i)}(T)^+$:


\begin{maintheorem}\label{maintheorem:Alt}
Let $T$ be the $(d_0,d_1)$-semiregular tree with $d_0, d_1 \geq 6$. Let $H \in \mathcal{H}_T^+$ be such that $\underline{H}(x) \cong F_0 \geq \Alt(d_0)$ for each $x \in V_0(T)$ and $\underline{H}(y) \cong F_1 \geq \Alt(d_1)$ for each $y \in V_1(T)$. Then there exists a legal coloring $i$ of $T$ such that $H \supseteq \Alt_{(i)}(T)^+$.
\end{maintheorem}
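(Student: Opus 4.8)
The plan is to produce, inside $H$, a subgroup to which Theorem~\ref{maintheorem:S} applies and which is thereby forced to be a conjugate of $\Alt_{(i)}(T)^+$. Since $\Alt(d_t) \leq F_t \leq \Sym(d_t)$, the only obstruction to applying Theorem~\ref{maintheorem:S} directly to $H$ is that a point stabilizer $F_t(1)$ may equal $\Sym(d_t-1)$, which is not simple. I will therefore cut $H$ down to its ``locally even'' part: fixing an arbitrary legal coloring $i$ of $T$, set
$$H^* := H \cap \Alt_{(i)}(T)^+ = \{g \in H \suchthat \sigma_{(i)}(g,v) \text{ is even for each } v \in V(T)\}.$$
Using the cocycle identity $\sigma_{(i)}(gh,v) = \sigma_{(i)}(g,h(v)) \circ \sigma_{(i)}(h,v)$ one checks that $H^*$ is a subgroup, and it is visibly closed. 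By construction $\underline{H^*}(v) \leq \Alt(E(v))$ for every vertex $v$.

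The two things to establish are that $H^*$ has full alternating local action and that it still lies in $\mathcal{H}_T^+$. For the local action I would show $\underline{H^*}(v) = \Alt(d_t)$ for $v \in V_t(T)$: the inclusion $\subseteq$ is immediate, and for $\supseteq$ I would, given an even permutation $\alpha$ of $E(v)$, first pick $g \in H$ fixing $v$ with $\sigma_{(i)}(g,v) = \alpha$ (possible since $F_t \geq \Alt(d_t)$), and then correct its local action at all other vertices to be even without disturbing $\sigma_{(i)}(g,v)$. This correction is exactly the kind of local-to-global statement proved for $H$ in Section~\ref{section:localtoglobal}: the hypotheses ($2$-transitivity on $\partial T$ together with local action $\geq \Alt(d_t)$, and $d_t \geq 6$ so that $\Alt(d_t-1)$ is perfect) guarantee that the fixators in $H$ of balls act richly enough on the hanging subtrees to absorb the parity defects of $g$ one sphere at a time; since $g$ may be chosen with trivial local action outside a bounded ball, the procedure terminates.

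For membership in $\mathcal{H}_T^+$ I would invoke the criterion for $2$-transitivity on $\partial T$ from Lemma~\ref{corollary:2-transitive}. Transitivity of $H^*$ on each $V_t(T)$ and local $2$-transitivity both hold — the latter because $\Alt(d_t)$ is $2$-transitive for $d_t \geq 4$ — so the remaining point is the existence in $H^*$ of hyperbolic elements translating along a prescribed geodesic. I would obtain these by taking a suitable hyperbolic element of $H$ (available since $H \in \mathcal{H}_T^+$) and multiplying it by a correction supported off its axis so as to make the local action even everywhere while preserving the axis, again using the richness of ball fixators in $H$.

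Once $H^* \in \mathcal{H}_T^+$ with $\underline{H^*}(x) \cong \Alt(d_0)$ and $\underline{H^*}(y) \cong \Alt(d_1)$ is established, the point stabilizers $\Alt(d_t-1)$ are simple non-abelian because $d_t \geq 6$, so Theorem~\ref{maintheorem:S} applies to $H^*$ and yields a legal coloring $i'$ with $H^* = U_{(i')}^+(\Alt(d_0),\Alt(d_1)) = \Alt_{(i')}(T)^+$. Then $H \supseteq H^* = \Alt_{(i')}(T)^+$, which is the desired conclusion. The main obstacle is the pair of richness statements used in the two middle paragraphs — realizing a prescribed even local action at $v$ by an element that is even everywhere, and producing even hyperbolic elements with a given axis. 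Both reduce to controlling the fixators of half-trees and balls in $H$, and this is precisely where the assumption $d_t \geq 6$ (making $\Alt(d_t-1)$ perfect, so that parity defects can be repaired by commutators without creating new odd local actions) is essential.
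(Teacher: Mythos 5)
There is a genuine gap, and it sits exactly at the point your proposal treats as routine: the claim that for an \emph{arbitrary} legal coloring $i$ the closed subgroup $H^* := H \cap \Alt_{(i)}(T)^+$ retains full alternating local action and stays in $\mathcal{H}_T^+$. This is false. Take $H = \Alt_{(j)}(T)^+$ for some legal coloring $j$ (this $H$ satisfies the hypotheses of the theorem). For any legal coloring $i$, set $\tau_v := i|_{S(v,1)} \circ j|_{S(v,1)}^{-1}$ and $\epsilon(v) := \sgn(\tau_v) \in \{\pm 1\}$; the computation underlying Lemma~\ref{lemma:sigma} gives $\sigma_{(i)}(g,v) = \tau_{g(v)} \circ \sigma_{(j)}(g,v) \circ \tau_v^{-1}$, hence $\sgn \sigma_{(i)}(g,v) = \epsilon(g(v))\,\epsilon(v)\,\sgn\sigma_{(j)}(g,v)$, so that $H^* = \{g \in \Alt_{(j)}(T)^+ \suchthat \epsilon(g(v)) = \epsilon(v) \text{ for all } v\}$. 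Now choose $i$ as follows: set $i_0 = j_0$, twist $j_1$ on $S(x_1,1)$ by a transposition of two colors for one fixed $x_1 \in V_0(T)$, and extend $i_1$ outward sphere by sphere, matching $j_1$ on every branch where the parent's color agrees with its $j$-color. This is legal (the greedy extension always works), and it yields $\epsilon(x_1) = -1$ while $\epsilon(x_2) = +1$ for any $x_2 \in V_0(T)$ in an unperturbed branch. Then $H^*$ is not transitive on $V_0(T)$ (no element may carry $x_2$ to $x_1$), so by Lemma~\ref{corollary:2-transitive} it is not $2$-transitive on $\partial T$; moreover, along the path from $x_1$ to $x_2$ some $w \in V_1(T)$ has neighbors with opposite $\epsilon$-values, and $\underline{H^*}(w)$ then preserves a nontrivial partition of $E(w)$, so it is a proper subgroup of $\Alt(d_1)$. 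This also shows why your ``correct the parity defects one sphere at a time'' step cannot be repaired: the obstruction $\epsilon$ is an invariant of the pair $(H,i)$, not of individual elements, so no amount of multiplying by ball or half-tree fixators of $H$ produces the missing elements; likewise your assumption that $g$ can be chosen with trivial $i$-local action outside a bounded ball is unjustified for a fixed, badly chosen $i$.

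The moral is that the coloring cannot be fixed in advance and the group cut down to it; the coloring must be \emph{built adaptively from $H$}. That is precisely what the paper does: Proposition~\ref{proposition:S^c(k)} supplies the local richness (fixators of balls surject onto products of $\Alt(d_t-1)$ on each sphere), and then Lemmas~\ref{lemma:square} and~\ref{lemma:rooted} show how to extend a coloring of a ball one level further so that the image of $H(v)$ still contains the corresponding alternating group, with the delicate parity bookkeeping (the sets $O \cup h(O)$ and the element $\tau = (f_1 \circ h \circ f_2)^2$) handling exactly the case $F_t = \Sym(d_t)$; finally the $n$-valid/$\infty$-valid bootstrap in the proof of Theorem~\ref{maintheorem:Alt} interleaves these extensions at two adjacent base vertices. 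Your closing step — applying Theorem~\ref{maintheorem:S} once the local action is exactly alternating — is sound and is indeed how the paper disposes of the case $F_0 = \Alt(d_0)$, $F_1 = \Alt(d_1)$; but the paper explicitly warns, just after the statement of $U^+_{(i)}(F_0,F_1)$, that the symmetric case is ``surprisingly more difficult,'' and your proposal in effect assumes that difficulty away.
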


Note that Theorem~\ref{maintheorem:Alt} is already sufficient to obtain meaningful information on the groups $H$ satisfying the hypotheses. For instance, it follows from Theorem~\ref{maintheorem:Alt} that the pointwise stabilizer of a half-tree in such a group $H$ is never trivial.

For a fixed legal coloring $i$, we then find in Section~\ref{section:classification} all the groups $H \in \mathcal{H}_T^+$ containing $\Alt_{(i)}(T)^+$. The strategy adopted to do so is somewhat involved and what follows is a rough description of it. Recall that, following~\cite{Banks}, the \textbf{$n$-closure} $J^{(n)}$ of an arbitrary group $J \leq \Aut(T)$ is defined by $$J^{(n)} := \{g \in \Aut(T) \suchthat \forall v \in V(T), \exists h \in J : g|_{B(v,n)} = h|_{B(v,n)}\}.$$
The next important step in our proof then reads as follows.


\begin{maintheorem}\label{maintheorem:completeinvariants}
Let $T$ be the $(d_0,d_1)$-semiregular tree with $d_0, d_1 \geq 6$, let $i$ be a legal coloring of $T$ and let $H \in \mathcal{H}_T^+$ be such that $H \supseteq \Alt_{(i)}(T)^+$. Then there exists $K \in \N$ such that $H = H^{(K)}$.
\end{maintheorem}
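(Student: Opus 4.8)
The plan is to show that the group $H$, which lies between $\Alt_{(i)}(T)^+$ and $\Aut(T)^+$ and is $2$-transitive on $\partial T$, is determined by its local action on balls of a uniformly bounded radius, so that it coincides with its own $K$-closure $H^{(K)}$ for some $K$. The key point is that $H$ is sandwiched between two groups that are \emph{already} equal to one of their finite closures: on one hand, $\Aut(T)^+ = (\Aut(T)^+)^{(0)}$ is its own $0$-closure, being the full type-preserving automorphism group; on the other hand, $\Alt_{(i)}(T)^+ = U_{(i)}^+(\Alt(d_0),\Alt(d_1))$ is a universal (Burger--Mozes type) group whose defining condition is purely local, namely that $\sigma_{(i)}(g,v)$ is even at every vertex, so it equals its own $1$-closure. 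The information distinguishing $H$ from these two extremes is entirely encoded by the local constraints on products of signatures $\sgn(\sigma_{(i)}(g,w))$ that appear in the definition of the groups $G_{(i)}^+(Y_0,Y_1)$.

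First I would analyze the quotient $H / \Alt_{(i)}(T)^+$. Since $H \supseteq \Alt_{(i)}(T)^+$ and both are subgroups of $\Aut(T)^+$, and since $\Alt_{(i)}(T)^+$ is normal in $\Aut(T)^+$ (it is defined by an $\Aut(T)^+$-invariant local evenness condition), the quotient $Q := \Aut(T)^+ / \Alt_{(i)}(T)^+$ is an abelian group -- in fact a product of copies of $\C_2$ indexed by the vertices of $T$, one factor recording the parity $\sgn(\sigma_{(i)}(g,v))$ at each vertex $v$. Membership in $H$ then corresponds to membership in a closed subgroup $\overline{H}$ of this profinite abelian $2$-group $Q$, and the classification of such $H$ reduces to understanding which closed subgroups of $Q$ arise. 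The essential constraint is that $\overline{H}$ must be invariant under the action of $\Aut(T)^+$ permuting the vertices, and the $2$-transitivity of $H$ on $\partial T$ forces strong homogeneity across vertices of the same type.

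The heart of the argument -- and the step I expect to be the main obstacle -- is proving that the constraints defining $\overline{H}$ inside $Q$ have \emph{bounded depth}, i.e. that there is a finite $K$ such that the parity constraints in $\overline{H}$ only involve vertices within distance $K$ of each other. The danger is an unbounded family of independent constraints relating signatures at vertices arbitrarily far apart, which would prevent $H$ from being any finite closure. I would rule this out by exploiting the pointwise stabilizers of half-trees, which are nontrivial by Theorem~\ref{maintheorem:Alt} (they contain the corresponding stabilizers in $\Alt_{(i)}(T)^+$): given a nontrivial element fixing a half-tree pointwise and acting only beyond some sphere, one can produce, for any putative long-range constraint, an element of $\Alt_{(i)}(T)^+ \leq H$ that violates it unless the constraint is in fact local. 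Concretely, one shows that if a signature relation holds for all $h \in H$ and involves a vertex at large distance, then by applying suitable elements supported deep in the tree one can flip an individual signature independently, contradicting the relation; this forces every defining relation of $\overline{H}$ to be expressible using spheres $S(v,r)$ with $r$ bounded.

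Once bounded depth is established, the conclusion follows formally. A bound $K$ on the depth of the defining local constraints means precisely that membership $g \in H$ can be checked by examining, for each vertex $v$, only the restriction $g|_{B(v,K)}$: if $g|_{B(v,K)}$ agrees with some $h_v \in H$ for every $v$, then $g$ satisfies all the (depth-$\leq K$) signature constraints that define $H$, hence $g \in H$. This is exactly the statement that $H^{(K)} \subseteq H$, and the reverse inclusion $H \subseteq H^{(K)}$ is immediate from the definition of the $n$-closure. Therefore $H = H^{(K)}$, as desired. I would take care to make the bound $K$ uniform over all vertices using transitivity of $H$ on each $V_t(T)$ (Lemma~\ref{corollary:2-transitive}), so that a single $K$ suffices rather than a vertex-dependent one.
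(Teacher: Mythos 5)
Your opening reduction is based on a false premise: $\Alt_{(i)}(T)^+$ is \emph{not} normal in $\Aut(T)^+$, so the quotient $Q := \Aut(T)^+/\Alt_{(i)}(T)^+$ is not a group, let alone a profinite abelian $2$-group. By Lemma~\ref{lemma:sigma}, for $h \in \Alt_{(i)}(T)^+$ and $g \in \Aut(T)^+$ one has $\sgn \sigma_{(i)}(ghg^{-1},v) = \sgn\sigma_{(i)}(g,hg^{-1}(v)) \cdot \sgn\sigma_{(i)}(g,g^{-1}(v))$: the two outer factors are evaluated at \emph{different} vertices, so conjugation by $g$ preserves the evenness condition only when $v \mapsto \sgn\sigma_{(i)}(g,v)$ is constant on each $V_t(T)$. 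This is exactly why the normalizer of $\Alt_{(i)}(T)^+$ in $\Aut(T)^+$ is the proper subgroup $G_{(i)}^+(\{0\}^*,\{0\}^*)$ (Lemma~\ref{lemma:normalizer}). The parity data $g \mapsto (\sgn\sigma_{(i)}(g,v))_{v \in V(T)}$ is a cocycle, not a homomorphism, so the problem does not reduce to classifying closed $\Aut(T)^+$-invariant subgroups of an abelian group; any argument in that framework has to be reorganized from the start.

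The deeper gap is in your ``bounded depth'' step, which you correctly identify as the heart of the matter but whose proposed mechanism fails. Your dichotomy --- either an individual signature can be flipped independently, or the constraint is local --- is refuted by groups in the classification itself: $G_{(i)}^+(X^*,\varnothing)$ is defined by requiring that \emph{all} values $\Sgn_{(i)}(g,S_X(v))$, $v \in V_t(T)$, be equal, a relation tying together vertices at arbitrary distance in which no single signature can be flipped independently; yet this group contains $\Alt_{(i)}(T)^+$, lies in $\mathcal{H}_T^+$, and \emph{does} equal a finite closure, because the global equality is equivalent to its restriction to pairs $v,v'$ with $d(v,v')=2$, checkable in balls of radius $\max X + 2$. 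So the correct statement is not that defining relations involve only nearby vertices, but that long-range relations are \emph{generated by} bounded-radius ones --- and ruling out any other behavior is precisely where the work lies. The paper does this by classifying the possible parity patterns on spheres into finitely many regimes including a starred regime $\alpha_k(v) = (k-1)^*$ that captures exactly these equality-type constraints (Lemma~\ref{lemma:alpha}), proving that the sequence of regimes is monotone and eventually constant (Lemma~\ref{lemma:sequences}, whose claims need delicate conjugation arguments inside $\Alt_{(i)}(T)^+$ and the hypothesis $d_0,d_1 \geq 6$), and showing that beyond the stabilization radius $K(t)$ membership of a local pattern is determined by its restrictions to subballs of radius $K(t)$ (Lemma~\ref{lemma:describe}); Theorem~\ref{maintheorem:completeinvariants'} then packages this as a complete system of invariants, from which $H = H^{(K)}$ follows for any $K$ exceeding the finite values among $K(0), K(1)$. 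Your final formal deduction (bounded depth implies $H^{(K)} \subseteq H$, the reverse inclusion being trivial) is sound, but without an argument that covers the equality-type constraints, the middle step of your proof does not go through.
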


Theorem~\ref{maintheorem:completeinvariants} is crucial, since it means that $H$ is completely determined by its local action on $T$ on a sufficiently large scale. In particular, observe that for each $K \in \N$ there is only a finite number of groups $H \in \mathcal{H}_T^+$ with $H \supseteq \Alt_{(i)}(T)^+$ and such that $H = H^{(K)}$. This already implies that the classification will lead to a countable family of groups. The idea to complete the classification is finally to fix $K$, to find an upper bound to the number of groups $H$ satisfying the hypotheses and such that $H = H^{(K)}$, and to show that this upper bound is achieved by the various groups from the explicit list described beforehand. These groups are all defined in Section~\ref{section:simple}, where Theorem~\ref{maintheorem:simple} is also proved (see Lemma~\ref{lemma:U(Alt)transitive}, Theorem~\ref{theorem:simple} and Lemma~\ref{lemma:normalizer}).


\subsection*{Acknowledgement}

I warmly thank Pierre-Emmanuel Caprace for suggesting me this problem and for all his valuable advice during my research and the writing of this paper. I also thank Adrien Le Boudec and especially Matthias Grüninger for their helpful comments on a previous version of this manuscript. I am finally grateful to the anonymous referee, who made suggestions to improve the reader's experience.


\section{From local to global structure}\label{section:localtoglobal}

In this section, we consider a group $H \in \mathcal{H}_T^+$ and analyze how the knowledge of $\underline{H}(v)$ for each $v \in V(T)$ has an impact on the global structure of $H$. This section is largely inspired from the work of Burger and Mozes \cite{Burger}. Our goal is to generalize several of their results to the situation where the groups are not vertex-transitive.

Most of our notations come from \cite{Burger}. If $x \in V(T)$, then $S(x,n)$ (resp. $B(x,n)$) is the set of vertices of $T$ at distance exactly $n$ (resp. at most $n$) from $x$. We also set $c(x,n) := |S(x,n)|$. If $n \geq 0$ and $x_1, \ldots, x_k \in V(T)$, then define $H_n(x_1, \ldots, x_k)$ to be the pointwise stabilizer of $\bigcup_{i=1}^k B(x_i,n)$. In the particular case where $n = 0$, we write $H(x_1, \ldots, x_k)$ instead of $H_0(x_1, \ldots, x_k)$ as it is simply the stabilizer of vertices $x_1, \ldots, x_k$. For $x \in V(T)$, set
$$\underline{H}_n(x) := \bigslant{H_n(x)}{H_{n+1}(x)}.$$
Once again, for $n = 0$ we write $\underline{H}(x)$ instead of $\underline{H}_0(x)$ and this exactly corresponds to the definition of $\underline{H}(x)$ given in the introduction.

We start by giving the following results which will be used throughout this paper.


\begin{lemma}\label{lemma:2-transitive}
Let $T$ be a locally finite tree whose vertices have valency at least~$3$ and let $H$ be a closed subgroup of $\Aut(T)$. Then $H$ is $2$-transitive on $\partial T$ if and only if $H(v)$ is transitive on $\partial T$ for each $v \in V(T)$.
\end{lemma}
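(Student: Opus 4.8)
The plan is to prove both implications directly, working with the basic structure of ends of a tree. Recall that the ends $\partial T$ can be identified with equivalence classes of rays (geodesic half-lines), and that any two distinct ends $\xi, \eta$ are joined by a unique bi-infinite geodesic line. The key geometric fact I would exploit is that for any vertex $v$ and any end $\xi \in \partial T$, there is a unique ray emanating from $v$ representing $\xi$; moreover the action of $H$ on $\partial T$ is continuous for the natural topology, and $\partial T$ is compact since $T$ is locally finite. I would also record that if $H(v)$ is transitive on $\partial T$ for some vertex $v$, then in particular $H$ is transitive on $\partial T$ (being transitive already on the ``rays from $v$''), and this transitivity quickly propagates.

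For the forward implication, suppose $H$ is $2$-transitive on $\partial T$; I want to show $H(v)$ is transitive on $\partial T$ for each $v$. First I would observe that $2$-transitivity gives vertex-transitivity within each part of the bipartition: given two ends $\xi_1, \eta_1$ and $\xi_2, \eta_2$ with the geodesics $(\xi_1,\eta_1)$ and $(\xi_2,\eta_2)$ passing through prescribed vertices, one can move vertices around, and more carefully one shows $H$ is transitive on $V_0(T)$ and $V_1(T)$ (this is the content referenced as Lemma~\ref{corollary:2-transitive}, so I may simply invoke that $H$ is transitive on vertices of each type). The real point is fixing $v$ and moving an arbitrary end to a fixed one: given $\xi, \eta \in \partial T$ distinct, choose a reference end $\zeta$ whose geodesic ray from $v$ I can control; then apply $2$-transitivity to the pairs to find $g \in H$ sending $(\xi, \text{something})$ appropriately while fixing $v$. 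Concretely, since $H$ is transitive on the set of ends and stabilizers interact nicely, I would argue: pick two ends $\xi, \eta$ whose joining geodesic passes through $v$; then for any other pair $\xi', \eta'$ whose geodesic also passes through $v$, $2$-transitivity yields $g$ with $g\xi = \xi'$, $g\eta = \eta'$, and since geodesics through the same vertex are preserved appropriately, $g$ can be taken to fix $v$. Transitivity of $H(v)$ on all of $\partial T$ then follows by combining with the observation that every end is paired with some end across $v$.

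For the reverse implication, suppose $H(v)$ is transitive on $\partial T$ for every vertex $v$. To prove $2$-transitivity, take two pairs of distinct ends $(\xi_1,\eta_1)$ and $(\xi_2,\eta_2)$. I would first use transitivity of $H$ on $\partial T$ (a consequence of $H(v)$-transitivity, as noted) to find $g_1 \in H$ with $g_1 \xi_1 = \xi_2$. Replacing $(\xi_1,\eta_1)$ by $(\xi_2, g_1\eta_1)$, I reduce to the case $\xi_1 = \xi_2 =: \xi$, and I must send $\eta_1$ to $\eta_2$ by an element fixing $\xi$. Here is where I would pick a vertex cleverly: choose a vertex $v$ lying on the geodesic $(\xi, \eta_1)$ (or better, on both geodesics after aligning them near $\xi$). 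Using that $H(v)$ is transitive on $\partial T$, and iterating along the ray towards $\xi$, I would build an element fixing $\xi$ and sending $\eta_1$ to $\eta_2$. The cleanest route is to fix a ray representing $\xi$ and use transitivity of the stabilizers of vertices further and further out along this ray to fix $\xi$ in the limit while adjusting the other end; a standard compactness/limiting argument (using that $H$ is closed and $\partial T$ compact) then produces an honest element of $H$ fixing $\xi$ and carrying $\eta_1$ to $\eta_2$.

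The main obstacle I anticipate is the reverse implication's limiting step: producing a single element of $H$ that \emph{exactly} fixes the end $\xi$ while moving $\eta_1$ to $\eta_2$, rather than merely approximately fixing longer and longer initial segments of the ray to $\xi$. I expect to handle this by choosing a vertex $v$ on the geodesic line $(\xi,\eta_1)$, applying transitivity of $H(v)$ to send $\eta_1$ to $\eta_2$ (noting $v$ lies on $(\xi,\eta_2)$ as well once the ends near $\xi$ are matched up), and arguing that an element of $H(v)$ sending $\eta_1 \mapsto \eta_2$ automatically fixes $\xi$ because both geodesics $(\xi,\eta_1)$ and $(\xi,\eta_2)$ share the ray from $v$ towards $\xi$ --- so the transitivity of $H(v)$ on $\partial T$, applied to the two ends $\eta_1, \eta_2$ which determine the same direction at $v$ opposite to $\xi$, can be upgraded to fix $\xi$. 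Making this direction-matching precise, via the tree's geodesic structure and closedness of $H$, is the crux of the argument.
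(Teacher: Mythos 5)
The paper's own ``proof'' of this lemma is a citation to \cite{Burger}*{Lemma~3.1.1}, so your attempt has to stand on its own, and it does not: the forward implication contains a genuine gap. Your key step is ``$2$-transitivity yields $g$ with $g\xi = \xi'$, $g\eta = \eta'$, and since geodesics through the same vertex are preserved appropriately, $g$ can be taken to fix $v$.'' Nothing justifies this: such a $g$ maps the line $(\xi,\eta)$ onto $(\xi',\eta')$, but it may shift along the target line, so $gv$ is an arbitrary, uncontrolled vertex of $(\xi',\eta')$. Correcting $g$ to fix $v$ requires elements of $H$ translating along $(\xi',\eta')$ by a prescribed amount, and producing these (controlling translation lengths, not just the existence of hyperbolic elements) is exactly the hard content of the forward direction. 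Moreover, your fallback of invoking transitivity of $H$ on $V_0(T)$ and $V_1(T)$ is circular in this paper: Lemma~\ref{corollary:2-transitive} is stated as a \emph{direct consequence} of the present lemma, so it cannot be used as an input.

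In the reverse implication, your first route (march vertices $w_n$ toward $\xi$ along the common ray and take a limit using closedness of $H$) is the right one, and it can be made airtight: if $u_0$ is the vertex where the lines $(\xi,\eta_1)$ and $(\xi,\eta_2)$ separate and $w_n \in [u_0,\xi)$ with $w_n \to \xi$, then any $g_n \in H(w_n)$ with $g_n\eta_1 = \eta_2$ maps the ray $[w_n,\eta_1)$ onto $[w_n,\eta_2)$ fixing the basepoint, hence fixes their common initial segment $[w_n,u_0]$ pointwise; in particular all $g_n$ lie in the single compact group $H(u_0)$ (this, rather than compactness of $\partial T$, is what makes the limit work), and a subsequential limit $g \in H$ fixes $[u_0,\xi)$ pointwise, so $g\xi = \xi$ and $g\eta_1 = \eta_2$. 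However, the shortcut you propose in your final paragraph is false: a single element $g \in H(v)$ with $g\eta_1 = \eta_2$ does \emph{not} automatically fix $\xi$. It fixes only the common initial segment of the rays $[v,\eta_1)$ and $[v,\eta_2)$, which points from $v$ toward the branch vertex $u_0$; its action on the ray from $v$ toward $\xi$ is completely unconstrained (it can permute the remaining directions at $v$ arbitrarily). Demanding $g\eta_1 = \eta_2$ \emph{and} $g\xi = \xi$ from an element of $H(v)$ is a $2$-transitivity-type condition on the local action --- precisely what you are trying to prove --- so this version of the argument begs the question, and the limiting argument is unavoidable.
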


\begin{proof}
See~\cite{Burger}*{Lemma~3.1.1}.
\end{proof}


\begin{lemma}\label{corollary:2-transitive}
Let $T$ be a locally finite tree whose vertices have valency at least~$3$ and let $H$ be a closed subgroup of $\Aut(T)$ acting $2$-transitively on $\partial T$. Then $T$ is semiregular and, for each $x,x',y,y' \in V(T)$ such that $x$ and $x'$ have the same type and $d(x,y)=d(x',y')$, there exists $h \in H$ such that $h(x) = x'$ and $h(y) = y'$.
\end{lemma}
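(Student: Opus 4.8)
The plan is to reduce everything to Lemma~\ref{lemma:2-transitive}, which tells us that $H(v)$ is transitive on $\partial T$ for every vertex $v$. The key intermediate claim I would establish first is a \emph{sphere-transitivity} statement: for every $v \in V(T)$ and every $n \geq 1$, the stabilizer $H(v)$ acts transitively on the sphere $S(v,n)$. To prove it, take $w, w' \in S(v,n)$; since every vertex has valency at least $3 \geq 2$, each of the geodesics $[v,w]$ and $[v,w']$ can be prolonged to geodesic rays $[v,\xi)$ and $[v,\xi')$ having $w$ (resp.\ $w'$) as their vertex at distance $n$ from $v$. By Lemma~\ref{lemma:2-transitive} there is $h \in H(v)$ with $h(\xi) = \xi'$; as $h$ fixes $v$ it maps $[v,\xi)$ isometrically onto $[v,\xi')$, whence $h(w) = w'$.

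From sphere-transitivity the remaining assertions follow quickly. For transitivity on a fixed type class, let $x, x'$ have the same type, so that $d(x,x') = 2m$ is even; the midpoint $c$ of $[x,x']$ is then a vertex with $x, x' \in S(c,m)$, and sphere-transitivity supplies $h \in H(c) \subseteq H$ with $h(x) = x'$. Since such an $h$ maps a type-$t$ vertex to a type-$t$ vertex it is type-preserving, so $H$ is transitive on $V_0(T)$ and on $V_1(T)$ separately; as automorphisms preserve valency, all vertices of a given type share one valency, which is exactly the assertion that $T$ is semiregular. For the statement about pairs, given $x,x'$ of the same type and $y,y'$ with $d(x,y)=d(x',y')=n$, I would first pick $h_1 \in H$ with $h_1(x)=x'$ (type transitivity), note that $h_1(y)$ and $y'$ both lie in $S(x',n)$, and then apply sphere-transitivity at $x'$ to find $h_2 \in H(x')$ with $h_2(h_1(y))=y'$; the composite $h := h_2 h_1$ satisfies $h(x)=x'$ and $h(y)=y'$.

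The part that requires care is not any single computation but the choice of the right intermediate claim. The naive route---extend $[x,y]$ to a bi-infinite line, extend $[x',y']$ likewise, and transport one ordered pair of ends onto the other by $2$-transitivity---does \emph{not} immediately work, because an element carrying the line $(\xi_-,\xi_+)$ onto $(\xi'_-,\xi'_+)$ is determined only up to a translation along the target line, and so need not send $x$ to $x'$. Routing the argument through single-end transitivity of the \emph{vertex} stabilizers $H(v)$ (that is, through the spheres $S(v,n)$) avoids this translation ambiguity entirely, since there the fixed vertex pins down the radius. Everything else is just the assembling of these transitivity statements, and the valency hypothesis enters only to guarantee that geodesic segments extend to rays.
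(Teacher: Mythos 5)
Your proof is correct, and it follows essentially the same route as the paper: the paper's proof of this lemma consists of the single remark that it is a direct consequence of Lemma~\ref{lemma:2-transitive}, and your argument is precisely the careful expansion of that remark, deducing sphere-transitivity of each $H(v)$ from its transitivity on $\partial T$ and then assembling the type-transitivity, semiregularity, and pair-transitivity statements. Your closing observation about why one should route through vertex stabilizers rather than transporting bi-infinite lines (to avoid the translation ambiguity) correctly identifies the point that makes Lemma~\ref{lemma:2-transitive} the right tool here.
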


\begin{proof}
This is a direct consequence of Lemma~\ref{lemma:2-transitive}.
\end{proof}


\subsection{Subgroups of products of finite simple non-abelian groups}\label{subsection:finite}

Lemma~\ref{lemma:subdiagonal} below is a basic result about finite groups and will play a fundamental role in the sequel. Its statement comes from~\cite{Burger}*{Lemma~3.4.3}, but the proof therein requires supplementary details because the definition of a \textit{product of subdiagonals} needs to be amended (probably due to a misnomer). The result could be deduced from Goursat's Lemma (see~\cite{Goursat}*{Sections 11--12} and \cite{Lang}*{Chapter I, Exercise 5}) ; we provide a self-contained proof for the reader's convenience.

Given a product of groups $G_1 \times \cdots \times G_n$ and $i_1, \ldots, i_m \in \{1, \ldots, n\}$, we write $\proj_{i_1, \ldots, i_m} \colon G_1 \times \cdots \times G_n \to G_{i_1} \times \cdots \times G_{i_m}$ for the projection on factors $G_{i_1}, \ldots, G_{i_m}$.


\begin{lemma}\label{lemma:finite1}
Let $S$ be a finite simple non-abelian group and let $G \leq S^n$ (where $n \geq 1$). If $\proj_{i,j}(G) = S^2$ for each $1 \leq i < j \leq n$, then $G = S^n$.
\end{lemma}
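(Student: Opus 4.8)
The plan is to argue by induction on the number $n$ of factors. The case $n=2$ requires nothing, since it is precisely the hypothesis $\proj_{1,2}(G)=S^2$ (and the case $n=1$ is degenerate). For the inductive step I would first pass to the projection $G' := \proj_{1,\ldots,n-1}(G) \leq S^{n-1}$. Because $\proj_{i,j}(G') = \proj_{i,j}(G) = S^2$ for all $1 \leq i < j \leq n-1$, the induction hypothesis applies and gives $G' = S^{n-1}$; in other words $\proj_{1,\ldots,n-1}|_G$ is surjective. The whole argument then reduces to understanding its kernel $M := G \cap (\{1\}^{n-1}\times S)$, which I identify with the subgroup $N := \proj_n(M) \leq S$.

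The first key step is to show $N \trianglelefteq S$. Note that $\proj_n(G) = S$, since already $\proj_{1,n}(G) = S^2$. Now for $g=(g_1,\ldots,g_n) \in G$ and $m=(1,\ldots,1,s) \in M$, componentwise conjugation gives $gmg^{-1} = (1,\ldots,1,\,g_n s g_n^{-1})$, which lies in $G \cap (\{1\}^{n-1}\times S) = M$; as $g$ ranges over $G$ the coordinate $g_n$ ranges over all of $S$, so $N$ is invariant under conjugation by $S$, hence normal. Simplicity of $S$ then forces $N = S$ or $N = 1$. If $N = S$, then $\{1\}^{n-1}\times S \subseteq G$, and combined with the surjectivity of $\proj_{1,\ldots,n-1}|_G$ this immediately yields $G = S^n$: lift an arbitrary $(s_1,\ldots,s_{n-1})$ to some $(s_1,\ldots,s_{n-1},t) \in G$ and correct the last coordinate by multiplying by the appropriate element of $\{1\}^{n-1}\times S$.

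The substance of the proof is to rule out the case $N = 1$. Here $\proj_{1,\ldots,n-1}|_G$ is an isomorphism onto $S^{n-1}$, so $G$ is the graph of a homomorphism $\phi \colon S^{n-1} \to S$. Writing $S_k$ for the $k$-th factor of $S^{n-1}$, each image $\phi(S_k)$ is a homomorphic image of the simple group $S$, hence trivial or isomorphic to $S$, and the images of distinct factors commute elementwise in $S$ (as the $S_k$ do). The crux is the observation that two subgroups of $S$ isomorphic to $S$ cannot commute elementwise: their intersection would be centralized by a copy of $S$ and hence central, therefore trivial, which would embed $S \times S$ into $S$ and contradict $|S|>1$. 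Thus at most one factor $S_m$ has nontrivial image, so $\phi$ depends only on the coordinate $m$; but then $\proj_{m,n}(G)$ is the graph of $\phi|_{S_m}$, of cardinality at most $|S| < |S|^2$, contradicting the hypothesis $\proj_{m,n}(G) = S^2$. This contradiction forces $N = S$ and closes the induction.

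I expect the commuting-subgroups step to be the genuine obstacle: the reduction to the kernel $M$ and the verification that $N$ is normal are routine bookkeeping about subdirect products, whereas the impossibility of two commuting copies of $S$ inside $S$ — equivalently, that a homomorphism $S^{n-1}\to S$ must factor through a single coordinate — is exactly where non-abelian simplicity is indispensable.
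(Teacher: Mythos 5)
Your proof is correct, and while it shares the paper's overall Goursat-style skeleton (an induction, a kernel that is trivial or all of $S$ by simplicity, and a graph-of-homomorphism analysis in the hard case), it is organized differently and closes with a different contradiction. The paper inducts on the size $m$ of a set of coordinates, proving $\proj_{i_1,\ldots,i_m}(G)=S^m$ for every $m$-subset, and in the inductive step examines \emph{all} $m$ kernels $\ker(\proj_{i_1,\ldots,\widehat{i_k},\ldots,i_m})$: if the omitted coordinate of some kernel projects onto $S$ it concludes at once, and otherwise the triviality of the kernels for $k=1,2,m$ yields two elementwise-commuting surjections $\beta,\gamma\colon S\to S$, forcing $S$ to be abelian. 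You instead induct on $n$ itself, inspect only the single kernel $M=G\cap(\{1\}^{n-1}\times S)$, make explicit (via conjugation and $\proj_n(G)=S$) the normality of $N=\proj_n(M)$ that the paper leaves implicit in its ``$\unlhd S$'', and in the case $N=1$ you classify homomorphisms $\phi\colon S^{n-1}\to S$ outright: your lemma that $S$ cannot contain two elementwise-commuting copies of itself (trivial intersection since $Z(S)=1$, then $|S\times S|>|S|$) shows $\phi$ factors through a single coordinate $m$, and the final contradiction is with the hypothesis $\proj_{m,n}(G)=S^2$ by a cardinality count, rather than with non-abelianness. The two closing lemmas are cousins --- both express that non-abelian simplicity forbids a diagonal splitting inside $S$ --- but yours buys a clean structural statement (a homomorphism $S^k\to S$ depends on at most one coordinate) at the cost of a slightly longer case analysis, while the paper's finer kernel bookkeeping avoids that classification. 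One small point you handle the same way the paper implicitly does: for $n=1$ the hypothesis is vacuous and the conclusion can fail (take $G=1$), so the induction genuinely starts at $n=2$, exactly as you set it up; also, in your last step, if $\phi$ is trivial the ``graph'' argument still applies with any choice of $m$, which is worth a word to make the case analysis airtight.
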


\begin{proof}
We prove by induction on $m$ that $\proj_{i_1, \ldots, i_m}(G) = S^m$ for each $1 \leq i_1 < \cdots < i_m \leq n$. By hypothesis this is true for $m = 2$. Now let $m \geq 3$ and suppose it is true for $m-1$. Given $1 \leq i_1 < \cdots < i_m \leq n$, we need to show that $\proj_{i_1, \ldots, i_m}(G) = S^m$. For any $k \in \{1, \ldots, m\}$, we have that $\proj_{i_k}(\ker(\proj_{i_1, \ldots, \widehat{i_k}, \ldots i_m})) \unlhd S$, so it is either trivial or equal to $S$ (since $S$ is simple). In the latter case, since $\proj_{i_1, \ldots, \widehat{i_k}, \ldots i_m}(G) = S^{m-1}$ by induction hypothesis, we directly get that $\proj_{i_1, \ldots, i_m}(G) = S^m$ and we are done. Now we assume that
$\proj_{i_k}(\ker(\proj_{i_1, \ldots, \widehat{i_k}, \ldots i_m}))$ is trivial for all $k$ $(*)$. Putting $k = m$ in $(*)$, we get that there exists $\alpha \colon S^{m-1} \to S$ such that $\proj_{i_m}(g) = \alpha(\proj_{i_1, \ldots, i_{m-1}}(g))$ for all $g \in G$. Moreover, $k = 1$ in $(*)$ implies that the map $\beta \colon S \to S$ defined by $\beta(s) = \alpha(s,1,\ldots,1)$ is injective, and hence surjective. For the same reason with $k = 2$, the map $\gamma \colon S \to S$ defined by $\gamma(s) = \alpha(1,s,1,\ldots,1)$ is surjective. Since $\beta(s) \cdot \gamma(s') = \alpha(s,s',1,\ldots,1) = \gamma(s') \cdot \beta(s)$ for all $s, s' \in S$, we get that $S$ is abelian, a contradiction.
\end{proof}

Given a group $S$ and a positive integer $n$, a \textbf{product of subdiagonals} of $S^n$ is a subgroup of $S^n$ of the form $(\alpha_1 \times \cdots \times \alpha_n)(\Delta_{I_1} \cdots \Delta_{I_r})$, where $\{I_j \suchthat 1 \leq j \leq r\}$ is a partition of $\{1, \ldots, n\}$, $\Delta_J$ is defined by $\Delta_J := \{ (s_1,\ldots,s_n) \in S^n \suchthat s_i = 1 \ \ \forall i \not\in J \text{ and } s_{\ell} = s_k \ \ \forall \ell, k \in J\}$ for each subset $J \subseteq \{1, \ldots, n\}$, and $\alpha_1, \ldots, \alpha_n \in \Aut(S)$. Here, $\alpha_1 \times \cdots \times \alpha_n \in \Aut(S^n)$ is the Cartesian product of $\alpha_1, \ldots, \alpha_n$.


\begin{lemma}\label{lemma:finite2}
Let $S$ be a finite simple non-abelian group and let $G \leq S^n$ (where $n \geq 1$). If $\proj_i(G) = S$ for each $i \in \{1,\ldots,n\}$, then $G$ is a product of subdiagonals of $S^n$.
\end{lemma}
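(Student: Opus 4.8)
The plan is to reduce to the pairwise case and then glue the coordinates into blocks, using Lemma~\ref{lemma:finite1} to handle independence across blocks. First I would establish the pairwise dichotomy: for $1 \leq i < j \leq n$, the subgroup $\proj_{i,j}(G) \leq S^2$ is subdirect (it surjects onto each factor since $\proj_i(G) = \proj_j(G) = S$), and I claim it is either all of $S^2$ or the graph $\{(s,\alpha(s)) \suchthat s \in S\}$ of some $\alpha \in \Aut(S)$. To see this without invoking Goursat's Lemma directly, set $N := \{s \in S \suchthat (s,1) \in \proj_{i,j}(G)\}$; conjugating $(s,1)$ by elements $(a,b) \in \proj_{i,j}(G)$ and using surjectivity of the first projection shows $N \unlhd S$, so $N = 1$ or $N = S$ by simplicity. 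If $N = S$ then $S \times \{1\} \leq \proj_{i,j}(G)$, and subdirectness forces $\proj_{i,j}(G) = S^2$; if $N = 1$ then the second projection restricted to $\proj_{i,j}(G)$ is injective, hence an isomorphism onto $S$, and composing with the inverse of the (also bijective) first projection yields $\alpha \in \Aut(S)$ with $\proj_{i,j}(G) = \{(s,\alpha(s))\}$.

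Next I would organize the coordinates. Declare $i \approx j$ (for $i \neq j$) when $\proj_{i,j}(G) \neq S^2$, i.e. when the pairwise projection is a graph. This relation is transitive: if $\proj_{i,j}(G) = \{(s,\alpha(s))\}$ and $\proj_{j,k}(G) = \{(s,\beta(s))\}$, then for every $g \in G$ the $k$-coordinate equals $\beta\alpha$ applied to the $i$-coordinate, so $\proj_{i,k}(G) = \{(s,(\beta\alpha)(s))\}$ and $i \approx k$. Hence $\approx$ is an equivalence relation; let $\{I_1, \ldots, I_r\}$ be its classes, which partition $\{1, \ldots, n\}$. Within a single class $I = \{c = k_1, \ldots, k_m\}$ (with $c$ its smallest element), the pairwise graphs furnish automorphisms $\phi_a$ with $\proj_{c,k_a}(G) = \{(s,\phi_a(s))\}$ and $\phi_1 = \id$; since the $c$-coordinate of any $g \in G$ then determines all coordinates in $I$ while still ranging over all of $S$, the restriction $\proj_c$ is a bijection from $\proj_I(G)$ onto $S$, giving $\proj_I(G) = \{(\phi_1(s), \ldots, \phi_m(s)) \suchthat s \in S\}$, a subdiagonal on $I$.

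Finally I would glue the blocks together. Pick a representative $c_j \in I_j$ for each class. Because distinct classes are $\approx$-unrelated, $\proj_{c_i,c_j}(G) = S^2$ for all $i \neq j$, so Lemma~\ref{lemma:finite1} gives $\proj_{c_1, \ldots, c_r}(G) = S^r$. Now for each coordinate $\ell \in I_j$ let $\alpha_\ell \in \Aut(S)$ be the automorphism with $\proj_{c_j, \ell}(G) = \{(s, \alpha_\ell(s))\}$ (so $\alpha_{c_j} = \id$), produced by the pairwise step. Every $g \in G$ is then determined by the tuple $(t_1, \ldots, t_r) = \proj_{c_1, \ldots, c_r}(g)$ via $\proj_\ell(g) = \alpha_\ell(t_j)$ for $\ell \in I_j$, and as $g$ varies this tuple ranges over all of $S^r$. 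Reading this against the definition, $G = (\alpha_1 \times \cdots \times \alpha_n)(\Delta_{I_1} \cdots \Delta_{I_r})$, a product of subdiagonals. The main obstacle I anticipate is bookkeeping rather than conceptual: one must check that the per-coordinate automorphisms chosen within and across blocks are mutually consistent and that the resulting normal form matches the definition exactly, the genuine content being concentrated in the pairwise dichotomy and in the cross-block independence supplied by Lemma~\ref{lemma:finite1}.
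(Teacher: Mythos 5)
Your proof is correct and takes essentially the same route as the paper: your pairwise dichotomy for $\proj_{i,j}(G)$ (full $S^2$ versus the graph of an automorphism, proved via the normal subgroup $N \unlhd S$) is exactly the paper's relation $i \sim j$ defined by triviality of $\proj_j(\ker(\proj_i))$, and both arguments then pass to the equivalence classes, extract automorphisms within each class, and apply Lemma~\ref{lemma:finite1} to class representatives to obtain $\proj_{c_1,\ldots,c_r}(G) = S^r$ and conclude. The differences (verifying transitivity by composing automorphisms rather than by kernel containment, and symmetry via the graph characterization rather than the order count $|G_{i,j}| = |S|$) are purely presentational.
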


\begin{proof}
For any $i, j \in \{1,\ldots,n\}$, we have $\proj_j(\ker(\proj_i)) \unlhd S$, so it is either trivial or equal to $S$. Let us define the relation $\sim$ on $\{1,\ldots,n\}$ by $i \sim j$ if and only if $\proj_j(\ker(\proj_i))$ is trivial. We claim that $\sim$ is an equivalence relation. Reflexivity and transitivity are clear. Let us prove that it is also symmetric. For $i, j \in \{1,\ldots,n\}$, write $G_{i,j} := \proj_{i,j}(G)$. Then $\proj_i|_{G_{i,j}} \colon G_{i,j} \to S$ has image $S$ by hypothesis, and its kernel is trivial if and only if $i \sim j$. So $|G_{i,j}|\ = |S|$ if and only if $i \sim j$. It follows directly that $\sim$ is symmetric and hence an equivalence relation.

Now let $I_1, \ldots, I_r$ be the equivalence classes of $\sim$ : they form a partition of $\{1,\ldots,n\}$. For each $1 \leq j \leq r$, choose $x_j \in I_j$. For such a $j$ and for $y \in I_j$, we have $x_j \sim y$ and thus $\ker(\proj_{x_j}) = \ker(\proj_y)$. As a consequence, there exists $\alpha_y \in \Aut(S)$ such that $\proj_y(g) = \alpha_y(\proj_{x_j}(g))$ for all $g \in G$. Combined with the fact that $\proj_{x_1, \ldots, x_r}(G) = S^r$ (by Lemma~\ref{lemma:finite1}, because $x_i \not \sim x_j$ implies $\proj_{x_i, x_j}(G) = S^2$), we obtain that $G$ is a product of subdiagonals of $S^n$ whose underlying partition is $\{I_j \suchthat 1 \leq j \leq r\}$.
\end{proof}


\begin{lemma}\label{lemma:subdiagonal}
Let $S \unlhd L$ be finite groups, where $L/S$ is solvable and $S$ is simple non-abelian. Let $G \leq L^n$ (where $n \geq 1$) be such that $\proj_i(G) \geq S$ for all $i \in \{1, \ldots, n\}$. Then $G \cap S^n$ is a product of subdiagonals of $S^n$.
\end{lemma}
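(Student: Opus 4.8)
The goal is to prove Lemma~\ref{lemma:subdiagonal}, which extends Lemma~\ref{lemma:finite2} from the simple group $S$ itself to a group $L$ containing $S$ as a normal subgroup with solvable quotient $L/S$. Let me think about how to reduce this to the results already established.

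We have $G \le L^n$ with $\proj_i(G) \ge S$ for each $i$. We want to show $G \cap S^n$ is a product of subdiagonals of $S^n$. Let me set $N := G \cap S^n$. The natural approach is to apply Lemma~\ref{lemma:finite2} to $N$ viewed as a subgroup of $S^n$, which requires verifying $\proj_i(N) = S$ for each $i$.

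**Key steps.**

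First I would show that $\proj_i(N) = S$ for each $i \in \{1, \ldots, n\}$. Here the hypothesis $\proj_i(G) \ge S$ and the structure of $L$ enters. For a fixed $i$, consider $\proj_i(N) = \proj_i(G \cap S^n)$. Since $\proj_i(G) \ge S$, and since $S \unlhd L$, I expect $\proj_i(N)$ to be a normal subgroup of $\proj_i(G)$ contained in $S$, hence normal in $S$, hence trivial or all of $S$ by simplicity of $S$. The content is to rule out the trivial case: if $\proj_i(N)$ were trivial, then every element of $G$ whose $i$-th coordinate lies in $S$ would actually have trivial $i$-th coordinate, and I would derive a contradiction with $\proj_i(G) \ge S$ by producing enough elements of $G$ with $i$-th coordinate in $S$. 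This is where solvability of $L/S$ should be used: the image of $G$ in $(L/S)^n$ is a subgroup of a solvable group, while $S$ is perfect (being simple non-abelian, $S = [S,S]$), so commutators force coordinates into $S$.

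**The crux via commutators.**

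Concretely, I would argue as follows. The commutator subgroup $[\proj_i(G), \proj_i(G)]$ is contained in $\proj_i([G,G])$, and since $\proj_i(G) \ge S$ with $S$ perfect, $[\proj_i(G), \proj_i(G)] \ge [S,S] = S$. Thus $S \le \proj_i([G,G])$. Now $[G,G] \le G$, and I want elements of $[G,G]$ to lie in $S^n$ so that they land in $N = G \cap S^n$; this follows because the image of $G$ in $(L/S)^n$ is solvable, so $[G,G]$ maps into a proper subgroup of this solvable image — more cleanly, iterating, the $k$-th derived subgroup $G^{(k)}$ of $G$ maps trivially into $(L/S)^n$ once $k$ exceeds the derived length of $L/S$, giving $G^{(k)} \le S^n$. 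But $S$ perfect gives $\proj_i(G^{(k)}) \ge \proj_i(S\text{-part}) = S$ at every stage, so $\proj_i(N) \ge \proj_i(G^{(k)}) = S$, whence $\proj_i(N) = S$. I would spell out that $\proj_i(G) \ge S$ together with $S$ perfect yields $\proj_i(G^{(k)}) \supseteq S$ for all $k$, which is the key stability property.

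**Conclusion.**

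Once $\proj_i(N) = S$ holds for every $i$, I simply apply Lemma~\ref{lemma:finite2} to $N \le S^n$ to conclude that $N = G \cap S^n$ is a product of subdiagonals of $S^n$, completing the proof. The main obstacle, and the only place requiring genuine care, is the first step: correctly leveraging the solvability of $L/S$ and the perfectness of $S$ to force $\proj_i(G \cap S^n) = S$ rather than something smaller. The derived-series argument is the clean way to handle this, and I would make sure to note explicitly that $G^{(k)} \le S^n$ for $k$ large (from solvability of $L/S$) while $\proj_i(G^{(k)}) = S$ persists (from perfectness of $S$).
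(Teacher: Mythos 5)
Your proposal is correct and follows essentially the same route as the paper's proof: reduce to showing $\proj_i(G \cap S^n) = S$ so that Lemma~\ref{lemma:finite2} applies, then use the derived series --- solvability of $L/S$ gives $G^{(k)} \leq S^n$ for $k$ large, while perfectness of $S$ gives $\proj_i(G^{(k)}) = \proj_i(G)^{(k)} \geq S^{(k)} = S$. The preliminary normality discussion is unnecessary, but your derived-series argument is exactly the paper's.
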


\begin{proof}
In view of Lemma~\ref{lemma:finite2}, it suffices to show that $\proj_i(G \cap S^n) = S$ for each $i \in \{1, \ldots, n\}$. Given a group $H$, we write $H^{(0)} = H$ and $H^{(k)} = [H^{(k-1)}, H^{(k-1)}]$ for each $k \geq 1$. Since $L/S$ is solvable, there exists $k$ such that $(L/S)^{(k)}$ is trivial. This implies that $L^{(k)} \leq S$. Hence, we obtain $G^{(k)} \leq (L^n)^{(k)} = (L^{(k)})^n \leq S^n$ and \[\proj_i(G \cap S^n) \geq \proj_i(G^{(k)}) = \proj_i(G)^{(k)} \geq S^{(k)} = S. \qedhere\]
\end{proof}

 
\subsection{Kernel of the action on balls}

We can now start to adapt the results \cite{Burger}*{Lemmas~3.4.2, 3.5.1 and~3.5.3} to the case of groups that are not vertex-transitive. Note that the proofs of some of our results are significantly more complicated because of this missing hypothesis.


\begin{lemma}\label{lemma:trivial}
Let $T$ be the $(d_0,d_1)$-semiregular tree with $d_0,d_1 \geq 3$ and let $H \in \mathcal{H}_T^+$. Let $x$ and $y$ be adjacent vertices of $T$ and let $k \geq 1$. Then $H_{k}(x) \neq H_{k}(y)$. In particular, $\underline{H}_{k-1}(x)$ or $\underline{H}_{k-1}(y)$ is non-trivial.
\end{lemma}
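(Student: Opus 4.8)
The plan is to prove the main assertion $H_k(x) \neq H_k(y)$ by contradiction and then to obtain the ``in particular'' clause as a short consequence. So suppose, for adjacent vertices $x,y$ and some $k \geq 1$, that $H_k(x) = H_k(y) =: N$. The strategy is to show that this forced coincidence of two pointwise ball-stabilizers propagates across the whole tree, forcing $N$ to be the kernel of the action of $H$ on $V(T)$, hence trivial; this will contradict the fact that $H_k(x)$ is always non-trivial when $H$ acts $2$-transitively on $\partial T$.

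For the propagation step I would argue as follows. If $u,u'$ are adjacent with $H_k(u) = H_k(u') = N$, then for any neighbour $u''$ of $u'$, the vertices $u$ and $u''$ have the same type (both adjacent to $u'$) and $d(u,u') = 1 = d(u'',u')$, so Lemma~\ref{corollary:2-transitive} provides $h \in H$ with $h(u) = u''$ and $h(u') = u'$. Conjugating by $h$ and using $H_k(u) = H_k(u')$ gives $H_k(u'') = h H_k(u) h^{-1} = h H_k(u') h^{-1} = H_k(u')= N$, and the pair $(u',u'')$ again has equal $H_k$-stabilizer. Starting from the edge $\{x,y\}$ and using connectedness of $T$, this shows $H_k(w) = N$ for every $w \in V(T)$. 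Consequently $N = \bigcap_{w \in V(T)} H_k(w)$ is the pointwise stabilizer of $\bigcup_{w} B(w,k) = V(T)$, which is trivial since $\Aut(T)$ acts faithfully on $V(T)$; thus $N = \{1\}$. I expect this invocation of $2$-transitivity to transport the equality along edges to be the main point of the argument, and the place to be careful is in checking that the propagation genuinely reaches every vertex (it does, since the ``frontier'' of vertices with $H_k(w) = N$ expands from both endpoints of the initial edge).

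It remains to contradict $N = \{1\}$ by showing $H_k(x) \neq \{1\}$. The action of $H(x)$ on the finite set $B(x,k)$ has kernel exactly $H_k(x)$, so $H(x)/H_k(x)$ embeds into $\Sym(B(x,k))$ and $[H(x) : H_k(x)] < \infty$; were $H_k(x)$ trivial, $H(x)$ would be finite. But $H(x)$ is transitive on $\partial T$ by Lemma~\ref{lemma:2-transitive}, and $\partial T$ is infinite because $d_0,d_1 \geq 3$, so $H(x)$ cannot be finite. Hence $H_k(x) \neq \{1\}$, contradicting $N = \{1\}$ and establishing $H_k(x) \neq H_k(y)$. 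Finally, for the ``in particular'' clause I would argue contrapositively: if both $\underline{H}_{k-1}(x)$ and $\underline{H}_{k-1}(y)$ were trivial, then $H_{k-1}(x) = H_k(x)$ and $H_{k-1}(y) = H_k(y)$; since $B(y,k-1) \subseteq B(x,k)$ and $B(x,k-1) \subseteq B(y,k)$ by the triangle inequality, this yields $H_k(x) \subseteq H_{k-1}(y) = H_k(y)$ and symmetrically $H_k(y) \subseteq H_k(x)$, whence $H_k(x) = H_k(y)$, contradicting the first part.
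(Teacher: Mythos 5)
Your proof is correct, and it follows the paper's overall skeleton (assume $H_k(x)=H_k(y)=:N$, show the coincidence spreads to all vertices so that $N$ is trivial, contradict non-triviality of $H_k(x)$, then deduce the ``in particular'' clause from $B(y,k-1)\subseteq B(x,k)$), but your globalization step is implemented differently and is worth noting. The paper uses normality: $H_k(x)\unlhd H(x)$ and $H_k(y)\unlhd H(y)$, so $N\unlhd\langle H(x),H(y)\rangle = H$, whence $H_k(w)=N$ for all $w$ by transitivity on $V_0(T)$ and $V_1(T)$; this is quicker but silently relies on the generation fact $\langle H(x),H(y)\rangle = H$, which the paper does not justify. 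Your edge-by-edge conjugation, using Lemma~\ref{corollary:2-transitive} to find $h$ fixing $u'$ and carrying $u$ to $u''$ (together with the identity $hH_k(u)h^{-1}=H_k(h(u))$), bypasses that generation statement entirely, at the cost of the slightly longer connectedness induction — a trade-off in the student's favor in terms of self-containedness. Your final contradiction also differs mildly: you note that $H_k(x)=\{1\}$ would make $H(x)$ finite (since $H(x)/H_k(x)$ embeds in $\Sym(B(x,k))$), incompatible with transitivity of $H(x)$ on the infinite set $\partial T$ (Lemma~\ref{lemma:2-transitive}), whereas the paper argues that $H$ would be countable, contradicting $2$-transitivity on the uncountable $\partial T$; both are valid. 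Finally, your contrapositive treatment of the ``in particular'' clause is just the paper's direct argument read backwards, and it is fine.
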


\begin{proof}
Assume for a contradiction that $H_{k}(x) = H_{k}(y)$. Since $H_{k}(x) \unlhd H(x)$ and $H_{k}(y) \unlhd H(y)$, we get $H_{k}(x) \unlhd \langle H(x), H(y) \rangle = H$. As $H$ is transitive on $V_0(T)$ and $V_1(T)$, this means that $H_{k}(x) = H_{k}(x')$ for each $x' \in V(T)$, implying that $H_{k}(x)$ is trivial. This is impossible as $H$ would then be countable, which contradicts its $2$-transitivity on $\partial T$.

In particular, $H_k(x) \setminus H_k(y)$ or $H_k(y) \setminus H_k(x)$ is non-empty. If $H_k(x) \setminus H_k(y) \neq \varnothing$, then there exists $h \in H_k(x) \setminus H_k(y) \subseteq H_{k-1}(y) \setminus H_k(y)$ and hence $\underline{H}_{k-1}(y)$ is non-trivial. If $H_k(y) \setminus H_k(x) \neq \varnothing$ then we get that $\underline{H}_{k-1}(x)$ is non-trivial.
\end{proof}

Recall that the \textbf{socle} of a group $G$ is the subgroup generated by the minimal non-trivial normal subgroups of $G$. In the next results, we will often use the easy fact that if $G$ is a finite group whose socle $S$ is simple and of index at most $2$ in $G$, then $S$ is the only non-trivial proper normal subgroup of $G$. If, moreover, $S$ is non-abelian, then it follows that the center $Z(G)$ of $G$ is trivial.


\begin{lemma}\label{lemma:two_poss}
Let $T$ be the $(d_0,d_1)$-semiregular tree with $d_0,d_1 \geq 3$ and let $F_1 \leq \Sym(d_1)$. Let $H \in \mathcal{H}_T^+$ be such that $\underline{H}(y) \cong F_1$ for each $y \in V_1(T)$. Suppose that the socle $S_1$ of the stabilizer $F_1(1)$ of $1$ in $F_1$ is simple non-abelian and of index $\leq 2$. Then for each $x \in V_0(T)$, one of the following holds.
\begin{enumerate}[(A)]
\item $H_1(x,y) = H_2(x)$ for each $y \in S(x,1)$.
\item $\underline{H}_1(x) \supseteq (S_1)^{d_0}$, where $\underline{H}_1(x)$ is seen in the natural way as a subgroup of $(F_1(1))^{d_0}$.
\end{enumerate}
\end{lemma}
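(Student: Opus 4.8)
The plan is to fix $x \in V_0(T)$ with neighbours $y_1, \dots, y_{d_0} \in S(x,1)$ and to analyse $N := \underline{H}_1(x)$ through its local actions. Writing $L := F_1(1)$, an element of $H_1(x)$ fixes the edge $\{x,y_i\}$, so its local action $p_i$ at $y_i$ lies (under the natural identification) in $L$; this yields an embedding $N \hookrightarrow L^{d_0}$, $g \mapsto (p_1(g), \dots, p_{d_0}(g))$, whose kernel is exactly $H_2(x)$. Under this identification $\ker p_i = H_1(x,y_i)/H_2(x)$, because an element of $H_1(x)$ has trivial local action at $y_i$ precisely when it fixes $B(y_i,1)$ pointwise. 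Hence option (A) is equivalent to the assertion that every projection $p_i \colon N \to L$ is injective, and the whole statement reduces to: either all $p_i$ are injective, or $N \supseteq (S_1)^{d_0}$.

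The first key observation is that $\proj_i(N) =: M_i$ is normal in $L$. Indeed $H_1(x) = \ker(H(x) \to \underline{H}(x))$ is normal in $H(x)$, hence in the stabiliser $H(x,y_i)$, and the local action at $y_i$ maps $H(x,y_i)$ \emph{onto} $L$ (it is the preimage of $L = F_1(1)$ under $H(y_i) \to \underline{H}(y_i) \cong F_1$). Thus $M_i = p_i(H_1(x)) \unlhd L$, and since $L$ has socle $S_1$ simple non-abelian of index $\leq 2$, its only normal subgroups are $1$, $S_1$ and $L$; so $M_i \in \{1, S_1, L\}$. As $\underline{H}(x) \cong F_0$ is $2$-transitive, hence transitive, on $S(x,1)$ by Lemma~\ref{corollary:2-transitive}, it acts on $N$ permuting the $d_0$ coordinates transitively; since $\Aut(L)$ preserves the characteristic subgroup $S_1$, the $M_i$ are either all trivial or all $\supseteq S_1$. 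In the former case $N = 1$, whence $H_1(x) = H_2(x)$ and option (A) holds trivially. Assume henceforth $M_i \supseteq S_1$ for every $i$.

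With $M_i \supseteq S_1$ for all $i$, I would apply Lemma~\ref{lemma:subdiagonal} with $S = S_1 \unlhd L$ (note $L/S_1$ has order $\leq 2$, hence is solvable) to conclude that $N \cap (S_1)^{d_0}$ is a product of subdiagonals of $(S_1)^{d_0}$ with some underlying partition $\mathcal{P}$ of $\{1, \dots, d_0\}$. Because the socle is characteristic in $L$, the subgroup $N \cap (S_1)^{d_0}$ is invariant under the coordinate-permuting action of $F_0$, and $\mathcal{P}$ is intrinsically determined by it; hence $F_0$ preserves $\mathcal{P}$. A $2$-transitive group preserving a partition forces that partition to be trivial (if a block had size between $2$ and $d_0-1$, moving an inside pair to an inside/outside pair by $2$-transitivity would violate that blocks are equal or disjoint), so $\mathcal{P}$ consists either of singletons or of the single block $\{1, \dots, d_0\}$.

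If $\mathcal{P}$ is the partition into singletons, then $N \cap (S_1)^{d_0} = \prod_i \alpha_i(S_1) = (S_1)^{d_0}$ with $\alpha_i \in \Aut(S_1)$, giving option (B). If $\mathcal{P}$ is the single block, then $N \cap (S_1)^{d_0}$ is a twisted diagonal copy of $S_1$ on which each $p_i$ is injective, so $\ker p_i \cap (S_1)^{d_0} = 1$. This is where the argument culminates: the projection $L^{d_0} \to (L/S_1)^{d_0}$ restricts to an injection on $\ker p_i$, and since $(L/S_1)^{d_0}$ is an elementary abelian $2$-group, $\ker p_i$ is abelian. But $\ker p_i \unlhd N$ gives $p_j(\ker p_i) \unlhd M_j$ for each $j$, and $M_j$ (being $S_1$ or $L$) has no non-trivial abelian normal subgroup, as every non-trivial normal subgroup contains the non-abelian socle $S_1$; therefore $p_j(\ker p_i) = 1$ for all $j$, forcing $\ker p_i = 1$. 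Thus all $p_i$ are injective and option (A) holds. The main obstacle is securing the two structural facts — that $\proj_i(N) \unlhd L$ (which hands over $\proj_i(N) \supseteq S_1$ for free) and that in the diagonal case $\ker p_i$ is abelian and hence trivial; once these are in place, Lemma~\ref{lemma:subdiagonal} together with the $2$-transitivity of $F_0$ does the rest.
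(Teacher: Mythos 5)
Your proof is correct and follows the paper's argument essentially step for step: the embedding of $\underline{H}_1(x)$ into $(F_1(1))^{d_0}$ with kernel $H_2(x)$, the dichotomy for the normal images $M_i \unlhd F_1(1)$, Lemma~\ref{lemma:subdiagonal}, and the $2$-transitivity of the $H(x)$-action on $S(x,1)$ forcing the underlying partition to be all singletons (case (B)) or a single block. The only difference is cosmetic, in the endgame of the diagonal case: the paper shows the image of $H_1(x,y)$ in $H_{x,z} \cong F_1(1)$ is a \emph{subnormal} $2$-group, hence trivial since every non-trivial subnormal subgroup contains $S_1$, whereas you show $\ker p_i$ is abelian (injecting into $(L/S_1)^{d_0}$) and that its projections are abelian \emph{normal} subgroups of $M_j$, hence trivial — both rest on the same structural fact that every non-trivial normal subgroup of $F_1(1)$ contains the non-abelian socle $S_1$.
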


\begin{proof}
Fix $x \in V_0(T)$. For each vertex $y \in S(x,1)$, the inclusion $H_1(x) \subseteq H(x,y)$ induces a homomorphism $\varphi_y \colon H_1(x) \to \bigslant{H(x,y)}{H_1(y)} =: H_{x,y} \cong F_1(1)$ which is such that $\varphi_y(H_1(x)) \unlhd H_{x,y}$. This also gives rise to an injective homomorphism
$$\varphi \colon \underline{H}_1(x) \to \prod_{y \in S(x,1)} H_{x,y} \cong (F_1(1))^{d_0}.$$
As $\varphi_y(H_1(x)) \unlhd H_{x,y}$ and $H_1(x) \unlhd H(x)$, there are only two possibilities: either $\varphi_y(H_1(x))$ is trivial for each $y \in S(x,1)$, or $\varphi_y(H_1(x)) \supseteq S_1$ (via the isomorphism $H_{x,y} \cong F_1(1)$) for each $y \in S(x,1)$. In the first case, we directly get $H_1(x) = H_1(x,y)$ for each $y \in S(x,1)$, which implies $H_1(x) = H_2(x)$ and in particular $H_1(x,y) = H_2(x)$ for each $y \in S(x,1)$. In the second case, by Lemma~\ref{lemma:subdiagonal} the group $\varphi(\underline{H}_1(x)) \cap (S_1)^{d_0}$ is a product of subdiagonals. These subdiagonals determine a bloc decomposition for the $H(x)$-action on $S(x,1)$. As this action is $2$-transitive (by Lemma~\ref{corollary:2-transitive}), there are two options: it is either the full group $(S_1)^{d_0}$ or a full diagonal $(\alpha_1 \times \cdots \times \alpha_{d_0})(\Delta_{\{1,\ldots,d_0\}})$ (with the notation given in Subsection~\ref{subsection:finite}). If it is the full group, then $\varphi(\underline{H}_1(x)) \supseteq (S_1)^{d_0}$ as wanted. Otherwise, $\bigslant{H_1(x,y)}{H_2(x)}$ is a $2$-group for each $y \in S(x,1)$. In particular, if $z \in S(x,1)$ with $z \neq y$ then the image $I$ of $H_1(x,y)$ in $H_{x,z} = \bigslant{H(x,z)}{H_1(z)} \cong F_1(1)$ is a subnormal $2$-group of $H_{x,z}$ (because $H_1(x,y) \unlhd H_1(x) \unlhd H(x,z)$). Since $S_1$ is not a $2$-group, the only possibility for $I$ is to be trivial. We thus have $H_1(x,y) \subseteq H_1(z)$ for each $z \in S(x,1)$, which means that $H_1(x,y) = H_2(x)$.
\end{proof}


\begin{lemma}\label{lemma:HkHk+1}
Let $T$ be the $(d_0,d_1)$-semiregular tree with $d_0,d_1 \geq 3$ and let $F_0 \leq \Sym(d_0)$ and $F_1 \leq \Sym(d_1)$. Let $H \in \mathcal{H}_T^+$ be such that $\underline{H}(x) \cong F_0$ for each $x \in V_0(T)$ and $\underline{H}(y) \cong F_1$ for each $y \in V_1(T)$. Suppose that, for each $t \in \{0,1\}$, the socle $S_t$ of $F_t(1)$ is simple non-abelian and of index $\leq 2$. Fix two adjacent vertices $x \in V_0(T)$ and $y \in V_1(T)$ and let $k \geq 1$. Assume that $\underline{H}_k(x) \supseteq (S_{k \bmod 2})^{c(x,k)}$ and, if $k \neq 1$, that $\underline{H}_{k-1}(x) \supseteq (S_{(k-1) \bmod 2})^{c(x,k-1)}$. Then $\underline{H}_k(y)$ is non-trivial.
\end{lemma}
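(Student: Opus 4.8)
The plan is to exhibit an explicit element $g \in H_k(y) \setminus H_{k+1}(y)$, that is, one fixing $B(y,k)$ pointwise but moving at least one vertex of $S(y,k+1)$. Removing the edge $\{x,y\}$ splits $T$ into two half-trees $T_x \ni x$ and $T_y \ni y$; since $d(x,y)=1$ we have $d(y,v)=d(x,v)+1$ for $v\in T_x$ and $d(x,v)=d(y,v)+1$ for $v\in T_y$, whence $B(y,k)\cap T_x = B(x,k-1)\cap T_x$, $B(y,k)\cap T_y = B(x,k+1)\cap T_y$ and $S(y,k+1)\cap T_x = S(x,k)\cap T_x$. I would aim $g$ at a child (in $T_x$) of a vertex $u \in S(x,k-1)\cap T_x$, because such a child lies in $S(x,k)\cap T_x = S(y,k+1)\cap T_x$. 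The obstruction is that an element fixing $B(x,k-1)$ and prescribing a socle action at $u$ on $S(x,k)$ has \emph{a priori} uncontrolled behaviour on $S(x,k+1)$, and the only part of $B(y,k)$ that is not then automatically fixed is the single layer $S(x,k+1)\cap T_y = S(y,k)\cap T_y$.

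First I would build a ``half-controlled'' element by passing to a commutator, which is the key device for killing the sign ambiguity on that layer. Using $\underline{H}_{k-1}(x)\supseteq (S_{(k-1)\bmod 2})^{c(x,k-1)}$ (when $k\geq 2$), choose $a,b\in H_{k-1}(x)$ whose images in $\underline{H}_{k-1}(x)$ are socle elements $s_a,s_b\in S_{(k-1)\bmod 2}$ at $u$ and the identity at every other vertex of $S(x,k-1)$, with $[s_a,s_b]\neq 1$ (possible as $S_{(k-1)\bmod 2}$ is non-abelian). For $k=1$ the same input comes from the standing hypothesis $\underline{H}(x)\cong F_0$: here $u=x$ and I take $a,b\in H(x,y)$ acting on $S(x,1)\setminus\{y\}$ through the socle $S_0$ of the point-stabilizer $F_0(1)$. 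In either case $a$ and $b$ fix every $w\in S(x,k)\cap T_y$ together with its parent, so the local action of $g_0:=[a,b]$ at such a $w$ equals the commutator $[a_w,b_w]$ of local permutations lying in $F_{k\bmod 2}(1)$. Since $S_{k\bmod 2}$ has index at most $2$ in $F_{k\bmod 2}(1)$, the quotient is abelian and $[F_{k\bmod 2}(1),F_{k\bmod 2}(1)]=S_{k\bmod 2}$; hence each $[a_w,b_w]$ lies in $S_{k\bmod 2}$. Thus $g_0$ acts on $S(x,k+1)\cap T_y$ only through socle elements, still realizes the non-trivial socle element $[s_a,s_b]$ at $u$ (so moves a child of $u$), and fixes $B(x,k)\cap T_y$.

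I would then correct $g_0$ on the problematic layer. Using $\underline{H}_k(x)\supseteq (S_{k\bmod 2})^{c(x,k)}$, choose $h_0\in H_k(x)$ realizing, at each $w\in S(x,k)\cap T_y$, the socle element inverse to the local action of $g_0$ at $w$, and the identity at each $w\in S(x,k)\cap T_x$. Set $g:=h_0g_0$. As $h_0$ fixes $B(x,k)$ pointwise it leaves the action on $S(x,k)$ untouched, so $g$ still moves a child of $u$ in $S(y,k+1)$, while on $S(x,k+1)\cap T_y$ the factor $h_0$ cancels $g_0$ by construction. Combining this with the identities $B(y,k)\cap T_x = B(x,k-1)\cap T_x$ and $B(y,k)\cap T_y = (B(x,k)\cap T_y)\cup(S(x,k+1)\cap T_y)$ shows that $g$ fixes $B(y,k)$ pointwise, whence $g\in H_k(y)\setminus H_{k+1}(y)$ and $\underline{H}_k(y)\neq 1$.

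I expect the main obstacle to be exactly the uncontrolled action on the layer $S(y,k)\cap T_y$: an element merely realizing the socle at $u$ need not fix it, and the residual permutations there may have non-trivial sign, which cannot be absorbed by the socle-valued corrections supplied by $\underline{H}_k(x)$. Working with the commutator $g_0=[a,b]$ from the outset forces those residual permutations into $S_{k\bmod 2}$ and thereby makes the correction step available. The only remaining subtlety is the case $k=1$, where the absent level $k-1$ hypothesis is replaced by the global assumption $\underline{H}(x)\cong F_0$ — a minor but necessary piece of bookkeeping that also explains why the statement only requires the $\underline{H}_{k-1}(x)$ hypothesis for $k\neq 1$.
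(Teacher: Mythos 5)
Your proposal is correct and follows essentially the same route as the paper: construct an element fixing $B(x,k-1)$ and the $y$-side of $S(x,k)$, whose local actions at the vertices of $S(x,k)$ closer to $y$ are forced into the socle $S_{k\bmod 2}$, and then cancel those actions by an element of $H_k(x)$ supplied by the hypothesis $\underline{H}_k(x)\supseteq (S_{k\bmod 2})^{c(x,k)}$, producing an element of $H_k(y)\setminus H_{k+1}(y)$. The only (harmless) variation is the device for landing in the socle: you take a commutator $[a,b]$ with $[s_a,s_b]\neq 1$, using that $S_{(k-1)\bmod 2}$ is non-abelian, whereas the paper squares an element $h$ with $h^2\notin H_k(x)$, using that $S_{(k-1)\bmod 2}$ is not a $2$-group; both exploit $[F_{k\bmod 2}(1):S_{k\bmod 2}]\leq 2$, and your treatment of $k=1$ via $H(x,y)/H_1(x)\cong F_0(1)$ matches the paper's parenthetical remark.
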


\begin{proof}
For $z \in S(x,n)$, let $p(z)$ be the vertex at distance $n-1$ from $x$ which is adjacent to $z$ and $H_{x,z} := \bigslant{H(z, p(z))}{H_1(z)}$. Define also $S_n(x,y)$ to be the set of vertices of $S(x,n)$ that are at distance $n-1$ from $y$ and $a(x,n) := |S_n(x,y)|$.

For simplicity, we set $s := k \bmod 2$ and $t := (k-1) \bmod 2$. We first claim that there exists $g \in H_{k-1}(x,y) \setminus H_k(x)$ whose image $\sigma(g)$ in $\prod_{z \in S_k(x,y)}H_{x,z} \cong (F_s(1))^{a(x,k)}$ is contained in $(S_s)^{a(x,k)}$. First remark that $H_{k-1}(x,y) \setminus H_k(x)$ is non-empty in view of the hypothesis $\underline{H}_{k-1}(x) \supseteq (S_t)^{c(x,k-1)}$ (if $k=1$, use $\bigslant{H(x,y)}{H_1(x)} \cong F_t(1) \supseteq S_t$). Hence, if $F_s(1) = S_s$ the claim is trivially true. On the other hand, if $[F_s(1) : S_s] = 2$ then take $h \in H_{k-1}(x,y)$ such that $h^2 \in H_{k-1}(x,y) \setminus H_k(x)$. Such an element exists as $S_t$ is not a $2$-group. Then $g = h^2$ satisfies the claim.

Now take $g' \in H_k(x)$ such that $\sigma(g') = \sigma(g)$, whose existence is ensured by the fact that $\underline{H}_k(x) \supseteq (S_s)^{c(x,k)}$. Then the element $g'g^{-1}$ is contained in $H_k(y)$ but not in $H_{k+1}(y)$ (by construction), so $\underline{H}_k(y)$ is non-trivial.
\end{proof}

In the proof of the following lemma, we use the Schreier conjecture stating that $\Out(S)$ is solvable for each finite simple group $S$. This conjecture has been proven using the Classification of the Finite Simple Groups. Note however that, except for Theorem~\ref{maintheorem:S}, we will only use Lemma~\ref{lemma:S^d} with $S_0 = \Alt(d_0)$ and $S_1 = \Alt(d_1)$, in which case the solvability of $\Out(S_0)$ and $\Out(S_1)$ is clear.


\begin{lemma}\label{lemma:S^d}
Let $T$ be the $(d_0,d_1)$-semiregular tree with $d_0,d_1 \geq 3$ and let $F_0 \leq \Sym(d_0)$ and $F_1 \leq \Sym(d_1)$. Let $H \in \mathcal{H}_T^+$ be such that $\underline{H}(x) \cong F_0$ for each $x \in V_0(T)$ and $\underline{H}(y) \cong F_1$ for each $y \in V_1(T)$. Suppose that, for each $t \in \{0,1\}$, the socle $S_t$ of $F_t(1)$ is simple non-abelian, of index $\leq 2$ and transitive but not simply transitive on $\{2,\ldots,d_t\}$. Then $\underline{H}_1(x) \supseteq (S_1)^{d_0}$ for each $x \in V_0(T)$ and $\underline{H}_1(y) \supseteq (S_0)^{d_1}$ for each $y \in V_1(T)$.
\end{lemma}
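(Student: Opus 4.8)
The plan is to show that at \emph{every} vertex we land in the second alternative of Lemma~\ref{lemma:two_poss}. Recall that for $x \in V_0(T)$ that lemma offers a dichotomy: either $H_1(x,y) = H_2(x)$ for every $y \in S(x,1)$ (equivalently $\underline{H}_1(x)$ is trivial), or $\underline{H}_1(x) \supseteq (S_1)^{d_0}$; and symmetrically for $y \in V_1(T)$, with $S_0$ in place of $S_1$. Since $H$ is transitive on $V_0(T)$ and on $V_1(T)$ (Lemma~\ref{corollary:2-transitive}), any element of $H$ carrying one vertex of a given type to another conjugates the entire local picture, so the alternative that occurs depends only on the type of the vertex. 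It therefore suffices to rule out, for each $t \in \{0,1\}$, the possibility that $\underline{H}_1(v)$ is trivial for the vertices $v$ of type $t$, and the desired two inclusions follow.

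I would argue by contradiction. Suppose the trivial alternative holds at type $0$, i.e.\ $\underline{H}_1(x)$ is trivial for all $x \in V_0(T)$. Fixing adjacent $x \in V_0(T)$ and $y \in V_1(T)$ and applying Lemma~\ref{lemma:trivial} with $k = 2$, one of $\underline{H}_1(x)$, $\underline{H}_1(y)$ must be non-trivial; as $\underline{H}_1(x)$ is trivial, this forces $\underline{H}_1(y) \neq 1$, and then Lemma~\ref{lemma:two_poss} (in its type-$1$ form) upgrades this to $\underline{H}_1(y) \supseteq (S_0)^{d_1}$ for every $y \in V_1(T)$. The heart of the argument is now a one-step bootstrap via Lemma~\ref{lemma:HkHk+1}. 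Applying that lemma with $k = 1$ and with the roles of the two types interchanged — legitimate because all standing hypotheses are symmetric in $t \in \{0,1\}$ — the input $\underline{H}_1(y) \supseteq (S_0)^{c(y,1)} = (S_0)^{d_1}$ produces an element witnessing that $\underline{H}_1(x)$ is non-trivial, contradicting our assumption at type $0$. Hence the trivial alternative fails at type $0$, so $\underline{H}_1(x) \supseteq (S_1)^{d_0}$ for $x \in V_0(T)$; the identical argument with the types exchanged rules out the trivial alternative at type $1$, giving $\underline{H}_1(y) \supseteq (S_0)^{d_1}$ for $y \in V_1(T)$, which is exactly the assertion.

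The step I expect to require the most care is the correct bookkeeping in the interchanged application of Lemma~\ref{lemma:HkHk+1}: one must verify that the socle indices line up (the factor of $\underline{H}_1(y)$ attached to a neighbour $x$ is governed by $F_0(1)$, hence by its socle $S_0$), and that the set $S_1(y,x)$ over which the constructed elements act is the intended one, so that the element built in the proof of Lemma~\ref{lemma:HkHk+1} genuinely certifies $\underline{H}_1(x) \neq 1$ rather than lying in $H_2(x)$. The remaining point to keep track of is that the logical backbone of this base case rests precisely on Lemmas~\ref{lemma:trivial}, \ref{lemma:two_poss} and~\ref{lemma:HkHk+1}, while the transitivity hypotheses on the socles (transitive, not simply transitive, on $\{2,\ldots,d_t\}$) serve to guarantee that the socles act transitively on the relevant neighbour-sets, so that the full product $(S_t)^{d}$ is the genuine local action recorded by the conclusion; this is the form in which the statement is needed as the starting point of the level-by-level induction carried out in the sequel.
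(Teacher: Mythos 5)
There is a genuine gap, and it sits at the very first step: your reading of Lemma~\ref{lemma:two_poss}. Alternative (A) there --- $H_1(x,y) = H_2(x)$ for each $y \in S(x,1)$ --- is \emph{not} equivalent to $\underline{H}_1(x)$ being trivial. In case (A), the homomorphism $\varphi_y \colon H_1(x) \to \bigslant{H(x,y)}{H_1(y)} \cong F_1(1)$ has kernel exactly $H_1(x,y) = H_2(x)$, so $\underline{H}_1(x)$ embeds as a \emph{normal} subgroup of $F_1(1)$: it can be trivial, but it can equally well be isomorphic to $S_1$ or to $F_1(1)$, sitting inside $(F_1(1))^{d_0}$ as a full diagonal $(\alpha_1 \times \cdots \times \alpha_{d_0})(\Delta_{\{1,\ldots,d_0\}})$. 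This diagonal possibility is precisely what Lemma~\ref{lemma:subdiagonal} leaves open, and in the proof of Lemma~\ref{lemma:two_poss} it is folded into conclusion (A), not (B). Consequently your bootstrap collapses: the application of Lemma~\ref{lemma:HkHk+1} with $k=1$ does show (exactly as in the paper) that $\underline{H}_1(x)$ is non-trivial in the mixed situation, but non-triviality does not place you in alternative (B), so you cannot conclude $\underline{H}_1(x) \supseteq (S_1)^{d_0}$. What survives your argument is the configuration ``(A) at one type, (B) at the other, with $\underline{H}_1(x)$ a diagonal copy of $S_1$ or $F_1(1)$'', and excluding it is the heart of the paper's proof, not a formality.

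The paper eliminates that configuration by a substantially longer argument: setting $\tilde{H} = \underline{H}_1(x)$ and $G = \bigslant{H(x)}{H_2(x)}$, it proves via the Schreier conjecture that $\tilde{H} \cdot C_G(\tilde{H})$ has index at most $2$ in $G$, transports the diagonal structure into a coherent labelling $a^{(k)}_j$ of $S(x,2)$, examines the orbits $O_1, \ldots, O_r$ of $C_{\Sym(d_1-1)}(S_1)$, and derives a contradiction with $2$-transitivity (Lemma~\ref{corollary:2-transitive}) from $r \geq 3$ by counting the possible images of the ordered pair $(a^{(1)}_1, a^{(2)}_1)$. Note that the hypothesis that $S_t$ is transitive but \emph{not simply transitive} on $\{2,\ldots,d_t\}$ is used exactly there, to exclude $r = 1$ (while $r = 2$ is excluded by simplicity of $S_1$); your proposal never uses this hypothesis in any load-bearing way, which is a reliable symptom that the diagonal case has been lost. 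Your treatment of the all-(A) case via Lemma~\ref{lemma:trivial} and your use of transitivity to make the alternative constant on each type are correct and agree with the paper, but as written the proposal establishes only non-triviality of the local kernels, not the full product $(S_t)^{d}$.
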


\begin{proof}
For each $x \in V(T)$, we can apply Lemma~\ref{lemma:two_poss}. This gives two possibilities ((A) or (B)) at each vertex of $T$. As $H$ is transitive on $V_0(T)$ and $V_1(T)$, the situation must be identical at all vertices of the same type. In total, there are three possible situations: (A) for all vertices, (A) for one type of vertices and (B) for the other, or (B) for all vertices. To prove the statement, we must show that the only situation that really occurs is the last one. To do so, we prove that the two other situations are impossible.

We already know that we cannot have (A) for all vertices, since it would imply that $H_2(x) = H_1(x,y) = H_2(y)$ for two adjacent vertices $x$ and $y$, contradicting Lemma~\ref{lemma:trivial}.

Now assume for a contradiction that we have (A) for $V_0(T)$ and (B) for $V_1(T)$ (the reverse situation being identical). If $x \in V_0(T)$ and $y \in S(x,1)$, then (A) means that $H_1(x,y) = H_2(x)$. The homomorphism $\varphi_y \colon H_1(x) \to \bigslant{H(x,y)}{H_1(y)} \cong F_1(1)$ has a normal image and its kernel is exactly $H_1(x,y) = H_2(x)$. Hence, $\underline{H}_1(x) = \bigslant{H_1(x)}{H_2(x)}$ is isomorphic to a normal subgroup of $F_1(1)$: it is either trivial or isomorphic to $S_1$ or $F_1(1)$. By Lemma~\ref{lemma:HkHk+1} (with $k=1$), since $\underline{H}_1(y) \supseteq (S_0)^{d_1}$, $\underline{H}_1(x)$ cannot be trivial.

For the sake of brevity, set $\tilde{H} := \underline{H}_1(x)$ and $G := \bigslant{H(x)}{H_2(x)}$. We have shown that $\tilde{H}$ is isomorphic to $S_1$ or $F_1(1)$, which implies that the center $Z(\tilde{H})$ of $\tilde{H}$ is trivial, and $\tilde{H}$ is a normal subgroup of $G$. Hence, $G$ contains the direct product of $\tilde{H}$ and its centralizer $C_G(\tilde{H})$ (the intersection of these two normal subgroups being $Z(\tilde{H})$).

\begin{claim*}
The product $\tilde{H} \cdot C_G(\tilde{H})$ is a subgroup of index at most $2$ of $G$.
\end{claim*}

\begin{claimproof}
Consider the homomorphism
$$\alpha \colon G \to \Out(\tilde{H}) \colon g \mapsto [h \in \tilde{H} \mapsto ghg^{-1} \in \tilde{H}].$$
An element $g \in G$ is in the kernel of $\alpha$ if and only if there exists $k \in \tilde{H}$ such that $ghg^{-1} = khk^{-1}$ for all $h \in \tilde{H}$, which is equivalent to saying that $k^{-1}g \in C_G(\tilde{H})$. Hence, $\ker(\alpha) = \tilde{H} \cdot C_G(\tilde{H})$. We write $K := \tilde{H} \cdot C_G(\tilde{H})$ and want to show that $[G:K]\leq 2$. Since $K = \ker(\alpha)$, the quotient $\bigslant{G}{K}$ can be embedded into $\Out(\tilde{H})$. By the Schreier conjecture (see~\cite{Dixon}*{Appendix~A}, $\Out(S_1)$ is solvable. As $\tilde{H} \cong S_1$ or $F_1(1)$, it implies that $\Out(\tilde{H})$ is solvable. Indeed, if $[F_1(1) : S_1] = 2$ then there is a natural map $j \colon \Aut(F_1(1)) \to \Out(S_1)$, and one can show that $\ker(j) \subseteq \Inn(F_1(1))$, so that $\bigslant{\Aut(F_1(1))}{\ker(j)} \cong \im(j) \leq \Out(S_1)$ surjects onto $\Out(F_1(1))$, making it solvable.

We just proved that $\bigslant{G}{K}$ is solvable. By the third isomorphism theorem, we have
$$\bigslant{\left(\bigslant{G}{\tilde{H}}\right)}{\left(\bigslant{K}{\tilde{H}}\right)} \cong \bigslant{G}{K}.$$
Since $\bigslant{G}{\tilde{H}} \cong F_0$, this means that $\bigslant{G}{K}$ is isomorphic to a quotient of $F_0$, let us say $\bigslant{F_0}{N}$ with $N \unlhd F_0$. There remains to show that $[F_0 : N] \leq 2$, using the fact that $\bigslant{F_0}{N}$ is solvable. Consider the injective map $i \colon \bigslant{F_0(1)}{N(1)} \hookrightarrow \bigslant{F_0}{N}$ where $N(1)$ is the stabilizer of $1$ in $N$. Since $\bigslant{F_0}{N}$ is solvable, $\bigslant{F_0(1)}{N(1)}$ is also solvable. However, $N(1)$ can only be trivial or equal to $F_0(1)$ or $S_0$. It cannot be trivial as $F_0(1)$ is not solvable, so $\left\lvert\bigslant{F_0(1)}{N(1)}\right\rvert \leq 2$. In particular, $N$ is a non-trivial normal subgroup of the $2$-transitive group $F_0$, which implies that $N$ is transitive. Hence, the map $i$ defined above is an isomorphism, and $\left\lvert\bigslant{F_0}{N}\right\rvert = \left\lvert\bigslant{F_0(1)}{N(1)}\right\rvert \leq 2$ as wanted.
\end{claimproof}

\medskip

Using the fact that $\tilde{H} \cdot C_G(\tilde{H})$ is a subgroup of index $1$ or $2$ of $G$, one can find a contradiction. Denote by $v_1, \ldots, v_{d_0}$ the vertices adjacent to $x$ and by $a^{(1)}_1, \ldots, a^{(1)}_{d_1-1}$ the vertices adjacent to $v_1$ different from $x$ (see Figure~\ref{picture:S^d}). As a corollary of the claim, the group $C_G(\tilde{H})$ acts non-trivially and therefore transitively on $S(x,1) = \{v_1, \ldots, v_{d_0}\}$. Hence, there exist $c_2, \ldots, c_{d_0} \in C_G(\tilde{H})$ such that $c_k(v_1) = v_k$ for each $k \in \{2, \ldots, d_0\}$. Define $a^{(k)}_i = c_k(a^{(1)}_i)$ for each $k \in \{2, \ldots, d_0\}$ and $i \in \{1, \ldots, d_1-1\}$. In this way, for each $k$ the vertices $a^{(k)}_1, \ldots, a^{(k)}_{d_1-1}$ are the vertices adjacent to $v_k$ different from $x$. Thanks to this choice, if $h \in \tilde{H}$ satisfies $h(a^{(1)}_i) = a^{(1)}_j$ for some $i$ and $j$ then the fact that $h c_k = c_k h$ directly implies that $h(a^{(k)}_i) = a^{(k)}_j$ for each $k \in \{2, \ldots, d_0\}$. In other words, as soon as the action of $h \in \tilde{H}$ on the vertices adjacent to $v_1$ is known, its action on the vertices adjacent to $v_k$ is also known for each $k \in \{2, \ldots, d_0\}$.

\begin{figure}
\centering
\begin{pspicture*}(-4,-3.05)(4,0.3)
\fontsize{10pt}{10pt}\selectfont
\psset{unit=1.2cm}

\rput(0,0.17){$x$}

\psline(0,0)(-2.4,-1) \rput(-2.55,-0.85){$v_1$}
\psline(0,0)(-0.8,-1) \rput(-0.95,-0.85){$v_2$}
\psline(0,0)(0.8,-1)
\psline(0,0)(2.4,-1)\rput(2.6,-0.9){$v_{d_0}$}

\psline(-2.4,-1)(-3,-2) \rput(-3,-2.26){$a_1^{(1)}$}
\psline(-2.4,-1)(-2.6,-2)
\psline(-2.4,-1)(-2.2,-2)
\psline(-2.4,-1)(-1.8,-2) \rput(-1.75,-2.26){$a_{d_1-1}^{(1)}$}
\psline(-0.8,-1)(-1.4,-2)
\psline(-0.8,-1)(-1,-2)
\psline(-0.8,-1)(-0.6,-2)
\psline(-0.8,-1)(-0.2,-2)
\psline(2.4,-1)(3,-2) \rput(3,-2.26){$a_{d_1-1}^{(d_0)}$}
\psline(2.4,-1)(2.6,-2)
\psline(2.4,-1)(2.2,-2)
\psline(2.4,-1)(1.8,-2) \rput(1.85,-2.26){$a_1^{(d_0)}$}
\psline(0.8,-1)(1.4,-2) 
\psline(0.8,-1)(1,-2)
\psline(0.8,-1)(0.6,-2)
\psline(0.8,-1)(0.2,-2)

\end{pspicture*}
\caption{Illustration of Lemma~\ref{lemma:S^d}.}\label{picture:S^d}
\end{figure}
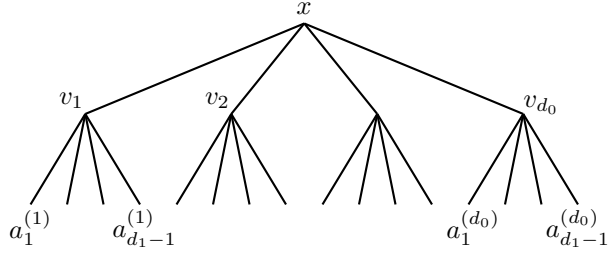

Now consider $c \in C_G(\tilde{H})$ with $c(v_k) = v_{\ell}$ (for some $k, \ell \in \{1,\ldots,d_0\}$). If we write $c(a^{(k)}_i) = a^{(\ell)}_{\sigma(i)}$ (for all $i$) with $\sigma \in \Sym(d_1-1)$, then the fact that $c$ centralizes $\tilde{H}$ implies that $\sigma$ centralizes $S_1$. Denote by $O_1, \ldots, O_r$ the distinct orbits of $C_{\Sym(d_1-1)}(S_1)$, forming a partition of $\{1,\ldots, d_1-1\}$. Since $S_1$ is transitive on $\{1,\ldots, d_1-1\}$, we directly get that $|O_1|\ = \cdots = |O_r|$ and that $S_1$ preserves the partition $O_1 \sqcup \cdots \sqcup O_r$. If $r = 1$, then $C_{\Sym(d_1-1)}(S_1)$ is transitive and hence $S_1$ is simply transitive, which is impossible by hypothesis. If $r = 2$, then $\{s \in S_1 \suchthat s(O_1) = O_1\}$ is a subgroup of index $2$ of $S_1$, which contradicts its simplicity. Hence, we must have $r \geq 3$.

We now explain how this contradicts the $2$-transitivity of $H$. Let us look at the possible images of the ordered pair $(a^{(1)}_1, a^{(2)}_1)$ by elements of $G$. In view of Lemma~\ref{corollary:2-transitive}, for all distinct $k, \ell \in \{1, \ldots, d_0\}$ and all $i,j \in \{1, \ldots, d_1-1\}$ there should exist some element $g \in G$ such that $g((a^{(1)}_1, a^{(2)}_1)) = (a^{(k)}_i, a^{(\ell)}_j)$. This means that $\left\lvert G \cdot (a^{(1)}_1, a^{(2)}_1)\right\rvert = d_0(d_0-1)(d_1-1)^2$. However, in view of what has been observed above, the image of $(a^{(1)}_1, a^{(2)}_1)$ by an element of $\tilde{H}$ is always of the form $(a^{(1)}_i, a^{(2)}_i)$, and the image of $(a^{(1)}_i, a^{(2)}_i)$ by an element of $C_G(\tilde{H})$ is always of the form $(a^{(k)}_j, a^{(\ell)}_{j'})$ with $j$ and $j'$ in the orbit $O_a \ni i$. Consequently, we have $\lvert (\tilde{H} \cdot C_G(\tilde{H})) \cdot (a^{(1)}_1, a^{(2)}_1)\rvert \leq d_0(d_0-1) r \left(\frac{d_1-1}{r}\right)^2$ (because there are $r$ orbits, each of size $\frac{d_1-1}{r}$). Since $[G : (\tilde{H} \cdot C_G(\tilde{H}))] \leq 2$, this implies that $\left\lvert G \cdot (a^{(1)}_1, a^{(2)}_1)\right\rvert \leq \frac{2}{r}\cdot d_0(d_0-1)(d_1-1)^2$, which contradicts the fact that $r \geq 3$.
\end{proof}


\begin{proposition}\label{proposition:S^c(k)}
Under the assumptions of Lemma~\ref{lemma:S^d} and for each $x \in V(T)$ and each $k \in \Nz$, we have
$$\underline{H}_k(x) \supseteq (S_{(t+k)\bmod 2})^{c(x,k)},$$
where $t \in \{0,1\}$ is the type of $x$.
\end{proposition}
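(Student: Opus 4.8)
The plan is to argue by induction on $k$, establishing the statement for every vertex at each stage. The base case $k=1$ is precisely Lemma~\ref{lemma:S^d}: for $x$ of type $t$ the sphere $S(x,1)$ consists of $c(x,1)=d_t$ vertices of type $(t+1)\bmod 2$, so $\underline{H}_1(x)\supseteq (S_{(t+1)\bmod 2})^{c(x,1)}$. So fix $k\geq 2$ and assume the conclusion for all $j<k$ and all vertices. Throughout, for $x$ of type $t$ I write $s:=(t+k)\bmod 2$ for the common type of the vertices of $S(x,k)$, and I use the injection $\varphi\colon \underline{H}_k(x)\hookrightarrow \prod_{z\in S(x,k)}F_s(1)$ sending the class of $g\in H_k(x)$ to its tuple of local actions at the vertices $z\in S(x,k)$, each fixing the edge towards $x$ (injectivity holds because the common kernel of the projections is $H_{k+1}(x)$).

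The heart of the argument is the following boost, which I would isolate as a claim: if $\underline{H}_k(x)\neq 1$ for a vertex $x$, then $\underline{H}_k(x)\supseteq (S_s)^{c(x,k)}$. To prove it, I first show that each coordinate projection $\proj_z(\varphi(\underline{H}_k(x)))$ is either trivial or contains $S_s$. Indeed $H_k(x)\unlhd H(x)$ forces $H_k(x)\unlhd H(x,z)$, so this projection is a normal subgroup of the local action $L_z$ of $H(x,z)$ at $z$, which lies in $F_s(1)$. The key point is that $L_z\supseteq S_s$: letting $u$ be the neighbour of $x$ on the geodesic $[x,z]$, one has $z\in S(u,k-1)$ and $x\in B(u,k-1)$ (using $k\geq 2$), so the induction hypothesis $\underline{H}_{k-1}(u)\supseteq (S_s)^{c(u,k-1)}$ produces, for any prescribed element of $S_s$, an element of $H_{k-1}(u)$ — which fixes both $x$ and $z$, hence lies in $H(x,z)$ — realizing it as local action at $z$. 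Since $[F_s(1):S_s]\leq 2$ and $S_s$ is the socle of $F_s(1)$, we have $L_z\in\{S_s,F_s(1)\}$, whose non-trivial normal subgroups all contain $S_s$; this gives the dichotomy. Because $\underline{H}_k(x)\unlhd H(x)/H_{k+1}(x)$ and $H(x)$ is transitive on $S(x,k)$ by Lemma~\ref{corollary:2-transitive}, the non-triviality of $\underline{H}_k(x)$ conjugates across all coordinates, so in fact $\proj_z\supseteq S_s$ for every $z$. Lemma~\ref{lemma:subdiagonal} (with $S=S_s\unlhd L=F_s(1)$, whose quotient has order $\leq 2$) then shows that $\varphi(\underline{H}_k(x))\cap (S_s)^{c(x,k)}$ is a product of subdiagonals of $(S_s)^{c(x,k)}$.

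It remains to force this product of subdiagonals to be the full group $(S_s)^{c(x,k)}$, i.e. to exclude all non-trivial diagonals, and this is the step I expect to be the main obstacle. Unlike the level-one situation of Lemma~\ref{lemma:two_poss}, the action of $H(x)$ on $S(x,k)$ is only transitive and not $2$-transitive for $k\geq 2$ (the divergence depth of a pair of vertices of $S(x,k)$ is an $H(x)$-invariant), so the underlying partition of the product of subdiagonals is merely some $H(x)$-invariant partition of $S(x,k)$, not automatically the partition into singletons. I would exclude non-trivial diagonals by a counting argument in the spirit of Lemma~\ref{lemma:S^d}: any identification linking two distinct coordinates $z,z'$ would, together with the transitivity furnished by Lemma~\ref{corollary:2-transitive}, bound the size of a suitable $H$-orbit of configurations strictly below the value forced by the $2$-transitivity of $H$ on $\partial T$, a contradiction. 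Granting the boost, the induction concludes as follows. By Lemma~\ref{lemma:trivial} (applied with $k+1$ in place of $k$) together with transitivity of $H$ on $V_0(T)$ and $V_1(T)$, there is a type $\tau$ with $\underline{H}_k(w)\neq 1$ for every vertex $w$ of type $\tau$; the boost upgrades this to $\underline{H}_k(w)\supseteq (S_{(\tau+k)\bmod 2})^{c(w,k)}$ for all such $w$. Feeding this, with the case $j=k-1$ of the induction hypothesis, into Lemma~\ref{lemma:HkHk+1} yields $\underline{H}_k(w')\neq 1$ for the neighbours $w'$ of the opposite type, and a second application of the boost gives the full product there as well. Hence the conclusion holds at every vertex, completing the induction.
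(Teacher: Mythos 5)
Your scaffolding matches the paper's proof closely and is correct as far as it goes: the base case, the injection $\varphi$ into $\prod_{z\in S(x,k)}F_s(1)$, the dichotomy for coordinate projections (your route via the neighbour $u$ on $[x,z]$ and the induction hypothesis at $u$ is a sound variant of the paper's computation), the propagation of non-triviality by normality and transitivity of $H(x)$ on $S(x,k)$, the appeal to Lemma~\ref{lemma:subdiagonal}, and the final bootstrapping between the two types via Lemmas~\ref{lemma:trivial} and~\ref{lemma:HkHk+1}. The genuine gap is exactly where you flagged it: excluding the non-trivial subdiagonals. Your proposed fix --- ``a counting argument in the spirit of Lemma~\ref{lemma:S^d}'' --- is not carried out, and it is unlikely to work as stated: the counting in Lemma~\ref{lemma:S^d} hinges on the centralizer $C_G(\tilde H)$ and the index-$2$ complement produced by the depth-one dichotomy of Lemma~\ref{lemma:two_poss}, and no analogous structure is available at depth $k$. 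Concretely, here is a family of $H(x)$-invariant patterns that survives every step you actually prove: fix $1\leq m\leq k$ and link the coordinates lying below each vertex at distance $k-m$ from $x$ (sibling diagonals for $m=1$; the full diagonal for $m=k$). Such a product of subdiagonals has all coordinate projections equal to $S_s$ and is compatible with transitivity of $H(x)$ on equal-distance pairs. It cannot be broken by Lemma~\ref{corollary:2-transitive} the way you suggest, because when a linked pair $z,z'$ lies in the same branch the element $h$ with $h(z)=z$ and $h(z')=w$ is only forced to fix the divergence vertex of $[z,z']$, which lies strictly below $x$; so $h\notin H(x)$ in general and bloc invariance gives no contradiction.

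The paper closes precisely this hole by proving \emph{branch fullness} before exploiting the subdiagonal decomposition. Writing $S_k(x,y)$ for the vertices of $S(x,k)$ at distance $k-1$ from $y\in S(x,1)$, let $I'_y$ and $I_y$ denote the images of $H_k(x)$ and $H_{k-1}(y)$ in $\prod_{z\in S_k(x,y)}H_{x,z}$. The induction hypothesis applied at the neighbour $y$ gives $I_y\supseteq (S_s)^{a(x,k)}$; since $H_k(x)\unlhd H_{k-1}(y)$, one gets $I'_y\cap (S_s)^{a(x,k)}\unlhd (S_s)^{a(x,k)}$, transitivity of $H(x)$ on $S(x,k)$ forces this intersection to be trivial or full, and triviality at one (hence, by transitivity on $S(x,1)$, at every) $y$ would contradict $\underline{H}_k(x)\neq 1$. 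Thus $I'_y\supseteq (S_s)^{a(x,k)}$, which forces every bloc of the subdiagonal decomposition to meet each branch in at most one coordinate --- killing all the same-branch patterns above, including the full diagonal. Only then does the direct argument via Lemma~\ref{corollary:2-transitive} finish the job: for a bloc containing $z\in S_k(x,y)$ and $z'\in S_k(x,y')$ with $y\neq y'$, one has $d(z,z')=d(z,w)=2k$ for $w\in S_k(x,y')\setminus\{z'\}$, so the element with $h(z)=z$, $h(z')=w$ automatically fixes the midpoint $x$ and violates bloc invariance. If you insert the $I'_y$ step, your proof closes and then coincides in substance with the paper's.
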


\begin{proof}
For $x \in V(T)$ and $z \in S(x,n)$, set $H_{x,z} := \bigslant{H(z, p(z))}{H_1(z)}$ where $p(z)$ is the vertex at distance $n-1$ from $x$ which is adjacent to $z$. For $y \in S(x,1)$, let also $S_n(x,y)$ be the set of vertices of $S(x,n)$ that are at distance $n-1$ from $y$ and $a(x,n) := |S_n(x,y)|$.

We prove the result by induction on $k$. For $k = 1$, this is exactly Lemma~\ref{lemma:S^d}. Now let $k \geq 2$ and assume the result is proven for $k-1$ (and for all vertices). We show that it is therefore also true for $k$. By Lemma~\ref{lemma:trivial} and since $H$ is transitive on $V_0(T)$ and $V_1(T)$, $\underline{H}_k(x)$ is non-trivial for each $x \in V_0(T)$ or $\underline{H}_k(y)$ is non-trivial for each $y \in V_1(T)$. Assume without loss of generality that $\underline{H}_k(x)$ is non-trivial for each $x \in V_0(T)$. We first prove that $\underline{H}_k(x) \supseteq (S_s)^{c(x,k)}$ for each $x \in V_0(T)$, where $s := k \bmod 2$.

Fix $x \in V_0(T)$. For each $y \in S(x,1)$, let $I_y$ be the image of $H_{k-1}(y)$ in the product $\prod_{z \in S_k(x,y)}H_{x,z}$. By the induction hypothesis, we have $I_y \supseteq (S_s)^{a(x,k)}$. But $H_k(x) \unlhd H_{k-1}(y)$, so if $I'_y$ is the image of $H_k(x)$ in this product, then $I'_y \unlhd I_y$ and $I'_y \cap (S_s)^{a(x,k)} \unlhd (S_s)^{a(x,k)}$. The only normal subgroups of $(S_s)^{a(x,k)}$ are the products made from the trivial group and the full group $S_s$. By transitivity of $H(x)$ on $S(x,k)$ (see Lemma~\ref{lemma:2-transitive}), $I'_y \cap (S_s)^{a(x,k)}$ must be either trivial or equal to $(S_s)^{a(x,k)}$. Suppose that $I'_y \cap (S_s)^{a(x,k)}$ is trivial. Then $I'_y$ is trivial, since the contrary and the fact that $I'_y \unlhd I_y$ would imply that $F_s(1)$ has a normal subgroup of order~$2$, which is not the case. Then, by transitivity of $H(x)$ on $S(x,1)$, $I'_y$ must be trivial for each $y \in S(x,1)$. This is impossible as $\underline{H}_k(x)$ is non-trivial. Hence, $I'_y$ contains $(S_s)^{a(x,k)}$. 

Now $\underline{H}_k(x)$ is exactly the image of $H_k(x)$ in $\prod_{y \in S(x,1)} \prod_{z \in S_k(x,y)} H_{x,z}$, so $\underline{H}_k(x) \cap (S_s)^{c(x,k)}$ is a product of subdiagonals in $(S_s)^{c(x,k)}$, by Lemma~\ref{lemma:subdiagonal}. We claim that it must be the full group $(S_s)^{c(x,k)}$. By contradiction, suppose it is not the case. Then the product of subdiagonals induces a bloc decomposition $\{B_i\}_{1 \leq i \leq r}$ for the $H(x)$-action on $S(x,k)$ with $\lvert B_{i_0}\rvert \geq 2$ for some $i_0$ and $\lvert B_i \cap S_k(x,y)\rvert \leq 1$ for all $i$ and $y \in S(x,1)$ (because $I'_y \supseteq (S_s)^{a(x,k)}$). Choose $y \neq y'$ in $S(x,1)$ such that $B_{i_0} \cap S_k(x,y) = \{z\}$ and $B_{i_0} \cap S_k(x,y') = \{z'\}$. Take $w \in S_k(x,y')$ with $w \neq z'$. By Lemma~\ref{corollary:2-transitive}, there exists $g \in H(x)$ such that $g(z) = z$ and $g(z') = w$, which is a contradiction with the bloc decomposition. Therefore, we have $\underline{H}_k(x) \supseteq (S_s)^{c(x,k)}$ as wanted.

We are done for each $x \in V_0(T)$. Now if we try to do the same reasoning for $y \in V_1(T)$, the only issue is that $\underline{H}_k(y)$ could a priori be trivial. However, since $\underline{H}_k(x) \supseteq (S_s)^{c(x,k)}$ for each $x \in V_0(T)$ and as $\underline{H}_{k-1}(x) \supseteq (S_{1-s})^{c(x,k-1)}$ by induction hypothesis, Lemma~\ref{lemma:HkHk+1} precisely tells us that $\underline{H}_k(y)$ is non-trivial. Hence, we also get $\underline{H}_k(y) \supseteq (S_{1-s})^{c(y,k)}$ in the same way.
\end{proof}


\subsection{A global result}

In the particular case where $F_0(1)$ and $F_1(1)$ are simple non-abelian, we can deduce from Proposition~\ref{proposition:S^c(k)} that there is, up to conjugation, only one group $H \in \mathcal{H}_T^+$ such that $\underline{H}(x) \cong F_0$ for each $x \in V_0(T)$ and $\underline{H}(y) \cong F_1$ for each $y \in V_1(T)$. This is the subject of Theorem~\ref{maintheorem:S} whose statement is recalled below.

Recall that a legal coloring $i$ of $T$ is a map defined piecewise by $i|_{V_0(T)} = i_0$ and $i|_{V_1(T)} = i_1$ where, for each $t \in \{0,1\}$, the map $i_t \colon V_t(T) \to \{1,\ldots,d_{1-t}\}$ is such that $i_t|_{S(v,1)} \colon S(v,1) \to \{1,\ldots, d_{1-t}\}$ is a bijection for each $v \in V_{1-t}(T)$. For $g \in \Aut(T)$ and $v \in V(T)$, the local action of $g$ at $v$ is $\sigma_{(i)}(g,v) := i|_{S(g(v),1)} \circ g \circ i|_{S(v,1)}^{-1}$. Given $F_0 \leq \Sym(d_0)$ and $F_1 \leq \Sym(d_1)$, the group $U_{(i)}^+(F_0,F_1)$ is defined by
$$U_{(i)}^+(F_0,F_1) := \left\{g \in \Aut(T)^+ \suchthat \begin{array}{c}\sigma_{(i)}(g,x) \in F_0 \text{ for each $x \in V_0(T)$,} \\ \sigma_{(i)}(g,y) \in F_1 \text{ for each $y \in V_1(T)$}\end{array} \right\}.$$

The following basic result will be used constantly in the rest of this paper.

\begin{lemma}\label{lemma:sigma}
Let $T$ be the $(d_0,d_1)$-semiregular tree and let $i$ be a legal coloring of $T$.
\begin{itemize}
\item If $g, h \in \Aut(T)$ and $v \in V(T)$, then $\sigma_{(i)}(gh, v) = \sigma_{(i)}(g, h(v)) \circ \sigma_{(i)}(h,v)$.
\item If $g \in \Aut(T)$ and $v \in V(T)$, then $\sigma_{(i)}(g^{-1}, v) = \sigma_{(i)}(g, g^{-1}(v))^{-1}$.
\end{itemize}
\end{lemma}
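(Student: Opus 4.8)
The plan is to treat both identities as purely formal consequences of the definition $\sigma_{(i)}(g,v) = i|_{S(g(v),1)} \circ g \circ i|_{S(v,1)}^{-1}$, the only geometric input being that every $g \in \Aut(T)$ restricts to a bijection $S(v,1) \to S(g(v),1)$ (since $g$ preserves distances) and that $S(gh(v),1) = S(g(h(v)),1)$. No estimate or structural result is needed; everything reduces to bookkeeping of domains and codomains.

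For the first identity I would substitute the definitions of $\sigma_{(i)}(g,h(v))$ and $\sigma_{(i)}(h,v)$ and compose them (applying the rightmost map first):
$$\sigma_{(i)}(g,h(v)) \circ \sigma_{(i)}(h,v) = i|_{S(gh(v),1)} \circ g \circ \left(i|_{S(h(v),1)}^{-1} \circ i|_{S(h(v),1)}\right) \circ h \circ i|_{S(v,1)}^{-1}.$$
The crucial step is the cancellation of the middle factor: because $h$ maps $S(v,1)$ onto $S(h(v),1)$, the composite $i|_{S(h(v),1)}^{-1} \circ i|_{S(h(v),1)}$ is the identity on $S(h(v),1)$ and may be deleted. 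What remains is $i|_{S(gh(v),1)} \circ (gh) \circ i|_{S(v,1)}^{-1} = \sigma_{(i)}(gh,v)$, which is the asserted equality.

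The second identity follows by specialising the first to the pair $(g,g^{-1})$. I would first record that $\sigma_{(i)}(\id,v) = i|_{S(v,1)} \circ i|_{S(v,1)}^{-1} = \id$. Applying the composition formula to $g \cdot g^{-1} = \id$ at the vertex $v$ then gives $\id = \sigma_{(i)}(g, g^{-1}(v)) \circ \sigma_{(i)}(g^{-1}, v)$, and solving for the last factor yields $\sigma_{(i)}(g^{-1}, v) = \sigma_{(i)}(g, g^{-1}(v))^{-1}$.

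I do not expect any genuine obstacle in this lemma: the argument is essentially a telescoping cancellation of restriction maps. The single point requiring care is to make sure that at each stage the codomain of one restriction map matches the domain of the next, which is exactly what the equalities $g(S(v,1)) = S(g(v),1)$ and $S(g(h(v)),1) = S(gh(v),1)$ guarantee. (When $g$ is type-reversing and $d_0 \neq d_1$ one reads $\sigma_{(i)}(g,v)$ as a bijection between the two colour sets rather than as a genuine permutation, but the formal identities above are unaffected by this interpretation.)
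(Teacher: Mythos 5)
Your proof is correct and takes exactly the approach the paper intends: the paper's proof consists of the single line ``this directly follows from the definition of $\sigma_{(i)}(g,v)$,'' and your telescoping cancellation of $i|_{S(h(v),1)}^{-1}\circ i|_{S(h(v),1)}$ together with the specialisation of the composition formula to $g\,g^{-1}=\id$ is precisely that implicit verification, with the domain/codomain bookkeeping done properly. (One harmless slip: your final parenthetical about type-reversing $g$ with $d_0\neq d_1$ describes a vacuous case, since for $d_0\neq d_1$ every automorphism of $T$ is type-preserving.)
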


\begin{proof}
This directly follows from the definition of $\sigma_{(i)}(g,v)$.
\end{proof}

 The next result is the edge-transitive version of~\cite{Burger}*{Proposition~3.2.2}.

\begin{lemma}\label{lemma:S}
Let $T$ be the $(d_0,d_1)$-semiregular tree with $d_0,d_1 \geq 3$ and let $F_0 \leq \Sym(d_0)$ and $F_1 \leq \Sym(d_1)$. Let $H \in \mathcal{H}_T^+$ be such that $\underline{H}(x) \cong F_0$ for each $x \in V_0(T)$ and $\underline{H}(y) \cong F_1$ for each $y \in V_1(T)$. Then there exists a legal coloring $i$ of $T$ such that $H \subseteq U_{(i)}^+(F_0,F_1)$.
\end{lemma}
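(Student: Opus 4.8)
The plan is to build the legal coloring one ``half'' at a time, using the cocycle identity of Lemma~\ref{lemma:sigma} to turn a global condition into conditions at a single base vertex. First I would observe that the two requirements decouple: for a vertex $x$ of type $t$, the local action $\sigma_{(i)}(g,x)=i|_{S(g(x),1)}\circ g\circ i|_{S(x,1)}^{-1}$ only involves the colours of type-$(1-t)$ vertices, i.e.\ the map $i_{1-t}$, since $S(x,1)$ and $S(g(x),1)$ consist of type-$(1-t)$ vertices. Hence it suffices to construct a legal $i_1\colon V_1(T)\to\{1,\dots,d_0\}$ (bijective on each $S(x,1)$ with $x\in V_0(T)$) such that $\sigma_{(i)}(g,x)\in F_0$ for all $g\in H$ and all $x\in V_0(T)$; the map $i_0$ is then produced by the symmetric argument with the two types interchanged, and the two are assembled into a single legal coloring $i$.

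Second, I would reduce the condition ``$\sigma_{(i)}(g,x)\in F_0$ for all $g$'' to conditions at one base vertex. Fix $v_0\in V_0(T)$ and, using transitivity of $H$ on $V_0(T)$ (Lemma~\ref{corollary:2-transitive}), pick for each $x\in V_0(T)$ an element $g_x\in H$ with $g_x(v_0)=x$, taking $g_{v_0}=\id$. For $g\in H$ and $x\in V_0(T)$, set $x'=g(x)$ and $s=g_{x'}^{-1}gg_x\in H(v_0)$; then $g=g_{x'}sg_x^{-1}$ and Lemma~\ref{lemma:sigma} telescopes to
$$\sigma_{(i)}(g,x)=\sigma_{(i)}(g_{x'},v_0)\circ\sigma_{(i)}(s,v_0)\circ\sigma_{(i)}(g_x,v_0)^{-1}.$$
Since $F_0$ is a group, it is enough to arrange that (i) $i_1|_{S(v_0,1)}$ conjugates $\underline H(v_0)$ onto $F_0$, so that $\sigma_{(i)}(s,v_0)\in F_0$ for all $s\in H(v_0)$, and (ii) $\sigma_{(i)}(g_x,v_0)\in F_0$ for all $x$. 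Condition (i) is achievable because $\underline H(v_0)\cong F_0$ as permutation groups, which by equality of orders gives an honest conjugating bijection.

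Third, I would construct $i_1$ together with the transport elements $g_x$ by induction on $d(v_0,x)$, in such a way that (ii) holds, in fact with $\sigma_{(i)}(g_x,v_0)=\id$. Colour $S(v_0,1)$ as in (i). For $x\in S(v_0,2n)$ with $n\ge 1$, let $m=p(x)\in V_1(T)$ be its inward neighbour and $y=p(m)\in S(v_0,2n-2)$ the type-$0$ vertex already treated; choose $k_x\in H(m)$ with $k_x(y)=x$, which exists because $\underline H(m)\cong F_1$ is transitive on $S(m,1)$ (a consequence of $2$-transitivity on $\partial T$, Lemma~\ref{corollary:2-transitive}). Then set $i_1|_{S(x,1)}:=i_1|_{S(y,1)}\circ k_x^{-1}$ and $g_x:=k_xg_y$. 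A direct computation gives $\sigma_{(i)}(k_x,y)=\id$ and, since $k_x$ conjugates $H(y)$ to $H(x)$, the local condition $\sigma_{(i)}(s',x)\in F_0$ for all $s'\in H(x)$ propagates from $y$ to $x$; the cocycle identity then yields $\sigma_{(i)}(g_x,v_0)=\sigma_{(i)}(k_x,y)\circ\sigma_{(i)}(g_y,v_0)=\id$ by induction.

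Combining (i) and (ii) with the telescoping formula gives $\sigma_{(i)}(g,x)\in F_0$ for all $g$ and $x$, and the symmetric construction of $i_0$ yields the analogous statement at type-$1$ vertices, so that $H\subseteq U_{(i)}^+(F_0,F_1)$. I expect the main obstacle to be precisely the well-definedness of the transported colouring: the inward neighbour $m$ of $x$ already received a colour when $y$ was treated, whereas the transport rule $i_1|_{S(x,1)}=i_1|_{S(y,1)}\circ k_x^{-1}$ would in general re-colour it. This is exactly why $k_x$ must be chosen inside the stabiliser $H(m)$ rather than as an arbitrary element sending $y$ to $x$: since $k_x(m)=m$, the newly assigned colour of $m$ coincides with the old one, while all other vertices of $S(x,1)$ sit at distance $2n+1$ and are being coloured for the first time because $T$ is a tree, so no further conflict can occur.
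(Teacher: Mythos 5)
Your proof is correct and follows essentially the same route as the paper's: both build $i_1$ (and symmetrically $i_0$) inductively outward from a base vertex by transporting colors along chosen elements of $H$ whose local action is trivial, then conclude via the cocycle identity of Lemma~\ref{lemma:sigma} by reducing to the base stabilizer, where the coloring is chosen to conjugate $\underline{H}(v_0)$ onto $F_0$. The only cosmetic difference is that the paper's transport element $h_v$ \emph{interchanges} $v$ and $p(v)$ (and hence fixes the midpoint automatically, since it reverses a path of length $2$), whereas you take $k_x$ in the stabilizer of the midpoint $m$ -- your well-definedness discussion at $m$ is exactly the point the paper's swap handles implicitly.
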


\begin{proof}
Fix $x \in V_0(T)$ and, for each $v \in V_0(T)$ different from $x$, let $p(v)$ be the vertex of $S(v,2)$ the closest to $x$. For each such $v$, choose $h_v \in H$ such that $h$ interchanges $v$ and $p(v)$. We define an appropriate map $i_1 \colon V_1(T) \to \{1,\ldots, d_0\}$ inductively on $X_n := V_1(T) \cap B(x,2n-1)$. For $n = 1$, we choose a bijection $i_x \colon X_1 = S(x,1) \to \{1,\ldots,d_0\}$ such that $i_x \underline{H}(x) i_x^{-1} = F_0$ and set $i_1|_{X_1} = i_x$. Now assume that $i_1$ is defined on $X_n$. To extend $i_1$ to $X_{n+1}$, we set $i_1|_{S(v,1)} = i_1|_{S(p(v),1)} h_v|_{S(v,1)}$ for each $v \in S(x,2n)$. The map $i_0 \colon V_0(T) \to \{1,\ldots, d_1\}$ is defined in the same way by fixing $y \in V_1(T)$ and choosing $h_v \in H$ for each $v \in V_1(T)$ as above. Define finally $i$ by $i|_{V_0(T)} = i_0$ and $i|_{V_1(T)} = i_1$.

Given $v \in V_0(T)$ different from $x$, our construction is such that $\sigma_{(i)}(h_v,v) = \id$. Hence, if $v$ is at distance $2n$ from $x$, the element $\tilde{h}_v = h_{p^{n-1}(v)} \cdots h_{p(v)} h_v \in H$ satisfies $\tilde{h}_v(v) = x$ and $\sigma_{(i)}(\tilde{h}_v,v) = \id$ (by Lemma~\ref{lemma:sigma}). Now if we consider $g \in H$ and $v \in V_0(T)$, the element $\tilde{h}_{g(v)} g \tilde{h}_v^{-1} \in H$ fixes $x$ and is therefore such that $\sigma_{(i)}(\tilde{h}_{g(v)} g \tilde{h}_v^{-1}, x) \in F_0$. Using Lemma~\ref{lemma:sigma}, we obtain that $\sigma_{(i)}(g,v) \in F_0$. In the same way, for $v \in V_1(T)$ we get $\sigma_{(i)}(g,v) \in F_1$. We thus have $g \in U_{(i)}^+(F_0,F_1)$ and hence $H \subseteq U_{(i)}^+(F_0,F_1)$.
\end{proof}

Let us now prove Theorem~\ref{maintheorem:S}. Note that the fact that $F_t(1)$ is simple non-abelian implies that $\lvert F_t(1)\rvert \geq 60$ and hence that $d_t \geq 6$ for each $t \in \{0,1\}$.

\begin{repeattheorem}{maintheorem:S}
Let $T$ be the $(d_0,d_1)$-semiregular tree and let $F_0 \leq \Sym(d_0)$ and $F_1 \leq \Sym(d_1)$. Let $H \in \mathcal{H}_T^+$ be such that $\underline{H}(x) \cong F_0$ for each $x \in V_0(T)$ and $\underline{H}(y) \cong F_1$ for each $y \in V_1(T)$. Suppose that, for each $t \in \{0,1\}$, $F_t(1)$ is simple non-abelian. Then there exists a legal coloring $i$ of $T$ such that $H = U_{(i)}^+(F_0,F_1)$.
\end{repeattheorem}

\begin{proof}
By Lemma~\ref{lemma:S}, there exists a legal coloring $i$ of $T$ such that $H \subseteq U_{(i)}^+(F_0,F_1)$. For each $t \in \{0,1\}$, $F_t$ is $2$-transitive and hence $F_t(1)$ is transitive on $\{2,\ldots,d_t\}$. Moreover, $F_t(1)$ is never simply transitive. Indeed, if it was the case then $F_t$ would be sharply $2$-transitive, but the finite sharply $2$-transitive permutation groups have been classified and they never have a simple non-abelian point stabilizer (see \cite{Zassenhaus}, \cite{Dixon}*{Section~7.6}). We can therefore apply Proposition~\ref{proposition:S^c(k)} and directly obtain, since $H$ is closed in $\Aut(T)$, that for each $v \in V(T)$ the stabilizer $H(v)$ is equal to $U_{(i)}^+(F_0,F_1)(v)$. As $H$ is generated by its vertex stabilizers, the conclusion follows.
\end{proof}


\section{A common subgroup}\label{section:commonsubgroup}

We assume in this section that $H \in \mathcal{H}_T^+$ satisfies $\underline{H}(x) \cong F_0 \geq \Alt(d_0)$ for each $x \in V_0(T)$ and $\underline{H}(y) \cong F_1 \geq \Alt(d_1)$ for each $y \in V_1(T)$. Our goal is to prove, under this hypothesis and when $d_0, d_1 \geq 6$, that there always exists a legal coloring $i$ of $T$ such that $H \supseteq \Alt_{(i)}(T)^+$. Recall that $\Alt_{(i)}(T)^+ = U_{(i)}^+(\Alt(d_0),\Alt(d_1))$, i.e.
$$\Alt_{(i)}(T)^+ = \{g \in \Aut(T)^+ \suchthat \sigma_{(i)}(g,v) \text{ is even for each $v \in V(T)$}\}.$$
Under these assumptions, we will apply Proposition~\ref{proposition:S^c(k)}. Indeed, when $F_t \supseteq \Alt(d_t)$ with $d_t \geq 6$ (for $t \in \{0,1\}$), the socle $S_t$ of $F_t(1)$ is $\Alt(d_t-1)$ which is simple non-abelian, of index at most $2$ in $F_t(1)$, and transitive but not simply transitive on $\{2,\ldots,d_t\}$.

Remark that, if $F_0 = \Alt(d_0)$ and $F_1 = \Alt(d_1)$, then we already know by Theorem~\ref{maintheorem:S} that $H = \Alt_{(i)}(T)^+$ for some legal coloring $i$. The task is however surprisingly more difficult when $F_0 = \Sym(d_0)$ or $F_1 = \Sym(d_1)$.


\subsection{Finding good colorings of rooted trees}

For our next results, we denote by $T_{d_0,d_1,n}$ the rooted tree of depth $n$ where the root $v_0$ has $d_0$ children, the vertices at positive even distance from $v_0$ (except the leaves) have $d_0-1$ children, and the vertices at odd distance from $v_0$ (except the leaves) have $d_1-1$ children. Similarly, $T'_{d_0,d_1,n}$ is the rooted tree of depth $n$ where $v_0$ and all the vertices at even distance from $v_0$ have $d_0-1$ children while the vertices at odd distance from $v_0$ have $d_1-1$ children.
Remark that, in the $(d_0,d_1)$-semiregular tree $T$, a ball $B(v,n)$ around a vertex $v$ of type $0$ is isomorphic to $T_{d_0,d_1,n}$. The intersection of $B(v,n)$ with a half-tree of $T$ rooted in $v$ is isomorphic to $T'_{d_0,d_1,n}$.

The notion of a legal coloring of $T_{d_0,d_1,n}$, as well as the permutations $\sigma_{(i)}(g,v)$ for $g \in \Aut(T_{d_0,d_1,n})$ and $v \not \in \partial T_{d_0,d_1,n}$ (i.e. $v$ is not a leaf), are defined as for semiregular trees. We can also define a legal coloring of $T'_{d_0,d_1,n}$: it suffices to precise that only $d_0-1$ colors are used for the vertices adjacent to $v_0$. The notation $\sigma_{(i)}(g,v)$ has also a meaning, but $\sigma_{(i)}(g,v_0) \in \Sym(d_0-1)$ instead of $\Sym(d_0)$.
Given $\tilde{T} = T_{d_0,d_1,n}$ or $T'_{d_0,d_1,n}$ with a legal coloring $i$, we finally define
$$\Alt_{(i)}(\tilde{T}) := \{g \in \Aut(\tilde{T}) \suchthat \sigma_{(i)}(g, v) \text{ is even for each $v \not \in \partial \tilde{T}$}\}.$$
In the rest of this section and for the sake of brevity, we will sometimes forget the word \textit{legal} and write \textit{coloring} instead of \textit{legal coloring}.


\begin{lemma}\label{lemma:square}
Let $\tilde{T} = T'_{d_0,d_1,n}$ with $d_0,d_1 \geq 3$ and let $i$ be a legal coloring of $\tilde{T}$. Then $\Alt_{(i)}(\tilde{T})$ is generated by the set $\{g^2 \suchthat g \in \Alt_{(i)}(\tilde{T})\}$.
\end{lemma}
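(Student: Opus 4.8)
The plan is to argue by induction on the depth $n$, exploiting the rooted wreath-product structure of $\tilde T = T'_{d_0,d_1,n}$. Write $v_0$ for the root, $v_1,\dots,v_{d_0-1}$ for its children, and $\tilde T_j$ for the subtree hanging below $v_j$; each $\tilde T_j$ is a copy of $T'_{d_1,d_0,n-1}$ carrying the restricted coloring. The base case $n\le 1$ is immediate: $\Alt_{(i)}(T'_{d_0,d_1,1})\cong\Alt(d_0-1)$, which is generated by $3$-cycles, and every $3$-cycle $\tau$ is a square since $\tau=(\tau^{2})^{2}$.

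The engine of the induction will be two facts about squares. First, for any three children $v_a,v_b,v_c$ of the root (this needs $d_0-1\ge 3$), there is a \emph{rotation} $\rho\in\Alt_{(i)}(\tilde T)$ with $\sigma_{(i)}(\rho,v_0)=(a\,b\,c)$ and $\sigma_{(i)}(\rho,w)=\id$ for every non-root vertex $w$: one sends $v_a\mapsto v_b\mapsto v_c\mapsto v_a$ and then propagates $\rho$ down the subtrees by forcing all lower local actions to be trivial. By Lemma~\ref{lemma:sigma} this forces $\sigma_{(i)}(\rho^{3},w)=\id$ for every $w$, hence $\rho^{3}=\id$ and $\rho=(\rho^{2})^{2}$ is a square. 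Second, if $a\in\Alt_{(i)}(\tilde T_j)$ is a square there, say $a=b^{2}$, then extending $b$ by the identity outside $\tilde T_j$ yields $\tilde b\in\Alt_{(i)}(\tilde T)$ (its only possibly non-trivial local actions are those of $b$, all even, while $\sigma_{(i)}(\tilde b,v_0)=\id$ because $\tilde b$ fixes every child of $v_0$), and $\tilde b^{2}$ realises $a$ on $\tilde T_j$ and the identity elsewhere; thus squares in the subtrees lift to squares in $\tilde T$.

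With these in hand the induction runs as follows. Given $g\in\Alt_{(i)}(\tilde T)$, its local action $\pi:=\sigma_{(i)}(g,v_0)$ lies in $\Alt(d_0-1)$, so $\pi$ is a product of $3$-cycles; multiplying together the corresponding rotations gives a product of squares $s$ with $\sigma_{(i)}(s,v_0)=\pi$, whence $h:=gs^{-1}$ satisfies $\sigma_{(i)}(h,v_0)=\id$ and therefore stabilises each $\tilde T_j$. (When $d_0=3$ we have $\Alt(2)=1$, so $\pi=\id$ and this step is vacuous.) The restriction $h_j$ of $h$ to $\tilde T_j$ lies in $\Alt_{(i)}(\tilde T_j)$, which by the induction hypothesis is generated by squares; lifting these squares as above and taking the product over $j$ (the extensions act on disjoint subtrees) expresses $h$ as a product of squares of $\Alt_{(i)}(\tilde T)$, and hence so is $g=hs$.

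The one point demanding care — the main obstacle — is the construction of the rotation $\rho$ with trivial local action below the root: the downward propagation requires that sibling subtrees $\tilde T_a$ and $\tilde T_b$ be isomorphic as coloured rooted trees, i.e.\ that the prescription ``send the colour-$c$ child of a vertex to the colour-$c$ child of its image'' be everywhere well defined. This holds because, in a legal coloring, the set of colours occurring on the children of a vertex depends only on the colour of that vertex (the children of $v_a$ use exactly the colours $\{1,\dots,d_1\}\setminus\{i(v_0)\}$, independently of $a$), and the same homogeneity persists at every deeper level; I would isolate this colour-matching observation as the technical heart of the argument. Alternatively, one can bypass the explicit rotations by checking that the abelianization of $\Alt_{(i)}(\tilde T)$ has odd order — it is built by iterated transitive wreath products from the odd-order groups $\Alt(m)^{\mathrm{ab}}\in\{1,\C_3\}$ — since a finite group is generated by its squares precisely when its abelianization has no quotient of order $2$.
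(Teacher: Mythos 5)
Your overall strategy (induct at the root: kill the local action at $v_0$ by squares, then reduce to the product of the branch groups and lift squares from the branches) is viable and genuinely different from the paper's proof, which inducts at the \emph{bottom}: the paper restricts to $B(v_0,n-1)$, applies the induction hypothesis there, and then notes that the pointwise stabilizer of $B(v_0,n-1)$ is generated by $3$-cycles on sibling leaves, each of which is visibly the square of its inverse. However, your central construction has a genuine error, precisely at the point you flag as the technical heart. The rotation $\rho$ with $\sigma_{(i)}(\rho,v_0)=(a\,b\,c)$ and $\sigma_{(i)}(\rho,w)=\id$ for all non-root $w$ does \emph{not} exist once $n\geq 3$, because the colour-homogeneity claim fails at depth $2$: if $w$ is a child of $v_a$, the children of $w$ carry exactly the colours $\{1,\dots,d_0\}\setminus\{i(v_a)\}$ (the full palette occurs on $S(w,1)$, and the parent $v_a$ uses $i(v_a)$), whereas the children of any vertex below $v_b$ at the same depth carry $\{1,\dots,d_0\}\setminus\{i(v_b)\}$; since $v_a$ and $v_b$ are siblings, $i(v_a)\neq i(v_b)$. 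So sibling branches are \emph{not} isomorphic as coloured rooted trees, the prescription ``colour-$c$ child to colour-$c$ child'' is undefined at depth $2$, and any $\rho$ sending $v_a$ to $v_b$ must satisfy $\sigma_{(i)}(\rho,w)(i(v_a))=i(v_b)\neq i(v_a)$ at every depth-$2$ vertex $w$ it moves — trivial local action there is impossible. (For $n\leq 2$ the depth-$2$ vertices are leaves, where no local action is imposed, which is why small cases do not reveal the problem.) The same soft spot undermines the fallback: $\Alt_{(i)}(\tilde T)$ is \emph{not} a standard iterated wreath product of identical coloured blocks — that is exactly what the false colour claim would provide — and ``transitive'' also fails at levels of degree $3$, where the local group is the trivial $\Alt(2)$.

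The gap is repairable without changing your architecture. You do not need trivial local actions below the root, only even ones: build $\phi_{ab}\colon \tilde T_a\to\tilde T_b$ and $\phi_{bc}\colon \tilde T_b\to\tilde T_c$ level by level, choosing at each vertex an even permutation of a palette of size $\geq 3$ with one prescribed image (such permutations exist), and define the third leg as $(\phi_{bc}\circ\phi_{ab})^{-1}$; by Lemma~\ref{lemma:sigma} the resulting $\rho$ has all local actions even and $\rho^3=\id$, so $\rho=(\rho^2)^2$ is a square. Even more cheaply, you can drop rotations altogether: letting $S=\langle g^2 \,:\, g\in\Alt_{(i)}(\tilde T)\rangle$ (a normal subgroup), the root local action maps $\Alt_{(i)}(\tilde T)$ onto $\Alt(d_0-1)$ (extend downward with even local actions, as above), hence maps $S$ onto the subgroup generated by squares of $\Alt(d_0-1)$, which is all of $\Alt(d_0-1)$; given $g$, pick $s\in S$ with the same root action, so that $h=gs^{-1}$ fixes every child of $v_0$, and your (correct) second engine fact — identity-extensions of squares in the branches are squares — together with the induction hypothesis gives $h\in S$, whence $g\in S$. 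With either repair your proof is complete, and it remains a different route from the paper's leaf-level argument.
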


\begin{proof}
We proceed by induction on $n$. For $n = 0$, the tree $T'_{d_0,d_1,0}$ has only one vertex and there is nothing to prove. Now let $n \geq 1$ and assume the result is proven for $n-1$. This means that $\left\{g|_{B(v_0,n-1)}^2 \suchthat g \in \Alt_{(i)}(\tilde{T})\right\}$ generates $\Alt_{(i)}(B(v_0,n-1))$, where $v_0$ is the root of $\tilde{T}$. Hence, it suffices to show that $\left\{g^2 \suchthat g \in \Fix_{\Alt_{(i)}(\tilde{T})}(B(v_0,n-1))\right\}$ generates $\Fix_{\Alt_{(i)}(\tilde{T})}(B(v_0,n-1))$. Since alternating groups are generated by $3$-cycles, the group $\Fix_{\Alt_{(i)}(\tilde{T})}(B(v_0,n-1))$ is generated by the elements $f \in \Alt_{(i)}(\tilde{T})$ fixing $\tilde{T} \setminus \{a,b,c\}$ and such that $f(a) = b$, $f(b) = c$ and $f(c) = a$ where $a,b,c \in S(v_0,n)$ have the same parent. The conclusion simply follows from the fact that each such element $f$ is the square of $f^{-1} \in \Alt_{(i)}(\tilde{T})$.
\end{proof}

In the following, if $v$ is a vertex in a tree $\tilde{T}$ with root $v_0$, then $X_v$ is the branch of $v$, i.e. the subtree of $\tilde{T}$ spanned by $v$ and all its descendants. For $G \leq \Aut(\tilde{T})$, $\Rist_G(v)$ is the pointwise stabilizer in $G$ of $\tilde{T} \setminus X_v$. We will generally see $\Rist_G(v)$ as a subgroup of $\Aut(X_v)$. Finally, $G_k$ is the pointwise stabilizer in $G$ of $B(v_0, k)$ for $k \geq 0$.


\begin{lemma}\label{lemma:rooted}
Let $\tilde{T} = T_{d_0,d_1,n}$ or $T'_{d_0,d_1,n}$ with $d_0, d_1 \geq 6$ (and $n \geq 1$), let $v_0$ be the root of $\tilde{T}$ and let $i$ be a legal coloring of $B(v_0,n-1)$. Let $G \leq \Aut(\tilde{T})$ be such that $G_{n-1} \supseteq \Alt(d_0-1)^{c(v_0,n-1)}$ (or $\Alt(d_1-1)^{c(v_0,n-1)}$ or $\Alt(d_0)$, depending on $n$) and $G|_{B(v_0,n-1)} \supseteq \Alt_{(i)}(B(v_0,n-1))$. Then there exists a legal coloring $\overline{i}$ of $\tilde{T}$ extending $i$ such that $G \supseteq \Alt_{(\overline{i})}(\tilde{T})$. Moreover, if for some vertex $y_0 \in S(v_0,1)$ we already had a legal coloring $i'$ of $X_{y_0}$ coinciding with $i$ on $X_{y_0} \cap B(v_0,n-1)$ and such that $\Rist_G(y_0) \supseteq \Alt_{(i')}(X_{y_0})$, then $\overline{i}$ can be chosen to extend $i'$ too.
\end{lemma}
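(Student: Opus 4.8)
The plan is to peel off the outermost layer of $\tilde T$ by restricting to $B(v_0,n-1)$, and to reduce the containment $G \supseteq \Alt_{(\overline i)}(\tilde T)$ to a statement about the parities of the local actions at the leaf-parents $S(v_0,n-1)$. First I would fix the extension $\overline i$: a legal coloring of $B(v_0,n-1)$ is extended to $\tilde T$ precisely by choosing, for each $v\in S(v_0,n-1)$, a bijection between the children of $v$ and the colors not yet used at $v$, and legality imposes nothing further. Thus there is a whole torsor of extensions, and this freedom is exactly what I will exploit. For the ``Moreover'' clause I would use the given $i'$ to color the leaves lying in $X_{y_0}$ and keep all the remaining freedom on the other branches.

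With $\overline i$ fixed, consider the restriction homomorphism $\rho\colon\Alt_{(\overline i)}(\tilde T)\to\Aut(B(v_0,n-1))$. Its kernel is the group $K$ acting as an even permutation on the children of each leaf-parent and trivially elsewhere; this is precisely the group appearing in the hypothesis $G_{n-1}\supseteq\Alt(d_\ast-1)^{c(v_0,n-1)}$, so $K\subseteq G$. Its image is $\Alt_{(i)}(B(v_0,n-1))$, which lies in $G|_{B(v_0,n-1)}$ by hypothesis. Since $G\supseteq K$, the desired containment $\Alt_{(\overline i)}(\tilde T)\subseteq G$ is therefore equivalent to the assertion that every element of $\Alt_{(i)}(B(v_0,n-1))$ admits a lift lying in $G$ whose local action at each leaf-parent is even with respect to $\overline i$.

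To produce such lifts I would invoke Lemma~\ref{lemma:square} (and its evident analogue for $T_{d_0,d_1,n}$): since $\Alt_{(\overline i)}(\tilde T)$ is generated by the squares of its elements, it suffices to show $b^2\in G$ for each $b\in\Alt_{(\overline i)}(\tilde T)$. Writing $b=gp$ with $g\in G$ a lift of $\rho(b)$ and $p$ fixing $B(v_0,n-1)$ pointwise, one computes $b^2=g^2\,(g^{-1}pg)\,p$, where $g^2\in G$ and $(g^{-1}pg)p$ again fixes $B(v_0,n-1)$ pointwise. Setting $s_v:=\sgn\bigl(\sigma_{(\overline i)}(g,v)\bigr)\in\C_2$ (the obstruction to $g$ itself being even at $v$, which equals the children-sign of $p$ at $v$ because $b\in\Alt_{(\overline i)}(\tilde T)$), Lemma~\ref{lemma:sigma} transports local actions under conjugation and gives that the sign vector of $(g^{-1}pg)p$ has $v$-entry $s_{g(v)}\cdot s_v$. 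In particular the product of these entries over any $g$-orbit of leaf-parents is $+1$, so the vector has even weight on each orbit and its total parity is automatically trivial.

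The hard part will be to guarantee that this even coboundary actually lies in the subgroup $\overline{G_{n-1}}\le(\C_2)^{S(v_0,n-1)}$ of sign patterns realized by ball-fixing elements of $G$, for only then does $(g^{-1}pg)p$ land in $G_{n-1}\subseteq G$ and hence $b^2\in G$. This is where the freedom in $\overline i$ is spent: I would choose the leaf-colorings so that, for a fixed lift of each generator of $\Alt_{(i)}(B(v_0,n-1))$, the vector $s$ itself already lies in the $\rho(G)$-invariant subspace $\overline{G_{n-1}}$ (an ``absorption'' step carried out along a transversal of the $G$-action on $S(v_0,n-1)$, in the spirit of the coloring construction of Lemma~\ref{lemma:S}), after which $v\mapsto s_{g(v)}s_v$ stays in $\overline{G_{n-1}}$. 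Cohomologically, modulo $K$ one has an extension of $\Gamma:=\Alt_{(i)}(B(v_0,n-1))$ by the permutation $\C_2$-module $(\C_2)^{S(v_0,n-1)}$, in which $\Alt_{(\overline i)}(\tilde T)$ and $G$ determine two splittings over $\overline{G_{n-1}}$; varying $\overline i$ moves the first splitting through its cohomology class, and generation by squares forces the two classes to agree in the exponent-$2$ quotient $(\C_2)^{S(v_0,n-1)}/\overline{G_{n-1}}$, so a matching coloring exists. The compatibility with $i'$ finally follows because this absorption uses only the coloring freedom on the branches other than $X_{y_0}$, while $\Rist_G(y_0)\supseteq\Alt_{(i')}(X_{y_0})$ already supplies the required even elements inside $X_{y_0}$.
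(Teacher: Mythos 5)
Your reduction of the lemma to a one-layer lifting problem is legitimate and genuinely different from the paper's route: the kernel of restriction to $B(v_0,n-1)$ inside $\Alt_{(\overline{i})}(\tilde{T})$ is indeed the group $\Alt(\cdot)^{c(v_0,n-1)}$ contained in $G$ by hypothesis, the image is all of $\Alt_{(i)}(B(v_0,n-1))\subseteq G|_{B(v_0,n-1)}$, recoloring the leaves changes the sign data by exactly the coboundary of an arbitrary $\epsilon\in(\C_2)^{S(v_0,n-1)}$, and your computation via Lemma~\ref{lemma:sigma} that $(g^{-1}pg)p$ has sign vector $v\mapsto s_{g(v)}s_v$ is correct. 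So the lemma is equivalent to the vanishing of the class of the $1$-cocycle $\gamma\mapsto[s(g_\gamma)]$ in $H^1\bigl(\Gamma,(\C_2)^{S(v_0,n-1)}/\overline{G_{n-1}}\bigr)$, where $\Gamma=\Alt_{(i)}(B(v_0,n-1))$ acts by permuting leaf-parents; this contrasts with the paper, which argues by induction on $n$ and builds the coloring branch by branch from the root.

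But precisely at this decisive point your argument stops being a proof: the sentence claiming that ``generation by squares forces the two classes to agree'' is an assertion, and it is the entire content of the lemma. Two concrete problems. First, for a square $b^2$ your computation only shows that the coboundary $v\mapsto s_{g(v)}s_v$ has trivial product over each $g$-orbit of leaf-parents; that does not place it in $\overline{G_{n-1}}$, which a priori is only guaranteed to contain the trivial pattern (the hypothesis is $G_{n-1}\supseteq\Alt(\cdot)^{c(v_0,n-1)}$ and nothing more), and the even-weight-per-orbit subspace is in general far larger than $\{0\}$. Second, the cocycle identity gives $c(\gamma^2)=(c(\gamma)\circ\gamma)\cdot c(\gamma)$, which vanishes only when $c(\gamma)$ is $\gamma$-invariant, so squares do not automatically carry trivial obstruction; and $H^1$ of a transitive permutation module over $\F_2$ need not vanish, so some specific structural input is unavoidable. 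The paper supplies exactly this input: $d_0,d_1\geq 6$ so that the root degree $e\geq 5$, explicit even elements inducing $3$-cycles on the branches which transfer the coloring from branch to branch, and, in the even-$e$ case where your cocycle genuinely threatens not to cobound, the ad hoc correcting element $\tau=(f_1\circ h\circ f_2)^2$ fixing $B(v_0,n-1)$ whose local signs cancel those of $h$ exactly on $O\cup h(O)$. Your treatment of the ``Moreover'' clause inherits the same gap: restricting the recoloring freedom to branches other than $X_{y_0}$ shrinks the available coboundaries, so the relative vanishing needs its own argument (the paper handles it by taking $y_1=y_0$ and $i_1=i'$ as the starting branch of the transfer). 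Until the vanishing step is actually proved, you have a correct and illuminating reformulation of the lemma, not a proof of it.
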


\begin{proof}
Define $e = d_0$ if $\tilde{T} = T_{d_0,d_1,n}$ and $e = d_0-1$ if $\tilde{T} = T'_{d_0,d_1,n}$, so that the root $v_0$ of $\tilde{T}$ has exactly $e$ neighbors. We proceed by induction on $n$. For $n = 1$, we have $G = G_0 \supseteq \Alt(e)$ by hypothesis, and thus any coloring $\overline{i}$ of $\tilde{T}$ is such that $G \supseteq \Alt_{(\overline{i})}(\tilde{T})$. Now let $n \geq 2$ and assume the lemma is true for $n-1$. We show it is also true for $n$.

Let $y_1, \ldots, y_e$ be the vertices of $S(v_0, 1)$. By hypothesis, $G_{n-1} \supseteq \Alt(\tilde{d}-1)^{c(v_0,n-1)}$ where $\tilde{d} = d_0$ if $n$ is odd and $\tilde{d} = d_1$ if $n$ is even. This implies in particular that $\Aut(X_{y_1}) \geq \Rist_G(y_1)_{n-2} \supseteq \Alt(\tilde{d}-1)^{c(y_1,n-2)}$ (where $c(y_1,n-2)$ counts the vertices of $X_{y_1}$ at distance $n-2$ from $y_1$ and $\Rist_G(y_1)$ is seen as a subgroup of $\Aut(X_{y_1})$).
We also claim that $\Rist_G(y_1)|_{B({y_1},n-2)} \supseteq \Alt_{(i)}(X_{y_1} \cap B(y_1,n-2))$. Indeed, since $G|_{B(v_0,n-1)} \supseteq \Alt_{(i)}(B(v_0,n-1))$, for each $h \in \Alt_{(i)}(X_{y_1} \cap B(y_1,n-2))$ there exists $g \in G$ fixing $(\tilde{T} \setminus X_{y_1}) \cap B(v_0,n-1)$ and acting as $h$ on $X_{y_1} \cap B(y_1,n-2)$. Then $g^2 \in G$ acts as $h^2$ on this set, and has the advantage that $g^2|_{E(x)}$ is an even permutation of $E(x)$ for each $x \in (\tilde{T} \setminus X_{y_1}) \cap S(v_0,n-1)$. As $G_{n-1} \supseteq \Alt(\tilde{d}-1)^{c(v_0,n-1)}$, there exists $f \in G_{n-1}$ such that $f|_{E(x)} = g^2|_{E(x)}$ for all those $x$. Then $f^{-1}g^2$ acts as $h^2$ on $X_{y_1} \cap B(y_1,n-2)$ and belongs to $\Rist_G(y_1)$. This means that $\Rist_G(y_1)|_{B(y_1,n-2)}$ contains $\{h^2 \suchthat h \in \Alt_{(i)}(X_{y_1}\cap B(y_1,n-2))\}$. By Lemma~\ref{lemma:square}, we obtain $\Rist_G(y_1)|_{B({y_1},n-2)} \supseteq \Alt_{(i)}(X_{y_1} \cap B(y_1,n-2))$. We can now use our induction hypothesis on $\Rist_G(y_1) \leq \Aut(X_{y_1})$ to get a coloring $i_1$ of $X_{y_1}$ extending $i$ and such that $\Rist_G(y_1) \supseteq \Alt_{(i_1)}(X_{y_1})$. In the particular case where we are given a vertex $y_0$ and a coloring $i'$ of $X_{y_0}$ as in the statement, we set $y_1 = y_0$ and rather define $i_1 = i'$.

Now take $g_1 \in G$ with $g_1|_{B(v_0, n-1)} \in \Alt_{(i)}(B(v_0, n-1))$ such that the induced action of $g_1$ on $S(v_0, 1)$ is the $3$-cycle $(y_1 \ y_3 \ y_2)$, $g_1$ fixes $X_y \cap B(v_0, n-1)$ for each $y \in S(v_0, 1) \setminus \{y_1, y_2, y_3\}$, and $g_1^3|_{B(v_0, n-1)} = \id|_{B(v_0, n-1)}$. Such an element $g_1$ exists as $G|_{B(v_0, n-1)} \supseteq \Alt_{(i)}(B(v_0,n-1))$.
The element $h_1 = g_1^2$ acts as the $3$-cycle $(y_1 \ y_2 \ y_3)$ on $S(v_0, 1)$, fixes $X_y \cap B(v_0, n-1)$ for each $y \in S(v_0, 1) \setminus \{y_1, y_2, y_3\}$ and also satisfies $h_1^3|_{B(v_0, n-1)} = \id|_{B(v_0, n-1)}$. In addition, $h_1|_{E(x)}$ is an even permutation of $E(x)$ for each $x \in (\tilde{T} \setminus (X_{y_1} \cup X_{y_2} \cup X_{y_3})) \cap S(v_0, n-1)$ (because $h_1 = g_1^2$). From $i_1$, construct a coloring $i_2$ of $X_{y_2}$ (coinciding with~$i$) such that $i_2|_{S(h_1(x),1)} \circ h_1 \circ i_1|_{S(x,1)}^{-1}$ is even for each $x \in X_{y_1} \cap S(v_0, n-1)$. In the same way, from $i_2$, construct a coloring $i_3$ of $X_{y_3}$ (coinciding with $i$) such that $i_3|_{S(h_1(x),1)} \circ h_1 \circ i_2|_{S(x,1)}^{-1}$ is even for each $x \in X_{y_2} \cap S(v_0, n-1)$. As $h_1 = g_1^2$, we also obtain that $i_1|_{S(h_1(x),1)} \circ h_1 \circ i_3|_{S(x,1)}^{-1}$ is even for each $x \in X_{y_3} \cap S(v_0, n-1)$. This exactly means that, for any coloring $\overline{i}$ of $\tilde{T}$ extending $i$, $i_1$, $i_2$ and $i_3$, it will be true that $h_1 \in \Alt_{(\overline{i})}(\tilde{T})$.

In the case where $e$ is odd, the proof is almost finished. Indeed, repeat this process to get $h_3 \in G$ inducing $(y_3 \ y_4 \ y_5)$ on $S(v_0, 1)$ and colorings $i_4$ of $X_{y_4}$ and $i_5$ of $X_{y_5}$, and so on until $h_{e-2} \in G$ inducing $(y_{e-2} \ y_{e-1} \ y_e)$ on $S(v_0, 1)$ and colorings $i_{e-1}$ of $X_{y_{e-1}}$ and $i_e$ of $X_{y_e}$. Then define $\overline{i}$ as the unique coloring extending $i, i_1, \ldots, i_e$. In view of our construction, $\overline{i}$ is such that $h_1, h_3, \ldots, h_{e-2} \in \Alt_{(\overline{i})}(\tilde{T})$. 
What is interesting about $h_1, h_3, \ldots, h_{e-2}$ is the fact that the permutations $(y_1 \ y_2 \ y_3), (y_3 \ y_4 \ y_5), \ldots, (y_{e-2} \ y_{e-1} \ y_e)$ generate $\Alt(e)$. In particular, as $\Rist_G(y_1) \supseteq \Alt_{(i_1)}(X_{y_1})$ we see by conjugating this inclusion with an element of $\langle h_1, h_3, \ldots, h_{e-2} \rangle$ sending $y_1$ on $y_k$ that $\Rist_G(y_k) \supseteq \Alt_{(i_k)}(X_{y_k})$ for each $k \in \{1, \ldots, e\}$. This means that $G$ contains all elements of $\Alt_{(\overline{i})}(\tilde{T})$ fixing $S(v_0, 1)$. Since it also contains $h_1, h_3, \ldots, h_{e-2} \in \Alt_{(\overline{i})}(\tilde{T})$ whose induced actions on $S(v_0, 1)$ generate $\Alt(e)$, we finally get $G \supseteq \Alt_{(\overline{i})}(\tilde{T})$.
 
If $e$ is even, then the exact same reasoning gives us $h_3, h_5,\ldots, h_{e-3}$ and colorings $i_4, \ldots, i_{e-1}$. At the end, there is no coloring of $X_{y_e}$ yet and the permutations $(y_1 \ y_2 \ y_3)$, $(y_3 \ y_4 \ y_5), \ldots$, $(y_{e-3} \ y_{e-2} \ y_{e-1})$ only generate the even permutations of $S(v_0, 1)$ fixing $y_e$. So as to conclude, take $g_{e-2} \in G$ as before so that the induced action on $S(v_0, 1)$ is $(y_{e-2} \ y_e \ y_{e-1})$ and define $h_{e-2} = g_{e-2}^2$. For simplicity, we write $h := h_{e-2}$. Here, the colorings $i_{e-2}$ and $i_{e-1}$ are already fixed and we can only choose a coloring $i_e$ of $X_{y_e}$. Choose $i_e$ so that $i_e|_{S(h(x),1)} \circ h \circ i_{e-1}|_{S(x,1)}^{-1}$ is even for each $x \in X_{y_{e-1}} \cap S(v_0, n-1)$, and define $\overline{i}$ as the unique coloring extending $i, i_1, \ldots, i_e$. The only issue preventing us from concluding as above is that it is not sure if $h \in \Alt_{(\overline{i})}(\tilde{T})$. The permutation $\sigma_{(\overline{i})}(h, x)$ could indeed be odd for some $x \in (X_{y_{e-2}} \cup X_{y_e}) \cap S(v_0, n-1)$. More precisely, these are the only vertices for which $\sigma_{(\overline{i})}(h, x)$ could be odd and we even know (because $h = g_{e-2}^2$) that $\sigma_{(\overline{i})}(h, x)$ with $x \in X_{y_e} \cap S(v_0, n-1)$ is odd if and only if $\sigma_{(\overline{i})}(h, h(x))$ is odd. We therefore define
$$O := \{x \in X_{y_e} \cap S(v_0, n-1) \suchthat \sigma_{(\overline{i})}(h, x) \text{ is odd}\},$$
so that $O \cup h(O)$ is exactly the set of vertices at which there is an odd permutation.

To finish the proof, we show that there exists $h' \in G \cap \Alt_{(\overline{i})}(\tilde{T})$ with $h'|_{B(v_0, n-1)} = h|_{B(v_0, n-1)}$. Denote by $a^{(e-2)}_1, \ldots, a^{(e-2)}_m$ the vertices of $X_{y_{e-2}} \cap S(v_0,n-1)$. Then define $a^{(e-1)}_j = h(a^{(e-2)}_j)$ and $a^{(e)}_j = h(a^{(e-1)}_j)$ for each $j \in \{1, \ldots, m\}$. Finally, for each $k \in \{1, \ldots, e-3\}$ choose $r_k \in G \cap \Alt_{(\overline{i})}(\tilde{T})$ such that $r_k(y_{e-2}) = y_k$ and define $a^{(k)}_j = r_k(a^{(e-2)}_j)$ for all $j$. We say that $f \in \Aut(\tilde{T})$ \textit{preserves the labelling} if $f(y_k) = y_{\ell}$ implies $f(a^{(k)}_j) = a^{(\ell)}_j$ for all $j$. One sees that if $f$ preserves the labelling and if $\sigma_{(\overline{i})}(f,v_0)$ is even, then $f|_{B(v_0,n-1)} \in \Alt_{(\overline{i})}(B(v_0, n-1))$.

Choose $f_1, f_2 \in \Alt_{(\overline{i})}(\tilde{T})$ preserving the labelling, fixing $X_{y_e}$ and such that the induced action of $f_1$ (resp. $f_2$) on $S(v_0, 1)$ is the permutation $(y_1 \ y_2 \  y_{e-1})$ (resp. $(y_1 \ y_{e-2} \ y_{e-1})$). Such elements exist by the previous remark, and they are contained in $G$. Note that $d_0 \geq 6$, so $e \geq 5$ and $2 < e-2$. Let us look at the element $\tau = (f_1 \circ h \circ f_2)^2 \in G$. Clearly, $\tau$ preserves the labelling and it suffices to look at its action on $S(v_0,1)$ to know its action on $S(v_0, n-1)$. The action of $\tau$ on $S(v_0,1)$ is given by
$$[(y_1 \ y_2 \  y_{e-1}) (y_{e-2} \ y_{e-1} \ y_e) (y_1 \ y_{e-2} \ y_{e-1})]^2$$
which is exactly the trivial permutation. Hence, $\tau$ acts trivially on $B(v_0,n-1)$. We should now observe with the help of Lemma~\ref{lemma:sigma} if $\sigma_{(\overline{i})}(\tau, x)$ is even or odd, for each $x \in S(v_0,n-1)$. As $f_1, f_2 \in \Alt_{(\overline{i})}(\tilde{T})$, all the permutations they induce are even. Using that $\sigma_{(\overline{i})}(h, x)$ is odd if and only if $x \in O \cup h(O)$, we actually obtain that $\sigma_{(\overline{i})}(\tau, x)$ is odd if and only if $x \in O \cup h(O)$. This means that $h' = h \circ \tau \in G$, which acts as $h$ on $B(v_0, n-1)$, is such that $\sigma_{(\overline{i})}(h', x)$ is always even, i.e. $h' \in \Alt_{(\overline{i})}(\tilde{T})$.
\end{proof}


\subsection{The common subgroup \texorpdfstring{$\Alt_{(i)}(T)^+$}{Alt_(i)(T)^+}}

We are now ready to complete the proof of Theorem~\ref{maintheorem:Alt} from the introduction. For the reader's convenience we reproduce its statement.


\begin{repeattheorem}{maintheorem:Alt}\label{maintheorem:Alt'}
Let $T$ be the $(d_0,d_1)$-semiregular tree with $d_0, d_1 \geq 6$. Let $H \in \mathcal{H}_T^+$ be such that $\underline{H}(x) \cong F_0 \geq \Alt(d_0)$ for each $x \in V_0(T)$ and $\underline{H}(y) \cong F_1 \geq \Alt(d_1)$ for each $y \in V_1(T)$. Then there exists a legal coloring $i$ of $T$ such that $H \supseteq \Alt_{(i)}(T)^+$.
\end{repeattheorem}

\begin{proof}
Given $v \in V(T)$ and a coloring $i$ of $T$, we say that $i$ is \textbf{$n$-valid at $v$} (with $n \in \Nz$) if the natural image of $H(v)$ in $\Aut(B(v,n))$ contains $\Alt_{(i)}(B(v, n))$. If $H(v) \supseteq \Alt_{(i)}(T)^+(v)$, $i$ is said to be \textbf{$\infty$-valid at $v$}. As $H$ is closed in $\Aut(T)$, a coloring is $\infty$-valid at $v$ if and only if it is $n$-valid at $v$ for all $n \in \Nz$.

We first claim that if $i$ is a coloring of $T$ which is $\infty$-valid at $v_1$ and $n$-valid at $v_2$ where $v_1$ and $v_2$ are adjacent vertices (with $n \in \Nz$), then there exists a coloring $\tilde{i}$ of $T$ such that $\tilde{i}|_{B(v_1,n) \cup B(v_2,n)} = i|_{B(v_1,n) \cup B(v_2,n)}$ and which is $(n+1)$-valid at $v_1$ and $\infty$-valid at $v_2$. To prove the claim, first define $\tilde{i}$ on $B(v_2, n) \cup T_{v_1}$ by $\tilde{i}|_{B(v_2, n) \cup T_{v_1}} = i|_{B(v_2, n) \cup T_{v_1}}$, where $T_{v_1}$ is the subtree of $T$ spanned by the vertices which are closer to $v_1$ than to $v_2$. This is already sufficient for $\tilde{i}$ to be $(n+1)$-valid at $v_1$ and $n$-valid at $v_2$. Now suppose that $\tilde{i}$ is defined on $B(v_2, k) \cup T_{v_1}$ for some $k \geq n$. We explain how to extend it to $B(v_2, k+1) \cup T_{v_1}$ so that it becomes $(k+1)$-valid at $v_2$. Define $\tilde{T} = B(v_2, k+1)$ and denote by $G$ the image of $H(v_2)$ in $\Aut(\tilde{T})$. We have $G_{k} \supseteq \Alt(\tilde{d}-1)^{c(v_2, k)}$ (where $\tilde{d} = d_0$ or $d_1$) in view of Proposition~\ref{proposition:S^c(k)} and $G|_{B(v_2, k)} \supseteq \Alt_{(\tilde{i})}(B(v_2, k))$ since $\tilde{i}$ is $k$-valid at $v_2$. Moreover, $X_{v_1}$ is already colored (by $\tilde{i}$ too) and $\Rist_G(v_1) \supseteq \Alt_{(\tilde{i})}(X_{v_1})$ (because $i$ is $\infty$-valid at $v_1$ and $\tilde{i}|_{X_{v_1}} = i|_{X_{v_1}}$). Lemma~\ref{lemma:rooted} thus gives us an extension of $\tilde{i}$ to $\tilde{T}$ making it $(k+1)$-valid at $v_2$. The coloring $\tilde{i}$ of $T$ defined in this way by induction is $(n+1)$-valid at $v_1$ and $\infty$-valid at $v_2$.

To prove the theorem, fix $x \in V_0(T)$ and $y \in V_1(T)$ two adjacent vertices of $T$. As $\Alt_{(i)}(T)^+(x)$ and $\Alt_{(i)}(T)^+(y)$ generate $\Alt_{(i)}(T)^+$, a coloring $i$ of $T$ is such that $H \supseteq \Alt_{(i)}(T)^+$ if and only if $i$ is $\infty$-valid at $x$ and $y$. Let us construct such a coloring. By Proposition~\ref{proposition:S^c(k)} and Lemma~\ref{lemma:rooted}, there exists a coloring $i_1$ of $T$ which is $\infty$-valid at $x$. As all colorings, $i_1$ is $1$-valid at $y$. Using the claim, construct $i_{n+1}$ from $i_n$ with $i_{n+1}|_{B(x, n) \cup B(y,n)} = i_n|_{B(x, n) \cup B(y,n)}$ for each $n \geq 1$. For $n$ odd, $i_n$ is $\infty$-valid at $x$ and $n$-valid at $y$; while for $n$ even, $i_n$ is $n$-valid at $x$ and $\infty$-valid at $y$. There is now a natural way to define our coloring $i$ of $T$: for each $v \in V(T)$, set $i(v) = i_n(v)$ where $n$ is such that $v \in B(x,n) \cup B(y,n)$. By construction, $i$ is $\infty$-valid at $x$ and $y$.
\end{proof}


\section{A list of examples, simplicity and normalizers}\label{section:simple}

In this section, we define all the groups that will appear in our classification theorems and analyze some of their properties, for instance their simplicity.


\subsection{Definition of the examples}\label{subsection:definitions}

We first recall the definitions of the groups appearing in the introduction and also define new similar groups. The fact that they are indeed groups follows from Lemma~\ref{lemma:sigma}.


\begin{definition}\label{definition:groups++}
Let $T$ be the $(d_0,d_1)$-semiregular tree with $d_0,d_1 \geq 4$ and let $i$ be a legal coloring of $T$. When $v \in V(T)$ and $X$ is a subset of $\N$, we set $S_X(v) := \bigcup_{r \in X}S(v,r)$. The notation $X \subset_f \N$ means that $X$ is a \textit{non-empty finite subset} of $\N$. We also write $\Sgn_{(i)}(g,A) := \prod_{w \in A} \sgn(\sigma_{(i)}(g,w))$ when $A$ is a finite subset of $V(T)$ and $g \in \Aut(T)$.

First set $G_{(i)}^+(\varnothing, \varnothing) := \Aut(T)^+$. Then, for $X \subset_f \N$, define
$$G_{(i)}^+(X, \varnothing) := \left\{g \in \Aut(T)^+ \suchthat \Sgn_{(i)}(g,S_X(v)) = 1 \text{ for each $v \in V_{t}(T)$}\right\}$$
and
$$G_{(i)}^+(X^*, \varnothing) := \left\{g \in \Aut(T)^+ \suchthat 
\text{All } \Sgn_{(i)}(g,S_X(v)) \text{ with $v \in V_t(T)$ are equal}\right\},$$
where $t = (\max X) \bmod 2$. The groups $G_{(i)}^+(\varnothing, X)$ and $G_{(i)}^+(\varnothing, X^*)$ are defined in the same way but with $t = (1 + \max X) \bmod 2$. For $X_0,X_1 \subset_f \N$ and $Y_0 \in \{X_0, X_0^*\}$, $Y_1 \in \{X_1,X_1^*\}$, define
$$G_{(i)}^+(Y_0,Y_1) := G_{(i)}^+(Y_0,\varnothing) \cap G_{(i)}^+(\varnothing,Y_1).$$
Finally, for $X_0,X_1 \subset_f \N$, set
$$G_{(i)}^+(X_0,X_1)^* := \left\{g \in \Aut(T)^+ \suchthat \begin{array}{c}
\text{All } \Sgn_{(i)}(g,S_{X_0}(v)) \text{ with $v \in V_{t_0}(T)$ and}\\
\Sgn_{(i)}(g,S_{X_1}(v)) \text{ with $v \in V_{t_1}(T)$ are equal}
\end{array}\right\},$$
where $t_0 = (\max X_0) \bmod 2$ and $t_1 = (1 + \max X_1) \bmod 2$.

We write $\mathcal{G}_{(i)}$ for the set of all these groups. Two groups are considered as different in this definition as soon as they have a different name, but two different groups may have exactly the same elements. We also define the following subsets $\mathcal{S}_{(i)}$ and $\mathcal{N}_{(i)}$ of $\mathcal{G}_{(i)}$, so that $\mathcal{G}_{(i)} = \mathcal{S}_{(i)} \sqcup \mathcal{N}_{(i)}$:
$$\mathcal{S}_{(i)} := \left\{G_{(i)}^+(\varnothing, \varnothing), G_{(i)}^+(X_0,\varnothing), G_{(i)}^+(\varnothing,X_1), G_{(i)}^+(X_0,X_1) \suchthat X_0, X_1 \subset_f \N\right\},$$
\vspace{-0.6cm}
\begin{multline*}
\mathcal{N}_{(i)} := \left\{G_{(i)}^+(X_0^*, \varnothing), G_{(i)}^+(\varnothing,X_1^*), G_{(i)}^+(X_0^*,X_1), G_{(i)}^+(X_0,X_1^*),\right. \\ \left. G_{(i)}^+(X_0^*,X_1^*), G_{(i)}^+(X_0,X_1)^* \suchthat X_0, X_1 \subset_f \N\right\}.
\end{multline*}
Finally, denote by $s \colon \mathcal{N}_{(i)} \to \mathcal{S}_{(i)}$ the map which simply erases the stars $^*$. Our remark on the groups which are considered as different in $\mathcal{G}_{(i)}$ is essential for $s$ to be well-defined.
\end{definition}


\begin{lemma}[Theorem~\ref{maintheorem:simple} (i)]\label{lemma:U(Alt)transitive}
Let $H \in \mathcal{G}_{(i)}$. Then $H$ belongs to $\mathcal{H}_T^+$.
\end{lemma}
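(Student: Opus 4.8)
The plan is to verify, for an arbitrary $H \in \mathcal{G}_{(i)}$, the three defining properties of membership in $\mathcal{H}_T^+$: that $H$ is a subgroup of $\Aut(T)^+$, that it is closed, and that it acts $2$-transitively on $\partial T$. The first is immediate, since every group in Definition~\ref{definition:groups++} is by construction contained in $\Aut(T)^+$ and is a subgroup by the remark following that definition (which rests on Lemma~\ref{lemma:sigma}). The whole argument then hinges on one observation: every $H \in \mathcal{G}_{(i)}$ contains $\Alt_{(i)}(T)^+$. Indeed, if $g \in \Alt_{(i)}(T)^+$ then $\sigma_{(i)}(g,w)$ is even for every $w \in V(T)$, so $\sgn(\sigma_{(i)}(g,w)) = 1$ and hence $\Sgn_{(i)}(g,A) = 1$ for every finite $A \subseteq V(T)$; consequently all the ``product equals $1$'' and all the ``these products are equal'' conditions appearing in Definition~\ref{definition:groups++} hold trivially for $g$.

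For closedness, I would first note that for fixed $w$ the map $g \mapsto \sigma_{(i)}(g,w)$ depends only on the restriction of $g$ to $B(w,1)$, hence is locally constant (continuous) on $\Aut(T)$; therefore so is $g \mapsto \Sgn_{(i)}(g, S_X(v))$ for any $v$ and any finite $X$, as it factors through the restriction of $g$ to a fixed finite subtree. Each condition defining a group of $\mathcal{G}_{(i)}$ --- either $\Sgn_{(i)}(g,S_X(v)) = 1$ for all $v$ of a prescribed type, or the requirement that a prescribed family of such signs all coincide --- is thus a closed condition on $g$, being an intersection over the relevant vertices $v$ of preimages of (clopen) subsets of $\{\pm 1\}$ under continuous maps. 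Since $\Aut(T)^+$ is itself closed (it has index at most $2$ in $\Aut(T)$), each $H \in \mathcal{G}_{(i)}$ is an intersection of closed sets and hence closed.

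It remains to establish $2$-transitivity on $\partial T$, where the real work lies. Because $H \supseteq \Alt_{(i)}(T)^+$, it suffices to prove that $\Alt_{(i)}(T)^+$ is already $2$-transitive on $\partial T$: any overgroup inside $\Aut(T)$ then inherits transitivity on ordered pairs of distinct ends. By Lemma~\ref{lemma:2-transitive} this reduces to showing that $\Alt_{(i)}(T)^+(v)$ is transitive on $\partial T$ for each $v$. As $\Alt_{(i)}(T)^+$ is closed, $\Alt_{(i)}(T)^+(v)$ is compact, so by a standard compactness argument it is enough to prove, for each $n$, transitivity of $\Alt_{(i)}(T)^+(v)$ on the sphere $S(v,n)$: an automorphism fixing $v$ and carrying $w \in S(v,n)$ to $w' \in S(v,n)$ automatically matches the geodesics $[v,w]$ and $[v,w']$, and a limit of a suitable subsequence of such maps as $n \to \infty$ carries one end to the other. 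The main obstacle is thus the explicit construction, given $w, w' \in S(v,n)$, of an element $g \in \Alt_{(i)}(T)^+$ with $g(v) = v$ and $g(w) = w'$. I would build $g$ along the two geodesics, choosing at each interior geodesic vertex an \emph{even} permutation of the colours of its neighbours realising the two prescribed colour-constraints (incoming and outgoing) --- possible because $\Alt(m)$ is $2$-transitive for $m \geq 4$ and every vertex has degree $d_0$ or $d_1$, both $\geq 4$ --- then a single colour-constraint at $v$, at $w$, and at every vertex off the geodesic, which is realisable by an even local permutation since $d_0,d_1 \geq 3$. The resulting $g$ is type-preserving and has even local action at every vertex, so $g \in \Alt_{(i)}(T)^+$, which completes the verification.
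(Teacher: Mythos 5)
Your proposal is correct and takes essentially the same route as the paper: both rest on the observations that every $H \in \mathcal{G}_{(i)}$ is closed and contains $\Alt_{(i)}(T)^+$, reduce via Lemma~\ref{lemma:2-transitive} to transitivity of $\Alt_{(i)}(T)^+(v)$ on $\partial T$, and settle this by realising prescribed local data with even permutations, using $d_0,d_1 \geq 4$. The only difference is packaging --- the paper shows the fixator of a geodesic $(v,w)$ acts transitively on $E(w)\setminus\{e\}$ (transitivity of $\Alt(d-1)$) and lets closedness finish, while you prove transitivity on each sphere $S(v,n)$ (via $2$-transitivity of $\Alt(d)$ at interior geodesic vertices) and pass to a limit by compactness --- and your explicit verification that the defining sign conditions are closed, which the paper leaves implicit, is sound.
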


\begin{proof}
All the groups $H \in \mathcal{G}_{(i)}$ contain $\Alt_{(i)}(T)^+ = G_{(i)}^+(\{0\},\{0\})$ and are closed in $\Aut(T)$, so it suffices to prove that $\Alt_{(i)}(T)^+$ is $2$-transitive on $\partial T$. By Lemma~\ref{lemma:2-transitive}, it is equivalent to showing that $\Alt_{(i)}(T)^+(v)$ is transitive on $\partial T$ for each $v \in V(T)$. As $\Alt_{(i)}(T)^+$ is closed, we can just show that the fixator in $\Alt_{(i)}(T)^+$ of a geodesic $(v,w)$ with $v,w \in V(T)$ always acts transitively on $E(w) \setminus \{e\}$, where $e$ is the edge of $(v,w)$ adjacent to $w$. This is immediate, since $\Alt(d-1)$ is transitive when $d \geq 4$.
\end{proof}

Given $H \in \mathcal{G}_{(i)}$ and $h \in \Aut(T)^+$, it is not hard to determine whether $h$ belongs to $H$. Indeed, one can simply draw the tree $T$ and label each vertex $v$ of $T$ with the letter $e$ (for \textit{even}) or $o$ (for \textit{odd}) depending on the parity of $\sigma_{(i)}(h,v)$. A condition on the value of $\Sgn_{(i)}(h, S_X(v))$ then translates in a condition on the parity of the number of vertices labelled by $o$ in $S_X(v)$.

Using this observation, we can easily construct elements of $H$ step by step. For example, consider $H = G_{(i)}^+(X_0,X_1^*)$. Let us observe how one can construct any labelling of $T$ that satisfies the condition of being in $H$, i.e. such that if $h \in \Aut(T)^+$ realizes this labelling, then $h \in H$. First fix a vertex $v_0 \in V(T)$. For $n \in \N$ and given a labelling of $B(v_0, n-1)$ (if $n \neq 0$), we look at how it can be extended to a labelling of $B(v_0, n)$ while satisfying the conditions for being in $H$.  Suppose we already have a labelling of $B(v_0, n-1)$ not contradicting any of the conditions. Let $t \in \{0,1\}$ be the type of the vertices of $S(v_0,n)$. If $n < \max X_t$, then there is no set $S_{X_0}(v)$ or $S_{X_1}(v)$ contained in $B(v_0,n)$ but not already contained in $B(v_0,n-1)$, so the labelling can be extended with no restriction. On the contrary, if $n \geq \max X_t$, then our new labelling must satisfy some additional conditions: the ones on the set $S_{X_t}(v)$ where $v$ is a vertex at distance $n-\max X_t$ from $v_0$. But $\{S_{X_t}(v) \cap S(v_0,n) \suchthat d(v,v_0) = n-\max X_t\}$ is a partition of $S(v_0,n)$, so there is only one condition on the parity of the number of labels $o$ on each set $S_{X_t}(v) \cap S(v_0,n)$. If $t = 0$ (recall that we consider $H = G_{(i)}^+(X_0,X_1^*)$), we just have to make sure that there is an even number of vertices labelled by $o$ in $S_{X_0}(v)$. If $t = 1$, then we distinguish the following two cases. If this is the first time (of the whole process) that we observe a set of the form $S_{X_1}(v)$, then we can still make the choice of the parity of the number of labels $o$ in $S_{X_1}(v)$. Otherwise, this parity must be the same as for this first choice. In all cases, we still have a lot of freedom in our choice of the new labelling. A labelling of $T$ constructed in this way will always be suitable, since everything was made for the conditions to be met.


\subsection{Simplicity}

It is clear that each group $H \in \mathcal{N}_{(i)}$ has $s(H)$ as a proper normal subgroup, and is therefore not simple. Our next goal is to prove that the groups in $\mathcal{S}_{(i)}$ are simple. Banks, Elder and Willis \cite{Banks} provided tools to show that a group of automorphisms of trees is simple. Those happen to be exactly what we need. Note that their work is based on a generalization of Tits' Property~P (see~\cite{Titsarbres}). For $G \leq_{cl} \Aut(T)$ and $k \in \Nz$, define
$$G^{+_k} := \langle G_{k-1}(v,w) \suchthat [v,w] \in E(T)\rangle.$$
The next proposition is a combination of results of \cite{Banks}.


\begin{proposition}\label{proposition:BEW}
Let $Y_0$ and $Y_1$ be (possibly empty) finite subsets of $\N$, let $H = G_{(i)}^+(Y_0,Y_1) \in \mathcal{S}_{(i)}$ and let $M = \max(\max Y_0, \max Y_1) + 1$, where we set $\max (\varnothing) = 0$ by convention. Then $H^{+_M}$ is abstractly simple.
\end{proposition}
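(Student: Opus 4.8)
The plan is to deduce the proposition from the simplicity machinery of Banks--Elder--Willis \cite{Banks}, which applies to a closed subgroup $G \leq \Aut(T)$ as soon as one verifies three inputs: that $G$ satisfies the local independence property $P_k$ (the $k$-local refinement of Tits' Property~P from \cite{Titsarbres}), that the $G$-action on $T$ is non-degenerate (minimal, fixing neither a vertex, an edge, nor an end), and that $G^{+_k}$ is non-trivial. Specialising to $G = H = G_{(i)}^+(Y_0,Y_1)$ and $k = M$, the criterion then yields at once that $H^{+_M}$ is either trivial or abstractly simple. Thus the real task is to check the three hypotheses, of which only $P_M$ is substantial.

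I would first dispose of non-degeneracy and non-triviality. By Lemma~\ref{lemma:U(Alt)transitive} we have $H \in \mathcal{H}_T^+$, so $H$ is $2$-transitive, in particular transitive, on $\partial T$; hence $H$ fixes no end, and the convex hull of an orbit of ends being all of $T$, the action is minimal and fixes neither a vertex nor an edge. Moreover $H$ is closed, being cut out of $\Aut(T)^+$ by the closed conditions $\Sgn_{(i)}(\,\cdot\,, S_{Y_t}(v)) = 1$. For non-triviality of $H^{+_M}$, recall from Definition~\ref{definition:groups++} that $H \supseteq \Alt_{(i)}(T)^+$; fixing an edge $[v,w]$ and performing a $3$-cycle on three vertices sitting at depth $> M-1$ inside a single branch hanging off $v$ produces an element of $\Alt_{(i)}(T)^+ \subseteq H$ fixing $B(v,M-1) \cup B(w,M-1)$ pointwise, so the tube-fixator $H_{M-1}(v,w)$ is non-trivial and therefore $H^{+_M} \neq 1$.

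The heart of the matter is the property $P_M$, and here the choice $M = \max(\max Y_0, \max Y_1)+1$ is exactly what makes the argument close. Concretely, for an edge $[v,w]$ with associated half-trees $\alpha \ni v$ and $\beta \ni w$, I would show that any $g \in H_{M-1}(v,w)$ splits as a commuting product $g = g_\alpha g_\beta$ with each factor again in $H$, where $g_\alpha$ agrees with $g$ on $\alpha$ and is the identity elsewhere (this is well defined since $g$ fixes $v,w$ and hence preserves $\alpha$ and $\beta$). The point is that every defining constraint of $H$ is a sign product $\Sgn_{(i)}(\,\cdot\,, S(u,r)) $ over a single sphere of radius $r \leq \max Y_t \leq M-1$, while $g_\alpha$ has non-trivial local action only at vertices $a \in \alpha$ with $d(a,v) \geq M-1$, and $\sigma_{(i)}(g_\alpha,\cdot)$ differs in parity from $\sigma_{(i)}(g,\cdot)$ only at vertices $b \in \beta$ with $d(b,w) \geq M-1$. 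A sphere $S(u,r)$ with $r \leq M-1$ cannot contain both such an $a$ and such a $b$: the geodesic from $a$ to $b$ crosses $[v,w]$ and so has length $d(a,v)+1+d(w,b) \geq 2M-1$, whereas $d(a,b) \leq 2r \leq 2M-2$, a contradiction. Hence on each constraint sphere either $g_\alpha$ has no active vertex, so its sign product is trivially $+1$, or $g_\alpha$ and $g$ have equal sign products, which is $1$ because $g \in H$; in both cases $g_\alpha$ satisfies the constraint, giving $g_\alpha \in H$ and likewise $g_\beta = g g_\alpha^{-1} \in H$. This is precisely $P_M$.

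The main obstacle I anticipate is packaging this independence computation into whatever exact formulation of $P_k$ is used in \cite{Banks} and tracking the frontier vertices carefully so that the distance inequality above is applied at the correct radius; the inequality $2(M-1) < 2M-1$ is forgiving, but the bookkeeping of where $g$ and $g_\alpha$ can possibly have odd local action must be done with care. Once $P_M$, non-degeneracy, and non-triviality are in hand, the Banks--Elder--Willis criterion delivers that $H^{+_M}$ is abstractly simple, completing the proof.
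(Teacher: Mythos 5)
Your proof is correct, and it differs from the paper's at the one substantial step. The paper's verification of the Banks--Elder--Willis hypothesis is entirely soft: since the defining sign conditions of $H = G_{(i)}^+(Y_0,Y_1)$ only constrain $g$ on balls of radius $M$, one has $H = H^{(M)}$, whence Property $IP_M$ by \cite{Banks}*{Proposition~5.2}, then Property $P_M$ by closedness and \cite{Banks}*{Corollary~6.4}, and finally \cite{Banks}*{Theorem~7.3} together with non-trivial elements of $\Alt_{(i)}(T)^+$ fixing large balls. You instead prove the independence property by hand, splitting $g \in H_{M-1}(v,w)$ as $g_\alpha g_\beta$ and checking the sign constraints via the estimate $2(M-1) < 2M-1$; this is correct and in effect re-proves the relevant case of Proposition~5.2 of \cite{Banks}, buying self-containedness at the cost of the bookkeeping you anticipated. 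Two labeling caveats, neither a gap: the constraints are sign products over $S_{Y_t}(u) = \bigcup_{r \in Y_t} S(u,r)$, a union of spheres rather than a single one, but your argument only uses $S_{Y_t}(u) \subseteq B(u,M-1)$; and your splitting establishes verbatim $IP_M$ (the edge version, \cite{Banks}*{Definition~5.1}), not $P_M$ itself (\cite{Banks}*{Definition~6.2}, for neighbourhoods of lines) --- the passage from the former to the latter still goes through closedness and Corollary~6.4, which you have available since you noted $H$ is closed. Your explicit check of minimality and non-fixation of ends, left implicit in the paper via $H \in \mathcal{H}_T^+$, is a welcome addition.
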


\begin{proof}
Recall the following definition for $n \in \N$:
$$H^{(n)} := \{g \in \Aut(T) \suchthat \forall v \in V(T), \exists h \in H : g|_{B(v,n)} = h|_{B(v,n)}\}.$$
In our case, it is clear from the definition of $H$ that $H^{(M)} = H$. Hence, by \cite{Banks}*{Proposition~5.2}, $H$ has Property $IP_M$ (as defined in \cite{Banks}*{Definition~5.1}). Since $H$ is a closed subgroup of $\Aut(T)$, we deduce from \cite{Banks}*{Corollary~6.4} that $H$ has Property $P_M$ (as defined in \cite{Banks}*{Definition~6.2}). We can therefore apply \cite{Banks}*{Theorem~7.3} that asserts that $H^{+_M}$ is abstractly simple or trivial. Since there exist non-trivial elements in $\Alt_{(i)}(T)^+ \subseteq H$ fixing arbitrarily large balls, we conclude that $H^{+_M}$ is abstractly simple.
\end{proof}

In order to prove that a group $H \in \mathcal{S}_{(i)}$ is simple, we therefore only need to prove that $H = H^{+_M}$, where $M = \max(\max Y_0, \max Y_1) + 1$. We first assert that $H^{+_1} = H$. Note that $H^{+_1}$ is the subgroup of $H$ generated by the elements fixing an edge of $T$.


\begin{lemma}\label{lemma:H+1}
Let $H \in \mathcal{H}_T^+$ (with $d_0,d_1 \geq 3$). Then $H^{+_1} = H$.
\end{lemma}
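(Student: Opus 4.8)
The plan is to show that $H^{+_1}$, the subgroup of $H$ generated by those elements fixing at least one edge of $T$, is all of $H$. Since $H^{+_1}$ is generated by elements fixing edges, it is in particular generated by the vertex stabilizers $H(v)$ for $v \in V(T)$ — indeed any $g \in H$ fixing a vertex $v$ also fixes some incident edge whenever it fixes a neighbour, but more to the point each $H(v)$ contains elements fixing edges incident to $v$ and, crucially, is itself contained in $H^{+_1}$ once we know $H(v)$ is generated by edge-fixing elements. So the first reduction I would make is to recall (as in the proof of Lemma~\ref{lemma:trivial}) that $H$ is generated by its vertex stabilizers $H(x)$ and $H(y)$ for a single pair of adjacent vertices $x \in V_0(T)$, $y \in V_1(T)$, using that $H$ is transitive on $V_0(T)$ and on $V_1(T)$ (Lemma~\ref{corollary:2-transitive}) together with connectedness of $T$. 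Thus it suffices to prove $H(v) \subseteq H^{+_1}$ for each $v \in V(T)$.

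The heart of the argument is therefore to show that each vertex stabilizer $H(v)$ is generated by elements of $H$ fixing an edge incident to $v$. Fix $v \in V(T)$ and take any $g \in H(v)$. The image of $H(v)$ in $\Sym(E(v))$ is a $2$-transitive group $\underline{H}(v)$ (by Lemma~\ref{corollary:2-transitive}), hence in particular transitive, and $2$-transitivity forces $\underline{H}(v)$ to be generated by its point stabilizers: the stabilizer of one point $e \in E(v)$ together with a single element moving $e$ generates a transitive group, and more carefully, a transitive group of degree $\geq 3$ whose point stabilizer is nontrivial is generated by its point stabilizers. The key input is that the point stabilizers in $\underline{H}(v)$ are nontrivial, which follows because $\underline{H}(v)$ acts $2$-transitively on a set of size $d_t \geq 3$, so the stabilizer of a point acts transitively (hence nontrivially) on the remaining $\geq 2$ points. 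Concretely, I would argue: given $g \in H(v)$ inducing a permutation $\bar g$ of $E(v)$, pick an edge $e$ and an element $h \in H(v)$ fixing $e$ (such elements exist and generate the stabilizer of $e$ in $\underline{H}(v)$); since a $2$-transitive permutation group on $\geq 3$ letters is generated by its point stabilizers, $\bar g$ is a product of permutations each fixing some edge, and each such factor lifts to an element of $H(v)$ fixing the corresponding edge of $T$ (composing with a suitable element of the edge-fixing kernel $H(v,e')$, which itself lies in $H^{+_1}$). Therefore $g \in H^{+_1}$.

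I would make the generation-by-point-stabilizers step precise as follows. Let $G \leq \Sym(\Omega)$ be $2$-transitive with $|\Omega| = d \geq 3$, and let $P = \langle G_\omega \suchthat \omega \in \Omega\rangle$ be generated by the point stabilizers. Then $P \unlhd G$ since the conjugate of a point stabilizer is again a point stabilizer, and $P$ is transitive (as $G$ is and $P \neq 1$ because point stabilizers are nontrivial when $d \geq 3$); a standard Frattini-type argument gives $G = G_\omega P = P$ because $G_\omega \leq P$. Applying this with $\Omega = E(v)$ and $G = \underline{H}(v)$ shows $\underline{H}(v)$ is generated by its point stabilizers, which are precisely the images of the subgroups $H(v)$ fixing an additional incident edge. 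Lifting through the surjection $H(v) \twoheadrightarrow \underline{H}(v)$ and using that the kernel $H_1(v)$ fixes \emph{every} edge at $v$ (so lies in any $H(v,e) \subseteq H^{+_1}$), I conclude $H(v) = \langle H(v,e) \suchthat e \in E(v)\rangle \subseteq H^{+_1}$.

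The main obstacle I anticipate is ensuring that point stabilizers in $\underline{H}(v)$ are genuinely nontrivial, so that the Frattini argument applies — this is exactly where the hypothesis $d_0, d_1 \geq 3$ enters, since $2$-transitivity on a set of size $\geq 3$ guarantees the point stabilizer acts transitively (hence nontrivially) on the $\geq 2$ remaining points. The case $d = 2$ would genuinely fail, which is why the degree bound is needed. The remaining bookkeeping — lifting permutations of $E(v)$ to automorphisms in $H(v)$ fixing the chosen edge, and checking that the full kernel $H_1(v)$ sits inside $H^{+_1}$ — is routine given that $H(v)$ surjects onto $\underline{H}(v)$ and that any element fixing an edge lies in $H^{+_1}$ by definition.
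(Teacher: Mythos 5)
Your proof is correct, but it takes a different route from the paper's. The paper proves Lemma~\ref{lemma:H+1} in one line, by a Frattini argument carried out directly on the edge set of $T$: by Lemma~\ref{corollary:2-transitive} the fixator $H(v,w)$ of an edge $e = [v,w]$ is transitive on $E(v) \setminus \{e\}$, so the normal subgroup $H^{+_1} \unlhd H$, which contains every edge-fixator, has an orbit of $e$ containing all adjacent edges, hence (by connectedness of $T$) is transitive on $E(T)$; since $H(v,w) \subseteq H^{+_1}$ and $H$ is edge-transitive, the decomposition $H = H^{+_1} H(v,w)$ gives $H = H^{+_1}$. You instead decompose through vertex stabilizers --- $H = \langle H(x), H(y)\rangle$ for adjacent $x,y$, as in the proof of Lemma~\ref{lemma:trivial} --- and run the Frattini argument inside the finite $2$-transitive group $\underline{H}(v) \leq \Sym(E(v))$, concluding $H(v) = \langle H(v,w) : w \in S(v,1)\rangle \subseteq H^{+_1}$. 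Your lifting step is sound: the action of $H(v)$ on $E(v)$ factors through $\underline{H}(v)$, so any preimage of a permutation fixing $e$ fixes $e$ in $T$, and the kernel $H_1(v)$ fixes every edge at $v$. The paper's argument is shorter and stays global; yours is longer but isolates the finite-group content and makes the lifting explicit --- both ultimately rest only on Lemma~\ref{corollary:2-transitive}.

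One imprecision to repair in your write-up: the inference that $P = \langle \underline{H}(v)_e : e \in E(v)\rangle$ is transitive ``as $G$ is and $P \neq 1$'' is not valid for a merely transitive $G$, and your auxiliary assertion that a transitive group of degree $\geq 3$ with nontrivial point stabilizers is generated by its point stabilizers is false in general: $\D_8$ acting on the four vertices of a square is transitive with point stabilizers of order $2$, yet these generate only a Klein subgroup of index $2$ (and the rotation of order $2$ generates a nontrivial intransitive normal subgroup). Here $2$-transitivity rescues the step: either note that $2$-transitive groups are primitive, so every nontrivial normal subgroup is transitive, or argue directly that $P$ contains two point stabilizers $G_{\omega}$ and $G_{\omega'}$, each transitive on the complement of its fixed point, which already forces $P$ to be transitive when $d \geq 3$. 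Since you only ever apply the claim to the $2$-transitive group $\underline{H}(v)$, this is a one-line fix rather than a genuine gap.
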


\begin{proof}
The result readily follows from the fact that the fixator of an edge $e = [v,w]$ in $H$ is transitive on $E(v) \setminus \{e\}$ (by Lemma~\ref{corollary:2-transitive}).
\end{proof}

For $[v,w] \in E(T)$, we write $T_{v,w}$ for the subtree of $T$ spanned by the vertices that are closer to $v$ than to $w$. Such a subtree is called a \textbf{half-tree}.


\begin{lemma}\label{lemma:simple}
Let $H \in \mathcal{S}_{(i)}$. Then $H$ is generated by the elements of $H$ fixing a half-tree of~$T$. In particular, $H = H^{+_k}$ for any $k \in \Nz$.
\end{lemma}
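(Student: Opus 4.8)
The plan is to prove the single inclusion $H \subseteq D$, where $D$ denotes the subgroup of $H$ generated by the half-tree fixators lying in $H$; everything else then follows formally. Indeed, $D \subseteq H$ is immediate, and every half-tree fixator of $H$ already lies in $H^{+_k}$ for \emph{every} $k \in \Nz$: if $g \in H$ fixes the half-tree $T_{v,w}$ pointwise, then choosing an edge $[a,b]$ deep enough inside $T_{v,w}$ that $B(a,k-1) \cup B(b,k-1) \subseteq T_{v,w}$ (possible since $T_{v,w}$ is an infinite locally finite subtree) gives $g \in H_{k-1}(a,b) \subseteq H^{+_k}$. Hence $D \subseteq H^{+_k} \subseteq H$ for all $k$, so once we know $H \subseteq D$ we simultaneously obtain $H = D = H^{+_k}$ for every $k$, which is exactly the assertion of the lemma.

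To prove $H \subseteq D$, I would first invoke Lemma~\ref{lemma:H+1}: since $H = H^{+_1}$ is generated by the fixators $H_0(v,w)$ of edges, it suffices to show that each edge-fixator $g \in H_0(v,w)$ belongs to $D$. Fixing the edge $[v,w]$ means $g$ preserves each of the two half-trees $T_{v,w}$ and $T_{w,v}$, so $g$ factors as $g = g_{v}g_{w}$, where $g_{v}$ acts as $g$ on $T_{v,w}$ and trivially on $T_{w,v}$, and $g_{w}$ does the opposite. Each factor fixes a half-tree pointwise, hence \emph{would} be a generator of $D$ --- provided it lies in $H$. The only possible failure is in the defining sign conditions of $H$: one has $\sigma_{(i)}(g_v,z)=\sigma_{(i)}(g,z)$ for $z$ interior to $T_{v,w}$ and $\sigma_{(i)}(g_v,z)=\id$ for $z$ in $T_{w,v}$, so every defining condition $\Sgn_{(i)}(g_v, S_{X}(u)) = 1$ (with $X \in \{X_0, X_1\}$) whose sphere-union $S_{X}(u)$ lies entirely on one side of the edge reduces to the corresponding condition for $g$, or is vacuous, and therefore holds. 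The conditions can only break at the \emph{finitely many} vertices $u$ within a bounded distance of the edge for which $S_{X}(u)$ straddles $[v,w]$.

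The heart of the argument is then a \textbf{correction step}: I would construct half-tree fixators $c_v, c_w \in H$, supported in $T_{v,w}$ and $T_{w,v}$ respectively, such that $c_v g_v, c_w g_w \in H$. Granting this, $c_v g_v$ and $c_w g_w$ still fix the half-trees $T_{w,v}$ and $T_{v,w}$ and now lie in $H$, so they belong to $D$; since $c_v, c_w \in D$ as well, $g = c_v^{-1}(c_v g_v)\,c_w^{-1}(c_w g_w) \in D$, completing the reduction. To build $c_v$ (and symmetrically $c_w$) I would use the explicit step-by-step construction of elements of $H$ described after Lemma~\ref{lemma:U(Alt)transitive}, together with the inclusion $\Alt_{(i)}(T)^+ \subseteq H$: the straddling conditions impose only a finite system of $\F_2$-linear parity constraints near the edge, and the required parities can be prescribed one-sidedly and then completed to a genuine element of $H$ by inserting compensating odd local actions far inside the half-tree, where $\Alt_{(i)}(T)^+$ leaves complete freedom. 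The consistency needed to choose $c_v$ and $c_w$ \emph{simultaneously} is guaranteed by the fact that $g$ itself satisfies the conditions of $H$, which couples the two sides' defects at each straddling $u$.

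The step I expect to be the main obstacle is precisely this correction step, and it is also the place where the hypothesis $H \in \mathcal{S}_{(i)}$ (rather than $H \in \mathcal{N}_{(i)}$) is indispensable. For groups in $\mathcal{S}_{(i)}$ the defining requirement is that each product $\Sgn_{(i)}(g,S_{X}(u))$ \emph{equals} $1$, a condition that decouples over regions of $T$ and constrains only finitely many spheres near any given edge; this is exactly what allows a half-tree fixator to realize the local parity pattern of $g$. By contrast, the groups in $\mathcal{N}_{(i)}$ are defined by requiring these products to be mutually \emph{equal} (possibly to $-1$), a global coupling of arbitrarily distant spheres that no half-tree fixator --- whose action is trivial, hence of sign $1$, on all far-away spheres --- can satisfy; this is why the lemma, and the ensuing simplicity statement, are restricted to $\mathcal{S}_{(i)}$.
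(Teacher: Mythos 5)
Your reduction via Lemma~\ref{lemma:H+1} to edge fixators and your observation that any half-tree fixator lies in $H^{+_k}$ for every $k$ are both correct, but the correction step at the heart of your argument cannot work, for a reason that is group-theoretic rather than technical: you require $c_v \in H$ and $c_v g_v \in H$ simultaneously, and since $H$ is a group this already forces $g_v = c_v^{-1}(c_v g_v) \in H$. Put differently, by Lemma~\ref{lemma:sigma} one has, for any $c \in H$ and any relevant vertex $u$, $\Sgn_{(i)}(c g_v, S_X(u)) = \Sgn_{(i)}(c, S_X(g_v(u))) \cdot \Sgn_{(i)}(g_v, S_X(u)) = \Sgn_{(i)}(g_v, S_X(u))$, so multiplication by elements of $H$ leaves the parity defects of $g_v$ completely untouched; no element of $H$, half-tree fixator or otherwise, can absorb them. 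And the defects are genuinely non-trivial: take $H = G_{(i)}^+(\{1\},\varnothing)$ and (using the step-by-step labelling construction of Subsection~\ref{subsection:definitions}) an element $g \in H$ fixing the edge $[v,w]$ with $\sigma_{(i)}(g,v)$ odd, compensated inside $T_{w,v}$ by an odd $\sigma_{(i)}(g,z_0)$ at a neighbour $z_0$ of $w$. Then $\Sgn_{(i)}(g_v, S(w,1)) = \sgn(\sigma_{(i)}(g,v)) = -1$, so $g_v \notin H$, and your decomposition $g = c_v^{-1}(c_v g_v)\,c_w^{-1}(c_w g_w)$ cannot be realized with all four factors in $H$. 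The sentence claiming that the consistency of the two corrections ``is guaranteed by the fact that $g$ itself satisfies the conditions of $H$'' is exactly where the circularity hides.

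The repair --- and this is what the paper actually does --- is to abandon the rigid two-sided splitting across $[v,w]$: instead of an element trivial on $T_{w,v}$ plus a separate correction, one builds a \emph{single} element $g' \in H$ with $g'|_{T_{v,w}} = h|_{T_{v,w}}$ whose labelled tree on $T_{w,v}$ carries the compensating odd local actions itself, and then writes $h = (h g'^{-1})g'$, where $hg'^{-1} \in H$ fixes $T_{v,w}$ pointwise and $g'$ fixes not $T_{w,v}$ but some half-tree strictly inside it. The content of the paper's proof is precisely that this is possible: the parity defects that must propagate into $T_{w,v}$ can be routed along the edges numbered $1$, so that moving away along edges numbered $2$ one finds balls $B(s_j,j)$ labelled entirely by $e$, and hence a half-tree $T_{s_M,s_{M-1}}$ on which $g'$ may be chosen to act trivially. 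Your intuition about ``inserting compensating odd local actions far inside the half-tree'' is the right one, but those compensations must live inside the extension of $h|_{T_{v,w}}$ itself, not in an auxiliary element of $H$. Your closing paragraph explaining why the lemma must fail for $\mathcal{N}_{(i)}$ is correct and is a good sanity check, but the main argument as proposed contains an unfixable obstruction at its key step.
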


\begin{proof}
We already know by Lemma~\ref{lemma:H+1} that $H = \langle H(v,w) \suchthat [v,w] \in E(T)\rangle$. Let us now prove that each $h \in H(v,w)$ (for some $[v,w] \in E(T)$) is generated by elements of $H$ fixing a half-tree of $T$. We construct an element $g \in H$ such that $g|_{T_{v,w}} = h|_{T_{v,w}}$ and $g$ fixes some half-tree of $T$. This will prove the statement as $h = (hg^{-1})g$.

First define $g$ on $T_{v,w}$ by declaring that $g|_{T_{v,w}} = h|_{T_{v,w}}$. Now look at the labelled tree associated to $g$: for the moment, all the vertices of $T_{v,w}$ are labelled by $e$ or $o$. We also label all the vertices of $T_{w,v} \cap B(w,M-1)$ where $M = \max(\max Y_0, \max Y_1)+1$ exactly as in the labelled tree associated to $h$. Since $h \in H$, all the conditions to be in $H$ which concern $T_{v,w} \cup B(w,M-1)$ are satisfied.

We now want to put new labellings on $S(w,n) \cap T_{w,v}$ for each $n \geq M$. Before doing so, we number the edges of $T_{w,v}$ in the following way: if $x$ is a vertex at distance $D$ from $w$, the edges from $x$ to a vertex at distance $D+1$ from $w$ are numbered with $1, 2, \ldots, d_0-1$ (or $d_1-1$). Let us now label the whole tree with $e$ and $o$. As already explained in Subsection~\ref{subsection:definitions}, at each step there will be conditions on the parity of the number of labels $o$ in sets of the form $S_X(x)$. More precisely, if we look at $S(w,n)$ (for $n \geq M$), then either there is no new condition to satisfy (because of the symbol $\varnothing$ in $H$), or there is a condition on each set of the form $S(w,n) \cap S(x,\max X)$ (where $X = X_0$ or $X = X_1$) with $x$ is at distance $n-\max X$ from $w$. If there is no condition then we label all the vertices of $S(w,n) \cap T_{w,v}$ by $e$. Otherwise, in each set $S(w,n) \cap S(x,\max X)$ with $x$ at distance $n-\max X$ from $w$, the number of vertices labelled by $o$ must be either even or odd (depending on the previous labellings). If it must be even, we label all the vertices of $S(w,n) \cap S(x,\max X)$ by $e$. If it must be odd, we label by $o$ the vertex $z$ of $S(w,n) \cap S(x,\max X)$ such that the path from $x$ to $z$ only contains edges numbered $1$. All the other vertices are labelled by $e$ (see Figure~\ref{picture:labelling} where $n = 3$ and $\max X = 2$).

\begin{figure}
\centering
\begin{pspicture*}(-6,-3.3)(6,1.9)
\fontsize{10pt}{10pt}\selectfont
\psset{unit=1cm}

\psline(0,1)(-2,2)
\psline(0,1)(0,2)
\psline(0,1)(2,2)

\psline(0,0)(0,1)

\psline(0,0)(-3,-1) \rput(-1.9,-0.4){$1$}
\psline(0,0)(0,-1) \rput(0.17,-0.6){$2$}
\psline(0,0)(3,-1) \rput(1.9,-0.4){$3$}

\psline(-3,-1)(-4,-2) \rput(-3.7,-1.4){$1$}
\psline(-3,-1)(-3,-2) \rput(-2.83,-1.7){$2$}
\psline(-3,-1)(-2,-2) \rput(-2.3,-1.4){$3$}

\psline(0,-1)(-1,-2) \rput(-0.7,-1.4){$1$}
\psline(0,-1)(0,-2) \rput(0.17,-1.7){$2$}
\psline(0,-1)(1,-2) \rput(0.7,-1.4){$3$}

\psline(3,-1)(2,-2) \rput(2.3,-1.4){$1$}
\psline(3,-1)(3,-2) \rput(3.17,-1.7){$2$}
\psline(3,-1)(4,-2) \rput(3.7,-1.4){$3$}

\psline(-4,-2)(-4.3,-3) \rput(-4.33,-2.5){$1$} \rput(-4.3,-3.2){$o$}
\psline(-4,-2)(-4,-3) \rput(-4,-3.2){$e$}
\psline(-4,-2)(-3.7,-3) \rput(-3.67,-2.5){$3$} \rput(-3.7,-3.2){$e$}
\psline(-3,-2)(-3.3,-3) \rput(-3.33,-2.5){$1$} \rput(-3.3,-3.2){$e$}
\psline(-3,-2)(-3,-3) \rput(-3,-3.2){$e$}
\psline(-3,-2)(-2.7,-3) \rput(-2.67,-2.5){$3$} \rput(-2.7,-3.2){$e$}
\psline(-2,-2)(-2.3,-3) \rput(-2.33,-2.5){$1$} \rput(-2.3,-3.2){$e$}
\psline(-2,-2)(-2,-3) \rput(-2,-3.2){$e$}
\psline(-2,-2)(-1.7,-3) \rput(-1.67,-2.5){$3$} \rput(-1.7,-3.2){$e$}

\psline(-1,-2)(-1.3,-3) \rput(-1.33,-2.5){$1$} \rput(-1.3,-3.2){$e$}
\psline(-1,-2)(-1,-3) \rput(-1,-3.2){$e$}
\psline(-1,-2)(-0.7,-3) \rput(-0.67,-2.5){$3$} \rput(-0.7,-3.2){$e$}
\psline(0,-2)(-0.3,-3) \rput(-0.33,-2.5){$1$} \rput(-0.3,-3.2){$e$}
\psline(0,-2)(0,-3) \rput(0,-3.2){$e$}
\psline(0,-2)(0.3,-3) \rput(0.33,-2.5){$3$} \rput(0.3,-3.2){$e$}
\psline(1,-2)(0.7,-3) \rput(0.67,-2.5){$1$} \rput(0.7,-3.2){$e$}
\psline(1,-2)(1,-3) \rput(1,-3.2){$e$}
\psline(1,-2)(1.3,-3) \rput(1.33,-2.5){$3$} \rput(1.3,-3.2){$e$}

\psline(2,-2)(1.7,-3) \rput(1.67,-2.5){$1$} \rput(1.7,-3.2){$o$}
\psline(2,-2)(2,-3) \rput(2,-3.2){$e$}
\psline(2,-2)(2.3,-3) \rput(2.33,-2.5){$3$} \rput(2.3,-3.2){$e$}
\psline(3,-2)(2.7,-3) \rput(2.67,-2.5){$1$} \rput(2.7,-3.2){$e$}
\psline(3,-2)(3,-3) \rput(3,-3.2){$e$}
\psline(3,-2)(3.3,-3) \rput(3.33,-2.5){$3$} \rput(3.3,-3.2){$e$}
\psline(4,-2)(3.7,-3) \rput(3.67,-2.5){$1$} \rput(3.7,-3.2){$e$}
\psline(4,-2)(4,-3) \rput(4,-3.2){$e$}
\psline(4,-2)(4.3,-3) \rput(4.33,-2.5){$3$} \rput(4.3,-3.2){$e$}

\rput(0.22,0.85){$v$}
\rput(-0.22,0.85){$e$}
\rput(0.22,0.15){$w$}

\rput(-3.1,-0.8){$e$}
\rput(-0.22,-0.9){$o$}
\rput(3.1,-0.8){$e$}

\end{pspicture*}
\caption{Illustration of Lemma~\ref{lemma:simple} for $H = G_{(i)}^+(\{2\},\{2\})$.}\label{picture:labelling}
\end{figure}

We claim that, after having followed these rules to label the whole tree, there will always exists a half-tree $T_{s,t}$ whose vertices are all labelled by $e$ (with $s, t \in T_{w,v}$ and $t$ closer to $w$ than $s$). This will complete the proof, since it is always possible to define $g$ on $T_{w,v}$ such that $g$ fixes the whole path from $w$ to $t$, fixes $T_{s,t}$, and realizes the labelled tree that we just constructed. (Note that we need $d_0,d_1 \geq 4$ to have sufficient freedom.)

Let us prove the claim. Let $s_0$ be a vertex of $T_{w,v} \cap S(w,M)$ labelled by $e$. Define $(s_n)_{n \in \N}$ by saying that $s_j$ is the vertex adjacent to $s_{j-1}$ farther from $w$ than $s_j$ and such that $[s_{j-1}, s_j]$ is numbered $2$. We show by induction that, for each $j \in \N$, the ball $B(s_j, j)$ only contains vertices labelled by $e$. For $j = 0$ this is clear. Now assume that all the vertices of $B(s_j,j)$ are labelled by $e$ and look at the ball $B(s_{j+1}, j+1)$. All the vertices of $B(s_{j+1}, j+1) \cap B(s_j,j)$ are labelled by $e$, so we only need to observe $B(s_{j+1}, j+1) \setminus B(s_j,j) = T_{s_{j+1},s_j} \cap (S(s_{j+1},j) \cup S(s_{j+1},j+1))$. The labels of the vertices of $T_{s_{j+1},s_j} \cap S(s_{j+1},j) =: A$ were determined according to some eventual conditions on sets of the form $S_X(x)$. If there are no such conditions, then all the vertices of $A$ were labelled by $e$ as wanted. Otherwise, there are two cases: either $\max X \leq j$ or $\max X > j$. If $\max X \leq j$, then $S_X(x) \setminus A \subseteq B(s_{j},j)$ so all the vertices of $S_X(x) \setminus A$ are labelled by $e$ and the vertices of $A$ were therefore also labelled by $e$. If $\max X > j$, then the condition on $S_X(x)$ may have been to put a label $o$ somewhere, but in any case this label $o$ was not put in $A$ since $[s_{j},s_{j+1}]$ is numbered~$2$ (and not~$1$). So all the vertices of $A$ were labelled by $e$. The reasoning is exactly the same for $T_{s_{j+1},s_j} \cap S(s_{j+1},j+1) =: A'$. This means that $B(s_M, M)$ only contains vertices labelled by $e$. Hence, in $T_{s_M, s_{M-1}}$ there is no condition on a set $S_X(x)$ asking to label a vertex by $o$ (because $\max X < M$). All the vertices of the half-tree $T_{s_M, s_{M-1}}$ are thus labelled by $e$.
\end{proof}


\begin{theorem}[Theorem~\ref{maintheorem:simple} (ii)]\label{theorem:simple}
Let $H \in \mathcal{S}_{(i)}$. Then $H$ is abstractly simple.
\end{theorem}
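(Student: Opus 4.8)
The plan is to deduce the statement in essentially one line by combining the two immediately preceding results. Writing an arbitrary $H \in \mathcal{S}_{(i)}$ in the uniform shape $G_{(i)}^+(Y_0,Y_1)$, where $Y_0,Y_1$ are (possibly empty) finite subsets of $\N$, I would first set $M = \max(\max Y_0, \max Y_1) + 1$ with the convention $\max(\varnothing) = 0$. This places $H$ exactly in the hypotheses of Proposition~\ref{proposition:BEW}, which already asserts that $H^{+_M}$ is abstractly simple.

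The only remaining point is to identify $H$ with $H^{+_M}$. This is precisely the content of Lemma~\ref{lemma:simple}: every $H \in \mathcal{S}_{(i)}$ satisfies $H = H^{+_k}$ for all $k \in \Nz$, hence in particular $H = H^{+_M}$. Therefore $H$ coincides with the abstractly simple group $H^{+_M}$, which is exactly the assertion to be proved.

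In short, Theorem~\ref{theorem:simple} is just the assembly $H = H^{+_M}$ \emph{is abstractly simple}, and I expect no genuine obstacle at this stage. All the real work has already been done: the hardest ingredient is Lemma~\ref{lemma:simple}, whose labelling argument shows that $H$ is generated by the fixators of half-trees and is thus equal to each $H^{+_k}$; the other ingredient is the Banks--Elder--Willis simplicity machinery invoked in Proposition~\ref{proposition:BEW}, which passes from the $M$-closedness $H^{(M)} = H$ through Property $IP_M$ and Property $P_M$ to simplicity of $H^{+_M}$. Since both of these are established beforehand, the proof of the theorem itself merely combines them and records that $\Alt_{(i)}(T)^+ \subseteq H$ supplies the nontrivial elements needed to rule out the degenerate (trivial) case.
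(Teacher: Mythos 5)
Your proposal is correct and coincides with the paper's own proof, which derives Theorem~\ref{theorem:simple} in exactly this way: Proposition~\ref{proposition:BEW} gives abstract simplicity of $H^{+_M}$, and Lemma~\ref{lemma:simple} gives $H = H^{+_M}$. Your remark that $\Alt_{(i)}(T)^+ \subseteq H$ rules out the trivial case correctly identifies where that observation enters (inside the proof of Proposition~\ref{proposition:BEW}), so nothing is missing.
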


\begin{proof}
This follows from Proposition~\ref{proposition:BEW} and Lemma~\ref{lemma:simple}.
\end{proof}


\subsection{Are these examples pairwise distinct?}

As highlighted in Definition~\ref{definition:groups++}, it is not clear for the moment if the members of $\mathcal{G}_{(i)}$ are pairwise different. One can actually remark that this is not the case: for instance, if $X_0, X_1 \subset_f \N$ are such that $\max X_0 \not \equiv \max X_1 \bmod 2$ and $\max X_0 < \max X_1$, then $G_{(i)}^+(X_0,X_1) = G_{(i)}^+(X_0,X_1 \triangle X_0)$ and $G_{(i)}^+(X_0,X_1)^* = G_{(i)}^+(X_0^*,X_1 \triangle X_0)$, where $\triangle$ denotes the symmetric difference. For this reason, we introduce the following definition.


\begin{definition}\label{definition:classification}
Let $T$ be the $(d_0,d_1)$-semiregular tree with $d_0,d_1 \geq 4$ and let $i$ be a legal coloring of~$T$. Say that $X_0, X_1 \subset_f \N$ are \textbf{compatible} if for each $x \in X_t$ (with $t \in \{0,1\}$), if $x \geq \max X_{1-t}$ then $x \equiv \max X_t \bmod 2$. Define $\underline{\mathcal{G}}_{(i)}$ to be the set containing the following groups:
\begin{itemize}
\item $G_{(i)}^+(Y_0, Y_1)$, where $Y_0 \in \{\varnothing, X_0, X_0^*\}$, $Y_1 \in \{\varnothing, X_1, X_1^*\}$, $X_0, X_1 \subset_f \N$ and, if $Y_0 \neq \varnothing$ and $Y_1 \neq \varnothing$, then $X_0$ and $X_1$ are compatible;
\item $G_{(i)}^+(X_0, X_1)^*$, where $X_0, X_1 \subset_f \N$ are compatible.
\end{itemize}
\end{definition}

We then have the following result.


\begin{proposition}\label{proposition:different}
The members of $\underline{\mathcal{G}}_{(i)}$ are pairwise different.
\end{proposition}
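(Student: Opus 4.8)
The plan is to translate the problem into $\F_2$-linear algebra on parity labellings. To each $g \in \Aut(T)^+$ attach the labelling $\ell_g \in \F_2^{V(T)}$ with $\ell_g(v) = 0$ or $1$ according as $\sigma_{(i)}(g,v)$ is even or odd. Every defining relation in Definition~\ref{definition:groups++} is a condition on the sums $\sum_{w \in S_X(v)} \ell_g(w)$, hence depends on $g$ only through $\ell_g$. Moreover $g \mapsto \ell_g$ is onto $\F_2^{V(T)}$: after fixing a vertex $v_0$, the local actions $\sigma_{(i)}(g,v)$ are free coordinates for the stabiliser of $v_0$, and since $d_0,d_1 \geq 4$ each prescribed parity is realisable at each vertex. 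Consequently every $H \in \underline{\mathcal{G}}_{(i)}$ is the full preimage of the $\F_2$-subspace $\mathcal{L}_H := \{\ell_g : g \in H\} \leq \F_2^{V(T)}$ cut out by its defining relations, and $H = H'$ if and only if $\mathcal{L}_H = \mathcal{L}_{H'}$. It therefore suffices to separate the subspaces $\mathcal{L}_H$, either by exhibiting a labelling in one but not the other, or by comparing invariants.

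To make this finite-dimensional I would use $N$-closedness. Each such $H$ contains $\Alt_{(i)}(T)^+$, so by Theorem~\ref{maintheorem:completeinvariants} there is a least $N = N(H)$ with $H = H^{(N)}$; this number depends only on $H$ as a subgroup, so groups with different $N$ are already distinct. For $H,H'$ sharing a common value $N$, I would compare the finite-dimensional local pattern spaces $\mathcal{P}_t(H) := \{\ell|_{B(v,N)} : \ell \in \mathcal{L}_H\} \leq \F_2^{B(v,N)}$ for $v \in V_t(T)$ (independent of the choice of $v \in V_t(T)$ by transitivity). Because $H$ is $N$-closed and contains $\Alt_{(i)}(T)^+$ — which allows one to upgrade a mere parity match on $B(v,N)$ to an exact match by composing with an even local correction — one checks that $\ell \in \mathcal{L}_H$ if and only if $\ell|_{B(v,N)} \in \mathcal{P}_{\mathrm{type}(v)}(H)$ for every $v$. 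Thus $H$ is recovered from the pair $(\mathcal{P}_0(H),\mathcal{P}_1(H))$, and the task becomes the separation of these finite-dimensional subspaces.

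It then remains to read the defining data off the local patterns. The constraint functional attached to a centre $v \in V_t(T)$ is the indicator of $S_X(v) = \bigsqcup_{r \in X} S(v,r)$, so the radii $r \in X$ appear as the spheres occurring in its support, which recovers $X$ and its attachment type; the flavour (plain, starred, or linked) is detected by the attainable value patterns $\big(\ell(S_{X_0}(v)),\, \ell(S_{X_1}(w))\big)$, a plain slot forcing the value $0$, a starred slot also allowing the constant $1$, and $G_{(i)}^+(X_0,X_1)^*$ forcing the two constants to agree. Separating these requires labellings realising the patterns $(1,0)$, $(0,1)$ and $(1,1)$, whose existence I would establish by an achievability lemma: orienting $T$ towards an end and choosing labels outward, one corrects each ball-sum on the outermost sphere $S(v,\max X)$, using $d_0,d_1 \geq 4$ to have enough freedom. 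The simplest instances here are the proper inclusions $s(H) \subsetneq H$ for $H \in \mathcal{N}_{(i)}$ already observed.

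The main obstacle is uniqueness of the recovered data, namely that distinct \emph{compatible} names give distinct constraint-functional spans. The functionals of the two slots are not independent, since a sphere of one type can be rewritten through spheres of the other type; this is exactly why, without compatibility, identities such as $G_{(i)}^+(X_0,X_1) = G_{(i)}^+(X_0, X_1 \triangle X_0)$ arise (see the discussion preceding Definition~\ref{definition:classification}). The crux is to show that these recombinations are the only relations, and that the compatibility condition — forcing every radius of one slot lying beyond $\max$ of the other to carry the parity of its own maximum — pins down a unique normal form, so that no two compatible data can be recombined into one another. Carrying out this bookkeeping, together with the achievability lemma, is where the real work lies.
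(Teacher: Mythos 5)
Your reduction to $\F_2$-subspaces of parity labellings is sound as far as it goes: membership in each group of Definition~\ref{definition:groups++} depends on $g$ only through $\ell_g$, every labelling is realised by some element of $\Aut(T)^+$ (this is the step-by-step construction of Subsection~\ref{subsection:definitions}), and the upgrade from a parity match on a ball to an exact match, using $\Alt_{(i)}(T)^+$, is precisely Lemma~\ref{lemma:diagram} of the paper; the appeal to Theorem~\ref{maintheorem:completeinvariants} is not circular, since its proof does not use the proposition. But your argument stops exactly where the proposition becomes non-trivial, and you say so yourself: the ``bookkeeping'' showing that distinct compatible data give distinct constraint spans ``is where the real work lies'' --- and it is never carried out. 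This is not a routine verification. The subspace $\mathcal{L}_H$ determines only the \emph{span} of the constraint functionals, not the functionals themselves, and the two slots genuinely interact (your own example $G_{(i)}^+(X_0,X_1) = G_{(i)}^+(X_0,X_1 \triangle X_0)$ shows a sphere-sum of one type can be rewritten through the other), so the radii cannot simply be ``read off the support''; one must prove that compatibility selects a unique representative in each span. The paper does this by explicit element constructions: for instance, when $X_0 = X_0'$, $\max X_1 = \max X_1'$ but $X_1 \neq X_1'$, it takes $r$ the greatest element of $X_1' \setminus X_1$ and uses compatibility (either $r < \max X_0$ or $r \equiv \max X_1 \bmod 2$) to guarantee that no set $S_{X_0}(x)$ is completed at radius $r$, so a labelling with a single $o$ on $S(v,r)$ and $e$ elsewhere on $B(v,r)$ lies in $G_{(i)}^+(X_0,X_1)$ but not in $G_{(i)}^+(X_0',X_1')$. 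Your ``achievability lemma'' is likewise asserted rather than proved, and it is needed even for the starred-versus-plain distinctions, where one must realise the value patterns $(1,0)$, $(0,1)$, $(1,1)$ \emph{jointly} with the constancy constraints of both slots.

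You also forgo two cheap devices that let the paper bypass most of this bookkeeping. First, the groups in $\mathcal{S}_{(i)}$ are abstractly simple (Theorem~\ref{theorem:simple}) while each $H \in \mathcal{N}_{(i)}$ has $s(H)$ as a proper normal subgroup, which separates the two families at a stroke. Second, if two groups $H, H' \in \mathcal{N}_{(i)}$ with $s(H) \neq s(H')$ were equal, the common group would contain two \emph{distinct} simple normal subgroups whose intersection is non-trivial (both contain $\Alt_{(i)}(T)^+$), which is impossible; this reduces the within-$\mathcal{N}_{(i)}$ comparisons to the within-$\mathcal{S}_{(i)}$ ones plus the easy case $s(H) = s(H')$. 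With these observations, only the pairwise comparisons inside $\mathcal{S}_{(i)}$ require labelling constructions at all. As it stands, your write-up is a plausible programme whose decisive step --- uniqueness of the compatible normal form among the constraint spans --- is the very content of the proposition and is missing.
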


\begin{proof}
The groups in $\mathcal{S}_{(i)}$ are simple (Theorem~\ref{theorem:simple}) while those in $\mathcal{N}_{(i)}$ are not, so a group in $\mathcal{S}_{(i)}$ is never equal to a group in $\mathcal{N}_{(i)}$.

Let us now prove that two groups $G_{(i)}^+(Y_0,Y_1)$ and $G_{(i)}^+(Y'_0,Y'_1)$ in $\mathcal{S}_{(i)} \cap \underline{\mathcal{G}}_{(i)}$ with $(Y_0,Y_1) \neq (Y'_0,Y'_1)$ are always different. If $Y_0 = \varnothing$ but $Y'_0 \neq \varnothing$, then $G_{(i)}^+(\varnothing,Y_1) \not \subseteq G_{(i)}^+(Y'_0,Y'_1)$. Indeed, for each ball $B(v,n)$ in $T$ such that $S(v,n) \subseteq V_0(T)$, the fixator of $B(v,n)$ in $G_{(i)}^+(\varnothing, Y_1)$ can act in any manner on $B(v,n+1)$. This is not true for $G_{(i)}^+(Y'_0,Y'_1)$ when $n \geq \max Y'_0$. This reasoning works whenever exactly one of the two sets $Y_t$ and $Y'_t$ is empty for some $t \in \{0,1\}$.

Now consider $X_0 \neq X'_0$ and let us show that $G_{(i)}^+(X_0,\varnothing) \neq G_{(i)}^+(X'_0,\varnothing)$ (the proof is exactly the same for $G_{(i)}^+(\varnothing, X_1) \neq G_{(i)}^+(\varnothing, X'_1)$ with $X_1 \neq X'_1$). If $\max X_0 < \max X'_0$, then fix $v$ a vertex of type $(\max X_0) \bmod 2$ and construct (as explained in Subsection~\ref{subsection:definitions} with the labellings, starting from $v$) an element $h \in G_{(i)}^+(X'_0,\varnothing)$ such that there is exactly one vertex labelled by $o$ in $S_{X_0}(v)$. This is possible because $\max X_0 < \max X'_0$. By definition, $h \not \in G_{(i)}^+(X_0,\varnothing)$. The reasoning is the same when $\max X_0 > \max X'_0$. Now assume that $\max X_0 = \max X'_0$. Suppose without loss of generality that $X'_0 \not \subseteq X_0$ and take $r \in X'_0 \setminus X_0$. Then, if $v$ is a vertex of type $(\max X_0) \bmod 2$, there exists $h \in G_{(i)}^+(X_0, \varnothing)$ such that there is exactly one vertex labelled by $o$ in $S(v,r)$ and all the other vertices of $B(v,\max X_0)$ are labelled by $e$. This element $h$ is not in $G_{(i)}^+(X'_0, \varnothing)$ because it does not satisfy the condition on $S_{X'_0}(v)$ (since $r \in X'_0$).

Finally (for $\mathcal{S}_{(i)} \cap \underline{\mathcal{G}}_{(i)}$), let $(X_0,X_1) \neq (X'_0,X'_1)$ be such that $X_0$ and $X_1$ (resp. $X'_0$ and $X'_1$) are compatible and let us show that $G_{(i)}^+(X_0,X_1) \neq G_{(i)}^+(X'_0,X'_1)$. As in the previous case, if $\max X_0 < \max X'_0$ then we can construct an element $h \in G_{(i)}^+(X'_0,X'_1)$ which is not in $G_{(i)}^+(X_0,X_1)$. The same reasoning works when $\max X_0 > \max X'_0$ or $\max X_1 \neq \max X'_1$. Now assume that $\max X_0 = \max X'_0$ and $\max X_1 = \max X'_1$, and without loss of generality that $\max X_0 \leq \max X_1$. If $X_0 \neq X'_0$ then as before we obtain an element that is in exactly one of the two groups $G_{(i)}^+(X_0,X_1)$ and $G_{(i)}^+(X'_0,X'_1)$. Now suppose that $X_0 = X'_0$ and $X_1 \neq X'_1$. Once again, assume without loss of generality that $X'_1 \not \subseteq X_1$. Let $r$ be the greatest element of $X'_1 \setminus X_1$. Since $X'_0$ and $X'_1$ are compatible, we have $r < \max X_0$ or $r \equiv \max X_1 \bmod 2$. If $v$ is a vertex of type $(1 + \max X_1) \bmod 2$, this means that there is no set of the form $S_{X_0}(x)$ (with $x$ of type $(\max X_0) \bmod 2$) which is contained in $B(v,r)$ but not already in $B(v,r-1)$. Hence, there exists $h \in G_{(i)}^+(X_0, X_1)$ with exactly one vertex labelled by $o$ in $S(v,r)$ and all the other vertices of $B(v,r)$ labelled by $e$. Our choice for $r$ is such that $h$ cannot also be an element of $G_{(i)}^+(X'_0, X'_1)$.

We proved that the groups in $\mathcal{S}_{(i)} \cap \underline{\mathcal{G}}_{(i)}$ are pairwise different. Let us now do it for $\mathcal{N}_{(i)} \cap \underline{\mathcal{G}}_{(i)}$. Take $H, H' \in \mathcal{N}_{(i)} \cap \underline{\mathcal{G}}_{(i)}$ with different names. If $H$ and $H'$ have exactly the same sets $X_0$ and/or $X_1$ in their name, i.e. if $s(H) = s(H')$, then it is clear from the definitions and the constructions with the labellings (see Subsection~\ref{subsection:definitions}) that $H \neq H'$. We can therefore assume that $s(H) \neq s(H')$ (and these groups are really different because of our work for $\mathcal{S}_{(i)} \cap \underline{\mathcal{G}}_{(i)}$). Suppose for a contradiction that $H = H'$. Recall that $s(H) \unlhd H$, $s(H') \unlhd H'$ and $s(H)$ and $s(H')$ are simple. Hence, the group $H = H'$ has two different simple normal subgroups having a non-trivial intersection ($s(H)$ and $s(H')$ both contain $\Alt_{(i)}(T)^+$), which is impossible.
\end{proof}

As a corollary of the classification, we will also see that each group in $\mathcal{G}_{(i)}$ is equal to a group in $\underline{\mathcal{G}}_{(i)}$, which makes the definition of $\underline{\mathcal{G}}_{(i)}$ completely natural.


\subsection{Normalizers}

We are now interested in the normalizers of all our groups. Before giving them, we must define the groups that will appear in the classification for groups in $\mathcal{H}_T \setminus \mathcal{H}_T^+$.


\begin{definition}\label{definition:groups}
Let $T$ be the $d$-regular tree with $d \geq 4$ and let $i$ be a legal coloring of $T$. With the same notation as in Definition~\ref{definition:groups++}, set $G_{(i)}(\varnothing,\varnothing) := \Aut(T)$ and define, for $X \subset_f \N$,
$$G_{(i)}(X,X) := \left\{g \in \Aut(T) \suchthat \Sgn_{(i)}(g, S_X(v)) = 1 \text{ for each $v \in V(T)$}\right\},$$
$$G_{(i)}(X^*,X^*) := \left\{g \in \Aut(T) \suchthat \begin{array}{c}
\text{All } \Sgn_{(i)}(g, S_X(v)) \text{ with $v \in V_0(T)$ are equal} \\
\text{and all } \Sgn_{(i)}(g, S_X(v)) \text{ with $v \in V_1(T)$ are equal}
\end{array}\right\},$$
$$G_{(i)}(X,X)^* := \left\{g \in \Aut(T) \suchthat
\text{All } \Sgn_{(i)}(g, S_X(v)) \text{ with $v \in V(T)$ are equal}\right\},$$
and
$$G_{(i)}'(X,X)^* := \left\{g \in \Aut(T) \suchthat \begin{array}{c}
\text{All } \Sgn_{(i)}(g, S_X(v)) \text{ with $v \in V_0(T)$ are equal to} \\
\text{$p_0$, all } \Sgn_{(i)}(g, S_X(v)) \text{ with $v \in V_1(T)$ are equal} \\
\text{to $p_1$, and $p_0 = p_1$ if and only if $g \in \Aut(T)^+$}
\end{array}\right\}.$$
We write $\mathcal{G}'_{(i)}$ for the set of all these groups.
\end{definition}

The normalizers are then given in the following lemma.


\begin{lemma}[Theorem~\ref{maintheorem:simple} (iii)]\label{lemma:normalizer}
Let $T$ be the $(d_0,d_1)$-semiregular tree with $d_0,d_1 \geq 4$  and let $i$ be a legal coloring of $T$.
\begin{enumerate}[(i)]
\item Define the map $n^+ \colon \mathcal{G}_{(i)} \to \mathcal{G}_{(i)}$ by $n^+(G_{(i)}^+(\varnothing,\varnothing)) = G_{(i)}^+(\varnothing,\varnothing)$, $n^+(G_{(i)}^+(X_0,\varnothing)) = G_{(i)}^+(X_0^*,\varnothing)$, 
$n^+(G_{(i)}^+(\varnothing,X_1)) = G_{(i)}^+(\varnothing,X_1^*)$,
$n^+(G_{(i)}^+(X_0,X_1)) = G_{(i)}^+(X_0^*,X_1^*)$ and $n^+(H) = n^+(s(H))$ for $H \in \mathcal{N}_{(i)}$. Let $H \in \mathcal{G}_{(i)}$.
\begin{enumerate}[(a)]
\item If $\tau \in \Aut(T)^+$ is such that $\tau H \tau^{-1} \supseteq \Alt_{(i)}(T)^+$, then $\tau \in n^+(H)$.
\item $n^+(H)$ is the normalizer of $H$ in $\Aut(T)^+$.
\end{enumerate}
\item Suppose that $d_0 = d_1$. For each $X \subset_f \N$, the normalizer of $G_{(i)}^+(X,X)$ (resp. $G_{(i)}^+(X^*,X^*)$ and $G_{(i)}^+(X,X)^*$) in $\Aut(T)$ is $G_{(i)}(X^*,X^*)$.
\end{enumerate}
\end{lemma}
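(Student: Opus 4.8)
The plan is to base everything on a single ``conjugation formula'' for the sign invariants and then to detect the normalizing condition by feeding elements of $\Alt_{(i)}(T)^+$ into it. Fix a finite $X \subseteq \N$ and, for $\tau \in \Aut(T)$, abbreviate $Q(w) := \Sgn_{(i)}(\tau, S_X(w))$. Using the cocycle identities of Lemma~\ref{lemma:sigma} (in particular $\sgn \sigma_{(i)}(\tau^{-1}, w) = \sgn \sigma_{(i)}(\tau, \tau^{-1}(w))$) together with the fact that any automorphism carries $S_X(v)$ bijectively onto $S_X$ of the image vertex, I would first establish, for every $g \in \Aut(T)$ and $v \in V(T)$,
\[
\Sgn_{(i)}(\tau g \tau^{-1}, S_X(v)) = Q(g\tau^{-1}(v)) \cdot \Sgn_{(i)}(g, S_X(\tau^{-1}(v))) \cdot Q(\tau^{-1}(v)).
\]
This master identity reduces the whole lemma to bookkeeping, since each group in $\mathcal{G}_{(i)}$ is cut out by conditions on the quantities $\Sgn_{(i)}(\cdot, S_X(v))$.

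For part (i)(a), I would rewrite $\tau H \tau^{-1} \supseteq \Alt_{(i)}(T)^+$ as $\tau^{-1} \Alt_{(i)}(T)^+ \tau \subseteq H$ and apply the master identity with $\tau$ replaced by $\tau^{-1}$ and $g = f \in \Alt_{(i)}(T)^+$; since the middle factor $\Sgn_{(i)}(f, \cdot)$ is then trivial and $\Sgn_{(i)}(\tau^{-1}, S_X(w)) = Q(\tau^{-1}(w))$, one gets $\Sgn_{(i)}(\tau^{-1} f \tau, S_X(v)) = Q(\tau^{-1} f \tau(v)) \cdot Q(v)$. When $H$ imposes an \emph{unstarred} condition ($\Sgn_{(i)} = 1$ on a whole vertex type $V_s$), letting $f$ range over $\Alt_{(i)}(T)^+$---which is transitive on $V_s$ by Lemma~\ref{lemma:U(Alt)transitive} and Lemma~\ref{corollary:2-transitive}---immediately forces $Q$ to be constant on $V_s$, which is exactly $\tau \in n^+(H)$. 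For a \emph{starred} condition (all $\Sgn_{(i)}(\cdot, S_X(v))$ with $v \in V_s$ merely equal) the same substitution only yields that $v \mapsto Q(\tau^{-1} f \tau(v)) Q(v)$ is constant on $V_s$ for each fixed $f$, so an extra argument is needed to again conclude that $Q$ is constant on $V_s$; note that $n^+(H)$ is defined through $s(H)$ precisely so that the target condition is once more ``$Q$ constant on $V_s$''.

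That extra step is the main obstacle, and I would settle it combinatorially. To prove $Q(a) = Q(b)$ for arbitrary $a, b \in V_s$ with $d(a,b) = 2m$, I would use $d_0, d_1 \geq 4$ to produce three vertices $u_1, u_2, u_3 \in V_s$ pairwise at distance $2m$ (the endpoints of three geodesics of length $m$ issuing from a common vertex of suitable type). Two of them, say $u_1, u_2$, satisfy $Q(u_1) = Q(u_2)$ by pigeonhole. By Lemma~\ref{corollary:2-transitive} choose $f \in \Alt_{(i)}(T)^+$ with $f\tau(u_1) = \tau(a)$ and $f\tau(u_2) = \tau(b)$, i.e.\ $\tau^{-1} f \tau(u_1) = a$ and $\tau^{-1} f \tau(u_2) = b$; evaluating the constant function $v \mapsto Q(\tau^{-1} f \tau(v)) Q(v)$ at $u_1$ and $u_2$ gives $Q(a) Q(u_1) = Q(b) Q(u_2)$, hence $Q(a) = Q(b)$. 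As this holds for all $a, b$ (with $2m = d(a,b)$), $Q$ is constant on $V_s$, finishing (a) uniformly in the starred and unstarred cases.

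Finally, for part (i)(b) the inclusion $N_{\Aut(T)^+}(H) \subseteq n^+(H)$ is immediate from (a) (a normalizing $\tau$ satisfies $\tau H \tau^{-1} = H \supseteq \Alt_{(i)}(T)^+$), while the reverse inclusion is the easy direction: for $\tau \in n^+(H)$ the two $Q$-factors in the master identity cancel---$g$ is type-preserving and $Q$ is constant on each type---so $\tau g \tau^{-1}$ inherits the defining conditions of $H$, giving $\tau H \tau^{-1} \subseteq H$ and, applied to $\tau^{-1}$, equality. (For starred $H$ one has $n^+(H) = H$, and self-normalization also follows by noting that $s(H)$, being abstractly simple by Theorem~\ref{theorem:simple}, is the unique index-$2$ subgroup of $H$, hence characteristic.) For part (ii) with $d_0 = d_1$, the master identity is insensitive to types, so the arguments above apply verbatim to a type-swapping $\tau$, the only change being that $\tau$ interchanges $V_0$ and $V_1$, which merely swaps the vertex types in the ranges; running them shows $N_{\Aut(T)}(H) \subseteq G_{(i)}(X^*, X^*)$, while the easy direction shows that every element of $G_{(i)}(X^*, X^*)$---including its type-swapping part---normalizes each of $G_{(i)}^+(X,X)$, $G_{(i)}^+(X^*,X^*)$ and $G_{(i)}^+(X,X)^*$, whence $N_{\Aut(T)}(H) = G_{(i)}(X^*, X^*)$.
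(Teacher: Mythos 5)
Your proof is correct, and its engine --- the conjugation identity for $\Sgn_{(i)}$ derived from the cocycle relations of Lemma~\ref{lemma:sigma}, then tested against elements of $\Alt_{(i)}(T)^+$ --- is the same computation the paper runs. Where you genuinely diverge is in the two places requiring an idea. First, for the starred constancy in (i)(a): the paper reduces, via $H \subseteq n^+(H)$ and $n^+(n^+(H)) = n^+(H)$, to the four fully starred groups, and then for $x,y \in V_t$ at distance $2$ it picks a third vertex $z$ at distance $2$ from both and a single $g \in \Alt_{(i)}(T)^+$ with $g(\tau(x)) = \tau(z)$ and $g(\tau(z)) = \tau(y)$, so that the shared factor $\Sgn_{(i)}(\tau, S_{X_0}(z))$ telescopes out of the two equal expressions; constancy on $V_t$ then follows by connectedness under distance-$2$ steps. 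Your pigeonhole argument --- three pairwise-equidistant vertices of $V_s$, two of which must share a $Q$-value, then $2$-point homogeneity (Lemma~\ref{corollary:2-transitive}) to transport that pair onto an arbitrary $(a,b)$ --- reaches the same conclusion in one shot at any distance, and it lets you treat the unstarred, singly starred and $G_{(i)}^+(X_0,X_1)^*$ cases uniformly, since membership in any $H \in \mathcal{G}_{(i)}$ implies each separate starred condition, which is all your argument uses. Second, for (ii) the paper is more economical: it invokes (i), exhibits one color-preserving type-exchanging $\nu$ (so $\Sgn_{(i)}(\nu,\cdot) \equiv 1$) that visibly normalizes $H$, and concludes $N_{\Aut(T)}(H) = n^+(H) \cup n^+(H)\nu = G_{(i)}(X^*,X^*)$ by coset bookkeeping; you instead re-run the master identity for type-swapping $\tau$ in both directions, which costs a bit more computation but avoids producing $\nu$. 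Both routes are sound.

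One caveat: the parenthetical in your part (i)(b) is dispensable and partly false. It is not true that $n^+(H) = H$ for every starred $H$ (e.g. $n^+(G_{(i)}^+(X_0^*,X_1)) = G_{(i)}^+(X_0^*,X_1^*) \neq G_{(i)}^+(X_0^*,X_1)$), and $s(H)$ need not have index $2$ in $H$: for $H = G_{(i)}^+(X_0^*,X_1^*)$ the subgroup $G_{(i)}^+(X_0,X_1)$ has index $4$, consistently with Theorem~\ref{maintheorem:simple}~(iii), where the quotient is $(\C_2)^k$ with $k=2$. Simply delete the aside: your cancellation argument shows $n^+(H)$ normalizes $H$ for every $H \in \mathcal{G}_{(i)}$, and (a) applied to $\tau H \tau^{-1} = H \supseteq \Alt_{(i)}(T)^+$ gives the reverse inclusion, so (b) is already complete without it.
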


\begin{proof}\hspace{1cm}
\begin{enumerate}[(i)]
\item Let us first prove (a). Since $n^+(H) \supseteq H$, having $\tau H \tau^{-1} \supseteq \Alt_{(i)}(T)^+$ implies $\tau n^+(H) \tau^{-1} \supseteq \Alt_{(i)}(T)^+$. As $n^+(n^+(H)) = n^+(H)$, this means that we can just prove the statement for $H = G_{(i)}^+(\varnothing,\varnothing)$, $G_{(i)}^+(X_0^*,\varnothing)$, $ G_{(i)}^+(\varnothing,X_1^*)$ and $G_{(i)}^+(X_0^*,X_1^*)$. If $H = G_{(i)}^+(\varnothing,\varnothing) = \Aut(T)^+$ then there is nothing to prove.

Now consider $H = G_{(i)}^+(X_0^*,\varnothing)$. Let $\tau \in \Aut(T)^+$ be such that $\tau G_{(i)}^+(X_0^*,\varnothing) \tau^{-1} \supseteq \Alt_{(i)}(T)^+$. Remind that
$$G_{(i)}^+(X_0^*, \varnothing) := \left\{g \in \Aut(T)^+ \suchthat 
\text{All } \Sgn_{(i)}(g,S_{X_0}(v)) \text{ with $v \in V_t(T)$ are equal}\right\},$$
where $t = (\max X_0) \bmod 2$. We therefore directly obtain
$$\tau G_{(i)}^+(X_0^*,\varnothing) \tau^{-1} = \left\{g \in \Aut(T)^+ \suchthat \begin{array}{c}
\text{All } \Sgn_{(i)}(\tau^{-1}g \tau,S_{X_0}(v))\\ \text{ with $v \in V_t(T)$ are equal}\end{array}\right\}.$$
By Lemma~\ref{lemma:sigma}, we have $\sigma_{(i)}(\tau^{-1}g\tau,w) = \sigma_{(i)}(\tau^{-1},g\tau(w)) \circ \sigma_{(i)}(g,\tau(w)) \circ \sigma_{(i)}(\tau,w)$ and $\sigma_{(i)}(\tau^{-1},g\tau(w)) = \sigma_{(i)}(\tau, \tau^{-1}g\tau(w))^{-1}$, so $\Sgn_{(i)}(\tau^{-1}g \tau,S_{X_0}(v))$ is equal to
$$\Sgn_{(i)}(\tau,S_{X_0}(\tau^{-1}g\tau(v))) \cdot \Sgn_{(i)}(g,S_{X_0}(\tau(v))) \cdot \Sgn_{(i)}(\tau,S_{X_0}(v)).$$
We want to prove that $\tau \in G_{(i)}^+(X_0^*, \varnothing)$, i.e. that all $\Sgn_{(i)}(\tau,S_{X_0}(v))$ with $v \in V_t(T)$ are equal. It suffices to show that $\Sgn_{(i)}(\tau,S_{X_0}(x)) = \Sgn_{(i)}(\tau,S_{X_0}(y))$ when $x,y \in V_t(T)$ and $d(x,y) = 2$. Fix such $x$ and $y$ and consider $z \in V_t(T)$ such that $d(x,z) = d(y,z) = 2$. Take $g \in \Alt_{(i)}(T)^+$ such that $g(\tau(x)) = \tau(z)$ and $g(\tau(z)) = \tau(y)$. By hypothesis, we have $g \in \tau G_{(i)}^+(X_0^*,\varnothing) \tau^{-1}$ so the two values
$$\Sgn_{(i)}(\tau,S_{X_0}(z)) \cdot \Sgn_{(i)}(g,S_{X_0}(\tau(x))) \cdot \Sgn_{(i)}(\tau,S_{X_0}(x))$$
and
$$\Sgn_{(i)}(\tau,S_{X_0}(y)) \cdot \Sgn_{(i)}(g,S_{X_0}(\tau(z))) \cdot \Sgn_{(i)}(\tau,S_{X_0}(z))$$
are equal. As $g \in \Alt_{(i)}(T)^+$, we have $\Sgn_{(i)}(g, A) = 1$ for each finite set $A \subseteq V(T)$ and hence we get $\Sgn_{(i)}(\tau,S_{X_0}(x)) = \Sgn_{(i)}(\tau,S_{X_0}(y))$ as wanted.

For $H = G_{(i)}^+(\varnothing, X_1^*)$, the reasoning is the same.

For $H = G_{(i)}^+(X_0^*, X_1^*)$, the inclusion $\tau H \tau^{-1} \supseteq \Alt_{(i)}(T)^+$ implies in particular that $\tau G_{(i)}^+(X_0^*,\varnothing) \tau^{-1} \supseteq \Alt_{(i)}(T)^+$ and that $\tau G_{(i)}^+(\varnothing, X_1^*) \tau^{-1} \supseteq \Alt_{(i)}(T)^+$. By the previous work, we therefore obtain $\tau \in G_{(i)}^+(X_0^*,\varnothing) \cap G_{(i)}^+(\varnothing, X_1^*) = G_{(i)}^+(X_0^*,X_1^*)$.

Part (b) follows from (a). Indeed, the normalizer of $H$ in $\Aut(T)^+$ is contained in $n^+(H)$ by (a), and one readily checks that $n^+(H)$ normalizes $H$ for each $H \in \mathcal{G}_{(i)}$.

\item Let $H$ be one of the groups $G_{(i)}^+(X,X)$, $G_{(i)}^+(X^*,X^*)$ and $G_{(i)}^+(X,X)^*$. By (i), the normalizer of $H$ in $\Aut(T)^+$ is $n^+(H) = G_{(i)}^+(X^*,X^*)$. Consider $\nu \in \Aut(T) \setminus \Aut(T)^+$ not preserving the types but preserving the colors, i.e. such that $i \circ \nu = i$. It is clear that $\nu$ normalizes $H$, and hence the normalizer of $H$ in $\Aut(T)$ is exactly $n^+(H) \cup n^+(H) \nu = G_{(i)}^+(X^*,X^*) \cup G_{(i)}^+(X^*,X^*) \nu = G_{(i)}(X^*,X^*)$. \qedhere
\end{enumerate}
\end{proof}


\section{The classification}\label{section:classification}

Throughout this section, we let $i$ be a legal coloring of $T$ and fix $d_0,d_1 \geq 6$. Our goal is to find all groups $H \in \mathcal{H}_T^+$ satisfying $H \supseteq \Alt_{(i)}(T)^+$. Our strategy consists in first observing the groups $H \in \mathcal{H}_T^+$ with this property and in defining some invariants (namely $c(t)$, $K(t)$ and $f^t_v$). We will then see that these invariants form a complete set of invariants, i.e. that they completely characterize the group $H$. This is the subject of Theorem~\ref{maintheorem:completeinvariants'}, which is the precise formulation of Theorem~\ref{maintheorem:completeinvariants} mentioned in the introduction. The idea is then simple: compute these invariants for the groups in $\underline{\mathcal{G}}_{(i)}$ and prove that these are the only values that our invariants can take. This task will turn out to be lengthy and technical because of the algebraic invariants $f^t_v$ which are not easy to manipulate.


\subsection{Evens and odds diagrams}

Let us first fix some $v \in V(T)$ and $k \in \N$. The colored rooted tree $B(v,k)$ where each vertex is additionally labelled by $e$ or $o$ is called a \textbf{diagram supported by $B(v,k)$}. We write $\Diag_{v,k}$ for the set of all these diagrams. Remark that $|\Diag_{v,k}| = 2^{|B(v,k)|}$. There is now a natural way to define the surjective map
$$\mathcal{D} \colon \Aut(B(v,k+1)) \to \Diag_{v,k}$$
where $B(v,k+1)$ is seen as a colored rooted tree. Indeed, given $\tilde{h} \in \Aut(B(v,k+1))$ we can define $\mathcal{D}(\tilde{h})$ (which we call the \textbf{diagram of $\tilde{h}$}) to be the rooted tree $B(v,k)$ where each vertex $w$ is labelled by the parity ($e$ for \textit{even} or $o$ for \textit{odd}) of $\sigma_{(i)}(\tilde{h}, w)$. We highlight the fact that $\mathcal{D}$ associates a diagram supported by $B(v,k)$ to an automorphism of the larger ball $B(v,k+1)$. In this section, we will often deal with such diagrams. For this reason, the next lemma must be well understood.

\begin{lemma}\label{lemma:diag}
Let $\tilde{g}, \tilde{h} \in \Aut(B(v,k+1))$ and let $w$ be a vertex of $B(v,k)$.
\begin{itemize}
\item The label of $w$ in $\mathcal{D}(\tilde{g}\tilde{h})$ is $e$ if and only if the label of $w$ in $\mathcal{D}(\tilde{h})$ and the label of $\tilde{h}(w)$ in $\mathcal{D}(\tilde{g})$ are identical.
\item The label of $w$ in $\mathcal{D}(\tilde{g}^{-1})$ is equal to the label of $\tilde{g}^{-1}(w)$ in $\mathcal{D}(\tilde{g})$.
\end{itemize}
\end{lemma}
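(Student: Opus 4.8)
The plan is to reduce everything to the cocycle identities of Lemma~\ref{lemma:sigma} by passing through the sign homomorphism. First I would record the dictionary between labels and signs: for $\tilde h \in \Aut(B(v,k+1))$ and a vertex $w \in B(v,k)$, the label of $w$ in $\mathcal{D}(\tilde h)$ is $e$ precisely when $\sgn(\sigma_{(i)}(\tilde h, w)) = 1$ and is $o$ precisely when $\sgn(\sigma_{(i)}(\tilde h, w)) = -1$. Since an automorphism of the rooted ball $B(v,k+1)$ fixes the root $v$ and preserves each sphere $S(v,r)$, the vertex $\tilde h(w)$ again lies in $B(v,k)$, so all the local actions $\sigma_{(i)}(\tilde g, \tilde h(w))$ and $\sigma_{(i)}(\tilde g^{-1}, w)$ appearing below are defined.

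For the first bullet I would apply the composition formula $\sigma_{(i)}(\tilde g\tilde h, w) = \sigma_{(i)}(\tilde g, \tilde h(w)) \circ \sigma_{(i)}(\tilde h, w)$ from Lemma~\ref{lemma:sigma} and take signs, using that $\sgn$ is a homomorphism: $\sgn(\sigma_{(i)}(\tilde g\tilde h, w)) = \sgn(\sigma_{(i)}(\tilde g, \tilde h(w))) \cdot \sgn(\sigma_{(i)}(\tilde h, w))$. The left-hand side equals $1$ (that is, the label of $w$ in $\mathcal{D}(\tilde g\tilde h)$ is $e$) if and only if the two signs on the right are equal, since each is $\pm 1$; this is exactly the assertion that the label of $w$ in $\mathcal{D}(\tilde h)$ and the label of $\tilde h(w)$ in $\mathcal{D}(\tilde g)$ coincide.

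For the second bullet I would use the inverse formula $\sigma_{(i)}(\tilde g^{-1}, w) = \sigma_{(i)}(\tilde g, \tilde g^{-1}(w))^{-1}$, again from Lemma~\ref{lemma:sigma}, together with the fact that a permutation and its inverse have the same sign. This gives $\sgn(\sigma_{(i)}(\tilde g^{-1}, w)) = \sgn(\sigma_{(i)}(\tilde g, \tilde g^{-1}(w)))$, which says precisely that the label of $w$ in $\mathcal{D}(\tilde g^{-1})$ equals the label of $\tilde g^{-1}(w)$ in $\mathcal{D}(\tilde g)$.

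There is essentially no real obstacle: the statement is a bookkeeping reformulation of Lemma~\ref{lemma:sigma} under the sign map. The only point requiring a moment of care is to confirm that the relevant vertices stay inside $B(v,k)$ so that every local action is well defined, which follows because automorphisms of the rooted ball preserve the distance to $v$.
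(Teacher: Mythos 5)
Your proof is correct and follows the same route as the paper, which simply derives the lemma as a corollary of Lemma~\ref{lemma:sigma}: you spell out the cocycle identities under the sign homomorphism, which is exactly the intended argument. The added remark that automorphisms of the rooted ball preserve spheres, so that all the local actions are defined, is a sound (if routine) bit of diligence the paper leaves implicit.
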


\begin{proof}
This is a corollary of Lemma~\ref{lemma:sigma}.
\end{proof}

We now fix $H \in \mathcal{H}_T^+$ such that $H \supseteq \Alt_{(i)}(T)^+$ and denote by $\tilde{H}^k(v)$ the natural image of $H(v)$ in $\Aut(B(v,k+1))$. Since $H$ is closed in $\Aut(T)$ and generated by its vertex stabilizers, it is entirely described by the groups $\tilde{H}^k(v)$ with $v \in V(T)$ and $k \in \N$. The next lemma shows that knowing the diagrams of elements of $\tilde{H}^k(v)$, i.e. $\mathcal{D}(\tilde{H}^k(v))$, suffices to fully know $\tilde{H}^k(v)$.


\begin{lemma}\label{lemma:diagram}
We have $\tilde{H}^k(v) = \mathcal{D}^{-1}(\mathcal{D}(\tilde{H}^k(v)))$.
\end{lemma}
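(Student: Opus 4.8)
The plan is to prove the non-trivial inclusion $\mathcal{D}^{-1}(\mathcal{D}(\tilde{H}^k(v))) \subseteq \tilde{H}^k(v)$, since the reverse inclusion holds by the very definition of a preimage. So I would start with an element $\tilde{g} \in \Aut(B(v,k+1))$ whose diagram $\mathcal{D}(\tilde{g})$ coincides with $\mathcal{D}(\tilde{h})$ for some $\tilde{h} \in \tilde{H}^k(v)$, and aim to show that $\tilde{g} \in \tilde{H}^k(v)$. The natural idea is to compare $\tilde{g}$ with $\tilde{h}$ through the quotient $\tilde{g}\tilde{h}^{-1}$ and to check that it lies in $\tilde{H}^k(v)$; since $\tilde{h}$ already lies there, the factorization $\tilde{g} = (\tilde{g}\tilde{h}^{-1})\tilde{h}$ would then conclude.

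The first key step is a direct computation of the diagram $\mathcal{D}(\tilde{g}\tilde{h}^{-1})$ via Lemma~\ref{lemma:diag}. For a vertex $w$ of $B(v,k)$, the first bullet of that lemma says that the label of $w$ in $\mathcal{D}(\tilde{g}\tilde{h}^{-1})$ is $e$ exactly when the label of $w$ in $\mathcal{D}(\tilde{h}^{-1})$ agrees with the label of $\tilde{h}^{-1}(w)$ in $\mathcal{D}(\tilde{g})$; the second bullet rewrites the former as the label of $\tilde{h}^{-1}(w)$ in $\mathcal{D}(\tilde{h})$. Since $\tilde{h}$ fixes $v$ and preserves spheres, $\tilde{h}^{-1}(w)$ again lies in $B(v,k)$, and the hypothesis $\mathcal{D}(\tilde{g}) = \mathcal{D}(\tilde{h})$ forces these two labels read at $\tilde{h}^{-1}(w)$ to coincide. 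Hence every vertex of $\mathcal{D}(\tilde{g}\tilde{h}^{-1})$ carries the label $e$; that is, $\tilde{g}\tilde{h}^{-1}$ has the all-even diagram.

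The second key step is to identify the preimage of the all-even diagram with a subgroup already known to sit inside $\tilde{H}^k(v)$. An automorphism $\tilde{f}$ of $B(v,k+1)$ with all-even diagram is precisely one for which $\sigma_{(i)}(\tilde{f}, w)$ is even for every $w \in B(v,k)$; extending $\tilde{f}$ to a tree automorphism with trivial local action beyond $B(v,k+1)$ produces an element of $\Alt_{(i)}(T)^+$ fixing $v$ whose image in $\Aut(B(v,k+1))$ is $\tilde{f}$. Thus the preimage of the all-even diagram is exactly the image of $\Alt_{(i)}(T)^+(v)$, and since $H \supseteq \Alt_{(i)}(T)^+$ we obtain that it is contained in $\tilde{H}^k(v)$. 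Combining this with the previous step gives $\tilde{g}\tilde{h}^{-1} \in \tilde{H}^k(v)$, whence $\tilde{g} = (\tilde{g}\tilde{h}^{-1})\tilde{h} \in \tilde{H}^k(v)$, as wanted.

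I expect the only delicate point to be the bookkeeping in the diagram computation of the first step, namely keeping track of the vertex at which each label is read and invoking both bullets of Lemma~\ref{lemma:diag} in the right order, rather than any genuine conceptual difficulty. The identification in the second step is routine once one recalls that $\Aut(B(v,k+1))$ is the full automorphism group of the rooted ball (not the colour-preserving one), so that every even diagram is realised by an element of $\Alt_{(i)}(T)^+$.
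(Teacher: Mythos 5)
Your proposal is correct and follows essentially the same route as the paper: the same factorization $\tilde{g} = (\tilde{g}\tilde{h}^{-1})\tilde{h}$, the same application of Lemma~\ref{lemma:diag} to see that $\mathcal{D}(\tilde{g}\tilde{h}^{-1})$ is all-even, and the same inclusion $\tilde{H}^k(v) \supseteq \Alt_{(i)}(B(v,k+1))$ coming from $H \supseteq \Alt_{(i)}(T)^+$ (your second step merely makes explicit an extension argument the paper leaves implicit). One micro-correction to that implicit step: the extension of $\tilde{f}$ cannot in general be taken with \emph{trivial} local action outside the ball, since at each new vertex the image of its parent prescribes one value of the local permutation; but an \emph{even} local action with that prescribed value always exists (as $d_0,d_1 \geq 4$), which is all that membership in $\Alt_{(i)}(T)^+$ requires.
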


\begin{proof}
Take $\tilde{h} \in \tilde{H}^k(v)$ and $\tilde{g} \in \Aut(B(v, k+1))$ such that $\mathcal{D}(\tilde{g}) = \mathcal{D}(\tilde{h})$. We want to show that $\tilde{g} \in \tilde{H}^k(v)$. As $\mathcal{D}(\tilde{g}) = \mathcal{D}(\tilde{h})$, Lemma~\ref{lemma:diag} directly implies that all the vertices of $\mathcal{D}(\tilde{g}\tilde{h}^{-1})$ are labelled by $e$, i.e. $\tilde{g}\tilde{h}^{-1}$ is an element of $\Alt_{(i)}(B(v,k+1))$. Since $H \supseteq \Alt_{(i)}(T)^+$, we have $\tilde{H}^k(v) \supseteq \Alt_{(i)}(B(v,k+1))$ and hence $\tilde{g} = (\tilde{g}\tilde{h}^{-1})\tilde{h} \in \tilde{H}^k(v)$.
\end{proof}

In view of the previous lemma, we only need to describe $\mathcal{D}(\tilde{H}^k(v))$ to entirely describe $\tilde{H}^k(v)$. We are first interested in the diagrams of $\mathcal{D}(\tilde{H}^k(v))$ where all the vertices of $B(v, k-1)$ are labelled by $e$. Let us call \textbf{$e$-diagram} a diagram in $\Diag_{v,k}$ with this property, and remark that $\tilde{g} \in \Aut(B(v,k+1))$ is such that $\mathcal{D}(\tilde{g})$ is an $e$-diagram if and only if $\tilde{g}|_{B(v,k)} \in \Alt_{(i)}(B(v,k))$. We denote by $\tilde{H}^k(v)_e$ the subgroup of $\tilde{H}^k(v)$ consisting of elements whose diagram is an $e$-diagram. If $\delta \in \Diag_{v,k}$ and if $w$ is a vertex of $\delta$ then the subtree of $\delta$ spanned by $w$ and all its descendants is called the \textbf{branch of $w$}. For $0 \leq r \leq k$, an \textbf{$r$-branch} of $\delta$ is a branch of a vertex at distance $k-r$ from $v$. The only $k$-branch is therefore the full tree $\delta$ and the $0$-branches all consist of a single leaf of $\delta$.


\begin{lemma}\label{lemma:alpha}
Let $v \in V(T)$ and $k \in \N$. Exactly one of the following holds.
\begin{enumerate}
\item $\mathcal{D}(\tilde{H}^k(v)_e)$ contains all the $e$-diagrams.
\item There exists $0 \leq r \leq k$ such that $\mathcal{D}(\tilde{H}^k(v)_e)$ exactly contains the $e$-diagrams with an even number of labels $o$ in each $r$-branch.
\item $\mathcal{D}(\tilde{H}^k(v)_e)$ exactly contains the $e$-diagrams with an even number of labels $o$ in each $(k-1)$-branch and the $e$-diagrams with an odd number of labels $o$ in each $(k-1)$-branch. (This case only occurs if $k \geq 1$.)
\end{enumerate}
\end{lemma}

\begin{proof}
For each $e$-diagram $\delta$, label each branch of $\delta$ with $E$ or $O$ depending on whether it contains an even or an odd number of vertices labelled by $o$. Denote by $\mathcal{D}_s$ the set of $e$-diagrams whose $s$-branches are all labelled by $E$. By definition, $\mathcal{D}(\Alt_{(i)}(B(v,k+1))) = \mathcal{D}_0 \subset \mathcal{D}_1 \subset \cdots \subset \mathcal{D}_k$. Since $\tilde{H}^k(v) \supseteq \Alt_{(i)}(B(v,k+1))$, we have $\mathcal{D}(\tilde{H}^k(v)_e) \supseteq \mathcal{D}_0$.

\begin{claim}\label{claim:branch}
Let $0 \leq s \leq k-1$. If $\mathcal{D}(\tilde{H}^k(v)_e) \supseteq \mathcal{D}_s$, then either $\mathcal{D}(\tilde{H}^k(v)_e) \supseteq \mathcal{D}_{s+1}$ or for every diagram $\delta \in \mathcal{D}(\tilde{H}^k(v)_e)$ and every $(s+1)$-branch $b$ of $\delta$, all the $s$-branches in $b$ have the same label.
\end{claim}

\begin{claimproof}
Suppose there exist a diagram $\mathcal{D}(\tilde{h}) \in \mathcal{D}(\tilde{H}^k(v)_e)$ and an $(s+1)$-branch $b$ of $\mathcal{D}(\tilde{h})$ containing both an $s$-branch $b_1$ labelled by $E$ and an $s$-branch $b_2$ labelled by $O$. Let $b_3$ and $b_4$ be two other $s$-branches in $b$ with the same label (such branches exist because $d_0,d_1 \geq 6$). Consider $\tilde{\tau} \in \Alt_{(i)}(B(v,k+1)) \subseteq \tilde{H}^k(v)$ an element interchanging $b_1$ and $b_2$, interchanging $b_3$ and $b_4$, and stabilizing all the other $s$-branches. In this way, $\tilde{h}\tilde{\tau}\tilde{h}^{-1} \in \tilde{H}^k(v)$ is such that the only $s$-branches of $\mathcal{D}(\tilde{h}\tilde{\tau}\tilde{h}^{-1})$ labelled by $O$ are $b_1$ and $b_2$ (see Lemma~\ref{lemma:diag}). Conjugating this element by adequate elements of $\Alt_{(i)}(B(v,k+1))$ and combining them, we deduce (once again by using Lemma~\ref{lemma:diag}) that $\mathcal{D}(\tilde{H}^k(v)_e)$ contains all the $e$-diagrams where each $(s+1)$-branch contains an even number of $s$-branches labelled by $O$. These are exactly the $e$-diagrams with each $(s+1)$-branch labelled by $E$, so $\mathcal{D}(\tilde{H}^k(v)_e) \supseteq \mathcal{D}_{s+1}$.
\end{claimproof}

\begin{claim}\label{claim:branch2}
Let $0 \leq s \leq k-2$. If $\mathcal{D}(\tilde{H}^k(v)_e) \supseteq \mathcal{D}_s$ but $\mathcal{D}(\tilde{H}^k(v)_e) \not\supseteq \mathcal{D}_{s+1}$, then $\mathcal{D}(\tilde{H}^k(v)_e) = \mathcal{D}_s$.
\end{claim}

\begin{claimproof}
By Claim~\ref{claim:branch}, the fact that $\mathcal{D}(\tilde{H}^k(v)_e) \supseteq \mathcal{D}_s$ but $\mathcal{D}(\tilde{H}^k(v)_e) \not\supseteq \mathcal{D}_{s+1}$ implies that for every diagram $\delta \in \mathcal{D}(\tilde{H}^k(v)_e)$ and every $(s+1)$-branch $b$ of $\delta$, all the $s$-branches in $b$ have the same label $(*)$. In order to show that $\mathcal{D}(\tilde{H}^k(v)_e) = \mathcal{D}_s$, it suffices to prove that it is impossible to have a diagram with an $(s+1)$-branch only containing $s$-branches labelled by $O$. By contradiction, suppose there exist $\tilde{h} \in \tilde{H}^k(v)_e$ and some $(s+1)$-branch $b$ of $\mathcal{D}(\tilde{h})$ all whose $s$-branches are labelled by $O$. In view of Lemma~\ref{lemma:diagram}, we can assume that $\tilde{h}$ fixes $B(v,k)$. Let us say that $b$ is the branch of the vertex $w$. Denote by $x_1, \ldots, x_r$ the children of $w$, by $b_1, \ldots, b_r$ the corresponding $s$-branches, and by $y$ the parent of $w$ (note that $w \neq v$ since $s \leq k-2$), see Figure~\ref{picture:branch2}. Let $h \in H$ be an element whose image in $\tilde{H}^k(v)$ is $\tilde{h}$ and consider an element $g \in \Alt_{(i)}(T)^+$ that fixes $w$ and interchanges $x_1$ and $y$. Then $f = ghg^{-1} \in H$ is an element fixing $B(w,1)$. Now observe the image of $f$ in $\tilde{H}^{s+1}(w)$: it is contained in $\tilde{H}^{s+1}(w)_e$ and the branches $b_2, \ldots, b_r$ are labelled by $O$ while $b_1$ is labelled by $E$ (see Lemma~\ref{lemma:diag}). Consider an element $\tau \in \Alt_{(i)}(T)^+$ that fixes $w$ and all the vertices that are closer to $y$ than to $w$, interchanges $x_1$ and $x_2$ and interchanges $x_3$ and $x_4$. Then $f \tau f^{-1} \in H$ is an element that also fixes $w$ and all the vertices that are closer to $y$ than to $w$ and, if we look at its image in $\tilde{H}^k(v)$, it is contained in $\tilde{H}^k(v)_e$ and the branches $b_1$ and $b_2$ are labelled by $O$ while the branches $b_3, \ldots, b_r$ are labelled by $E$. This is a contradiction with $(*)$.
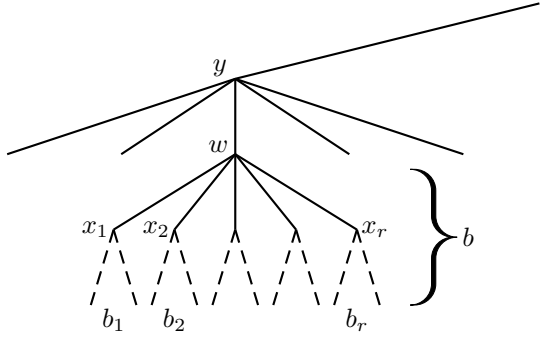
\begin{figure}
\centering
\begin{pspicture*}(-4.1,-3.4)(4.1,1.1)
\fontsize{10pt}{10pt}\selectfont
\psset{unit=1cm}

\psline(0,0)(4,1)

\psline(0,0)(-3,-1)
\psline(0,0)(-1.5,-1)
\psline(0,0)(0,-1)
\psline(0,0)(1.5,-1)
\psline(0,0)(3,-1)

\psline(0,-1)(-1.6,-2)
\psline(0,-1)(-0.8,-2)
\psline(0,-1)(0,-2)
\psline(0,-1)(0.8,-2)
\psline(0,-1)(1.6,-2)

\psline[linestyle=dashed](-1.6,-2)(-1.9,-3)
\psline[linestyle=dashed](-1.6,-2)(-1.3,-3)
\psline[linestyle=dashed](-0.8,-2)(-1.1,-3)
\psline[linestyle=dashed](-0.8,-2)(-0.5,-3)
\psline[linestyle=dashed](0,-2)(-0.3,-3)
\psline[linestyle=dashed](0,-2)(0.3,-3)
\psline[linestyle=dashed](0.8,-2)(0.5,-3)
\psline[linestyle=dashed](0.8,-2)(1.1,-3)
\psline[linestyle=dashed](1.6,-2)(1.9,-3)
\psline[linestyle=dashed](1.6,-2)(1.3,-3)

\rput(-0.2,0.15){$y$}
\rput(-0.22,-0.9){$w$}
\rput(-1.84,-2){$x_1$}
\rput(-1.04,-2){$x_2$}
\rput(1.84,-2){$x_r$}

\rput(-1.6,-3.2){$b_1$}
\rput(-0.8,-3.2){$b_2$}
\rput(1.6,-3.2){$b_r$}

\psbrace[ref=lC,nodesepA=2pt](2.3,-3)(2.3,-1.2){$b$}

\end{pspicture*}
\caption{Illustration of Lemma~\ref{lemma:alpha},  Claim~\ref{claim:branch2}.}\label{picture:branch2}
\end{figure}
\end{claimproof}

\medskip

If $\mathcal{D}(\tilde{H}^k(v)_e) \supseteq \mathcal{D}_{k}$ then there are two possibilities: either $\mathcal{D}(\tilde{H}^k(v)_e) = \mathcal{D}_{k}$ (i.e. we are in the second case with $r = k$) or there exists a diagram in $\mathcal{D}(\tilde{H}^k(v)_e)$ whose $k$-branch is labelled by $O$. In the latter case, $\mathcal{D}(\tilde{H}^k(v)_e)$ contains all the $e$-diagrams.

Suppose now that $\mathcal{D}(\tilde{H}^k(v)_e) \not\supseteq \mathcal{D}_{k}$. Then there exists $0 \leq s \leq k-1$ such that $\mathcal{D}(\tilde{H}^k(v)_e) \supseteq \mathcal{D}_s$ but $\mathcal{D}(\tilde{H}^k(v)_e) \not\supseteq \mathcal{D}_{s+1}$. If $s \neq k-1$ then by Claim~\ref{claim:branch2} we have $\mathcal{D}(\tilde{H}^k(v)_e) = \mathcal{D}_s$, i.e. we are in the second case with $r = s$. If $s = k-1$, then by Claim~\ref{claim:branch} we know that each diagram in $\mathcal{D}(\tilde{H}^k(v)_e)$ either has all its $(k-1)$-branches labelled by $E$ or all its $(k-1)$-branches labelled by $O$. If there is no diagram with labels $O$, then $\mathcal{D}(\tilde{H}^k(v)_e) = \mathcal{D}_{k-1}$ (i.e. we are in the second case with $r = k-1$). On the contrary, if there exists such a diagram, then $\mathcal{D}(\tilde{H}^k(v)_e)$ also contains the $e$-diagrams with an odd number of labels $o$ in each $(k-1)$-branch (i.e. we are in the third case).
\end{proof}


\subsection{Four possible shapes for \texorpdfstring{$H(v)$}{H(v)}}

For $v \in V(T)$ and $k \in \N$, Lemma~\ref{lemma:alpha} gives different shapes that $\mathcal{D}(\tilde{H}^k(v)_e)$ can take. We now associate a symbol $\alpha_k(v)$ to each $v$ and $k$ by defining $\alpha_k(v) = \infty$ in the first case, $\alpha_k(v) = r$ in the second case and $\alpha_k(v) = (k-1)^*$ in the third case. A natural total order on the set of symbols $\{\infty,0,0^*,1,1^*,\ldots\}$ is given by $0 < 0^* < 1 < 1^* < \cdots$ and $x < \infty$ for each $x \neq \infty$.

\begin{lemma}\label{lemma:alphaeasy}
For $x \in \{1,2,\ldots,k\}$, we have $\alpha_k(v) \geq x$ if and only if there exists a diagram in $\mathcal{D}(\tilde{H}^k(v)_e)$ with exactly two vertices labelled by $o$, situated in the same $x$-branch but in different $(x-1)$-branches.
\end{lemma}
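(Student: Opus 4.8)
The statement is essentially an unpacking of the definition of $\alpha_k(v)$ in terms of the classification provided by Lemma~\ref{lemma:alpha}, so the plan is to verify the two implications by carefully matching the condition ``two labels $o$ in the same $x$-branch but in different $(x-1)$-branches'' against each of the three cases enumerated in Lemma~\ref{lemma:alpha}. Recall that $\alpha_k(v) \geq x$ (for $x \in \{1,\ldots,k\}$) holds precisely when we are in the first case ($\alpha_k(v) = \infty$), or in the second case with $r \geq x$, or in the third case with $(k-1)^* \geq x$ (i.e. $k-1 \geq x$, using $x \not\le (k-1)^*$ only when $x = k$). The key observation driving everything is that the condition described in the statement is exactly the assertion that $\mathcal{D}(\tilde{H}^k(v)_e)$ is \emph{not} contained in $\mathcal{D}_{x-1}$, where $\mathcal{D}_s$ denotes (as in the proof of Lemma~\ref{lemma:alpha}) the set of $e$-diagrams all of whose $s$-branches carry an even number of labels $o$.

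\medskip

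For the forward direction, I would assume $\alpha_k(v) \geq x$ and exhibit the required diagram. The cleanest route is to note that in each case corresponding to $\alpha_k(v) \geq x$, the set $\mathcal{D}(\tilde{H}^k(v)_e)$ contains all the $e$-diagrams lying in $\mathcal{D}_x$ together with at least one diagram outside $\mathcal{D}_{x-1}$; more precisely, by Lemma~\ref{lemma:alpha} it contains $\mathcal{D}_x \supseteq \mathcal{D}_0 = \mathcal{D}(\Alt_{(i)}(B(v,k+1)))$ and, because $\alpha_k(v) \ge x$ means we have not yet ``stopped'' at level $x-1$, an $e$-diagram with an odd number of labels $o$ in some $(x-1)$-branch. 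Starting from such a diagram and multiplying by (diagrams of) elements of $\Alt_{(i)}(B(v,k+1))$ — which can freely permute the $(x-1)$-branches inside a fixed $x$-branch since $d_0,d_1 \ge 6$ — I can cancel all labels $o$ except two, placing one in each of two distinct $(x-1)$-branches of a common $x$-branch, using Lemma~\ref{lemma:diag} to track how labels compose. This produces the desired diagram with exactly two labels $o$.

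\medskip

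For the converse, I would argue contrapositively: if $\alpha_k(v) < x$, then by Lemma~\ref{lemma:alpha} we are in the second case with $r \leq x-1$, so $\mathcal{D}(\tilde{H}^k(v)_e) = \mathcal{D}_r \subseteq \mathcal{D}_{x-1}$, meaning every diagram in $\mathcal{D}(\tilde{H}^k(v)_e)$ has an even number of labels $o$ in each $(x-1)$-branch. A diagram with exactly two labels $o$ sitting in \emph{different} $(x-1)$-branches of a common $x$-branch would have an odd number (namely one) of labels $o$ in each of those two $(x-1)$-branches, contradicting membership in $\mathcal{D}_{x-1}$; hence no such diagram exists. I expect the main subtlety to be bookkeeping in the forward direction — verifying that one can reduce an arbitrary ``bad'' diagram to one with exactly two misplaced labels using only permutations available in $\Alt_{(i)}(B(v,k+1))$ — and making sure the third (starred) case is handled, since there $x$ ranges only up to $k$ and the condition $\alpha_k(v) = (k-1)^* \geq x$ must be interpreted correctly for $x \leq k-1$ versus $x = k$.
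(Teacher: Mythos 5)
Your ``key observation'' --- that the existence of the required diagram is equivalent to $\mathcal{D}(\tilde{H}^k(v)_e) \not\subseteq \mathcal{D}_{x-1}$ --- is false in the third case of Lemma~\ref{lemma:alpha}, and this breaks your converse precisely where the lemma has any content. Take $\alpha_k(v) = (k-1)^*$ and $x = k$: then $\alpha_k(v) < x$, but you are \emph{not} ``in the second case with $r \leq x-1$'' as your contrapositive asserts, since $\mathcal{D}(\tilde{H}^k(v)_e)$ also contains the $e$-diagrams with an \emph{odd} number of labels $o$ in every $(k-1)$-branch and is therefore not contained in $\mathcal{D}_{k-1}$. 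The converse of the lemma still holds there, but for a reason your plan never supplies: an all-odd diagram carries at least one label $o$ in each of the $\tilde{d} \geq 6$ $(k-1)$-branches (one per neighbour of $v$), hence at least six labels $o$, so it cannot have exactly two; and an all-even diagram cannot have its only two labels $o$ sitting in two distinct $(k-1)$-branches, as each of those branches would then have an odd count. Your closing sentence flags the starred case as a subtlety, but the argument you actually wrote excludes it rather than handles it, so this is a genuine gap.

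The forward direction, by contrast, is correct but over-engineered, and the cancellation you sketch is fragile as stated: the conjugation trick (as in Claim~1 of the proof of Lemma~\ref{lemma:alpha}) requires an $x$-branch containing both an even and an odd $(x-1)$-branch, and an arbitrary diagram outside $\mathcal{D}_{x-1}$ --- for instance one with \emph{all} $(x-1)$-branches odd --- does not provide this; conjugating such a diagram by a branch swap lands you back in $\mathcal{D}_{x-1}$. None of this machinery is needed. The target diagram itself belongs to $\mathcal{D}_x$: its two labels $o$ lie in a common $x$-branch, so every $x$-branch has an even count. And in every case with $\alpha_k(v) \geq x$ one has $\mathcal{D}(\tilde{H}^k(v)_e) \supseteq \mathcal{D}_x$ --- trivially in the first case, via $\mathcal{D}_x \subseteq \mathcal{D}_r$ in the second case with $r \geq x$, and via $\mathcal{D}_x \subseteq \mathcal{D}_{k-1}$ in the third case with $x \leq k-1$. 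This one-line observation, together with the parity argument above for the starred case with $x = k$, is the whole proof, which is presumably why the paper dismisses the lemma as an immediate consequence of the definition of $\alpha_k(v)$.
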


\begin{proof}
This is a consequence of the definition of $\alpha_k(v)$.
\end{proof}

Clearly, since $\Alt_{(i)}(T)^+$ is transitive on $V_0(T)$ and $V_1(T)$, we have $\alpha_k(v) = \alpha_k(v')$ when $v$ and $v'$ have the same type. For this reason, for $t \in \{0,1\}$ we define $\alpha_k^t$ to be equal to $\alpha_k(v)$ where $v$ is a vertex of type $(t+k) \bmod 2$. In this way, $\alpha_k^t$ tells us the labels that can appear in $S(v,k)$, which is a sphere containing vertices of type $t$.

We are now interested in how the sequences $(\alpha_k^0)_{k \in \N}$ and $(\alpha_k^1)_{k \in \N}$ can look like.

\setcounter{claim}{0}


\begin{lemma}\label{lemma:sequences}
Let $t \in \{0,1\}$. Either $\alpha_k^t = \infty$ for all $k \in \N$ (case $\#$0), or there exists $K \in \N$ such that the sequence $(\alpha_k^t)_{k \in \N}$ takes one of the following three shapes. (For cases $\#$2 and $\#$3, $K$ cannot be equal to $0$.)
$$\begin{array}{c|ccccccc}
\# & \alpha_0^t & \cdots & \alpha_{K-1}^t & \alpha_K^t & \alpha_{K+1}^t & \alpha_{K+2}^t & \cdots \\
\hline
1 & \infty & \cdots & \infty & K & K & K & \cdots \\
\hline
2 & \infty & \cdots & \infty & K-1 & K-1 & K-1 & \cdots \\
\hline
3 & \infty & \cdots & \infty & (K-1)^* & K-1 & K-1 & \cdots \\
\end{array}$$
\end{lemma}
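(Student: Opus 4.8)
The plan is to prove two local transition properties comparing $\alpha_k^t$ with $\alpha_{k+1}^t$, and then to assemble the four shapes by a direct induction on $k$ starting from the base value $\alpha_0^t = \infty$.

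\emph{Base case.} I would first observe that $\alpha_0^t = \infty$ for each $t \in \{0,1\}$. Indeed, the only $e$-diagram supported by $B(v,0)$ is the one labelling the single vertex $v$ by $e$, and it is realized (for instance by the identity); hence $\mathcal{D}(\tilde{H}^0(v)_e)$ contains all $e$-diagrams and we are in the first case of Lemma~\ref{lemma:alpha}. This forces the leading block of $\infty$'s and shows that the first index $K$ at which a sequence leaves $\infty$ is always $\geq 1$.

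\emph{The two transitions.} Fix $k \geq 1$, let $v$ be a vertex of the type defining $\alpha_{k+1}^t$ and let $u$ be a neighbour of $v$, so that $u$ has the type defining $\alpha_k^t$; write $T_{u,v}$ for the half-tree on $u$'s side. The elementary geometric observation is that $S(v,k+1) \cap T_{u,v} = S(u,k) \cap T_{u,v}$, and that two vertices of $S(v,k+1)$ at mutual distance $2x$ with $x \leq k$ necessarily lie in a common half-tree $T_{u,v}$, where from $u$ they again sit at distance $2x$ inside $S(u,k)$. Using the characterization of Lemma~\ref{lemma:alphaeasy}, I would then establish: (P1) for every integer $x$ with $1 \leq x \leq k$, one has $\alpha_{k+1}^t \geq x$ if and only if $\alpha_k^t \geq x$; and (P2) if $\alpha_k^t = k$, then $\alpha_{k+1}^t = k$. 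For (P1), a witness of $\alpha_k^t \geq x$ is an element of $\tilde{H}^k(u)_e$ whose diagram carries exactly two labels $o$, at distance $2x$; after moving these two labels into branches of $u$ pointing away from $v$ (possible since the induced action of $\Alt_{(i)}(T)^+$ on the neighbours of $u$ is $\Alt(d_t)$ with $d_t \geq 6$), its restriction to $T_{u,v}$ can be realized by an element of $H$ fixing $T \setminus T_{u,v}$ pointwise, producing a witness of $\alpha_{k+1}^t \geq x$; conversely a witness of $\alpha_{k+1}^t \geq x$ has its two labels $o$ inside one half-tree $T_{u,v}$ and restricts to a witness of $\alpha_k^t \geq x$. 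For (P2), the hypothesis $\alpha_k^t = k$ says precisely that every element of $\tilde{H}^k(u)_e$ produces an even number of labels $o$ on all of $S(u,k)$; were $\alpha_{k+1}^t$ strictly larger than $k$ (i.e. $k^*$, $k+1$ or $\infty$), some diagram of $\mathcal{D}(\tilde{H}^{k+1}(v)_e)$ would carry an odd number of labels $o$ in a single half-tree $T_{u,v}$, and restricting to $u$ would contradict this. Throughout, Lemma~\ref{lemma:diagram} lets one pass between elements and diagrams and Lemma~\ref{lemma:diag} governs the behaviour of labels under composition.

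\emph{Assembling the shapes.} Reading (P1) through the order $0 < 0^* < 1 < 1^* < \cdots < \infty$, and using that the only admissible starred symbol at radius $k+1$ is $k^*$, the transitions become: if $\alpha_k^t$ is an integer $m \leq k-1$ or equals $(k-1)^*$, then $\alpha_{k+1}^t = m$ (respectively $k-1$); if $\alpha_k^t = \infty$, then $\alpha_{k+1}^t \in \{\infty, k+1, k, k^*\}$; and if $\alpha_k^t = k$, then (P1) and (P2) together give $\alpha_{k+1}^t = k$. Feeding these implications into an induction from $\alpha_0^t = \infty$ yields exactly the four cases: either the sequence stays $\infty$ (case $\#0$), or it leaves $\infty$ for the first time at some $K \geq 1$ with value $K$, $K-1$ or $(K-1)^*$, after which the stabilization implications keep it equal to $K$ (case $\#1$), equal to $K-1$ (case $\#2$), or force a single drop from $(K-1)^*$ to $K-1$ followed by stabilization (case $\#3$). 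The passage from $k=0$ to $k=1$ requires no separate discussion, since every symbol is admissible at radius $1$.

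\emph{Main obstacle.} The delicate step is the transport used in (P1) and (P2): one must genuinely produce elements of $H$ realizing a prescribed parity pattern on $T_{u,v} \cap B(u,k)$ while being even and fixing the relevant vertices outside this half-tree. This localization is exactly where the hypothesis $H \supseteq \Alt_{(i)}(T)^+$ and the richness of the branch stabilizers furnished by Proposition~\ref{proposition:S^c(k)} are indispensable, since a diagram containing labels $o$ cannot be produced by $\Alt_{(i)}(T)^+$ alone.
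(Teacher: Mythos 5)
Your overall architecture (two local transition claims, then an induction along $k$) is essentially the paper's own scheme — your (P1) backward direction plus (P2) play the role of its monotonicity claim, and (P1) forward is its persistence claim — but two steps are genuinely broken. First, the base case is wrong. At $k=0$ the paper's convention $B(v,-1) = \varnothing$ makes \emph{every} diagram of $\Diag_{v,0}$ an $e$-diagram, including the one labelling $v$ by $o$; you have misread the definition when you assert that only the $e$-labelled diagram is an $e$-diagram. Whether the $o$-labelled diagram is realized depends on whether $\underline{H}(v)$ contains odd permutations, and since the standing hypothesis is only $\underline{H}(v) \supseteq \Alt(d_t)$, both possibilities occur: $\alpha_0^t = \infty$ when the local action is $\Sym(d_t)$, but $\alpha_0^t = 0$ when it is exactly $\Alt(d_t)$ — in particular for $H = \Alt_{(i)}(T)^+$ itself. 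So your conclusion that the sequence always starts at $\infty$ and that ``the first index $K$ at which a sequence leaves $\infty$ is always $\geq 1$'' is false; it omits case \#1 with $K=0$, which the lemma explicitly permits (only cases \#2 and \#3 exclude $K=0$). The repair is cheap — if $\alpha_0^t = 0$, your own transition rules propagate the constant value — but as written your argument proves a false strengthening.

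Second, the transport step in (P1) forward is a gap that the tools you cite cannot fill. You need an element of $H$ fixing the half-tree $T_{v,u}$ pointwise whose diagram carries two prescribed labels $o$; you flag this as the main obstacle and appeal to Proposition~\ref{proposition:S^c(k)} together with $H \supseteq \Alt_{(i)}(T)^+$. But both of these only furnish \emph{even} local actions (the proposition gives $\underline{H}_k(x) \supseteq (S_s)^{c(x,k)}$ with $S_s$ an alternating group), so — as you yourself observe — they cannot manufacture the $o$-labels, and no further construction is offered. The paper closes exactly this gap with a conjugation trick: starting from any $h \in H$ realizing the witness diagram and fixing the relevant neighbour, it conjugates an element $\tau \in \Alt_{(i)}(T)^+$ that fixes the half-tree pointwise, stabilizes one $o$-labelled vertex $a$, and swaps the other, $b$, with a third vertex $c$ in a different $(x-1)$-branch; then $h\tau h^{-1} \in H$ automatically fixes the half-tree pointwise and, by Lemma~\ref{lemma:diag}, has an $e$-diagram with exactly two labels $o$, at $b$ and $c$. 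The missing idea is that the odd labels must be \emph{inherited from the witness via conjugation} rather than produced by branch stabilizers. With that substitution, and the corrected base case, your two-transition scheme does yield the lemma along the same lines as the paper.
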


\begin{proof}
We prove this result by giving two rules that $(\alpha_k^t)_{k \in \N}$ must satisfy.

\begin{claim}\label{claim:decreasing}
The sequence $(\alpha_k^t)_{k \in \N}$ is non-increasing, i.e. $\alpha_k^t \geq \alpha_{k+1}^t$ for all $k \in \N$.
\end{claim}

\begin{claimproof}
Let $k \in \N$, let $v$ be a vertex of type $(t+k+1) \bmod 2$ and let $w$ be a vertex adjacent to $v$. Given a diagram $\delta \in \mathcal{D}(\tilde{H}^{k+1}(v)_e)$, Lemma~\ref{lemma:diagram} tells us that it is realized by an element $\tilde{h} \in \tilde{H}^{k+1}(v)_e$ which fixes $w$. Hence, $\tilde{h}$ has a natural image in $\tilde{H}^k(w)_e$ and the diagram of this image is exactly the restriction of $\delta$ to $B(w,k)$. Hence, $\mathcal{D}(\tilde{H}^{k}(w)_e)$ contains the restriction of each element of $\mathcal{D}(\tilde{H}^{k+1}(v)_e)$ to $B(w,k)$. Observing the different possibilities for $\alpha_{k+1}^t$, this always implies that $\alpha_{k}^t \geq \alpha_{k+1}^t$.
\end{claimproof}

\begin{claim}\label{claim:rule}
If $\alpha_{k}^t \geq x$ with $x \in \{0, 1, \ldots, k\}$, then $\alpha_{k+1}^t \geq x$.
\end{claim}

\begin{claimproof}
If $x = 0$ then the claim is trivial, so suppose that $x > 0$. Let $w$ be a vertex of type $(t+k) \bmod 2$. Since $\alpha_{k}^t \geq x$, there exists $h \in H(w)$ whose image in $\tilde{H}^k(w)$ has a diagram which is an $e$-diagram with exactly two vertices labelled by $o$, say $a$ and $b$, in the same $x$-branch but in different $(x-1)$-branches (see Lemma~\ref{lemma:alphaeasy}). Take $c \in S(w,k)$ a vertex in this same $x$-branch but in a third $(x-1)$-branch and $v$ a vertex adjacent to $w$ such that $a$ is closer to $w$ than to $v$ (see Figure~\ref{picture:rule}). By Lemma~\ref{lemma:diagram}, we can assume that $h$ fixes $v$. Consider $\tau \in \Alt_{(i)}(T)^+$ an element fixing all the vertices closer to $v$ than to $w$, stabilizing $a$ and interchanging $b$ and $c$. Then by Lemma~\ref{lemma:diag} the image of $h \tau h^{-1}$ in $\tilde{H}^{k+1}(v)$ has a diagram which is an $e$-diagram having exactly two vertices labelled by $o$, namely $b$ and $c$. By Lemma~\ref{lemma:alphaeasy}, this implies that $\alpha^t_{k+1} \geq x$.
\begin{figure}[b]
\centering
\begin{pspicture*}(-4.5,-3.4)(4.5,1.25)
\fontsize{10pt}{10pt}\selectfont
\psset{unit=1cm}

\psline(0,0)(4,1)

\psline(0,0)(-3,-1)
\psline(0,0)(-1.5,-1)
\psline(0,0)(0,-1)
\psline(0,0)(1.5,-1)
\psline(0,0)(3,-1)

\psline(0,-1)(-1.6,-2)
\psline(0,-1)(-0.8,-2)
\psline(0,-1)(0,-2)
\psline(0,-1)(0.8,-2)
\psline(0,-1)(1.6,-2)

\psline[linestyle=dashed](-1.6,-2)(-1.9,-3)
\psline[linestyle=dashed](-1.6,-2)(-1.3,-3)
\psline[linestyle=dashed](-0.8,-2)(-1.1,-3)
\psline[linestyle=dashed](-0.8,-2)(-0.5,-3)
\psline[linestyle=dashed](0,-2)(-0.3,-3)
\psline[linestyle=dashed](0,-2)(0.3,-3)
\psline[linestyle=dashed](0.8,-2)(0.5,-3)
\psline[linestyle=dashed](0.8,-2)(1.1,-3)
\psline[linestyle=dashed](1.6,-2)(1.9,-3)
\psline[linestyle=dashed](1.6,-2)(1.3,-3)

\rput(3.9,1.15){$v$}
\rput(-0.2,0.15){$w$}

\psdot[dotsize=3pt](-1.6,-3)
\rput(-1.6,-3.3){$a$}
\psdot[dotsize=3pt](-0.8,-3)
\rput(-0.8,-3.27){$b$}
\psdot[dotsize=3pt](0,-3)
\rput(0,-3.3){$c$}

\psbrace[ref=lC,nodesepA=2pt,nodesepB=-1pt](2.3,-3)(2.3,-1.2){$x$-branch}

\end{pspicture*}
\caption{Illustration of Lemma~\ref{lemma:sequences},  Claim~\ref{claim:rule}.}\label{picture:rule}
\end{figure}
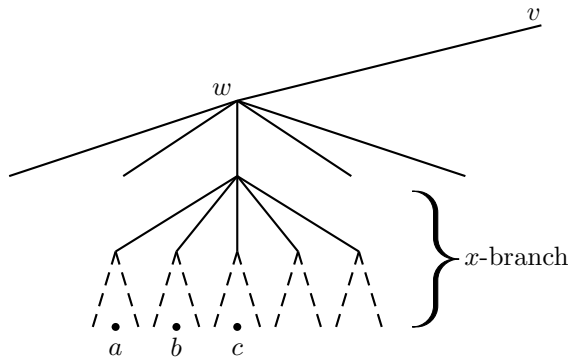
\end{claimproof}

\medskip

These two claims suffice to get the result. Indeed, we either have $\alpha_0^t = 0$ or $\alpha_0^t = \infty$. If $\alpha_0^t = 0$ then by Claim~\ref{claim:decreasing} we get case $\#$1. If $\alpha_0^t = \infty$, then either $\alpha_k^t = \infty$ for all $k \in \N$, or there exists a smallest $K$ such that $\alpha_K^t < \infty$. In the latter case, Claim~\ref{claim:rule} with $k = x = K-1$ gives $\alpha_K^t \geq K-1$, so $\alpha_K^t \in \{K-1, (K-1)^*, K\}$. If $\alpha_K^t \in \{K-1, K\}$, then the two claims imply that $(\alpha_k^t)_{k \geq K}$ is constant and we get cases $\#$1 and $\#$2. If $\alpha_K^t = (K-1)^*$, then Claim~\ref{claim:decreasing} and Claim~\ref{claim:rule} with $k = K$ and $x = K-1$ give $(K-1) \leq \alpha_{K+1}^t \leq (K-1)^*$. Since $\alpha_{K+1}^t$ is never equal to $(K-1)^*$, we must have $\alpha_{K+1}^t = K-1$ and then get the constant sequence as above, which gives case $\#$3.
\end{proof}


\subsection{The numerical invariants \texorpdfstring{$c(t)$}{c(t)} and \texorpdfstring{$K(t)$}{K(t)}}

For $t \in \{0,1\}$, denote by $c(t) \in \{0,1,2,3\}$ the case which was encountered in Lemma~\ref{lemma:sequences} and by $K(t)$ the smallest integer such that $\alpha_{K(t)}^t < \infty$, as in the lemma (if $c(t) = 0$, define $K(t) = \infty$). The limit value
$$K'(t) := \lim_{k \to \infty}\alpha_k^t$$
will also be useful for our proofs. Note that $c(t)$ and $K(t)$ completely determine $K'(t)$. Similarly, $c(t)$ and $K'(t)$ determine $K(t)$.

These invariants can be computed for each of our key examples. To simplify the notations, we define the binary operation $\boxplus \colon \N \times \N \to \N$ by
$$a \boxplus b := a + b - \left\lceil\frac{|a-b|}{2}\right\rceil.$$


\begin{proposition}\label{proposition:table}
The values of $c(0)$, $c(1)$, $K'(0)$ and $K'(1)$ for the members of $\underline{\mathcal{G}}_{(i)}$ are given in Table~\ref{table:invariants}. The last column of Table~\ref{table:invariants} gives, for fixed $c(0)$, $c(1)$, $K'(0)$ and $K'(1)$, the exact number of groups (in $\underline{\mathcal{G}}_{(i)}$) in the corresponding line.
\end{proposition}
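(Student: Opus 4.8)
The plan is to compute the four invariants $c(t)$ and $K'(t)$ directly from the defining sign conditions of each group in $\underline{\mathcal{G}}_{(i)}$, working type by type. Recall that $c(t)$ records which of the four shapes the sequence $(\alpha_k^t)_{k\in\N}$ takes, and $K'(t) = \lim_{k\to\infty}\alpha_k^t$. Since by Lemma~\ref{lemma:alpha} and Lemma~\ref{lemma:alphaeasy} the symbol $\alpha_k(v)$ encodes exactly the finest ``branch parity'' constraint that the diagrams of $\tilde H^k(v)_e$ are forced to satisfy, the whole computation amounts to understanding, for a sphere $S(v,k)$ containing vertices of type $t$, which constraints on parities of labels $o$ the membership conditions of $H$ impose. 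The strategy is therefore to translate each sign condition $\Sgn_{(i)}(g, S_X(v)) = 1$ (or the ``all equal'' variants with stars) into a statement about the parity of $o$-labels in the sets $S_X(v) \cap S(v_0,k)$, exactly as described in the labelling discussion of Subsection~\ref{subsection:definitions}.

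First I would treat the ``single-side'' groups $G_{(i)}^+(X,\varnothing)$ and $G_{(i)}^+(X^*,\varnothing)$ (and their symmetric counterparts), since these isolate the effect of one set $X$ on one type. For a vertex $v_0$ and a sphere $S(v_0,k)$ of type $t$, the relevant constraints come from sets $S_X(x)$ with $x$ at distance $k - \max X$ from $v_0$; because $\{S_X(x)\cap S(v_0,k)\}$ partitions $S(v_0,k)$, there is at most one parity condition per block, and the block size is governed by $\max X$ together with the second-largest element of $X$. The key computation is to check that, once $k \geq \max X$, the finest branch at which a parity constraint bites is determined by $\max X$: an even number of $o$-labels is forced in each branch of depth $\alpha_k^t = k - \max X$ (roughly), giving the threshold value $K'(t)$. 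For the starred groups, the ``all $\Sgn$ equal'' condition relaxes the constraint by one branch level (it permits a global odd parity that is coherent across the sphere), which is precisely what distinguishes case $\#3$ (the $(K-1)^*$ shape) from case $\#2$; I expect $K'(t)$ and $c(t)$ to differ between $G_{(i)}^+(X,\varnothing)$ and $G_{(i)}^+(X^*,\varnothing)$ by exactly this one-level shift.

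Next I would handle the ``two-sided'' groups $G_{(i)}^+(Y_0,Y_1)$ and the linked variant $G_{(i)}^+(X_0,X_1)^*$. Here both $X_0$ and $X_1$ constrain each sphere, because a sphere $S(v_0,k)$ of type $t$ meets sets of the form $S_{X_t}(x)$ (with $x$ of the appropriate type at the appropriate distance) as well as sets $S_{X_{1-t}}(x')$ reaching the same sphere from vertices of the other type. The combined constraint produces the value $K'(t)$ via the operation $\boxplus$, which is why $\boxplus$ was defined: I expect the relevant threshold to come out as $\max X_t \boxplus \max X_{1-t}$, reflecting how a constraint anchored at depth $\max X_{1-t}$ on the opposite type propagates to distance $\lceil |\max X_0 - \max X_1|/2\rceil$ fewer branch levels on the given sphere. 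The compatibility hypothesis on $X_0, X_1$ (Definition~\ref{definition:classification}) ensures that the two families of constraints do not interfere in a way that would collapse the invariants, so that the entry in Table~\ref{table:invariants} is well-defined. For the final column counting the number of groups with fixed invariants, I would simply enumerate how many members of $\underline{\mathcal{G}}_{(i)}$ share a given quadruple $(c(0),c(1),K'(0),K'(1))$, reading off the multiplicities from the list in Definition~\ref{definition:classification}.

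The main obstacle I anticipate is the bookkeeping for the two-sided and starred groups: correctly identifying, for each $k$ and each type $t$, the exact finest branch at which the interplay of $X_t$ and $X_{1-t}$ forces a parity, and verifying that this yields precisely $\boxplus$ rather than a naive sum or maximum. This requires a careful case analysis according to the relative sizes and parities of $\max X_0$ and $\max X_1$, and according to whether the star is present on one side, the other, or both (the linked condition $G_{(i)}^+(X_0,X_1)^*$ behaves differently from $G_{(i)}^+(X_0^*,X_1^*)$ because it couples the two types into a single global parity). I would organize this by first establishing a clean lemma expressing $\alpha_k^t$ for large $k$ in terms of the defining data, and then specializing; the parity-shift caused by each star is the subtle point, and I would double-check it against the simplest example $G_{(i)}^+(\{0\},\{0\}) = \Alt_{(i)}(T)^+$, where every $\alpha_k^t$ should be $0$, and against $G_{(i)}^+(\varnothing,\varnothing) = \Aut(T)^+$, where every $\alpha_k^t = \infty$ (case $\#0$), to fix the normalization of $K'$.
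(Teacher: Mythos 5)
Your overall method---computing $c(t)$ and $K'(t)$ directly from the sign conditions via the $e$/$o$ labelling device of Subsection~\ref{subsection:definitions}, and then enumerating multiplicities for the last column---is exactly the route the paper takes (its proof of Proposition~\ref{proposition:table} is deliberately terse and consists of precisely this). However, your blueprint contains a concrete error at the heart of the two-sided case: you predict that the threshold comes out as $K'(t) = \max X_t \boxplus \max X_{1-t}$. This contradicts Table~\ref{table:invariants}, where $K'(t) = \max X_t$ in \emph{every} line with a non-empty set on side $t$. The reason is that the conditions attached to $X_{1-t}$ are anchored so that the outermost layer of each set $S_{X_{1-t}}(x)$ consists of vertices of type $1-t$; they therefore constrain the spheres governing $\alpha_k^{1-t}$, not those governing $\alpha_k^t$, so no cross-propagation into $K'(t)$ occurs. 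The interplay between $X_0$ and $X_1$ affects only the case symbol $c(t)$ (for instance $G_{(i)}^+(X_0,X_1)^*$ has $c(0)=c(1)=2$ when $\max X_0 = \max X_1$ but $\{c(0),c(1)\}=\{1,3\}$ otherwise). The operation $\boxplus$ enters the table \emph{exclusively} through the last column: $2^{K'(0)\boxplus K'(1)}$ is the number of compatible pairs $(X_0,X_1)$ with $\max X_0 = K'(0)$ and $\max X_1 = K'(1)$. Indeed, if $a = \max X_0 \leq b = \max X_1$, then $X_0$ is unconstrained ($2^a$ choices), while $X_1$ may contain any subset of $\{0,\dots,a-1\}$ together with any of the $\lfloor (b-a)/2 \rfloor$ integers in $[a,b)$ congruent to $b$ modulo $2$, giving $2^{a}\cdot 2^{\lfloor (b-a)/2\rfloor}$ choices; and $2a + \lfloor (b-a)/2\rfloor = a \boxplus b$. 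Your proposal never identifies this count, precisely because you have misattributed $\boxplus$ to the invariant values.

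Two smaller slips would compound the bookkeeping problem. First, compatibility of $X_0$ and $X_1$ (Definition~\ref{definition:classification}) is not what makes the invariants ``well-defined''---they can be computed for any group in $\mathcal{G}_{(i)}$; its role is to make the members of $\underline{\mathcal{G}}_{(i)}$ pairwise distinct (Proposition~\ref{proposition:different}), which is what allows the last column to count groups rather than names, and to make the compatible-pair count come out to the stated power of $2$. Second, passing from $G_{(i)}^+(X,\varnothing)$ to $G_{(i)}^+(X^*,\varnothing)$ does \emph{not} shift $K'(t)$: both lines of the table have $K'(t)=\max X$; the star changes $c(t)$ from $1$ to $3$, equivalently shifts $K(t)$ from $\max X$ to $\max X + 1$ while the limit $K'(t) = \lim_k \alpha_k^t$ stays put. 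Note finally that your two proposed sanity checks ($\Alt_{(i)}(T)^+$ and $\Aut(T)^+$) would both pass and leave all of these errors undetected, since they are one-parameter degenerate cases; testing a two-sided example with distinct maxima, e.g.\ $G_{(i)}^+(\{1\},\{0\})$, would immediately expose the $\boxplus$ mistake (the table gives $K'(0)=1$, $K'(1)=0$, whereas your formula would give $1 \boxplus 0 = 0$ for both).
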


\begin{proof}
\begin{table}[b!]
$$
\begin{array}{|c|c|c;{1pt/2pt}c|c;{1pt/2pt}c|c|}
\hline
 & & c(0) & K'(0) & c(1) & K'(1) & \text{\#} \Tstrut\Bstrut\\
\hline
1 & \Aut(T)^+ = G_{(i)}^+(\varnothing,\varnothing) & 0 & \infty & 0 & \infty & 1 \Tstrut\\
2 & G_{(i)}^+(X_0,\varnothing) & 1 & \max X_0 & 0 & \infty & 2^{K'(0)} \\
3 & G_{(i)}^+(\varnothing,X_1) & 0 & \infty & 1 & \max X_1 & 2^{K'(1)} \\
4 & G_{(i)}^+(X_0,X_1) & 1 & \max X_0 & 1 & \max X_1 & 2^{K'(0)\boxplus K'(1)} \Bstrut \\
\hline
5 & G_{(i)}^+(X_0^*,\varnothing) & 3 & \max X_0 & 0 & \infty & 2^{K'(0)} \Tstrut\\
6 & G_{(i)}^+(\varnothing,X_1^*) & 0 & \infty & 3 & \max X_1 & 2^{K'(1)} \\
7 & G_{(i)}^+(X_0,X_1^*) & 1 & \max X_0 & 3 & \max X_1 & 2^{K'(0)\boxplus K'(1)} \\
8 & G_{(i)}^+(X_0^*,X_1) & 3 & \max X_0 & 1 & \max X_1 & 2^{K'(0)\boxplus K'(1)} \\
9 & G_{(i)}^+(X_0^*,X_1^*) & 3 & \max X_0 & 3 & \max X_1 & 2^{K'(0)\boxplus K'(1)} \Bstrut\\
10 & \begin{array}{c}
G_{(i)}^+(X_0,X_1)^*\Tstrut\\
\text{($\max X_0 = \max X_1$)}
\end{array}
& 2 & \max X_0 & 2 & \max X_1 & 2^{K'(0)\boxplus K'(1)} \Tstrut\Bstrut\\
11 & \begin{array}{c}
G_{(i)}^+(X_0,X_1)^*\Tstrut\\
\text{($\max X_0 > \max X_1$)}
\end{array}
& 1 & \max X_0 & 3 & \max X_1 & 2^{K'(0)\boxplus K'(1)} \Tstrut\Bstrut\\
12 & \begin{array}{c}
G_{(i)}^+(X_0,X_1)^*\Tstrut\\
\text{($\max X_0 < \max X_1$)}
\end{array}
& 3 & \max X_0 & 1 & \max X_1 & 2^{K'(0)\boxplus K'(1)} \Tstrut\Bstrut\\
\hline
\end{array}
$$
\caption{Values of the invariants for the groups in $\underline{\mathcal{G}}_{(i)}$.}
\label{table:invariants}
\end{table}
The values of the different invariants can be computed only using the definitions of the groups and the construction explained in Subsection~\ref{subsection:definitions} with the labellings $e$ and~$o$. We suggest the reader to compute the invariants for some of the groups to become familiar with the definitions.

The value $2^{K'(0)}$ in the last column for lines 2 and 5 is simply equal to the number of sets $X_0 \subset_f \N$ such that $\max X_0 = K'(0)$. The reasoning is the same for lines 3 and~6. Concerning line 4 and lines 7--12, the value $2^{K'(0) \boxplus K'(1)}$ corresponds to the number of pairs $(X_0,X_1)$ with $X_0,X_1 \subset_f \N$ such that $X_0$ and $X_1$ are compatible (as defined in Definition~\ref{definition:classification}), $\max X_0 = K'(0)$ and $\max X_1 = K'(1)$. Note that we do not count a group twice as all the groups in $\underline{\mathcal{G}}_{(i)}$ are pairwise different (see Proposition~\ref{proposition:different}).
\end{proof}

In Table~\ref{table:invariants}, it is remarkable that having $c(0) = 2$ also implies $c(1) = 2$ and $K(0) = K(1)$. This is actually a general fact for any $H \in \mathcal{H}_T^+$ such that $H \supseteq \Alt_{(i)}(T)^+$.


\begin{lemma}\label{lemma:2-2}
If $c(t) = 2$ for some $t \in \{0,1\}$, then $c(1-t) = 2$ and $K(0) = K(1)$.
\end{lemma}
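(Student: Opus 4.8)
The statement to prove is that if $c(t)=2$ for some $t$, then $c(1-t)=2$ and $K(0)=K(1)$. Recall that $c(t)=2$ means the sequence $(\alpha_k^t)_{k}$ has shape $\#3$: there is some $K=K(t)$ with $\alpha_K^t = (K-1)^*$ and $\alpha_k^t = K-1$ for $k > K$. The key feature of the starred value $(K-1)^*$ is that, by the third case of Lemma~\ref{lemma:alpha}, $\mathcal{D}(\tilde{H}^{K}(v)_e)$ contains \emph{both} the $e$-diagrams with an even number of labels $o$ in each $(K-1)$-branch \emph{and} those with an odd number in each $(K-1)$-branch, for $v$ a vertex of type $(t+K)\bmod 2$. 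The plan is to exploit this ``global parity coupling'' at scale $K$ and show that it forces the same phenomenon at the adjacent vertex.

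First I would fix adjacent vertices $v$ (of type $(t+K)\bmod 2$) and $w$, and pick $h \in H(v)$ whose image in $\tilde{H}^{K}(v)$ realizes a diagram with all $(K-1)$-branches labelled $O$ (this exists precisely because $\alpha_K^t = (K-1)^*$). Using Lemma~\ref{lemma:diagram} I may assume $h$ fixes $w$, and then conjugate $h$ by suitable elements of $\Alt_{(i)}(T)^+$ exactly as in the proofs of Claims~\ref{claim:branch2} and~\ref{claim:rule} to transport the odd labels across the edge $[v,w]$ and produce an element of $\tilde{H}(w)$ exhibiting the starred behaviour at the corresponding scale. The symmetry between $v$ and $w$ (both $H$-stabilizers contain the corresponding $\Alt_{(i)}$-piece, and $\Alt_{(i)}(T)^+$ acts $2$-transitively so that half-trees rooted at $v$ and $w$ are interchangeable) should give $c(1-t)=2$ and simultaneously pin down the scale so that $K(1-t)=K(t)$, hence $K(0)=K(1)$.

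The main obstacle, and where I expect the real work to lie, is ruling out the alternative shapes for the \emph{other} type. A priori Lemma~\ref{lemma:sequences} allows $(\alpha_k^{1-t})_k$ to be in case $\#0$, $\#1$, $\#2$ or $\#3$ with its own threshold $K(1-t)$. I would argue that the starred (global, non-factorizing) condition at type $t$ is incompatible with a ``local'' ($\#1$ or $\#2$) or trivial ($\#0$) condition at type $1-t$: if at type $1-t$ the group imposed an honest parity constraint on each $r$-branch (case $\#2$) or no constraint (cases $\#0$, $\#1$) at the matching scale, then one could build an element violating the global coupling at type $t$, contradicting that the two parities at different vertices of type $t$ must agree. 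Concretely, the labels on $S(v,K)$ distribute over the neighbours $w$ of $v$, and a non-global condition at $w$ would let me independently flip the parity in one $(K-1)$-branch while keeping the others fixed, breaking the ``all $(K-1)$-branches have the same label'' structure that the starred case demands. Carefully checking that the scales line up (so that $\alpha^{1-t}$ becomes starred at exactly $k=K$, forcing $K(1-t)=K(t)$) will require tracking the branch decomposition across the central edge, which is the delicate bookkeeping step; the group-theoretic manipulations themselves are routine conjugations of the kind already used repeatedly in this section.
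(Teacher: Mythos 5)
There is a genuine gap, and it is right at the foundation: you have misread the invariant. In this paper $c(t)$ records which case of Lemma~\ref{lemma:sequences} occurs, and $c(t)=2$ is case $\#2$, where $\alpha^t_{K(t)} = K(t)-1$ and the sequence never takes a starred value; the starred case $\alpha^t_{K(t)} = (K(t)-1)^*$ is $c(t)=3$. Consequently your opening move --- picking $h \in H(v)$ whose image in $\tilde{H}^{K}(v)$ realizes a diagram with \emph{all} $(K-1)$-branches labelled $O$, which you justify ``precisely because $\alpha_K^t = (K-1)^*$'' --- is unavailable under the hypothesis of the lemma: when $c(t)=2$, Lemma~\ref{lemma:alpha} says $\mathcal{D}(\tilde{H}^{K}(v)_e)$ contains \emph{only} the $e$-diagrams with an even number of labels $o$ in each $(K-1)$-branch, so the all-odd diagram does not exist and the whole ``global parity coupling'' mechanism you plan to transport across the edge $[v,w]$ belongs to the wrong case. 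Your proposal is even internally inconsistent on this point: later you correctly describe case $\#2$ as the per-branch (``honest parity constraint on each $r$-branch'') condition, which contradicts your initial identification of $c(t)=2$ with the starred shape.

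For comparison, the paper's proof needs no starred diagram at type $t$ at all. Taking $v$ of type $(K(0)-1)\bmod 2$, it uses $\alpha^0_{K(0)-1} = \infty$ to produce $h \in H$ fixing $B(v,K(0)-1)$ whose diagram at scale $K(0)-1$ has a \emph{single} vertex $a$ labelled $o$, with $a$ outside the branch of a chosen neighbour $w$. It then argues by two contradictions: (a) if $\alpha^1_{K(0)} \geq (K(0)-1)^*$, one finds $g$ fixing $B(v,K(0))$ whose diagram matches that of $h$ on the branch of $w$, so that $hg^{-1}$ viewed in $\tilde{H}^{K(0)}(w)$ gives an $e$-diagram with exactly one label $o$ in a single $(K(0)-1)$-branch, violating $\alpha^0_{K(0)} = K(0)-1$; this yields $K(1) \leq K(0)$. (b) If $K(1) < K(0)$, the constancy $\alpha^1_{K(0)} = K'(1)$ again supplies such a $g$ and the same contradiction, giving $K(1) \geq K(0)$. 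Then $K(1) = K(0)$ and $\alpha^1_{K(1)} \leq K(1)-1$ combined with Claim~\ref{claim:rule} forces $\alpha^1_{K(1)} = K(1)-1$, i.e.\ $c(1)=2$. Your instinct that a non-matching shape at type $1-t$ would let one ``independently flip the parity in one $(K-1)$-branch'' is the right kind of idea, but as written your argument cannot start, and the delicate step you defer (ruling out the starred shape $c(1-t)=3$ and pinning $K(1-t)=K(t)$) is exactly where the paper's actual work lies.
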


\begin{proof}
Assume without loss of generality that $t = 0$ and let $v$ be a vertex of type $(K(0)-1) \bmod 2$. Since $\alpha_{K(0)-1}^0 = \infty$, there exists $h \in H$ fixing $B(v,K(0)-1)$ and such that the diagram of its image in $\tilde{H}^{K(0)-1}(v)$ has exactly one vertex $a$ labelled by $o$. Let $w$ be a vertex adjacent to $v$ such that $a$ is not in the branch of $w$.

We first show that $\alpha^{1}_{K(0)} \leq K(0)-1$, which will in particular imply that $K(1) \leq K(0)$. Suppose for a contradiction that $\alpha^{1}_{K(0)} \geq (K(0)-1)^*$. Then there exists $g \in H$ fixing $B(v,K(0))$ and such that the diagram of its image $\tilde{g}$ in $\tilde{H}^{K(0)}(v)$ and the diagram of the image of $h$ in $\tilde{H}^{K(0)}(v)$ coincide on the branch of $w$. Indeed, the condition $\alpha^{1}_{K(0)} \geq (K(0)-1)^*$ gives us sufficient freedom to choose the labels of $\mathcal{D}(\tilde{g})$ in the branch of $w$. Hence, the diagram of the image of $hg^{-1}$ in $\tilde{H}^{K(0)}(w)$ is an $e$-diagram with a $(K(0)-1)$-branch (the branch of $v$) containing exactly one vertex labelled by $o$, contradicting $\alpha_{K(0)}^0 = K(0)-1$.

We now prove that $K(1) \geq K(0)$, once again by contradiction, assuming that $K(1) < K(0)$. If $\tilde{h}$ is the image of $h$ in $\tilde{H}^{K(0)}(v)$, then since $\alpha^1_{K(1)} \in \{K'(1), K'(1)^*\}$ the $K'(1)$-branches of $\mathcal{D}(\tilde{h})$ contained in the branch of $w$ all contain an even number of vertices labelled by $o$. But $\alpha_{K(0)}^1 = K'(1)$ (because $K(0) > K(1)$), so there exists $g \in H$ fixing $B(v,K(0))$ and such that the diagram of its image $\tilde{g}$ in $\tilde{H}^{K(0)}(v)$ and the diagram of $\tilde{h}$ coincide on the branch of $w$. We therefore have the same contradiction as above by considering the image of $hg^{-1}$ in $\tilde{H}^{K(0)}(w)$.

As a conclusion, $K(1) = K(0)$ and $\alpha_{K(1)}^1 \leq K(1)-1$ so $c(1) = 2$. 
\end{proof}


\subsection{The algebraic invariants \texorpdfstring{$f^t_v$}{f^t_v}}

Our next goal is to understand the relationship between $\mathcal{D}(\tilde{H}^{k-1}(v))$ and $\mathcal{D}(\tilde{H}^k(v))$ (for fixed $v$ and $k$). The first result in this direction is the following. We identify the group $\C_2$ of order $2$ with $\{E,O\}$, where $E$ is the neutral element. By convention, we say that $B(v,-1) = \varnothing$ and that $\mathcal{D}(\tilde{H}^{-1}(v))$ only contains the empty diagram $\varepsilon$.

\begin{lemma}\label{lemma:crucial}
Let $v \in V(T)$, let $k \in \N$ and let $\delta \in \mathcal{D}(\tilde{H}^{k-1}(v))$.
\begin{enumerate}[(i)]
\item If $\alpha_k(v) = \infty$, then $\mathcal{D}(\tilde{H}^k(v))$ contains all the diagrams of $\Diag_{v,k}$ whose intersection with $B(v,k-1)$ is $\delta$.
\item If $\alpha_k(v) = x \in \{0,1,\ldots,k\}$, denote by $b_1,\ldots,b_m$ the $x$-branches of $B(v,k)$. Then there exists a unique element $(p_1,\ldots,p_m) \in \{E,O\}^m$ such that the following holds. For each $\hat{\delta} \in \Diag_{v,k}$ with $\hat{\delta} \cap B(v,k-1) = \delta$, if $q_i \in \{E,O\}$ is the parity of the number of vertices labelled by $o$ in $\hat{\delta} \cap b_i \cap S(v,k)$, then $\hat{\delta}$ is contained in $\mathcal{D}(\tilde{H}^k(v))$ if and only if $(q_1,\ldots,q_m) = (p_1,\ldots,p_m)$.
\item If $\alpha_k(v) = (k-1)^*$, denote by $b_1,\ldots,b_m$ the $(k-1)$-branches of $B(v,k)$. Then there exists a unique element $[(p_1,\ldots,p_m)] \in \bigslant{\{E,O\}^m}{\langle (O,\ldots,O) \rangle}$ such that the following holds. For each $\hat{\delta} \in \Diag_{v,k}$ with $\hat{\delta} \cap B(v,k-1) = \delta$, if $q_i \in \{E,O\}$ is the parity of the number of vertices labelled by $o$ in $\hat{\delta} \cap b_i \cap S(v,k)$, then $\hat{\delta}$ is contained in $\mathcal{D}(\tilde{H}^k(v))$ if and only if $[(q_1,\ldots,q_m)] = [(p_1,\ldots,p_m)]$.
\end{enumerate}
\end{lemma}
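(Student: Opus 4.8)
The statement describes, for each ball $B(v,k)$, exactly which extensions of a fixed diagram $\delta \in \mathcal{D}(\tilde H^{k-1}(v))$ lie in $\mathcal{D}(\tilde H^k(v))$, organizing the answer according to the value of $\alpha_k(v)$. The plan is to reduce everything to the structure of the subgroup $\tilde H^k(v)_e$ of elements whose diagram is an $e$-diagram, which was completely described in Lemma~\ref{lemma:alpha}. First I would observe that the problem naturally splits into the $e$-diagram part (how one may modify the labels on $S(v,k)$ alone) and the ``extension'' part (lifting a fixed $\delta$ on $B(v,k-1)$). By Lemma~\ref{lemma:diagram} the group $\tilde H^k(v)$ is determined by its set of diagrams, and since $H \supseteq \Alt_{(i)}(T)^+$ we have $\tilde H^k(v) \supseteq \Alt_{(i)}(B(v,k+1))$; this gives us $\mathcal{D}(\tilde H^k(v)_e)$ explicitly via Lemma~\ref{lemma:alpha}.

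\textbf{Key steps.} First I would fix $\delta \in \mathcal{D}(\tilde H^{k-1}(v))$ and pick any $\tilde h \in \tilde H^k(v)$ with $\mathcal{D}(\tilde h)|_{B(v,k-1)} = \delta$ (such $\tilde h$ exists by definition of $\delta$ as an element of $\mathcal{D}(\tilde H^{k-1}(v))$, viewing the larger ball). Then any other $\hat\delta$ extending $\delta$ is realizable in $\tilde H^k(v)$ if and only if $\hat\delta$ differs from $\mathcal{D}(\tilde h)$ by an $e$-diagram, i.e. $\hat\delta \cdot \mathcal{D}(\tilde h)^{-1}$ (multiplication of labels in $\C_2$ at each vertex, using Lemma~\ref{lemma:diag}) lies in $\mathcal{D}(\tilde H^k(v)_e)$. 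This is because composing $\tilde h$ with an element of $\tilde H^k(v)_e$ keeps the diagram equal to $\delta$ on $B(v,k-1)$ and allows exactly the modifications on $S(v,k)$ described by Lemma~\ref{lemma:alpha}. Now I would translate the three cases of Lemma~\ref{lemma:alpha} into the three cases of the present statement. If $\alpha_k(v) = \infty$, case~1 of Lemma~\ref{lemma:alpha} says $\mathcal{D}(\tilde H^k(v)_e)$ contains \emph{all} $e$-diagrams, so every $\hat\delta$ extending $\delta$ occurs, giving (i). If $\alpha_k(v) = x \in \{0,\ldots,k\}$, case~2 of Lemma~\ref{lemma:alpha} says $\mathcal{D}(\tilde H^k(v)_e)$ consists of the $e$-diagrams with an even number of $o$'s in each $x$-branch; defining $p_i$ to be the parity of the number of $o$-labels of $\mathcal{D}(\tilde h)$ in $b_i \cap S(v,k)$, the condition $\hat\delta \cdot \mathcal{D}(\tilde h)^{-1} \in \mathcal{D}(\tilde H^k(v)_e)$ becomes exactly $q_i = p_i$ for all $i$, giving (ii). For $\alpha_k(v) = (k-1)^*$, case~3 of Lemma~\ref{lemma:alpha} says the admissible $e$-diagrams are those where the $(k-1)$-branch parities are either all $E$ or all $O$; this is precisely the condition that the difference vector lies in $\langle(O,\ldots,O)\rangle$, which yields the well-definedness of the class $[(p_1,\ldots,p_m)]$ and gives (iii).

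\textbf{Well-definedness and uniqueness.} The uniqueness of $(p_1,\ldots,p_m)$ in (ii) and of the class $[(p_1,\ldots,p_m)]$ in (iii) I would establish by noting that two different choices of lift $\tilde h$ differ by an element of $\tilde H^k(v)_e$ extending the trivial modification of $\delta$, hence their $o$-parity vectors on the $x$-branches (resp.\ $(k-1)$-branches) differ by an allowed vector, which is the zero vector in case (ii) and an element of $\langle(O,\ldots,O)\rangle$ in case (iii). This shows the parity data attached to $\delta$ is a genuine invariant, independent of the lift. I expect the main subtlety, rather than a deep obstacle, to be the bookkeeping of the identification between the $\C_2$-valued ``difference'' of diagrams and the group operation on labels: one must verify carefully, via Lemma~\ref{lemma:diag}, that the parity of $o$-labels in a fixed branch behaves additively under composition so that the translation to a coset condition in $\{E,O\}^m$ is valid. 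Once this additivity is in place, the three cases follow directly from the corresponding three cases of Lemma~\ref{lemma:alpha}, and the proof is essentially a clean transcription with no further genuine difficulty.
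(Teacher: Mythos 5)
Your proposal is correct and follows essentially the same route as the paper: fix one lift $\tilde{h} \in \tilde{H}^k(v)$ of $\delta$, observe that an arbitrary $\tilde{g}$ extending $\delta$ lies in $\tilde{H}^k(v)$ if and only if $\tilde{g}\tilde{h}^{-1} \in \tilde{H}^k(v)_e$ (its diagram being an $e$-diagram), and read off the three cases from Lemma~\ref{lemma:alpha}, with $(p_1,\ldots,p_m)$ the parity vector of $\mathcal{D}(\tilde{h})$. The bookkeeping subtlety you flag is real but harmless and is exactly what the paper's appeal to Lemma~\ref{lemma:diag} handles: the diagram of $\tilde{g}\tilde{h}^{-1}$ is the pointwise difference of labels transported by the permutation that $\tilde{h}$ induces on the branches, and since the conditions of Lemma~\ref{lemma:alpha} (all branches even, resp.\ all branches equal) are invariant under permuting branches, the coset condition in $\{E,O\}^m$ comes out as stated.
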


\begin{proof}
Let $\tilde{h} \in \tilde{H}^k(v)$ be such that $\mathcal{D}(\tilde{h}) \cap B(v,k-1) = \delta$.
\begin{enumerate}[(i)]
\item Let $\tilde{g} \in \Aut(B(v,k+1))$ be such that $\mathcal{D}(\tilde{g}) \cap B(v,k-1) = \delta$. Using Lemma~\ref{lemma:diag}, we see that having $\mathcal{D}(\tilde{g}) \cap B(v,k-1) = \mathcal{D}(\tilde{h}) \cap B(v,k-1)$ implies that $\mathcal{D}(\tilde{g}\tilde{h}^{-1})$ is an $e$-diagram. As $\alpha_k(v) = \infty$, we get $\tilde{g}\tilde{h}^{-1} \in \tilde{H}^k(v)$ and thus $\tilde{g} = (\tilde{g}\tilde{h}^{-1})\tilde{h} \in \tilde{H}^k(v)$.
\item[(ii),(iii)] For each $i \in \{1,\ldots,m\}$, let $p_i \in \{E,O\}$ be the parity of the number of vertices labelled by $o$ in $\mathcal{D}(\tilde{h}) \cap b_i \cap S(v,k)$. We prove that $(p_1,\ldots,p_m)$ (resp. $[(p_1,\ldots,p_m)]$) satisfies the statement (and it is clear that it is unique). Let $\tilde{g} \in \Aut(B(v,k+1))$ be such that $\mathcal{D}(\tilde{g}) \cap B(v,k-1) = \delta$ and let $q_i \in \{E,O\}$ be the parity of the number of vertices labelled by $o$ in $\mathcal{D}(\tilde{g}) \cap b_i \cap S(v,k)$. We have $\tilde{g} \in \tilde{H}^k(v)$ if and only if $\tilde{g} \tilde{h}^{-1} \in \tilde{H}^k(v)$, and $\mathcal{D}(\tilde{g} \tilde{h}^{-1})$ is an $e$-diagram. The value of $\alpha_k(v)$ and Lemma~\ref{lemma:diag} then imply that $\tilde{g} \tilde{h}^{-1} \in \tilde{H}^k(v)_e$ if and only if $(q_1,\ldots,q_m) = (p_1,\ldots,p_m)$ (resp. $[(q_1,\ldots,q_m)] = [(p_1,\ldots,p_m)]$). \qedhere
\end{enumerate}
\end{proof}

Fix $t \in \{0,1\}$ such that $c(t) \neq 0$. For $k < K(t)$, if $v$ is a vertex of type $(t+k) \bmod 2$ then the fact that $\alpha_k^t = \infty$ implies by Lemma~\ref{lemma:crucial} (i) that $\mathcal{D}(\tilde{H}^k(v))$ exactly contains the diagrams of $\Diag_{v,k}$ whose intersection with $B(v,k-1)$ is a diagram in $\mathcal{D}(\tilde{H}^{k-1}(v))$.

On the other hand, if $v$ is a vertex of type $(t+K(t)) \bmod 2$ then the shape of $\mathcal{D}(\tilde{H}^{K(t)}(v))$ cannot be directly deduced from $\mathcal{D}(\tilde{H}^{K(t)-1}(v))$. In view of Lemma~\ref{lemma:crucial} (ii),(iii), we can however define a map $f^t_v$ to encode this information. The domain of $f^t_v$ will be $\mathcal{D}(\tilde{H}^{K(t)-1}(v))$ while its codomain $J^t$ will depend on the value of $c(t)$. Given a diagram $\delta \in \mathcal{D}(\tilde{H}^{K(t)-1}(v))$, the value of $f^t_v(\delta)$ will exactly give what is the condition on a diagram of $\Diag_{v,K(t)}$ whose intersection with $B(v,K(t)-1)$ is $\delta$ for being contained in $\mathcal{D}(\tilde{H}^{K(t)}(v))$. Let us denote by $b_1, \ldots, b_{\tilde{d}}$ the branches of the vertices adjacent to $v$.

\begin{itemize}
\item If $c(t) = 1$, then $\alpha^t_{K(t)} = K(t)$ and we can apply Lemma~\ref{lemma:crucial} (ii) with $k = x = K(t)$. We set $J^t := \{E, O\}$ and define $f^t_v \colon \mathcal{D}(\tilde{H}^{K(t)-1}(v)) \to J^t$ naturally: $f^t_v(\delta)$ is the unique element $p \in J^t$ given by the lemma (note that $m = 1$).

\item If $c(t) = 2$, then $\alpha^t_{K(t)} = K(t)-1$ and we can apply Lemma~\ref{lemma:crucial} (ii) with $k = K(t)$ and $x = K(t)-1$. We set $J^t := \{E, O\}^{\tilde{d}}$ and define $f^t_v \colon \mathcal{D}(\tilde{H}^{K(t)-1}(v)) \to J^t$ naturally: $f^t_v(\delta)$ is the unique element $(p_1,\ldots,p_{\tilde{d}}) \in J_t$ given by the lemma.

\item If $c(t) = 3$, then $\alpha^t_{K(t)} = (K(t)-1)^*$ and we can apply Lemma~\ref{lemma:crucial} (iii) with $k = K(t)$. We set $J^t := \bigslant{\{E, O\}^{\tilde{d}}}{\langle(O,\ldots,O)\rangle}$ and define $f^t_v \colon \mathcal{D}(\tilde{H}^{K(t)-1}(v)) \to J^t$ naturally: $f^t_v(\delta)$ is the unique element $[(p_1,\ldots,p_{\tilde{d}})] \in J_t$ given by the lemma.
\end{itemize}

The next result directly follows from the definition of $f^t_v$.


\begin{lemma}\label{lemma:ftrivial}
Let $\delta_e \in \Diag_{v,K(t)-1}$ be the diagram with all vertices labelled by $e$. Then $\delta_e$ belongs to $\mathcal{D}(\tilde{H}^{K(t)-1}(v))$ and $f^t_v(\delta_e)$ is the trivial element of $J^t$. Moreover, for $\tilde{g}, \tilde{h} \in \tilde{H}^{K(t)-1}(v)$, we have the following.
\begin{itemize}
\item If $c(t) = 1$, then $f^t_v(\mathcal{D}(\tilde{g}\tilde{h})) = f^t_v(\mathcal{D}(\tilde{g})) \cdot f^t_v(\mathcal{D}(\tilde{h}))$.
\item If $c(t) \in \{2,3\}$, then $f^t_v(\mathcal{D}(\tilde{g}\tilde{h})) = \sigma(f^t_v(\mathcal{D}(\tilde{g}))) \cdot f^t_v(\mathcal{D}(\tilde{h}))$, where $\sigma \colon J^t \to J^t$ permutes the coordinates in the same way as $\tilde{h}$ permutes the branches $b_1, \ldots, b_{\tilde{d}}$.
\end{itemize}
\end{lemma}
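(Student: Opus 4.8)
The plan is to reduce both assertions to the cocycle relation for diagrams recorded in Lemma~\ref{lemma:diag}. Throughout I would write the group $\C_2 = \{E,O\}$ additively, so that the label of a vertex in a product diagram is the sum of the two contributing labels, and I would freely use that $H \supseteq \Alt_{(i)}(T)^+$ forces $\tilde{H}^{K(t)}(v) \supseteq \Alt_{(i)}(B(v,K(t)+1))$.

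For the first assertion, I would observe that $\id \in \tilde{H}^{K(t)}(v)$ and that $\mathcal{D}(\id)$ is the diagram with every vertex labelled $e$; its restriction to $B(v,K(t)-1)$ is $\delta_e$, so $\delta_e \in \mathcal{D}(\tilde{H}^{K(t)-1}(v))$. Since $\mathcal{D}(\id) \in \mathcal{D}(\tilde{H}^{K(t)}(v))$ is an extension of $\delta_e$ whose label-parities on every set $b_i \cap S(v,K(t))$ equal $E$, the uniqueness clause of Lemma~\ref{lemma:crucial} forces $f^t_v(\delta_e)$ to be the trivial element of $J^t$.

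For the second assertion, given $\tilde{g}, \tilde{h} \in \tilde{H}^{K(t)-1}(v)$ I would choose lifts $\tilde{g}', \tilde{h}' \in \tilde{H}^{K(t)}(v)$ restricting to $\tilde{g}, \tilde{h}$ on $B(v,K(t))$; these exist because $\tilde{H}^{K(t)}(v)$ surjects onto $\tilde{H}^{K(t)-1}(v)$. As $\tilde{g}'$ lies in $\tilde{H}^{K(t)}(v)$ and extends $\mathcal{D}(\tilde{g})$, the defining property of $f^t_v$ identifies $f^t_v(\mathcal{D}(\tilde{g}))$ with the tuple of label-parities of $\mathcal{D}(\tilde{g}')$ on the sets $b_i \cap S(v,K(t))$, and likewise for $\tilde{h}'$; since $\tilde{g}'\tilde{h}'$ lifts $\tilde{g}\tilde{h}$, the same holds for $\tilde{g}\tilde{h}$. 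I would then apply Lemma~\ref{lemma:diag} vertexwise: for $w \in S(v,K(t))$ the label of $w$ in $\mathcal{D}(\tilde{g}'\tilde{h}')$ equals the label of $w$ in $\mathcal{D}(\tilde{h}')$ plus the label of $\tilde{h}'(w)$ in $\mathcal{D}(\tilde{g}')$. Summing over $w \in b_i \cap S(v,K(t))$ and using that $\tilde{h}'$ permutes the branches $b_1,\dots,b_{\tilde{d}}$ by the same permutation it induces on the neighbours of $v$, the first summand contributes the $i$-th coordinate of $f^t_v(\mathcal{D}(\tilde{h}))$, while the second, after reindexing by that permutation, contributes the correspondingly permuted coordinate of $f^t_v(\mathcal{D}(\tilde{g}))$. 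For $c(t)=1$ there is a single branch and the permutation is trivial, yielding $f^t_v(\mathcal{D}(\tilde{g}\tilde{h})) = f^t_v(\mathcal{D}(\tilde{g})) \cdot f^t_v(\mathcal{D}(\tilde{h}))$; for $c(t)\in\{2,3\}$ one obtains the stated twist by $\sigma$.

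The only points demanding care are bookkeeping rather than substance. First, one must fix the convention so that the coordinate permutation $\sigma$ genuinely mirrors the branch permutation of $\tilde{h}$, tracking whether $\tilde{g}'$ is read on $b_i$ or on $\tilde{h}'(b_i)$; this is where the main risk of an off-by-inverse error lies. Second, in the case $c(t)=3$ I would check that the computation descends to $J^t = \{E,O\}^{\tilde{d}}/\langle(O,\dots,O)\rangle$, which is automatic since any coordinate permutation fixes $(O,\dots,O)$, so $\sigma$ is well defined on the quotient and the identity established for representatives passes to cosets. Independence of the chosen lifts is immediate, as $f^t_v$ depends only on the diagram on $B(v,K(t)-1)$.
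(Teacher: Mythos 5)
Your proof is correct and takes essentially the same route as the paper's: the paper likewise gets $\delta_e \in \mathcal{D}(\tilde{H}^{K(t)-1}(v))$ and the triviality of $f^t_v(\delta_e)$ from the fact that the all-$e$ diagram is always realized (you use the identity element, which suffices), and obtains the multiplication formula directly from Lemma~\ref{lemma:diag}. Your lift-and-sum computation over each $b_i \cap S(v,K(t))$, together with the check that $\sigma$ fixes $(O,\ldots,O)$ so that everything descends to the quotient when $c(t)=3$, simply makes explicit the details the paper's two-line proof leaves implicit.
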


\begin{proof}
For each $k \geq 0$, the diagram in $\Diag_{v,k}$ with all vertices labelled by $e$ is always contained in $\mathcal{D}(\tilde{H}^k(v))$ (because $H \supseteq \Alt_{(i)}(T)^+$). In particular, we have $\delta_e \in \mathcal{D}(\tilde{H}^{K(t)-1}(v))$ and $f^t_v(\delta_e)$ must be the trivial element of $J_t$.

The formula for $f^t_v(\mathcal{D}(\tilde{g}\tilde{h}))$ can be directly obtain by using Lemma~\ref{lemma:diag}.
\end{proof}


\subsection{The invariants \texorpdfstring{$c(t)$}{c(t)}, \texorpdfstring{$K(t)$}{K(t)} and \texorpdfstring{$f^t_v$}{f^t_v} form a complete system}

By definition, the map $f^t_v$ defined above fully describes the shape of $\mathcal{D}(\tilde{H}^{K(t)}(v))$ from $\mathcal{D}(\tilde{H}^{K(t)-1}(v))$. A priori, it seems that we also need similar maps to deduce the shape of $\mathcal{D}(\tilde{H}^k(v))$ from $\mathcal{D}(\tilde{H}^{k-1}(v))$ for each $k > K(t)$ (where $v$ is of type $(t+k) \bmod 2$). However, as the following lemma shows, this is not the case.

\begin{lemma}\label{lemma:describe}
Let $t \in \{0,1\}$ be such that $c(t) \neq 0$, let $k > K(t)$ and let $v$ be a vertex of type $(t+k) \bmod 2$. Let also $\delta \in \mathcal{D}(\tilde{H}^{k-1}(v))$. Consider $\hat{\delta} \in \Diag_{v,k}$ with $\hat{\delta} \cap B(v,k-1) = \delta$. Then $\hat{\delta}$ belongs to $\mathcal{D}(\tilde{H}^k(v))$ if and only if, for each vertex $w$ at distance $k-K(t)$ from $v$, the diagram $\hat{\delta} \cap B(w,K(t))$ belongs to $\mathcal{D}(\tilde{H}^{K(t)}(w))$.
\end{lemma}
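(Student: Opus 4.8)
The plan is to prove both implications by turning the membership question into a single linear condition over $\mathbf{F}_2$ on the labels carried by $S(v,k)$. Two structural facts should be recorded first. Since $c(t)\neq 0$ and $k>K(t)$, Lemma~\ref{lemma:sequences} gives $\alpha_k(v)=\alpha_k^t=K'(t)$, a finite, non-starred symbol lying in $\{K(t)-1,K(t)\}$; hence Lemma~\ref{lemma:crucial}~(ii) applies at $v$ and level $k$, and $\mathcal{D}(\tilde{H}^k(v))$ is cut out, among the diagrams restricting to $\delta$ on $B(v,k-1)$, by prescribing the parity of the labels $o$ on each $\alpha_k(v)$-branch. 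Second, for every vertex $w$ at distance $k-K(t)$ from $v$ the triangle inequality gives $B(w,K(t))\subseteq B(v,k)$, so $\hat\delta\cap B(w,K(t))$ is a genuine element of $\Diag_{w,K(t)}$, and the only vertices of $S(v,k)$ it involves are the descendant-leaves of $w$ (those of $S(w,K(t))$ lying in the branch $X_w$). Fixing $\delta$, I would view the admissible $\hat\delta$ as an affine space $A$ over $\mathbf{F}_2$, with free coordinates the labels on $S(v,k)$, and write $A_{\mathrm{glob}}$ for its members lying in $\mathcal{D}(\tilde{H}^k(v))$ and $A_{\mathrm{loc}}$ for those satisfying all the local conditions.

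For the inclusion $A_{\mathrm{glob}}\subseteq A_{\mathrm{loc}}$ I would argue directly. Realize $\hat\delta$ by some $h\in H(v)$. Given $w$, since $\Alt_{(i)}(T)^+$ is transitive on the vertices of each type (Lemma~\ref{lemma:U(Alt)transitive} with Lemma~\ref{corollary:2-transitive}) and $w,h(w)$ share a type, choose $g\in\Alt_{(i)}(T)^+$ with $g(h(w))=w$. Then $gh\in H(w)$, and because $\sigma_{(i)}(g,\cdot)$ is everywhere even, Lemma~\ref{lemma:sigma} shows $gh$ and $h$ have the same parity at every vertex; thus the diagram of $gh$ on $B(w,K(t))$ is exactly $\hat\delta\cap B(w,K(t))$, which therefore lies in $\mathcal{D}(\tilde{H}^{K(t)}(w))$. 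In particular $A_{\mathrm{glob}}\neq\varnothing$, since $\delta\in\mathcal{D}(\tilde{H}^{k-1}(v))$.

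The converse is the heart of the matter, and I would obtain it by showing that $A_{\mathrm{glob}}$ and $A_{\mathrm{loc}}$ are cosets of one and the same subspace $W$ of label-patterns on $S(v,k)$; together with the inclusion and non-emptiness from the previous step this forces $A_{\mathrm{glob}}=A_{\mathrm{loc}}$. The subspace attached to $A_{\mathrm{glob}}$ is read off from Lemma~\ref{lemma:alpha}: it consists of the even-corrections carrying an even number of labels $o$ on each $\alpha_k(v)$-branch. For $A_{\mathrm{loc}}$ I would first note that the inner requirement $\hat\delta\cap B(w,K(t)-1)\in\mathcal{D}(\tilde{H}^{K(t)-1}(w))$ holds automatically, being the forward argument applied to $\delta$ at level $k-1$; this supplies the reference diagram needed to invoke Lemma~\ref{lemma:crucial} at $w$ and level $K(t)$. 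Each local condition then becomes a parity constraint on the neighbour-branches of $w$, whose homogeneous part is described by Lemma~\ref{lemma:alpha} in the shape dictated by $c(t)$. Splitting into the cases $c(t)=1,2,3$, the branch of $w$ towards its parent carries only labels of $B(v,k-1)$, on which every even-correction is trivial, so the constraint collapses to ``even number of $o$ on each descendant-branch of $w$''. Ranging over all $w$, these descendant-branches are exactly the $\alpha_k(v)$-branches of $B(v,k)$ (the $X_w$ themselves when $K'(t)=K(t)$, the branches of the children of the $w$ when $K'(t)=K(t)-1$), so the homogeneous part of $A_{\mathrm{loc}}$ coincides with $W$.

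The main obstacle I anticipate is the bookkeeping in this last step, and especially the case $c(t)=3$: there the even-corrections at scale $K(t)$ around $w$ are not ``even on each branch'' but ``all neighbour-branch parities equal'', and one must use that the fixed parent-branch contributes an even parity to deduce that the varying descendant-branches are forced even as well. Aligning the branch indexing at scale $k$ with that at scale $K(t)$—the one-step shift between the alternatives $K'(t)=K(t)$ and $K'(t)=K(t)-1$—is where care is required; once the two homogeneous spaces are identified, the conclusion is immediate.
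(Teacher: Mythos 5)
Your proposal is correct and takes essentially the same route as the paper: the forward direction by realizing $\hat{\delta}$ through an element of $H(w)$ obtained by composing with an everywhere-even automorphism, and the converse by comparing an arbitrary diagram satisfying the local conditions against a reference extension of $\delta$ lying in $\mathcal{D}(\tilde{H}^k(v))$ and invoking the uniqueness in Lemma~\ref{lemma:crucial}~(ii) at level $k$. Your coset-of-$W$ packaging over $\mathbf{F}_2$ is just a reformulation of the paper's branch-by-branch parity comparison via the maps $f^t_w$, including the identical resolution of the $c(t)=3$ ambiguity by means of the parent branch, whose labels are frozen in $B(v,k-1)$.
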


\begin{proof}
If $\hat{\delta} \in \mathcal{D}(\tilde{H}^k(v))$ and $w$ is a vertex at distance $k-K(t)$ from $v$, then by Lemma~\ref{lemma:diagram} the diagram $\hat{\delta}$ is realized by an element $\tilde{h}$ of $\tilde{H}^k(v)$ fixing $w$. Hence, the diagram of $\tilde{h}|_{B(w,K(t)+1)}$, which is $\hat{\delta} \cap B(w,K(t))$, is contained in $\mathcal{D}(\tilde{H}^{K(t)}(w))$.

Now take $\hat{\delta} \in \Diag_{v,k}$ with $\hat{\delta} \cap B(v,k-1) = \delta$ such that $\hat{\delta} \cap B(w,K(t)) \in \mathcal{D}(\tilde{H}^{K(t)}(w))$ for each vertex $w$ at distance $k-K(t)$ from $v$. Consider also $\hat{\delta}' \in \mathcal{D}(\tilde{H}^k(v))$ with $\hat{\delta}' \cap B(v,k-1) = \delta$. In view of the first part of the proof, we have $\hat{\delta}' \cap B(w,K(t)) \in \mathcal{D}(\tilde{H}^{K(t)}(w))$ for each $w$ at distance $k-K(t)$ from $v$. Denote by $b_1,\ldots,b_m$ the $K'(t)$-branches of $B(v,k)$, and let $p_i$ (resp. $p'_i$) be the parity of the number of vertices labelled by $o$ in $\hat{\delta} \cap b_i \cap S(v,k)$ (resp. in $\hat{\delta}' \cap b_i \cap S(v,k)$). In view of Lemma~\ref{lemma:crucial} (ii), it suffices to show that $(p_1,\ldots,p_m) = (p'_1,\ldots,p'_m)$ in order to prove that $\hat{\delta} \in \mathcal{D}(\tilde{H}^k(v))$. Let $j \in \{1,\ldots,m\}$ and let $w$ be the vertex at distance $k-K(t)$ from $v$ whose branch $b$ contains $b_j$. The diagrams $\delta_0 = \hat{\delta} \cap B(w,K(t))$ and $\delta'_0 = \hat{\delta}' \cap B(w,K(t))$, which both belong to $\mathcal{D}(\tilde{H}^{K(t)}(v))$, coincide on $B(w,K(t)-1) \cup (S(w,K(t)) \setminus b)$. In particular, we have $\delta_0 \cap B(w,K(t)-1) = \delta'_0 \cap B(w,K(t)-1)$ and the two diagrams must therefore satisfy the condition given by $f^t_w(\delta_0 \cap B(w,K(t)-1)) \in J^t$ $(*)$. Given a part $X$ of a diagram, let us write $P(X)$ for the parity of the number of vertices labelled by $o$ in $X$.

If $c(t) = 1$ then $b = b_j$ and $(*)$ means that $P(\delta_0 \cap S(w,K(t))) = P(\delta'_0 \cap S(w,K(t)))$. Since $\delta_0$ and $\delta'_0$ coincide on $S(w,K(t)) \setminus b$, this means that $P(\delta_0 \cap S(w,K(t)) \cap b) = P(\delta'_0 \cap S(w,K(t)) \cap b)$, i.e. we have $p_j = p'_j$.
If $c(t) \in \{2,3\}$ then let $\tilde{b}_1,\ldots,\tilde{b}_{\tilde{d}}$ be the branches (seen in $B(w,K(t))$) of the vertices adjacent to $w$. One of these branches is equal to $b_j$, say $\tilde{b}_1$, and another of these branches is the branch of the parent of $w$ (in $B(v,k)$), say $\tilde{b}_2$. If $c(t) = 2$ then $(*)$ means that $P(\delta_0 \cap \tilde{b}_i \cap S(w,K(t))) = P(\delta'_0 \cap \tilde{b}_i \cap S(w,K(t)))$ for each $i \in \{1,\ldots,\tilde{d}\}$, and $i = 1$ directly gives $p_j = p'_j$. If $c(t) = 3$, then $(*)$ means that either $P(\delta_0 \cap \tilde{b}_i \cap S(w,K(t))) = P(\delta'_0 \cap \tilde{b}_i \cap S(w,K(t)))$ for each $i$ or $P(\delta_0 \cap \tilde{b}_i \cap S(w,K(t))) \neq P(\delta'_0 \cap \tilde{b}_i \cap S(w,K(t)))$ for each $i$. But $\delta_0$ and $\delta'_0$ coincide on $S(w,K(t)) \setminus b = S(w,K(t)) \cap \tilde{b}_2$, so we must have the equality for each $i$. In particular, $i = 1$ gives $p_j = p'_j$.
\end{proof}

As a consequence of the previous lemma, we find that the invariants $c(t)$, $K(t)$ and $f^t_v$ (for $t \in \{0,1\}$ and $v \in V(T)$ such that $f^t_v$ is defined) fully describe the entire group $H$. Note that, since $\Alt_{(i)}(T)^+$ is transitive on $V_0(T)$ and $V_1(T)$, if $c(t) \neq 0$ then knowing $f^t_v$ for a fixed vertex $v$ of type $(t+K(t)) \bmod 2$ suffices to get each $f^t_w$.


\begin{primetheorem}{maintheorem:completeinvariants}\label{maintheorem:completeinvariants'}
If $H, H' \in \mathcal{H}_T^+$ satisfy $H, H' \supseteq \Alt_{(i)}(T)^+$ and have the same invariants $c(t)$, $K(t)$ and $f^t_v$ (for $t \in \{0,1\}$ and $v \in V(T)$ such that $f^t_v$ is defined), then $H = H'$.
\end{primetheorem}

\begin{proof}
We fix one group $H \in \mathcal{H}_T^+$ with $H \supseteq \Alt_{(i)}(T)^+$ and show that, for each $v \in V(T)$ and $k \in \N$, the set $\mathcal{D}(\tilde{H}^k(v))$ can be described only using the invariants $c(t)$, $K(t)$ and $f^t_v$. By Lemma~\ref{lemma:diagram} and the fact that $H$ is generated by point stabilizers, this will prove the statement.

Let us do it by induction on $k$. For $k = -1$, $\mathcal{D}(\tilde{H}^{-1}(v))$ only contains the empty diagram $\varepsilon$ for each $v \in V(T)$. Now fix $k \geq 0$ and assume that $\mathcal{D}(\tilde{H}^{k-1}(v))$ is known for each $v \in V(T)$. Let $v \in V(T)$ and define $t \in \{0,1\}$ to be such that $v$ is of type $(t+k) \bmod 2$. If $k < K(t)$, then $\alpha^t_k = \infty$ and we know that $\mathcal{D}(\tilde{H}^k(v))$ exactly contains the diagrams of $\Diag_{v,k}$ whose intersection with $B(v,k-1)$ is contained in $\mathcal{D}(\tilde{H}^{k-1}(v))$ (see Lemma~\ref{lemma:crucial} (i)). If $k = K(t)$, then $f^t_v$ fully describes $\mathcal{D}(\tilde{H}^{K(t)}(v))$ from $\mathcal{D}(\tilde{H}^{K(t)-1}(v))$. Finally, if $k > K(t)$, then Lemma~\ref{lemma:describe} shows that $\mathcal{D}(\tilde{H}^{k}(v))$ can be deduced from $\mathcal{D}(\tilde{H}^{k-1}(v))$ and each $\mathcal{D}(\tilde{H}^{K(t)}(w))$ with $w$ at distance $k-K(t)$ from $v$.
\end{proof}

Theorem~\ref{maintheorem:completeinvariants} formulated in the introduction is a consequence of Theorem~\ref{maintheorem:completeinvariants'}. Indeed, for $H \in \mathcal{H}_T^+$ satisfying $H \supseteq \Alt_{(i)}(T)^+$, take $K \in \Nz$ strictly greater than the finite numbers among $\{K(0),K(1)\}$. Then the $K$-closure $H^{(K)}$ of $H$ also satisfies $H^{(K)} \in \mathcal{H}_T^+$ and $H^{(K)} \supseteq \Alt_{(i)}(T)^+$ and have the same invariants as $H$, so that $H = H^{(K)}$.


\subsection{Possible shapes for \texorpdfstring{$f^t_v$}{f^t_v} when \texorpdfstring{$K(t) \leq K(1-t)$}{K(t) <= K(1-t)}}

We now observe which shapes $f^t_v$ can take when $c(t) \neq 0$ and $K(t) \leq K(1-t)$. Recall that $S_X(v) := \bigcup_{r \in X}S(v,r)$ when $X \subseteq \N$.


\begin{lemma}\label{lemma:forms}
Suppose that $c(t) \neq 0$ and $K(t) \leq K(1-t)$ and let $v$ be a vertex of type $(t+K(t))\bmod 2$. Then the possible shapes for $f^t_v$ are given as follows. Here, $b_1, \ldots, b_{\tilde{d}}$ denote the branches of the vertices adjacent to $v$, as in the definition of $J^t$.
\begin{itemize}
\item If $c(t) = 1$, then there exists $A \subseteq \{0, 1, \ldots, K(t)-1\}$ such that $f^t_v(\delta)$ is equal to the parity of the number of vertices labelled by $o$ in $\delta \cap S_A(v)$.
\item If $c(t) = 2$, then there exist $A \subseteq \{1, 2, \ldots, K(t)-1\}$ and $B \subseteq \{0, 1, \ldots, K(t)-1\}$ such that $f^t_v(\delta) = (p_1, \ldots, p_{\tilde{d}})$ where $p_i$ is the parity of the number of vertices labelled by $o$ in $\delta \cap \left((S_A(v) \cap b_i) \cup (S_B(v)\setminus b_i)\right)$.
\item If $c(t) = 3$, then there exists $A \subseteq \{1, 2, \ldots, K(t)-1\}$ such that $f^t_v(\delta) = [(p_1, \ldots, p_{\tilde{d}})]$ where $p_i$ is the parity of the number of vertices labelled by $o$ in $\delta \cap (S_A(v) \cap b_i)$.
\end{itemize}
\end{lemma}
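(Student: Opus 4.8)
The plan is to exploit the multiplicativity of $f^t_v$ recorded in Lemma~\ref{lemma:ftrivial}, combined with the observation that, under the hypothesis $K(t)\le K(1-t)$, the domain of $f^t_v$ is as large as possible. First I would record this domain statement: for every $k\in\{0,1,\dots,K(t)-1\}$ and every vertex $v$ of type $(t+K(t))\bmod 2$ the symbol $\alpha_k(v)$ equals $\infty$. Indeed $\alpha_k(v)=\alpha_k^{t}$ or $\alpha_k^{1-t}$ according to the parity of $k$, and both are $\infty$ because $k<K(t)\le K(1-t)$. Iterating Lemma~\ref{lemma:crucial}~(i) then gives $\mathcal{D}(\tilde{H}^{K(t)-1}(v))=\Diag_{v,K(t)-1}$, so that $f^t_v$ is defined on \emph{all} diagrams supported by $B(v,K(t)-1)$. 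This is the only place where the inequality $K(t)\le K(1-t)$ is used.

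Next I would introduce the candidate building blocks. For $0\le r\le K(t)-1$ let $\psi_r(\delta)\in\C_2\cong\{E,O\}$ be the parity of the number of $o$'s of $\delta$ on $S(v,r)$, and for $r\ge 1$ and a branch $b_i$ let $\psi_r^{(i)}(\delta)$ be the parity on $S(v,r)\cap b_i$. Using Lemma~\ref{lemma:diag} (the label of $w$ in $\mathcal{D}(\tilde{g}\tilde{h})$ flips exactly when the labels of $w$ in $\mathcal{D}(\tilde{h})$ and of $\tilde{h}(w)$ in $\mathcal{D}(\tilde{g})$ differ) together with the fact that every $\tilde{h}\in\tilde{H}^{K(t)-1}(v)$ preserves each sphere $S(v,r)$, one checks that $\psi_r\circ\mathcal{D}$ is a homomorphism $\tilde{H}^{K(t)-1}(v)\to\C_2$, while each $\psi_r^{(i)}\circ\mathcal{D}$ is a crossed homomorphism with respect to the branch permutation, matching the twist $\sigma$ appearing in Lemma~\ref{lemma:ftrivial}. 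Moreover, by the very definition of $f^t_v$, its value is read off the forced parity on $S(v,K(t))$ of any lift to $\tilde{H}^{K(t)}(v)$; concretely, for $c(t)=1$ one has $f^t_v(\mathcal{D}(\tilde{h}))=\psi_{K(t)}(\tilde{h}')$ for any such lift $\tilde{h}'$, and similarly per branch for $c(t)\in\{2,3\}$.

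I would treat $c(t)=1$ first. Here $\phi:=f^t_v\circ\mathcal{D}\colon\tilde{H}^{K(t)-1}(v)\to\C_2$ is a genuine homomorphism, and it suffices to prove $\bigcap_{r=0}^{K(t)-1}\ker(\psi_r\circ\mathcal{D})\subseteq\ker\phi$; then $\phi$ factors through $(\psi_0,\dots,\psi_{K(t)-1})$ and equals $\sum_{r\in A}\psi_r$ for some $A\subseteq\{0,\dots,K(t)-1\}$, which is exactly the asserted form with $S_A(v)$. The kernel $\bigcap_r\ker(\psi_r\circ\mathcal{D})$ consists of the diagrams with an even number of $o$'s on each sphere, and such diagrams are generated by the two-point diagrams $\delta_{w,w'}$ having $o$ at exactly two vertices $w,w'$ of a common sphere $S(v,r)$. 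For such a diagram I would realize it as $\mathcal{D}(\tau_w\tau_{w'})$, where $\tau_w,\tau_{w'}$ are disjointly supported automorphisms with one-point diagrams at $w$ and at $w'$ respectively (subtree transpositions, available since $d_0,d_1\ge 6$). Choosing $\gamma\in\Alt_{(i)}(T)^+$ fixing $v$ with $\gamma(w)=w'$ and $\gamma\tau_w\gamma^{-1}=\tau_{w'}$, the multiplicativity of $\phi$ together with $\phi(\mathcal{D}(\gamma))=E$ gives $\phi(\mathcal{D}(\tau_w))=\phi(\mathcal{D}(\tau_{w'}))$; hence $\phi(\delta_{w,w'})=\phi(\mathcal{D}(\tau_w))\,\phi(\mathcal{D}(\tau_{w'}))=E$. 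This disposes of interior and leaf spheres uniformly and finishes $c(t)=1$.

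Finally, for $c(t)\in\{2,3\}$ I would run the same scheme coordinate by coordinate, now using the crossed-homomorphism formula of Lemma~\ref{lemma:ftrivial}: restricting to the subgroup of $\tilde{H}^{K(t)-1}(v)$ fixing each branch $b_1,\dots,b_{\tilde d}$ (so that $\sigma=\mathrm{id}$) turns each coordinate of $f^t_v$ into an honest homomorphism, which by the argument above is an $\mathbb{F}_2$-combination of the $\psi_r^{(i)}$ and the $\psi_r$. Transitivity of $\Alt_{(i)}(T)^+(v)$ on the branches then forces the combination to be branch-symmetric, producing a level set $A$ of branch-local contributions and a level set $B$ of global contributions; the root $v$ lies in no branch $b_i$, which forces $0\notin A$, and when $c(t)=3$ the passage to the quotient $\{E,O\}^{\tilde d}/\langle(O,\dots,O)\rangle$ annihilates precisely the global part, leaving only $A$. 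The main obstacle I anticipate is exactly this last bookkeeping for $c(t)\in\{2,3\}$: disentangling the branch-local part from the global part, verifying that no mixed level-and-branch terms survive beyond the claimed form, and checking the level-$0$ exclusion and the $c(t)=3$ quotient collapse against the covariance relation. The realization of prescribed diagrams by explicit elements of $H$ (legitimate because $H\supseteq\Alt_{(i)}(T)^+$ and $\mathcal{D}(\tilde{H}^{K(t)-1}(v))$ is everything) is routine but must be carried out with care so that supports stay disjoint and the local actions away from the chosen vertices remain even.
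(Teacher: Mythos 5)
Your proposal is correct and follows essentially the same route as the paper: both arguments first use $K(t)\le K(1-t)$ to get $\mathcal{D}(\tilde{H}^{K(t)-1}(v))=\Diag_{v,K(t)-1}$, then reduce $f^t_v$ to its values on one-point diagrams via the (twisted) multiplicativity of Lemma~\ref{lemma:ftrivial}, and finally force sphere- and branch-symmetry of those values by conjugating with elements of $\Alt_{(i)}(B(v,K(t)))$. Your repackaging of the $c(t)=1$ case as a homomorphism factoring through the sphere parities, and of $c(t)\in\{2,3\}$ via restriction to the branch-stabilizing subgroup, is a mild reformulation rather than a different method, and the ``bookkeeping'' you flag as the remaining obstacle is precisely the short computation the paper performs on its diagrams $\delta_r$: the symmetry $p^r_2=\cdots=p^r_{\tilde{d}}$ for $r\ge 1$, the fully symmetric value at $r=0$ (which is what excludes $0$ from $A$), and the normalization $y_r=E$ modulo $(O,\ldots,O)$ when $c(t)=3$.
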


\begin{proof}
Since $K(t) \leq K(1-t)$, we have $\alpha_k^t = \alpha_k^{1-t} = \infty$ for each $k < K(t)$ and hence $\mathcal{D}(\tilde{H}^{K(t)-1}(v)) = \Diag_{v,K(t)-1}$. Now, for each $r \in \{0, 1, \ldots, K(t)-1\}$, fix a diagram $\delta_r \in \mathcal{D}(\tilde{H}^{K(t)-1}(v))$ having exactly one vertex $w$ labelled by $o$, with $w \in S(v,r)$ and, if $r \geq 1$, $w \in b_1$. In view of Lemma~\ref{lemma:ftrivial}, it is clear that the image of any diagram by $f^t_v$ can always be recovered from the values that $f^t_v$ takes on $\{\delta_0, \delta_1, \ldots, \delta_{K(t)-1}\}$.
\begin{itemize}
\item If $c(t) = 1$, then define $A = \{r \in \{0, 1, \ldots, K(t)-1\} \suchthat f^t_v(\delta_r) = O\}$. Then $f^t_v$ is exactly of the shape given in the statement.
\item If $c(t) = 2$, then for each $r \in \{0, \ldots, K(t)-1\}$ we have $f^t_v(\delta_r) = (p^r_1, \ldots, p^r_{\tilde{d}})$ for some $p^r_i \in \{E, O\}$. For $r \geq 1$, considering elements $\tilde{g} \in \Alt_{(i)}(B(v, K(t)))$ stabilizing the branch $b_1$ but permuting the other branches, we directly obtain using Lemma~\ref{lemma:ftrivial} that $p^r_2 = p^r_3 = \cdots = p^r_{\tilde{d}}$. We can therefore write $f^t_v(\delta_r) = (x_r, y_r, \ldots, y_r)$ with $x_r, y_r \in \{E, O\}$. For $r = 0$, we obtain in the same way that $p^0_1 = p^0_2 = \cdots = p^0_{\tilde{d}}$, and we write $f^t_v(\delta_0) = (y_0, \ldots, y_0)$. Now if we define $A = \{r \in \{1, 2, \ldots, K(t)-1\} \suchthat x_r = O\}$ and $B = \{r \in \{0, 1, \ldots, K(t)-1\} \suchthat y_r = O\}$, then we exactly get the shape given in the statement.
\item If $c(t) = 3$, then the same reasoning as in the previous case works but it must be remembered that the values are taken modulo $(O,\ldots,O)$. We thus get $f^t_v(\delta_r) = [(x_r, y_r, \ldots, y_r)]$ for $r \geq 1$ and $f^t_v(\delta_0) = [(y_0, \ldots, y_0)]$, but it can be assumed that all the $y_r$ are equal to $E$. Defining $A = \{r \in \{1, 2, \ldots, K(t)-1\} \suchthat x_r = O\}$, we obtain the shape given in the statement.\qedhere
\end{itemize}
\end{proof}

When $c(t) = 2$, we also have $c(1-t) = 2$ and $K(0) = K(1)$ by Lemma~\ref{lemma:2-2}. In this case, Lemma~\ref{lemma:forms} can be applied with $t = 0$ and $t = 1$. It is however important to note the following result. Remark that, as $\Alt_{(i)}(T)^+$ is transitive on $V_0(T)$ and $V_1(T)$, the sets $A$ and $B$ given by Lemma~\ref{lemma:forms} depend on $t \in \{0,1\}$ but not on $v$.


\begin{lemma}\label{lemma:2-2:AB}
Suppose that $c(0) = c(1) = 2$ and $K(0) = K(1) =: K$. For each $t \in \{0,1\}$, let $A_t$ and $B_t$ be the sets given by Lemma~\ref{lemma:forms}. For each $t \in \{0,1\}$, we have $K-1 \in B_t$ and, if $r \in \{0, \ldots, K-2\}$, then $r \in B_t$ if and only if $r+1 \in A_{1-t}$.
\end{lemma}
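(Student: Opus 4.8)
The plan is to compute one and the same forced sphere‑parity from two different centres. Fix $t\in\{0,1\}$ and choose adjacent vertices $v$ of type $(t+K)\bmod 2$ and $w$ of type $(1-t+K)\bmod 2$. By Lemma~\ref{lemma:2-2} we have $c(0)=c(1)=2$ and $K(0)=K(1)=K$, so $f^t_v$ and $f^{1-t}_w$ are both defined and have the shape of Lemma~\ref{lemma:forms}, with associated sets $A_t,B_t$ and $A_{1-t},B_{1-t}$. Let $b_1,\dots,b_{\tilde d}$ be the branches of the neighbours of $v$, with $b_1$ the branch containing $w$, and let $c_1,\dots$ be the branches of the neighbours of $w$, with $c_1$ the branch containing $v$. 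The geometric backbone is the pair of identities
\[
b_1\cap S(v,K)=S(w,K-1)\setminus c_1,\qquad c_1\cap S(w,K)=S(v,K-1)\setminus b_1,
\]
which hold because a vertex at distance $K$ from $v$ inside $T_{w,v}$ is at distance $K-1$ from $w$, and symmetrically. Since $K(0)=K(1)=K$ gives $\alpha^t_{K-1}=\alpha^{1-t}_{K-1}=\infty$, Lemma~\ref{lemma:crucial}~(i) yields $\mathcal D(\tilde H^{K-1}(v))=\Diag_{v,K-1}$ and likewise at $w$, so any prescribed diagram on $B(v,K-1)$ or on $B(w,K-1)$ is realised by an element of $H$. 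For a statement $P$ I write $[P]\in\{0,1\}$ for its truth value.

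First I would prove $K-1\in B_t$ (and symmetrically $K-1\in B_{1-t}$). Pick a vertex $u$ at distance $K-1$ from $v$ lying off $b_1$; then $u$ sits in $c_1\cap S(w,K)$. Realise $h\in H$ whose image in $\tilde H^{K-1}(v)$ has the diagram carrying a single label $o$ at $u$ and $e$ elsewhere. By the formula of Lemma~\ref{lemma:forms}, the parity forced on the $b_1$‑coordinate, namely on $b_1\cap S(v,K)=S(w,K-1)\setminus c_1$, is $[K-1\in B_t]$, and all remaining labels of $B(w,K-1)$ in the diagram of $h$ are $e$. Reading $h$ from $w$ instead: $c_1\cap S(w,K)=S(v,K-1)\setminus b_1$ contains exactly the single $o$ at $u$, hence has odd parity, while the $c_1$‑coordinate of $f^{1-t}_w$ expresses this parity (Lemma~\ref{lemma:forms}) as the product of $[K-1\in B_{1-t}]$ with the parity of $S(w,K-1)\setminus c_1$. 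Equating the two yields, modulo $2$,
\[
1=[K-1\in B_{1-t}]\cdot[K-1\in B_t],
\]
forcing both $K-1\in B_t$ and $K-1\in B_{1-t}$.

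Next I would establish, for $0\le r\le K-2$, the equivalence $r\in B_t\iff r+1\in A_{1-t}$ by the same mechanism with the two spheres swapped. Pick $u$ at distance $r$ from $v$ off $b_1$; now $u$ lies in $c_1\cap B(w,K-1)$, at distance $r+1$ from $w$. Realise $h\in H$ whose image in $\tilde H^{K-1}(w)$ has a single $o$ at $u$ and $e$ elsewhere. By the definition of $A_{1-t}$ (Lemma~\ref{lemma:forms}), the parity forced on $c_1\cap S(w,K)=S(v,K-1)\setminus b_1$ is $[r+1\in A_{1-t}]$. Computing instead from $v$, the diagram of $h$ on $B(v,K-1)$ carries the single $o$ at $u$ together with labels of parity $[r+1\in A_{1-t}]$ on $S(v,K-1)\setminus b_1$ and nothing on $b_1$; the $b_1$‑coordinate of $f^t_v$ then evaluates the parity of $b_1\cap S(v,K)=S(w,K-1)\setminus c_1$, which is even in the diagram of $h$, giving
\[
0=[r\in B_t]+[K-1\in B_t]\cdot[r+1\in A_{1-t}]\pmod 2.
\]
Using $K-1\in B_t$ from the previous step, this collapses to $[r\in B_t]=[r+1\in A_{1-t}]$, as required.

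The main obstacle I anticipate is the bookkeeping: one must verify precisely, via Lemma~\ref{lemma:diag} and the defining formula of $f$ in Lemma~\ref{lemma:forms}, that the realising element's diagram has $o$'s exactly where claimed and nowhere else, so that each evaluation of $f$ reduces to a single indicator. One must also handle separately the degenerate configurations — the case $r=0$ (where $u=v$ lies in no branch, so the $\delta_0$ computation of Lemma~\ref{lemma:forms} replaces the off‑branch one) and $K=1$ (where Claim~2 is vacuous and only $0\in B_t$ must be checked). Throughout, the hypotheses $d_0,d_1\ge 6$ provide the extra branches needed to choose $u$ and to realise the required diagrams.
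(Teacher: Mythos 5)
Your proposal is correct and takes essentially the same route as the paper: both arguments rest on evaluating the forced sphere parities of Lemma~\ref{lemma:forms} from the two adjacent centres $v$ and $w$, via the identities $b_1\cap S(v,K)=S(w,K-1)\setminus c_1$ and $c_1\cap S(w,K)=S(v,K-1)\setminus b_1$ together with full realizability of diagrams below level $K$, and your second step is literally the paper's computation with the same test vertex ($a'=u$ at distance $r+1$ from $w$ in the branch of $v$). The only harmless variation is in the first step, where the paper places the single $o$ in $S(w,K-1)\setminus c_1$ and argues by contradiction separately for each $t$, whereas you place it in $S(v,K-1)\setminus b_1$ and obtain $K-1\in B_0$ and $K-1\in B_1$ simultaneously from the direct relation $1=[K-1\in B_t]\cdot[K-1\in B_{1-t}]$.
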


\begin{proof}
Let $t \in \{0,1\}$ and let $v$ and $w$ be adjacent vertices with $v$ of type $(t+K) \bmod 2$.

We first assume for a contradiction that $K-1 \not \in B_t$. Let $a$ be a vertex of $S(w,K-1)$ that is not in the branch of $v$ (see Figure~\ref{picture:2-2}). Since $\alpha_{K-1}^t = \infty$, there exists $h \in H$ such that $a$ is the only vertex labelled by $o$ in the diagram of the image of $h$ in $\tilde{H}^{K-1}(w)$. Now if we look at the image $\tilde{h}$ of $h$ in $\tilde{H}^{K-1}(v)$, Lemma~\ref{lemma:forms} and the fact that $K-1 \not \in B_t$ imply that $f^t_v(\mathcal{D}(\tilde{h})) = (E, *, \ldots, *)$, where the first branch $b_1$ is the branch of $w$. This means that the number of vertices labelled by $o$ in $S(v,K) \cap b_1$ should be even, but this is a contradiction with the fact that $a$ is the only vertex of $S(v,K) \cap b_1$ labelled by $o$.

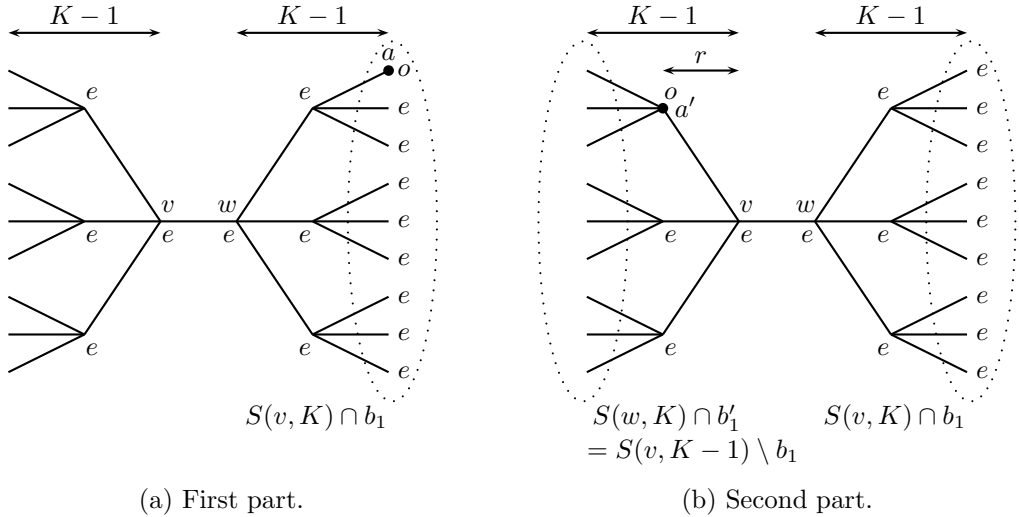
\begin{figure}[b!]
\begin{subfigure}{.49\textwidth}
\centering
\begin{pspicture*}(-2.55,-3.3)(3.15,2.9)
\fontsize{10pt}{10pt}\selectfont
\psset{unit=1cm}

\psline(0.5,0)(-0.5,0)
\rput(0.4,-0.2){$e$}
\rput(-0.4,-0.2){$e$}

\psline(0.5,0)(1.5,1.5)
\psline(0.5,0)(1.5,0)
\psline(0.5,0)(1.5,-1.5)
\rput(1.4,1.7){$e$}
\rput(1.4,-0.2){$e$}
\rput(1.4,-1.7){$e$}

\psline(-0.5,0)(-1.5,1.5)
\psline(-0.5,0)(-1.5,0)
\psline(-0.5,0)(-1.5,-1.5)
\rput(-1.4,1.7){$e$}
\rput(-1.4,-0.2){$e$}
\rput(-1.4,-1.7){$e$}

\psline(1.5,1.5)(2.5,2)
\psline(1.5,1.5)(2.5,1.5)
\psline(1.5,1.5)(2.5,1)
\psline(1.5,0)(2.5,0.5)
\psline(1.5,0)(2.5,0)
\psline(1.5,0)(2.5,-0.5)
\psline(1.5,-1.5)(2.5,-1)
\psline(1.5,-1.5)(2.5,-1.5)
\psline(1.5,-1.5)(2.5,-2)
\rput(2.5,2.22){$a$}
\psdot[dotsize=4pt](2.5,2)
\rput(2.7,2){$o$}
\rput(2.7,1.5){$e$}
\rput(2.7,1){$e$}
\rput(2.7,0.5){$e$}
\rput(2.7,0){$e$}
\rput(2.7,-0.5){$e$}
\rput(2.7,-1){$e$}
\rput(2.7,-1.5){$e$}
\rput(2.7,-2){$e$}

\psline(-1.5,1.5)(-2.5,2)
\psline(-1.5,1.5)(-2.5,1.5)
\psline(-1.5,1.5)(-2.5,1)
\psline(-1.5,0)(-2.5,0.5)
\psline(-1.5,0)(-2.5,0)
\psline(-1.5,0)(-2.5,-0.5)
\psline(-1.5,-1.5)(-2.5,-1)
\psline(-1.5,-1.5)(-2.5,-1.5)
\psline(-1.5,-1.5)(-2.5,-2)

\psellipse[linestyle=dotted](2.55,0)(0.6,2.4)
\rput(1.55,-2.6){$S(v,K)\cap b_1$}

\rput(-0.4,0.2){$v$}
\rput(0.38,0.2){$w$}

\psline[arrows=<->](0.5,2.5)(2.5,2.5)
\rput(1.5,2.72){$K-1$}
\psline[arrows=<->](-0.5,2.5)(-2.5,2.5)
\rput(-1.5,2.72){$K-1$}
\end{pspicture*}
\caption{First part.}\label{picture:2-2}
\end{subfigure}
\begin{subfigure}{.49\textwidth}
\centering
\begin{pspicture*}(-3.15,-3.3)(3.15,2.9)
\fontsize{10pt}{10pt}\selectfont
\psset{unit=1cm}

\psline(0.5,0)(-0.5,0)
\rput(0.4,-0.2){$e$}
\rput(-0.4,-0.2){$e$}

\psline(0.5,0)(1.5,1.5)
\psline(0.5,0)(1.5,0)
\psline(0.5,0)(1.5,-1.5)
\rput(1.4,1.7){$e$}
\rput(1.4,-0.2){$e$}
\rput(1.4,-1.7){$e$}

\psline(-0.5,0)(-1.5,1.5)
\psline(-0.5,0)(-1.5,0)
\psline(-0.5,0)(-1.5,-1.5)
\rput(-1.2,1.52){$a'$}
\psdot[dotsize=4pt](-1.5,1.5)
\rput(-1.4,1.7){$o$}
\rput(-1.4,-0.2){$e$}
\rput(-1.4,-1.7){$e$}

\psline(1.5,1.5)(2.5,2)
\psline(1.5,1.5)(2.5,1.5)
\psline(1.5,1.5)(2.5,1)
\psline(1.5,0)(2.5,0.5)
\psline(1.5,0)(2.5,0)
\psline(1.5,0)(2.5,-0.5)
\psline(1.5,-1.5)(2.5,-1)
\psline(1.5,-1.5)(2.5,-1.5)
\psline(1.5,-1.5)(2.5,-2)
\rput(2.7,2){$e$}
\rput(2.7,1.5){$e$}
\rput(2.7,1){$e$}
\rput(2.7,0.5){$e$}
\rput(2.7,0){$e$}
\rput(2.7,-0.5){$e$}
\rput(2.7,-1){$e$}
\rput(2.7,-1.5){$e$}
\rput(2.7,-2){$e$}

\psline(-1.5,1.5)(-2.5,2)
\psline(-1.5,1.5)(-2.5,1.5)
\psline(-1.5,1.5)(-2.5,1)
\psline(-1.5,0)(-2.5,0.5)
\psline(-1.5,0)(-2.5,0)
\psline(-1.5,0)(-2.5,-0.5)
\psline(-1.5,-1.5)(-2.5,-1)
\psline(-1.5,-1.5)(-2.5,-1.5)
\psline(-1.5,-1.5)(-2.5,-2)

\psellipse[linestyle=dotted](2.55,0)(0.6,2.4)
\rput(1.55,-2.6){$S(v,K)\cap b_1$}

\psellipse[linestyle=dotted](-2.55,0)(0.6,2.4)
\rput(-1.45,-2.6){$S(w,K)\cap b'_1$}
\rput(-1.1,-3.05){$=S(v,K-1)\setminus b_1$}

\rput(-0.4,0.2){$v$}
\rput(0.38,0.2){$w$}

\psline[arrows=<->](0.5,2.5)(2.5,2.5)
\rput(1.5,2.72){$K-1$}
\psline[arrows=<->](-0.5,2.5)(-2.5,2.5)
\rput(-1.5,2.72){$K-1$}
\psline[arrows=<->](-0.5,2)(-1.5,2)
\rput(-1,2.17){$r$}
\end{pspicture*}
\caption{Second part.}\label{picture:2-2bis}
\end{subfigure}
\caption{Illustration of Lemma~\ref{lemma:2-2:AB}.}
\end{figure}

We now show the second part of the statement. Let $r \in \{0, \ldots, K-2\}$ and let $a'$ be a vertex of $S(w,r+1)$ in the branch of $v$, which we denote by $b'_1$ (see Figure~\ref{picture:2-2bis}). Since $K(0) = K(1) = K$, there exists $h \in H$ such that $a'$ is the only vertex labelled by $o$ in the diagram of the image of $h$ in $\tilde{H}^{K-1}(w)$. By Lemma~\ref{lemma:forms} (with $1-t$ instead of $t$), the number of vertices labelled by $o$ in $S(w,K) \cap b'_1$ is odd if and only if $r+1 \in A_{1-t}$. Now we observe the diagram of the image $\tilde{h}$ of $h$ in $\tilde{H}^{K-1}(v)$. Lemma~\ref{lemma:forms} tells us that $f^t_v(\mathcal{D}(\tilde{h})) = (p_1, *, \ldots, *)$ where $p_1$ is the parity of the number of vertices labelled by $o$ in $(S_{A_t}(v) \cap b_1) \cup (S_{B_t}(v) \setminus b_1)$. But all the vertices of $S(v,K) \cap b_1$ are labelled by $e$, so $p_1 = E$. Hence, there is an even number of vertices labelled by $o$ in $(S_{A_t}(v) \cap b_1) \cup (S_{B_t}(v) \setminus b_1)$. As $K-1 \in B_t$ and $a'$ is the only vertex of $B(v,K-2)$ labelled by $o$, this means that the number of vertices labelled by $o$ in $S(v,K-1) \setminus b_1$ is odd if and only if $r \in B_t$. Since $S(w,K) \cap b'_1 = S(v,K-1) \setminus b_1$, we obtained that $r+1 \in A_{1-t}$ if and only if $r \in B_t$.
\end{proof}


\subsection{Possible shapes for \texorpdfstring{$f^t_v$}{f^t_v} when \texorpdfstring{$K(t) > K(1-t)$}{K(t) > K(1-t)}}

In the case where $c(0),c(1) \in \{1,3\}$, it can happen that $K(t) > K(1-t)$ and Lemma~\ref{lemma:forms} cannot be applied. Indeed, $\mathcal{D}(\tilde{H}^{K(t)-1}(v))$ does not contain all the diagrams, which prevents us from using the diagrams $\delta_r$ as above.
To deal with this case, we therefore need to better understand $\mathcal{D}(\tilde{H}^{K(t)-1}(v))$. This is the subject of the following result, which is illustrated in Figure~\ref{picture:all4}.

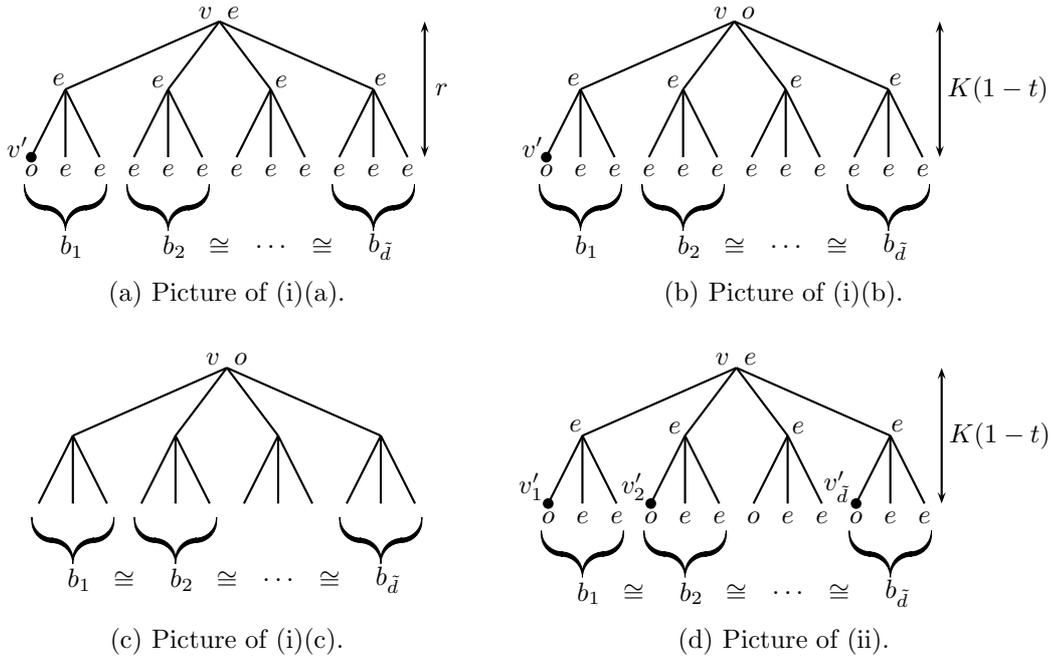
\begin{figure}[b!]
\begin{subfigure}{.5\textwidth}
\centering
\begin{pspicture*}(-2.8,-3.2)(3,0.25)
\fontsize{10pt}{10pt}\selectfont
\psset{unit=0.9cm}

\rput(-0.2,0.13){$v$}
\rput(0.2,0.13){$e$}
\psline(0,0)(-2.25,-1) \rput(-2.35,-0.85){$e$}
\psline(0,0)(-0.75,-1) \rput(-0.9,-0.88){$e$}
\psline(0,0)(0.75,-1) \rput(0.9,-0.88){$e$}
\psline(0,0)(2.25,-1) \rput(2.35,-0.85){$e$}

\psline(-2.25,-1)(-2.75,-2) \rput(-2.75,-2.2){$o$}
\psdot[dotsize=4pt](-2.75,-2) \rput(-2.95,-1.85){$v'$}
\psline(-2.25,-1)(-2.25,-2) \rput(-2.25,-2.2){$e$}
\psline(-2.25,-1)(-1.75,-2) \rput(-1.75,-2.2){$e$}
\psline(-0.75,-1)(-1.25,-2) \rput(-1.25,-2.2){$e$}
\psline(-0.75,-1)(-0.75,-2) \rput(-0.75,-2.2){$e$}
\psline(-0.75,-1)(-0.25,-2) \rput(-0.25,-2.2){$e$}
\psline(0.75,-1)(0.25,-2) \rput(0.25,-2.2){$e$}
\psline(0.75,-1)(0.75,-2) \rput(0.75,-2.2){$e$}
\psline(0.75,-1)(1.25,-2) \rput(1.25,-2.2){$e$}
\psline(2.25,-1)(1.75,-2) \rput(1.75,-2.2){$e$}
\psline(2.25,-1)(2.25,-2) \rput(2.25,-2.2){$e$}
\psline(2.25,-1)(2.75,-2) \rput(2.75,-2.2){$e$}

\psline[arrows=<->](3,0)(3,-2)
\rput(3.25,-1){$r$}
\psbrace[rot=90,ref=lC,nodesepA=-2pt,nodesepB=6pt](-2.85,-2.4)(-1.65,-2.4){$b_1$}
\psbrace[rot=90,ref=lC,nodesepA=-2pt,nodesepB=6pt](-1.35,-2.4)(-0.15,-2.4){$b_2$}
\rput(0.75,-3.32){$\ldots$}
\psbrace[rot=90,ref=lC,nodesepA=-2pt,nodesepB=6pt](1.65,-2.4)(2.85,-2.4){$b_{\tilde{d}}$}

\rput(0,-3.32){$\cong$}
\rput(1.5,-3.32){$\cong$}
\end{pspicture*}
\caption{Picture of (i)(a).}\label{picture:special1a}
\end{subfigure}%
\begin{subfigure}{.5\textwidth}
\centering
\begin{pspicture*}(-2.8,-3.2)(4.1,0.25)
\fontsize{10pt}{10pt}\selectfont
\psset{unit=0.9cm}

\rput(-0.2,0.13){$v$}
\rput(0.2,0.13){$o$}
\psline(0,0)(-2.25,-1) \rput(-2.35,-0.85){$e$}
\psline(0,0)(-0.75,-1) \rput(-0.9,-0.88){$e$}
\psline(0,0)(0.75,-1) \rput(0.9,-0.88){$e$}
\psline(0,0)(2.25,-1) \rput(2.35,-0.85){$e$}

\psline(-2.25,-1)(-2.75,-2) \rput(-2.75,-2.2){$o$}
\psdot[dotsize=4pt](-2.75,-2) \rput(-2.95,-1.85){$v'$}
\psline(-2.25,-1)(-2.25,-2) \rput(-2.25,-2.2){$e$}
\psline(-2.25,-1)(-1.75,-2) \rput(-1.75,-2.2){$e$}
\psline(-0.75,-1)(-1.25,-2) \rput(-1.25,-2.2){$e$}
\psline(-0.75,-1)(-0.75,-2) \rput(-0.75,-2.2){$e$}
\psline(-0.75,-1)(-0.25,-2) \rput(-0.25,-2.2){$e$}
\psline(0.75,-1)(0.25,-2) \rput(0.25,-2.2){$e$}
\psline(0.75,-1)(0.75,-2) \rput(0.75,-2.2){$e$}
\psline(0.75,-1)(1.25,-2) \rput(1.25,-2.2){$e$}
\psline(2.25,-1)(1.75,-2) \rput(1.75,-2.2){$e$}
\psline(2.25,-1)(2.25,-2) \rput(2.25,-2.2){$e$}
\psline(2.25,-1)(2.75,-2) \rput(2.75,-2.2){$e$}

\psline[arrows=<->](3,0)(3,-2)
\rput(3.85,-1){$K(1-t)$}
\psbrace[rot=90,ref=lC,nodesepA=-2pt,nodesepB=6pt](-2.85,-2.4)(-1.65,-2.4){$b_1$}
\psbrace[rot=90,ref=lC,nodesepA=-2pt,nodesepB=6pt](-1.35,-2.4)(-0.15,-2.4){$b_2$}
\rput(0.75,-3.32){$\ldots$}
\psbrace[rot=90,ref=lC,nodesepA=-2pt,nodesepB=6pt](1.65,-2.4)(2.85,-2.4){$b_{\tilde{d}}$}

\rput(0,-3.32){$\cong$}
\rput(1.5,-3.32){$\cong$}
\end{pspicture*}
\caption{Picture of (i)(b).}\label{picture:special1b}
\end{subfigure}
\vskip\baselineskip
\begin{subfigure}{.5\textwidth}
\centering
\begin{pspicture*}(-2.7,-3.2)(2.7,0.25)
\fontsize{10pt}{10pt}\selectfont
\psset{unit=0.9cm}

\rput(-0.2,0.13){$v$}
\rput(0.2,0.13){$o$}
\psline(0,0)(-2.25,-1)
\psline(0,0)(-0.75,-1)
\psline(0,0)(0.75,-1)
\psline(0,0)(2.25,-1)

\psline(-2.25,-1)(-2.75,-2)
\psline(-2.25,-1)(-2.25,-2)
\psline(-2.25,-1)(-1.75,-2)
\psline(-0.75,-1)(-1.25,-2)
\psline(-0.75,-1)(-0.75,-2)
\psline(-0.75,-1)(-0.25,-2)
\psline(0.75,-1)(0.25,-2)
\psline(0.75,-1)(0.75,-2)
\psline(0.75,-1)(1.25,-2)
\psline(2.25,-1)(1.75,-2)
\psline(2.25,-1)(2.25,-2)
\psline(2.25,-1)(2.75,-2)

\psbrace[rot=90,ref=lC,nodesepA=-2pt,nodesepB=6pt](-2.85,-2.2)(-1.65,-2.2){$b_1$}
\psbrace[rot=90,ref=lC,nodesepA=-2pt,nodesepB=6pt](-1.35,-2.2)(-0.15,-2.2){$b_2$}
\rput(0.75,-3.12){$\ldots$}
\psbrace[rot=90,ref=lC,nodesepA=-2pt,nodesepB=6pt](1.65,-2.2)(2.85,-2.2){$b_{\tilde{d}}$}

\rput(-1.5,-3.12){$\cong$}
\rput(0,-3.12){$\cong$}
\rput(1.5,-3.12){$\cong$}
\end{pspicture*}
\caption{Picture of (i)(c).}\label{picture:special1c}
\end{subfigure}%
\begin{subfigure}{.5\textwidth}
\centering
\begin{pspicture*}(-2.85,-3.2)(4.1,0.25)
\fontsize{10pt}{10pt}\selectfont
\psset{unit=0.9cm}

\rput(-0.2,0.13){$v$}
\rput(0.2,0.13){$e$}
\psline(0,0)(-2.25,-1) \rput(-2.35,-0.85){$e$}
\psline(0,0)(-0.75,-1) \rput(-0.9,-0.88){$e$}
\psline(0,0)(0.75,-1) \rput(0.9,-0.88){$e$}
\psline(0,0)(2.25,-1) \rput(2.35,-0.85){$e$}

\psline(-2.25,-1)(-2.75,-2) \rput(-2.75,-2.2){$o$}
\psdot[dotsize=4pt](-2.75,-2) \rput(-3,-1.75){$v'_1$}
\psline(-2.25,-1)(-2.25,-2) \rput(-2.25,-2.2){$e$}
\psline(-2.25,-1)(-1.75,-2) \rput(-1.75,-2.2){$e$}
\psline(-0.75,-1)(-1.25,-2) \rput(-1.25,-2.2){$o$}
\psdot[dotsize=4pt](-1.25,-2) \rput(-1.5,-1.75){$v'_2$}
\psline(-0.75,-1)(-0.75,-2) \rput(-0.75,-2.2){$e$}
\psline(-0.75,-1)(-0.25,-2) \rput(-0.25,-2.2){$e$}
\psline(0.75,-1)(0.25,-2) \rput(0.25,-2.2){$o$}
\psline(0.75,-1)(0.75,-2) \rput(0.75,-2.2){$e$}
\psline(0.75,-1)(1.25,-2) \rput(1.25,-2.2){$e$}
\psline(2.25,-1)(1.75,-2) \rput(1.75,-2.2){$o$}
\psdot[dotsize=4pt](1.75,-2) \rput(1.46,-1.75){$v'_{\tilde{d}}$}
\psline(2.25,-1)(2.25,-2) \rput(2.25,-2.2){$e$}
\psline(2.25,-1)(2.75,-2) \rput(2.75,-2.2){$e$}

\psline[arrows=<->](3,0)(3,-2)
\rput(3.85,-1){$K(1-t)$}
\psbrace[rot=90,ref=lC,nodesepA=-2pt,nodesepB=6pt](-2.85,-2.4)(-1.65,-2.4){$b_1$}
\psbrace[rot=90,ref=lC,nodesepA=-2pt,nodesepB=6pt](-1.35,-2.4)(-0.15,-2.4){$b_2$}
\rput(0.75,-3.32){$\ldots$}
\psbrace[rot=90,ref=lC,nodesepA=-2pt,nodesepB=6pt](1.65,-2.4)(2.85,-2.4){$b_{\tilde{d}}$}

\rput(-1.5,-3.32){$\cong$}
\rput(0,-3.32){$\cong$}
\rput(1.5,-3.32){$\cong$}
\end{pspicture*}
\caption{Picture of (ii).}\label{picture:special2}
\end{subfigure}
\caption{Illustration of Lemma~\ref{lemma:special}.}\label{picture:all4}
\end{figure}


\begin{lemma}\label{lemma:special}
Suppose that $c(t) \neq 0$ and $K(t) > K(1-t)$ and let $v$ be a vertex of type $(t+K(t))\bmod 2$. Denote by $b_1, \ldots, b_{\tilde{d}}$ the branches of the vertices adjacent to $v$. For each $j \in \{1, \ldots, \tilde{d}-1\}$, fix $\tilde{\gamma}_j \in \Alt_{(i)}(B(v,K(t)))$ sending $b_j$ to $b_{\tilde{d}}$.
\begin{enumerate}[(i)]
\item Let $r \in \{0, \ldots, K(t)-1\}$ be such that $r < K(1-t)$ or $r \equiv K(t) \bmod 2$. 
\begin{enumerate}[(a)]
\item If $r \geq 1$, then there exist a diagram $\delta \in \mathcal{D}(\tilde{H}^{K(t)-1}(v))$ and a vertex $v' \in S(v,r) \cap b_1$ such that $v'$ is the only vertex labelled by $o$ in $\delta \cap B(v,r)$ and, for each $j \in \{2, \ldots, \tilde{d}-1\}$ and each $w \in b_j$, $\tilde{\gamma}_j(w)$ has the same label as $w$.
\item If $r = 0$, $c(1-t) = 1$, $K(t) \not \equiv K(1-t) \bmod 2$ and the set $A$ associated to $1-t$ in Lemma~\ref{lemma:forms} contains $0$, then there exist a diagram $\delta \in \mathcal{D}(\tilde{H}^{K(t)-1}(v))$ and a vertex $v' \in S(v,K(1-t)) \cap b_1$ such that the only vertices labelled by $o$ in $\delta \cap B(v,K(1-t))$ are $v$ and $v'$ and, for each $j \in \{2, \ldots, \tilde{d}-1\}$ and each $w \in b_j$, $\tilde{\gamma}_j(w)$ has the same label as $w$.
\item If $r = 0$ and we are not in (b), then there exists a diagram $\delta \in \mathcal{D}(\tilde{H}^{K(t)-1}(v))$ in which $v$ is labelled by $o$ and such that, for each $j \in \{1, \ldots, \tilde{d}-1\}$ and each $w \in b_j$, $\tilde{\gamma}_j(w)$ has the same label as $w$.
\end{enumerate}
\item Suppose that $c(1-t)=3$ and $K(t) \not \equiv K(1-t) \bmod 2$. Then there exist a diagram $\sigma \in \mathcal{D}(\tilde{H}^{K(t)-1}(v))$ and a vertex $v'_j \in S(v,K(1-t)) \cap b_j$ for each $j \in \{1,\ldots,\tilde{d}\}$ such that the only vertices labelled by $o$ in $\sigma \cap B(v,K(1-t))$ are $v'_1,\ldots,v'_{\tilde{d}}$ and, for each $j \in \{1, \ldots, \tilde{d}-1\}$ and each $w \in b_j$, $\tilde{\gamma}_j(w)$ has the same label as $w$.
\end{enumerate}
\end{lemma}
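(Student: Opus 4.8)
The plan is to first pin down exactly which diagrams lie in $\mathcal{D}(\tilde{H}^{K(t)-1}(v))$, and then to construct the required diagrams explicitly, correcting parities from the inside out. Since $K(1-t) \le K(t)$, Lemma~\ref{lemma:forms} applies to the type $1-t$ and describes the maps $f^{1-t}_u$ by explicit subsets $A,B$ of $\{0,\dots,K(1-t)-1\}$. Because $\alpha_k^t = \infty$ for all $k \le K(t)-1$, Lemma~\ref{lemma:crucial}~(i) shows that no constraint of type $t$ is active inside $B(v,K(t)-1)$; combining this with Lemma~\ref{lemma:describe} and the recursive description underlying Theorem~\ref{maintheorem:completeinvariants'}, I would prove that a diagram $\delta$ on $B(v,K(t)-1)$ is valid if and only if, for every vertex $u$ of type $(1-t+K(1-t))\bmod 2$ with $B(u,K(1-t)) \subseteq B(v,K(t)-1)$, the restriction $\delta \cap B(u,K(1-t))$ lies in $\mathcal{D}(\tilde{H}^{K(1-t)}(u))$. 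The essential feature of each such constraint is that it ties the labels in the interior $B(u,K(1-t)-1)$ to the parities of the boundary sphere $S(u,K(1-t))$, through the value $f^{1-t}_u\bigl(\delta \cap B(u,K(1-t)-1)\bigr)$.

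For part (i)(a) I would start from the diagram $\delta_0$ labelling $v'$ by $o$ and every other vertex by $e$, and examine the constraints lying entirely inside $B(v,r)$, which are the ones that cannot be repaired since $v'$ must remain the only $o$ there. A distance-and-type computation shows that $v'$ can meet such a constraint at $u$ only on its boundary sphere $S(u,K(1-t))$, and that this happens only when $u$ lies on the geodesic from $v$ to $v'$ at distance $r-K(1-t)$ from $v$; for that $u$ to carry an active constraint this in turn forces $r \ge K(1-t)$ and $r \not\equiv K(t)\bmod 2$, both excluded by hypothesis. Hence $\delta_0$ violates no interior constraint, and I would extend it outward through the spheres at distances $r+1,\dots,K(t)-1$, where each newly activated constraint still has free labels on the part of $S(u,K(1-t))$ lying outside $B(v,r)$ and so can be satisfied by its single parity condition; making these choices symmetrically with respect to the $\tilde{\gamma}_j$ (which is possible because $d_0,d_1 \ge 6$ leave enough room, and each diagram is realized by an automorphism via Lemma~\ref{lemma:diagram}) yields the required symmetry on $b_2,\dots,b_{\tilde{d}-1}$, the only asymmetry sitting in $b_1$.

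Parts (i)(b), (i)(c) and (ii) are variations of the same construction for the seed placed at $v$ itself, i.e. the case $r=0$. Here I would analyse when the single seed $v=o$ can be completed to a diagram symmetric across \emph{all} of $b_1,\dots,b_{\tilde{d}}$: the only obstruction comes from the constraints at distance $K(1-t)$ from $v$, and tracing their parity shows that the symmetric completion fails exactly in the configuration isolated in (b), namely $c(1-t)=1$, $K(t)\not\equiv K(1-t)\bmod 2$ and $0\in A$; in that case one repairs the obstruction by allowing a single extra $o$ at a vertex $v' \in S(v,K(1-t))\cap b_1$, which confines the asymmetry to $b_1$ as required, giving (b), while (c) is the complementary situation where $v=o$ completes symmetrically. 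Part (ii) uses that for $c(1-t)=3$ the relevant constraint of Lemma~\ref{lemma:forms} lives in the quotient $\{E,O\}^{\tilde{d}}/\langle(O,\dots,O)\rangle$, so the all-branches-odd configuration is trivial there; this is exactly what lets me place one $o$ per branch at distance $K(1-t)$ symmetrically across $b_1,\dots,b_{\tilde{d}-1}$.

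The main obstacle I anticipate is the bookkeeping: matching the distance/type/parity computation at the boundary spheres $S(u,K(1-t))$ against the exact case divisions in the statement, and verifying that the outward extension never forces a label back inside $B(v,r)$ (or, for $r=0$, never breaks the global symmetry) in any admissible case. The delicate data are the parity of $K(t)-K(1-t)$ and whether $0 \in A$, which are precisely what distinguishes (b) from (c) and governs (ii); getting these boundary computations exactly right — rather than the construction itself, which is routine once the valid-diagram description is in hand — is where the real work lies.
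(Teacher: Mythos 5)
Your proposal is correct and follows essentially the same route as the paper: seed the single label $o$ at radius $r$ (valid precisely because the hypothesis on $r$ forces $\alpha_r(v)=\infty$), then extend outward sphere by sphere, pulling labellings back through the $\tilde{\gamma}_j$ to keep the branches $b_2,\ldots,b_{\tilde{d}-1}$ (or all of them) symmetric, with the only delicate level being $i=K(1-t)$ when $c(1-t)=1$ and $K(t)\not\equiv K(1-t)\bmod 2$, where the condition $0\in A$ from Lemma~\ref{lemma:forms} produces exactly the dichotomy between (b) and (c), and the quotient modulo $(O,\ldots,O)$ handles (ii). Your upfront global characterization of $\mathcal{D}(\tilde{H}^{K(t)-1}(v))$ via the balls $B(u,K(1-t))$ is just an explicit packaging of Lemmas~\ref{lemma:crucial} and~\ref{lemma:describe}, which the paper applies level by level in its induction on suitable diagrams.
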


\begin{proof}
We prove all three cases of (i) simultaneously, the proof of (ii) being similar. Let us say that a diagram $\delta \in \Diag_{v,i}$ (with $0 \leq i \leq K(t)-1$) is \textit{suitable} if $\delta \in \mathcal{D}(\tilde{H}^i(v))$ and if $\delta$ satisfies the conditions of the statement which concern the ball $B(v,i)$.

First remark that there exists a suitable diagram $\delta \in \Diag_{v,r}$. Indeed, it suffices to label all the vertices of $\delta \cap B(v,r-1)$ by $e$ and then to label exactly one vertex of $S(v,r)$ by $o$, placed in $b_1$ if $r \geq 1$. This always gives a diagram in $\mathcal{D}(\tilde{H}^r(v))$ because the assumption on $r$ is made so that $\alpha_r(v) = \infty$.

We now prove that, for each $r+1 \leq i \leq K(t)-1$, if $\delta \in \Diag_{v,i-1}$ is suitable then there exists $\hat{\delta} \in \Diag_{v,i}$ suitable and extending $\delta$. We obviously start by defining $\hat{\delta} \cap B(v,i-1) = \delta$, and there remains to give the labels of the vertices in $S(v,i)$. If $i < K(1-t)$ or $i \equiv K(t) \bmod 2$, we have $\alpha_i(v) = \infty$ and by Lemma~\ref{lemma:crucial} (i) we can simply label all the vertices of $S(v,i)$ by $e$. Now if $i \geq K(1-t)$ and $i \not \equiv K(t) \bmod 2$, then $v$ is of type $(1-t+i) \bmod 2$ and we know that a diagram $\hat{\delta}$ with $\hat{\delta} \cap B(v,i-1) = \delta$ is contained in $\mathcal{D}(\tilde{H}^i(v))$ if and only if $\hat{\delta} \cap B(w,K(1-t)) \in \mathcal{D}(\tilde{H}^{K(1-t)}(w))$ for each $w$ at distance $i-K(1-t)$ from $v$ (see Lemma~\ref{lemma:describe} with $1-t$). In other words, the only restrictions for being in $\mathcal{D}(\tilde{H}^i(v))$ are given by the maps $f^{1-t}_w$. These are always restrictions on the parity of the number of vertices labelled by $o$ in the $K'(1-t)$-branches. When $K'(1-t) < i$, these branches are smaller than the whole ball $B(v,i)$. In this case, since $\delta$ is suitable, a labelling of the vertices of $S(v,i) \cap b_{\tilde{d}}$ satisfying the restrictions which concern them can be pulled back by $\tilde{\gamma}_j$ in a labelling of the vertices of $S(v,i) \cap b_j$ also satisfying the restrictions (for each $j \in \{2,\ldots,\tilde{d}-1\}$ in cases (a) and (b) and for each $j \in \{1,\ldots,\tilde{d}-1\}$ in case (c)). The only case where $K'(1-t) = i$ is the case where $i = K(1-t)$, $c(1-t) = 1$, and $K(1-t) \not\equiv K(t) \bmod 2$. If $r \geq 1$, we want to prove (a) and there is no problem: we can label all the vertices of $S(v,i) \setminus b_1$ with $e$ and adapt the labelling of $S(v,i) \cap b_1$. If $r = 0$, and if the set $A$ associated to $1-t$ in Lemma~\ref{lemma:forms} does not contain $0$, then since all the vertices of $B(v,i-1) \setminus  \{v\}$ are labelled by $e$ all the vertices of $S(v,i)$ can be labelled by $e$. If $A$ contains $0$, then the restriction given by $f^{1-t}_v$ imposes the number of vertices of $S(v,i)$ labelled by $o$ to be odd. As we want to prove (b), we label one vertex of $S(v,i) \cap b_1$ by $o$ and the other vertices of $S(v,i)$ by $e$. In any cases, the diagram $\hat{\delta} \in \Diag_{v,i}$ constructed in this way is suitable.
\end{proof}

We can now look at the shapes that $f^t_v$ can take when $K(t) > K(1-t)$. It is actually sufficient for our classification to count how many shapes are possible.


\begin{lemma}\label{lemma:forms2}
Let $t \in \{0,1\}$. Fix $c(0), c(1) \in \{1,3\}$ and $K(t) > K(1-t)$ and let $v$ be a vertex of type $(t+K(t))\bmod 2$. Let $N$ be the number of maps $f^t_v$ that can be observed for at least one $H \in \mathcal{H}_T^+$ satisfying $H \supseteq \Alt_{(i)}(T)^+$ and with these invariants $c(0)$, $c(1)$, $K(0)$ and $K(1)$. Define
$$R = \{r \in \{0,1,\ldots,K(t)-1\} \suchthat r < K(1-t) \text{ or } r \equiv K(t) \bmod 2\}.$$
\begin{itemize}
\item If $c(t) = 1$, then $N \leq 2^{\lvert R\rvert +\varepsilon}$ where $\varepsilon = 1$ if $c(1-t) = 3$ and $K(t) \not \equiv K(1-t) \bmod 2$, and $\varepsilon = 0$ otherwise.
\item If $c(t) = 3$, then $N \leq 2^{\lvert R\setminus\{0\}\rvert +\varepsilon'}$ where $\varepsilon' = 1$ if $c(1-t) = 1$, $K(1-t) \neq 0$ and $K(t) \not \equiv K(1-t) \bmod 2$, and $\varepsilon' = 0$ otherwise.
\end{itemize}
\end{lemma}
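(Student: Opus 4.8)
The plan is to exploit the fact, recorded in Lemma~\ref{lemma:ftrivial}, that $f^t_v$ is a (crossed) homomorphism with respect to diagram composition: for $c(t)=1$ it is a genuine homomorphism into $J^t=\C_2$, while for $c(t)=3$ its codomain $J^t=\{E,O\}^{\tilde{d}}/\langle(O,\dots,O)\rangle$ carries the branch-permutation action. Consequently $f^t_v$ is completely determined by its values on any generating set of the domain $\mathcal{D}(\tilde{H}^{K(t)-1}(v))$, viewed as a quotient group of $\tilde{H}^{K(t)-1}(v)$. Since we only want an upper bound on $N$, it suffices to exhibit a generating set on which $f^t_v$ carries at most $|R|+\varepsilon$ (resp. $|R\setminus\{0\}|+\varepsilon'$) free binary parameters, all remaining generators having a value that is forced independently of $H$.

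First I would pin down the domain. For $0\le r\le K(t)-1$ one checks that $\alpha_r(v)=\infty$ precisely when $r\equiv K(t)\bmod 2$ or $r<K(1-t)$, that is, exactly when $r\in R$: at these radii a single label $o$ may be placed freely in $S(v,r)$ by Lemma~\ref{lemma:crucial}~(i), whereas at radii $r\notin R$ the value $\alpha_r(v)$ is finite and the admissible labelings of $S(v,r)$ are constrained through the maps $f^{1-t}_w$ via Lemma~\ref{lemma:describe}. This is the source of the difference with Lemma~\ref{lemma:forms}: the elementary single-$o$ diagrams are available only for $r\in R$. Using Lemma~\ref{lemma:special}~(i) I obtain, for each $r\in R$, a suitable diagram $\delta_r\in\mathcal{D}(\tilde{H}^{K(t)-1}(v))$ carrying a single $o$ at radius $r$ (inside $b_1$ when $r\geq 1$) and satisfying the crucial invariance that $\tilde{\gamma}_j(w)$ has the same label as $w$ on the branches $b_j$; in the exceptional parity subcases I use Lemma~\ref{lemma:special}~(i)(b) and~(ii) to produce one further generator, which will account for the terms $\varepsilon$ and $\varepsilon'$.

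The counting then proceeds branch by branch. For $c(t)=1$ the value $f^t_v(\delta_r)\in\C_2$ is a single free bit for each $r\in R$, giving a factor $2^{|R|}$; the extra diagram $\sigma$ of Lemma~\ref{lemma:special}~(ii), available exactly when $c(1-t)=3$ and $K(t)\not\equiv K(1-t)\bmod 2$, supplies the additional factor $2^{\varepsilon}$. For $c(t)=3$ the codomain is a quotient, but the $\tilde{\gamma}_j$-invariance of $\delta_r$ together with the transitivity of $\Alt_{(i)}(T)^+$ on the branches forces, exactly as in the $c(t)=3$ case of Lemma~\ref{lemma:forms}, the value $f^t_v(\delta_r)$ to have the shape $[(x_r,E,\dots,E)]$ with $x_r\in\{E,O\}$; the fully symmetric diagram $\delta_0$ then has $f^t_v(\delta_0)$ trivial, so $r=0$ contributes nothing and only $r\in R\setminus\{0\}$ give free bits, yielding $2^{|R\setminus\{0\}|}$, with the diagram from Lemma~\ref{lemma:special}~(i)(b) (which requires $c(1-t)=1$, matching the definition of $\varepsilon'$) accounting for $2^{\varepsilon'}$.

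The main obstacle is the generation claim underlying the first step: one must show that the $\delta_r$ with $r\in R$, the one extra special diagram, and the diagrams whose $f^t_v$-value is forced together generate the whole domain $\mathcal{D}(\tilde{H}^{K(t)-1}(v))$. The delicate point is the behaviour at the constrained radii $r\notin R$, where the admissible diagrams are governed by the branch-parity restrictions imposed by $f^{1-t}_w$ through Lemma~\ref{lemma:describe}; there I would argue that any such diagram is, modulo the $\delta_r$, a product of symmetric configurations on which $f^t_v$ vanishes, the symmetry being guaranteed by the $\tilde{\gamma}_j$-invariance built into Lemma~\ref{lemma:special} and by the composition formula of Lemma~\ref{lemma:ftrivial} (conjugating one admissible position to another by an element of $\Alt_{(i)}(T)^+$ alters $f^t_v$ only by a trivial twist). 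Verifying that no further freedom sneaks in at these radii, and matching the single exceptional generator to the precise parity conditions defining $\varepsilon$ and $\varepsilon'$, is where the bulk of the technical work lies.
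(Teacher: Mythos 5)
Your overall strategy is the same as the paper's: generate the domain $\mathcal{D}(\tilde{H}^{K(t)-1}(v))$ by the single-$o$ diagrams $\delta_r$ ($r\in R$) from Lemma~\ref{lemma:special} plus a few special diagrams, and count free bits via the cocycle formula of Lemma~\ref{lemma:ftrivial}. For $c(t)=1$ your outline is essentially the paper's proof: there the generators at constrained radii $r\notin R$ (the paper's $\rho_r$, carrying two labels $o$ at radius $r$ inside a common $K'(1-t)$-branch) can indeed be normalized to have forced value, because $f^t_v$ is then an honest homomorphism into $\C_2$ and a conjugate $\tilde{h}\tilde{\tau}\tilde{h}^{-1}$ with $\tilde{\tau}\in\Alt_{(i)}(B(v,K(t)))$ automatically has trivial value; the extra bit $\varepsilon$ comes from the diagram $\sigma$ of Lemma~\ref{lemma:special}~(ii), as you say. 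But the step you explicitly defer --- ``any diagram at a constrained radius is a product of symmetric configurations on which $f^t_v$ vanishes, since conjugation alters $f^t_v$ only by a trivial twist'' --- is false in the $c(t)=3$ case, and this is precisely where $\varepsilon'$ lives. When $c(t)=3$ the twist in Lemma~\ref{lemma:ftrivial} is the nontrivial permutation action on $J^t=\{E,O\}^{\tilde{d}}/\langle(O,\ldots,O)\rangle$, and when $c(1-t)=1$ and $r=K(1-t)\neq 0$ (with $K(t)\not\equiv K(1-t)\bmod 2$) the constraint at radius $r$ is a single global parity imposed by $f^{1-t}_v$, so the two labels $o$ of $\rho_{K(1-t)}$ necessarily sit in \emph{distinct} branches $b_i$ of $v$; no branch-preserving normalization exists, and the paper's two-step conjugation only pins the value down to $[(x,x,E,\ldots,E)]$ with $x$ a genuinely free bit. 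That bit is realized (lines 8 and 12 of Table~\ref{table:invariants} both occur), so your claim that all constrained-radius generators are forced would ``prove'' the sharper bound $2^{|R\setminus\{0\}|}$, contradicting the fact that the stated bound is attained.

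Correspondingly, your attribution of $\varepsilon'$ to Lemma~\ref{lemma:special}~(i)(b) is a misidentification. Diagram (i)(b) exists only when the set $A$ attached to $f^{1-t}$ in Lemma~\ref{lemma:forms} contains $0$ --- a condition on $H$ that is \emph{not} determined by the invariants $c(0),c(1),K(0),K(1)$, whereas $\varepsilon'$ in the statement is; in the paper (i)(b) governs the auxiliary quantity $\varepsilon$, not $\varepsilon'$. Two repairs are then needed which your accounting omits: first, when $0\notin A$ (so (i)(b) is unavailable) the free bit from $\rho_{K(1-t)}$ must still be counted; second, when $0\in A$ one must avoid double counting, which the paper does by observing that $\varepsilon=1$ forces $\varepsilon'=1$ and that the (i)(b) form of $\delta_0$ already generates a diagram with the properties of the normalized $\rho''_{K(1-t)}$, so only $\varepsilon'$ survives in the exponent. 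Note also that (i)(b) and (i)(c) are alternative forms of the \emph{same} $r=0$ generator $\delta_0$ --- one has it fully $\tilde{\gamma}_j$-symmetric with trivial value, or one has the (i)(b) form with value $[(x_0,E,\ldots,E)]$ --- not an extra generator on top of a symmetric $\delta_0$ as your count tacitly assumes.
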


\begin{proof}
We separate the case $c(t) = 1$ from the case $c(t) = 3$.
\begin{itemize}
\item Suppose that $c(t) = 1$.
To find an upper bound on $N$, we first give a set of diagrams of $\mathcal{D}(\tilde{H}^{K(t)-1}(v))$ generating (via Lemma~\ref{lemma:ftrivial}) all the diagrams of $\mathcal{D}(\tilde{H}^{K(t)-1}(v))$. \\
For $r \in R$, take $\delta_r \in \mathcal{D}(\tilde{H}^{K(t)-1}(v))$ as in Lemma~\ref{lemma:special} (i). In this case ($c(t) = 1$), we actually do not care of the condition with the elements $\tilde{\gamma}_j$. \\
For $r \in \{0, 1, \ldots, K(t)-1\} \setminus R$, an element $\delta_r$ with all vertices of $B(v,r-1)$ labelled by $e$ and exactly one vertex labelled by $o$ in $S(v,r)$ does not exist. Instead, and if $K'(1-t) > 0$, we consider an element $\rho_r \in \mathcal{D}(\tilde{H}^{K(t)-1}(v))$ with all vertices of $B(v,r-1)$ labelled by $e$ and exactly two vertices labelled by $o$ in $S(v,r)$, placed such that the minimal branch containing them is a $K'(1-t)$-branch. This diagram can be used to generate, via Lemma~\ref{lemma:ftrivial}, all the possible labellings of $S(v,r)$ with an even number of vertices labelled by $o$ in each $K'(1-t)$-branch.\\
In the particular case where $c(1-t) = 3$ and $r = K(1-t)$, as $\alpha_r(v) = (K(1-t)-1)^*$ we also need to add a diagram $\sigma \in \mathcal{D}(\tilde{H}^{K(t)-1}(v))$ as in Lemma~\ref{lemma:special} (ii). Note that this element $\sigma$ is considered if and only if $c(1-t) = 3$ and $K(t) \not \equiv K(1-t) \bmod 2$ (so that $K(1-t) \not \in R$). We write $\varepsilon = 1$ in this case and $\varepsilon = 0$ otherwise.\\
By construction, $\mathcal{D}(\tilde{H}^{K(t)-1}(v))$ can be generated using $\delta_r$, $\rho_r$ and $\sigma$ (if $\varepsilon = 1$).\\
It is however not hard to convince oneself that the diagrams $\rho_r$ can be chosen so that $f^t_v(\rho_r)$ must always be equal to $E$. Indeed, take $\rho_r$ as above with vertices $a, b \in S(v,r)$ labelled by $o$ and let $\tilde{h}$ be an element realizing this diagram. Let $\tilde{\tau} \in \Alt_{(i)}(B(v,K(t)))$ be an element that stabilizes $a$ while sending the $(K(1-t)-1)$-branch containing $b$ to another branch. In this way, $\tilde{h}\tilde{\tau}\tilde{h}^{-1}$ has a diagram $\rho'_r$ satisfying the same property as $\rho_r$ but it is now sure by Lemma~\ref{lemma:ftrivial} that $f^t_v(\rho'_r) = E$. Hence, a map $f^t_v$ is fully characterized by its values $f^t_v(\delta_r)$ (for each $r \in R$) and $f^t_v(\sigma)$ (if $\varepsilon = 1$), which leaves at most $2^{\lvert R\rvert +\varepsilon}$ possibilities for $f^t_v$.

\item For $c(t) = 3$, the idea is exactly the same. The only difference is that $f^t_v$ takes its values in $\bigslant{\{E, O\}^{\tilde{d}}}{\langle(O,\ldots,O)\rangle}$. The diagrams $\delta_r$, $\rho_r$ and $\sigma$ with the same properties as above once again generate all the diagrams. Denote by $b_1, \ldots, b_{\tilde{d}}$ the branches of the vertices adjacent to $v$, as for the definition of $J^t$.\\
This time, we fix $\tilde{\gamma}_1, \ldots, \tilde{\gamma}_{\tilde{d}-1}$ as in Lemma~\ref{lemma:special} and really want $\delta_r$ to satisfy the conditions given in this same lemma. In this way, for $r > 0$ we obtain using Lemma~\ref{lemma:ftrivial} that $f^t_v(\delta_r)$ is of the form $[(x_r, y_r, \ldots, y_r)]$, and we can assume that $y_r = E$. For $r = 0$, if $c(1-t) = 1$, $K(t) \not \equiv K(1-t) \bmod 2$ and if the set $A$ associated to $1-t$ in Lemma~\ref{lemma:forms} contains $0$, then we define $\varepsilon = 1$ and also get $f^t_v(\delta_0) = [(x_0, y_0, \ldots, y_0)]$ (and can assume that $y_0 = E$). Otherwise, we set $\varepsilon = 0$ and find that $f^t_v(\delta_0) = [(E, \ldots, E)]$.\\
For $\rho_r$, as in the first case they can generally be chosen so that $f^t_v(\rho_r)$ must always be equal to $[(E,\ldots,E)]$. Indeed, if there exist $\tilde{h}$ and $\tilde{\tau}$ as above and stabilizing the branches $b_1, \ldots, b_{\tilde{d}}$ then the same reasoning works. This is always possible, unless $c(1-t) = 1$ and $r = K(1-t) \neq 0$. This only happens when $c(1-t) = 1$, $K(t) \not \equiv K(1-t) \bmod 2$ and $K(1-t) \neq 0$, in which case we set $\varepsilon' = 1$. Otherwise, set $\varepsilon' = 0$. If $\varepsilon' = 1$, then the diagram $\rho_{K(1-t)}$ has two vertices in $S(v,K(1-t))$ labelled by $o$ and they are in different branches, say $b_1$ and $b_2$. Let $\tilde{h}$ be an element realizing this diagram and let $\tilde{\tau} \in \Alt_{(i)}(B(v,K(t)))$ be an element interchanging $b_2$ and $b_3$, interchanging $b_4$ and $b_5$, and fixing $b_1, b_6, \ldots, b_{\tilde{d}}$. In this way, $\tilde{h}' = \tilde{h} \tilde{\tau} \tilde{h}^{-1}$ has a diagram $\rho'_{K(1-t)}$ with two vertices in $S(v,K(1-t))$ labelled by $o$: one in $b_2$ and one in $b_3$. Moreover, we know that $f^t_v(\rho'_{K(1-t)}) = [(E, x, x, y, y, E \ldots, E)]$. Now let $\tilde{\tau}' \in \Alt_{(i)}(B(v,K(t)))$ be an element interchanging $b_1$ and $b_3$, interchanging $b_4$ and $b_5$, and fixing $b_2, b_6, \ldots, b_{\tilde{d}}$. Then $\tilde{h}'' = \tilde{h}' \tilde{\tau}' \tilde{h}^{\prime -1}$ has a diagram $\rho''_{K(1-t)}$ with two vertices in $S(v,K(1-t))$ labelled by $o$: one in $b_1$ and one in $b_2$. This time, we know that $f^t_v(\rho''_{K(1-t)}) = [(x, x, E, \ldots, E)]$.\\
Concerning $\sigma$ (if it must be considered), take it as in Lemma~\ref{lemma:special} (ii) so that all its branches $b_1, \ldots, b_{\tilde{d}}$ are identical (via $\tilde{\gamma}_1, \ldots, \tilde{\gamma}_{\tilde{d}-1}$). We obtain $f^t_v(\sigma) = [(E,\ldots,E)]$.\\
In total, there are at most $2^{\lvert R \setminus \{0\}\rvert + \varepsilon + \varepsilon'}$ possibilities for $f^t_v$. However, having $\varepsilon = 1$ also implies $\varepsilon' = 1$ and the diagram $\delta_0$ which we chose when $\varepsilon = 1$ can be used to generate an element with the properties of $\rho''_{K(1-t)}$. We can therefore forget $\rho''_{K(1-t)}$ when $\varepsilon = 1$, which leaves at most $2^{\lvert R \setminus \{0\}\rvert + \varepsilon'}$ possibilities for $f^t_v$.\qedhere
\end{itemize}
\end{proof}


\subsection{Upper bound on the number of groups \texorpdfstring{$H \in \mathcal{H}_T^+$ with $H \supseteq \Alt_{(i)}(T)^+$}{H}}

For each fixed values of $c(0)$, $c(1)$, $K'(0)$, $K'(1)$, we can now compute an upper bound on the number of groups $H \in \mathcal{H}_T^+$ satisfying $H \supseteq \Alt_{(i)}(T)^+$ and with these invariants. Recall that $a \boxplus b := a + b - \left\lceil\frac{|a-b|}{2}\right\rceil$ for $a,b \in \N$.


\begin{proposition}\label{proposition:upperbound}
Fix $c(0), c(1) \in \{0,1,2,3\}$ such that $c(0) = 2$ if and only if $c(1) = 2$ and fix $K'(0), K'(1) \in \N \cup \{\infty\}$ such that $K'(t) = \infty$ if and only if $c(t) = 0$, and $K'(0) = K'(1)$ if $c(0) = c(1) = 2$. Let $N$ be the number of groups $H \in \mathcal{H}_T^+$ satisfying $H \supseteq \Alt_{(i)}(T)^+$ and with these invariants $c(0)$, $c(1)$, $K'(0)$ and $K'(1)$.
\begin{enumerate}[(i)]
\item If $c(0) = c(1) = 0$ then $N \leq 1$.
\item If $c(t) \in \{1,3\}$ and $c(1-t) = 0$ then $N \leq 2^{K'(t)}$.
\item If $c(t) = 1$, $c(1-t) = 3$ and $K'(t) > K'(1-t)$, then $N \leq 2 \cdot 2^{K'(0) \boxplus K'(1)}$.
\item If $c(0) \neq 0$, $c(1) \neq 0$ and we are not in (iii), then $N \leq 2^{K'(0) \boxplus K'(1)}$.
\end{enumerate}
\end{proposition}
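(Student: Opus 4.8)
The plan is to convert the counting problem into a count of admissible pairs of maps $f^t_v$ and then verify an arithmetic identity. By Theorem~\ref{maintheorem:completeinvariants'}, a group $H \in \mathcal{H}_T^+$ with $H \supseteq \Alt_{(i)}(T)^+$ is uniquely determined by the data $c(t)$, $K(t)$ and the maps $f^t_v$ (for $t$ with $c(t)\neq 0$ and $v$ such that $f^t_v$ is defined). Since $\Alt_{(i)}(T)^+$ is transitive on $V_0(T)$ and on $V_1(T)$, the whole family $(f^t_w)_w$ is recovered from a single map $f^t_{v_t}$ at a fixed vertex $v_t$ of type $(t+K(t))\bmod 2$. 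Moreover $c(t)$ and $K'(t)$ determine $K(t)$, namely $K(t)=K'(t)$ when $c(t)=1$ and $K(t)=K'(t)+1$ when $c(t)\in\{2,3\}$. Hence, for fixed invariants $c(0),c(1),K'(0),K'(1)$, the number $N$ is bounded above by the number of admissible pairs $(f^0_{v_0},f^1_{v_1})$. The overall strategy is to bound the number of choices for each $f^t_{v_t}$ separately, multiply, and check that the exponents combine to the claimed value.

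Cases (i) and (ii) are immediate. In (i) there are no maps to specify, so $H=\Aut(T)^+$ and $N\leq 1$. In (ii) only one type $t$ satisfies $c(t)\neq 0$, and since $K(1-t)=\infty$ we have $K(t)\leq K(1-t)$, so Lemma~\ref{lemma:forms} applies: it allows at most $2^{K(t)}$ shapes when $c(t)=1$ and at most $2^{K(t)-1}$ shapes when $c(t)=3$, both equal to $2^{K'(t)}$. For (iii) and (iv) both types are nonzero. When $c(0)=c(1)=2$, Lemma~\ref{lemma:2-2} forces $K(0)=K(1)=:K$, and Lemma~\ref{lemma:2-2:AB} shows that the pair $(f^0_{v_0},f^1_{v_1})$, parametrised by the sets $(A_t,B_t)$ of Lemma~\ref{lemma:forms}, is completely determined by $B_0,B_1\subseteq\{0,\ldots,K-1\}$ subject only to $K-1\in B_t$ (because each $A_{1-t}$ is read off from $B_t$). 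This leaves $2^{(K-1)}\cdot 2^{(K-1)}=2^{(K-1)\boxplus(K-1)}=2^{K'(0)\boxplus K'(1)}$ choices, which is the bound in (iv).

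The heart of the argument is the case $c(0),c(1)\in\{1,3\}$. For the type $t$ with $K(t)\leq K(1-t)$ one applies Lemma~\ref{lemma:forms}, giving at most $2^{K'(t)}$ maps; for the type with $K(t)>K(1-t)$ one applies Lemma~\ref{lemma:forms2}, giving at most $2^{|R|+\varepsilon}$ or $2^{|R\setminus\{0\}|+\varepsilon'}$ maps. Writing $a=K'(0)$, $b=K'(1)$, $n=|a-b|$, and translating each $K(t)$ into $K'(t)$, the product of the two local bounds becomes $2^{E}$ for an exponent $E$ assembled from $|R|$ (respectively $|R\setminus\{0\}|$), the parity of $K(t)-K(1-t)$, and the flags $\varepsilon,\varepsilon'$. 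After substituting the explicit count $|R|=K(1-t)+\lfloor(K(t)-K(1-t))/2\rfloor$ for the relevant branch, the whole verification reduces to the single identity $\lfloor n/2\rfloor+\lceil n/2\rceil=n$, which collapses $E$ to $a\boxplus b=a+b-\lceil n/2\rceil$ in every configuration \emph{except} $c(t)=1$, $c(1-t)=3$ with $K'(t)>K'(1-t)$ (equivalently $K(t)\geq K(1-t)$ on the $c=1$ side). In that single configuration no cross-constraint of the type supplied by Lemma~\ref{lemma:2-2:AB} is available, one bit survives, and $E=a\boxplus b+1$; this is exactly case (iii), where the asserted bound is $2\cdot 2^{K'(0)\boxplus K'(1)}$. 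The main obstacle is purely the bookkeeping: one must carry the parity of $K(t)-K(1-t)$ and the $\varepsilon,\varepsilon'$ corrections consistently through the floor/ceiling arithmetic, and confirm that case (iii) is the unique configuration in which the two local bounds fail to interact, so that all remaining configurations (including $c(0)=c(1)=2$, $c(0)=c(1)\in\{1,3\}$, and the mixed cases with $K'(t)\leq K'(1-t)$ on the $c=1$ side) land exactly on $2^{K'(0)\boxplus K'(1)}$.
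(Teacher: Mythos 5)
Your proposal is correct and follows essentially the same route as the paper's proof: the same reduction to counting pairs $(f^0_{v_0},f^1_{v_1})$ via Theorem~\ref{maintheorem:completeinvariants'}, the same appeals to Lemmas~\ref{lemma:forms}, \ref{lemma:forms2}, \ref{lemma:2-2} and~\ref{lemma:2-2:AB}, the same case split, and the same floor/ceiling bookkeeping identifying $c(t)=1$, $c(1-t)=3$, $K'(t)>K'(1-t)$ (covering both $K(t)=K(1-t)$ and $K(t)>K(1-t)$) as the unique configuration producing the extra factor $2$. The only immaterial deviation is that in the $c(0)=c(1)=2$ case you parametrise by $(B_0,B_1)$ with $K-1\in B_t$, whereas the paper uses $(A_0,B_0)$; both counts give $2^{K'(0)\boxplus K'(1)}$.
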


\begin{proof}
In view of Theorem~\ref{maintheorem:completeinvariants'}, we simply need to give in each case an upper bound on the number of ordered pairs $(f^0_{v_0}, f^1_{v_1})$ that can be observed, where $v_0$ and $v_1$ are fixed (when $c(t) = 0$, we say that there is only one possibility for $f^t_{v_t}$ (which was not defined)). Recall that the values of $c(t)$ and $K'(t)$ completely determine the value of $K(t)$. 
\begin{itemize}
\item If $c(0) = c(1) = 0$ then we trivially have $N \leq 1$.
\item If $c(t) = 1$ and $c(1-t) = 0$, then we get by Lemma~\ref{lemma:forms} that $N \leq 2^{K(t)} = 2^{K'(t)}$, because $2^{K(t)}$ is the number of subsets of $\{0,\ldots,K(t)-1\}$.
\item If $c(t) = 3$ and $c(1-t) = 0$, then we get by Lemma~\ref{lemma:forms} that $N \leq 2^{K(t)-1} = 2^{K'(t)}$, because $2^{K(t)-1}$ is the number of subsets of $\{1,\ldots,K(t)-1\}$.
\item If $c(0) \neq 0$ and $c(1) \neq 0$ then we must distinguish some cases.
\begin{itemize}
\item If $c(0) = c(1) = 2$ then by Lemma~\ref{lemma:2-2:AB} the shape of $f^1_{v_1}$ is fully determined by the shape of $f^0_{v_0}$. Let $A$ and $B$ be the sets given by Lemma~\ref{lemma:forms} for $t = 0$. There are $2^{K'(0)}$ possibilities for $A$ and $2^{K'(1)}$ possibilities for $B$ (since $K(1)-1$ must always be contained in $B$ by Lemma~\ref{lemma:2-2:AB}). Hence, $N \leq 2^{K'(0) + K'(1)} = 2^{K'(0) \boxplus K'(1)}$ (recall that $K'(0) = K'(1)$ when $c(0)=c(1)=2$).
\item If $c(0), c(1) \in \{1,3\}$ and $K(0) = K(1)$, then Lemma~\ref{lemma:forms} can be applied twice to get $N \leq 2^{K'(0) + K'(1)}$. If $c(0) = c(1)$ then $K'(0) = K'(1)$ so $2^{K'(0)+K'(1)} = 2^{K'(0) \boxplus K'(1)}$. If $c(0) \neq c(1)$ then $\lvert K'(0)-K'(1)\rvert = 1$ and $2^{K'(0)+K'(1)} = 2^{(K'(0) \boxplus K'(1)) + 1}$.
\item If $c(0), c(1) \in \{1,3\}$ and $K(t) > K(1-t)$ (for some $t \in \{0,1\}$), then by Lemma~\ref{lemma:forms} there are at most $2^{K'(1-t)}$ possibilities for $f^{1-t}_{v_{1-t}}$. The number of possibilities for $f^t_{v_t}$ is given by Lemma~\ref{lemma:forms2}. Remark that $\lvert R\rvert = K(t) - \left\lceil\frac{K(t)-K(1-t)}{2}\right\rceil$ (where $R$ is defined as in Lemma~\ref{lemma:forms2}) and that $0$ does not belong to $R$ if and only if $K(1-t) = 0$ and $K(t)$ is odd.
\begin{itemize}
\item If $c(t)=c(1-t)=1$, there there are at most $2^{|R|}$ possibilities for $f^t_{v_t}$ and we directly get $N \leq 2^{K(0) \boxplus K(1)} = 2^{K'(0) \boxplus K'(1)}$.
\item If $c(t)=c(1-t)=3$, then $K(1-t) \neq 0$ and $0$ is never contained in $R$, so there are at most $2^{|R|-1}$ possibilities for $f^t_{v_t}$ and $N \leq 2^{K'(0) \boxplus K'(1)}$.
\item If $c(t)=1$ and $c(1-t)=3$, then there are at most $2^{\lvert R \rvert+\varepsilon}$ possibilities for $f^t_{v_t}$ where $\varepsilon=1$ if $K(t) \not \equiv K(1-t) \bmod 2$ and $\varepsilon=0$ otherwise. As $K'(t) = K(t)$ and $K'(1-t)=K(1-t)-1$, we see that $\varepsilon$ is exactly equal to $1+\left\lceil\frac{K(t)-K(1-t)}{2}\right\rceil - \left\lceil\frac{K'(t)-K'(1-t)}{2}\right\rceil$, so that $N \leq 2^{(K'(0) \boxplus K'(1)) + 1}$.
\item If $c(t)=3$ and $c(1-t)=1$, then there are at most $2^{\lvert R\setminus\{0\}\rvert +\varepsilon'}$ possibilities for $f^t_{v_t}$ where $\varepsilon' = 1$ if $K(1-t)\neq 0$ and $K(t) \not \equiv K(1-t) \bmod 2$, and $\varepsilon'=0$ otherwise. Moreover, the number $K'(1-t) + \lvert R\setminus\{0\}\rvert +\varepsilon'$ is equal to
$K'(1-t)+K(t)-\left\lceil\frac{K(t)-K(1-t)}{2}\right\rceil - 1 + \eta + \varepsilon'$,
where $\eta = 1$ if $0 \not \in R$, i.e. if $K(1-t)=0$ and $K(t)$ is odd, and $\eta = 0$ otherwise. By definition of $\eta$ and $\varepsilon'$, we see that $\eta + \varepsilon' = 1$ if $K(t)$ and $K(1-t)$ have a different parity and $\eta + \varepsilon' = 0$ otherwise. Hence, $\eta + \varepsilon'$ is exactly equal to $\left\lceil\frac{K(t)-K(1-t)}{2}\right\rceil - \left\lceil\frac{K'(t)-K'(1-t)}{2}\right\rceil$, so that we obtain $N \leq 2^{K'(0) \boxplus K'(1)}$. \qedhere
\end{itemize}
\end{itemize}
\end{itemize}
\end{proof}


\subsection{The classification theorem}

The following main theorem readily follows from the previous results.


\begin{theorem}\label{theorem:classification}
Let $T$ be the $(d_0,d_1)$-semiregular tree with $d_0,d_1 \geq 6$ and let $i$ be a legal coloring of $T$. Let $H \in \mathcal{H}_T^+$ be such that $H \supseteq \Alt_{(i)}(T)^+$. Then $H$ belongs to $\underline{\mathcal{G}}_{(i)}$.
\end{theorem}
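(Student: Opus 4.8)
The plan is to combine the complete-invariants theorem (Theorem~\ref{maintheorem:completeinvariants'}) with the counting results of Proposition~\ref{proposition:upperbound} and Proposition~\ref{proposition:table}, treating the whole problem as a counting argument organized by the invariant tuple $(c(0),c(1),K'(0),K'(1))$. First I would recall that by Theorem~\ref{maintheorem:completeinvariants'}, a group $H \in \mathcal{H}_T^+$ with $H \supseteq \Alt_{(i)}(T)^+$ is completely determined by its invariants $c(t)$, $K(t)$ and $f^t_v$; equivalently, fixing the numerical invariants $c(0),c(1),K'(0),K'(1)$, the group $H$ is determined by the pair $(f^0_{v_0},f^1_{v_1})$. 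The strategy is then, for each admissible choice of $(c(0),c(1),K'(0),K'(1))$, to show that the number of groups in $\underline{\mathcal{G}}_{(i)}$ with exactly those invariants (read off from the last column of Table~\ref{table:invariants}) equals the upper bound on the total number of such $H$ given by Proposition~\ref{proposition:upperbound}. Since every group in $\underline{\mathcal{G}}_{(i)}$ does satisfy $H \supseteq \Alt_{(i)}(T)^+$ and lies in $\mathcal{H}_T^+$ (Lemma~\ref{lemma:U(Alt)transitive}), and since the members of $\underline{\mathcal{G}}_{(i)}$ are pairwise distinct (Proposition~\ref{proposition:different}), matching the count forces every such $H$ to coincide with one of them.

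Concretely, I would first dispose of the constraints: by Lemma~\ref{lemma:2-2} the case $c(t)=2$ can only occur with $c(1-t)=2$ and $K(0)=K(1)$, so the admissible tuples are exactly those allowed in Proposition~\ref{proposition:upperbound}. Then I would run through the four cases of Proposition~\ref{proposition:upperbound} and in each check that Table~\ref{table:invariants} supplies the same number of groups:
\begin{itemize}
\item When $c(0)=c(1)=0$, Proposition~\ref{proposition:upperbound}(i) gives $N\le 1$, and line~1 of Table~\ref{table:invariants} provides exactly one group, $\Aut(T)^+=G_{(i)}^+(\varnothing,\varnothing)$.
\item When $c(t)\in\{1,3\}$ and $c(1-t)=0$, Proposition~\ref{proposition:upperbound}(ii) gives $N\le 2^{K'(t)}$, matched by line~2/3 (if $c(t)=1$) or line~5/6 (if $c(t)=3$), each of which supplies $2^{K'(t)}$ groups.
\item When $c(0),c(1)\neq 0$ and we are not in the exceptional case, Proposition~\ref{proposition:upperbound}(iv) gives $N\le 2^{K'(0)\boxplus K'(1)}$, matched by the appropriate line among 4 and 7--12.
\item When $c(t)=1$, $c(1-t)=3$ and $K'(t)>K'(1-t)$, Proposition~\ref{proposition:upperbound}(iii) gives $N\le 2\cdot 2^{K'(0)\boxplus K'(1)}$, and here I must account for \emph{two} table lines of the same invariants (e.g. lines 7 and 11, or 8 and 12), whose counts add up to $2\cdot 2^{K'(0)\boxplus K'(1)}$.
\end{itemize}

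The main obstacle, and the delicate bookkeeping of the argument, is this last case: I must verify that the groups realizing a given invariant tuple in that regime come from precisely two distinct families in Table~\ref{table:invariants} and that the two counts sum to the Proposition~\ref{proposition:upperbound}(iii) bound. This requires carefully correlating the parity conditions defining compatibility (Definition~\ref{definition:classification}) with the constraint $K'(t)>K'(1-t)$ and with the relation between $K'(t)$ and $K(t)$ (which differs by one when $c(t)=3$), so that no group is double-counted and none is omitted. I would conclude by observing that in every case the matching of the upper bound with the cardinality of the relevant subset of $\underline{\mathcal{G}}_{(i)}$, together with the fact that every element of $\underline{\mathcal{G}}_{(i)}$ is itself a valid such $H$ with the stated invariants, forces the inclusion $H\in\underline{\mathcal{G}}_{(i)}$ for an arbitrary $H$ satisfying the hypotheses, completing the proof.
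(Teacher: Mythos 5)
Your proposal is correct and takes essentially the same approach as the paper: the paper's proof of Theorem~\ref{theorem:classification} is precisely this counting argument, observing that the upper bounds of Proposition~\ref{proposition:upperbound} are attained by the members of $\underline{\mathcal{G}}_{(i)}$ (Proposition~\ref{proposition:table}), including your key observation that lines 7 and 11 (resp.\ 8 and 12) of Table~\ref{table:invariants} carry the same invariants $c(0)$, $c(1)$, so their counts add up to match the factor $2$ in Proposition~\ref{proposition:upperbound}~(iii). Your write-up merely makes explicit the bookkeeping (via Theorem~\ref{maintheorem:completeinvariants'}, Lemma~\ref{lemma:2-2}, Lemma~\ref{lemma:U(Alt)transitive} and Proposition~\ref{proposition:different}) that the paper's terse proof leaves implicit.
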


\begin{proof}
This comes from the fact that the upper bounds given in Proposition~\ref{proposition:upperbound} are all reached by the members of $\underline{\mathcal{G}}_{(i)}$ (see Proposition~\ref{proposition:table}). Remark that, in Table~\ref{table:invariants}, the lines $7$ and $11$ (as well as the lines $8$ and $12$) give the same invariants $c(0)$ and $c(1)$, so their total number add up, thereby matching the factor $2$ in the upper bound given by Proposition~\ref{proposition:upperbound} (iii).
\end{proof}

We can now prove the next explicit formulation of Theorem~\ref{maintheorem:classification}.


\begin{primetheorem}{maintheorem:classification}[Classification]\label{maintheorem:classification'}
Let $T$ be the $(d_0,d_1)$-semiregular tree with $d_0,d_1 \geq 4$ and let $i$ be a legal coloring of $T$.
\begin{enumerate}[(i)]
\item Two groups $H,H' \in \underline{\mathcal{G}}_{(i)}$ are conjugate in $\Aut(T)$ if and only if $H = H'$ or $d_0 = d_1$ and either $H = G_{(i)}^+(Y_0,Y_1)$ and $H' = G_{(i)}^+(Y'_0,Y'_1)$ with $(Y_0,Y_1) = (Y'_1,Y'_0)$ or $H = G_{(i)}^+(X_0,X_1)^*$ and $H' = G_{(i)}^+(X'_0,X'_1)^*$ with $(X_0,X_1) = (X'_1,X'_0)$.
\item Suppose that $d_0,d_1 \geq 6$. Let $H \in \mathcal{H}_T^+$ be such that  $\underline{H}(x) \cong F_0 \geq \Alt(d_0)$ for each $x \in V_0(T)$ and $\underline{H}(y) \cong F_1 \geq \Alt(d_1)$ for each $y \in V_1(T)$. Then $H$ is conjugate in $\Aut(T)^+$ to a group belonging to $\underline{\mathcal{G}}_{(i)}$.
\end{enumerate}
\end{primetheorem}

\begin{proof}
\hspace{1cm}
\begin{enumerate}[(i)]
\item It is a direct consequence of Lemma~\ref{lemma:normalizer} (i) that two different groups in $\underline{\mathcal{G}}_{(i)}$ can never be conjugate in $\Aut(T)^+$. If $d_0\neq d_1$ then $\Aut(T) = \Aut(T)^+$ and we are done. Now suppose that $d_0 = d_1$. Then there exists $\nu \in \Aut(T) \setminus \Aut(T)^+$ not preserving the types but preserving the colors, i.e. such that $i \circ \nu = i$. Every automorphism $\tau \in \Aut(T) \setminus \Aut(T)^+$ can be written as $\tau = \mu \circ \nu$ with $\mu \in \Aut(T)^+$. The statement then follows from the fact that $\nu G_{(i)}^+(Y_0,Y_1) \nu^{-1} = G_{(i)}^+(Y_1,Y_0)$ and $\nu G_{(i)}^+(X_0,X_1)^* \nu^{-1} = G_{(i)}^+(X_1,X_0)^*$.
\item By Theorem~\ref{maintheorem:Alt'}, there exists a legal coloring $i'$ of $T$ such that $H \supseteq \Alt_{(i')}(T)^+$. Hence, by Theorem~\ref{theorem:classification}, $H$ is equal to a group belonging to $\underline{\mathcal{G}}_{(i')}$. But $\Aut(T)^+$ is transitive on the set of legal colorings of $T$, so each member of $\underline{\mathcal{G}}_{(i')}$ is conjugate in $\Aut(T)^+$ to its counterpart in $\underline{\mathcal{G}}_{(i)}$ and the conclusion follows. \qedhere
\end{enumerate}
\end{proof}


\subsection{Proofs of the corollaries}

We can now prove the different corollaries mentioned in the introduction. We actually give, for each one, a more precise formulation than its version in the introduction. For the definition of the set $\mathcal{G}'_{(i)}$, see Definition~\ref{definition:groups}.


\begin{primecorollary}{maincorollary:classification}\label{maincorollary:classification'}
Let $T$ be the $d$-regular tree with $d \geq 4$ and let $i$ be a legal coloring of~$T$.
\begin{enumerate}[(i)]
\item The members of $\mathcal{G}'_{(i)}$ are pairwise non-conjugate in $\Aut(T)$.
\item Suppose that $d \geq 6$. Let $H \in \mathcal{H}_T \setminus \mathcal{H}_T^+$ be such that $\underline{H}(v) \cong F \geq \Alt(d)$ for each $v \in V(T)$. Then $H$ is conjugate in $\Aut(T)^+$ to a group belonging to $\mathcal{G}'_{(i)}$.
\end{enumerate}
\end{primecorollary}

\begin{proof}
We start by proving (ii).
\begin{enumerate}
\item[(ii)] Clearly, $H^+ := H \cap \Aut(T)^+$ is a subgroup of index $2$ of $H$. Since $H^+(v) = H(v)$ for each $v \in V(T)$, we deduce by Lemma~\ref{lemma:2-transitive} that $H^+$ is also $2$-transitive on $\partial T$, i.e. $H^+ \in \mathcal{H}_T^+$. Moreover, $\underline{H}^+(v) \cong F \geq \Alt(d)$ for each $v \in V(T)$, so Theorem~\ref{maintheorem:classification'} can be applied to find the shapes that $H^+$ can take. It is however important to note that, if $\nu \in H \setminus H^+$, then $\nu H^+ \nu^{-1} = H^+$ while $\nu$ does not preserve the types. This means that in $H^+$ the situation must be the same for the vertices of type $0$ and the vertices of type $1$. As a consequence, $H^+$ can only be conjugate in $\Aut(T)^+$ to one of the groups $G_{(i)}^+(Y, Y)$ with $Y \in \{\varnothing, X, X^*\}$ and $G_{(i)}^+(X,X)^*$ (with $X \subset_f \N$). In other words, there exists a legal coloring $i'$ of $T$ such that $H^+$ is equal to one of the groups $G_{(i')}^+(Y, Y)$ and $G_{(i')}^+(X,X)^*$.

Since $H^+$ is normal in $H$, $H$ is contained in the normalizer of $H^+$ in $\Aut(T)$. By Lemma~\ref{lemma:normalizer} (ii), the normalizer in $\Aut(T)$ of $G_{(i')}^+(\varnothing,\varnothing)$ (resp. $G_{(i')}^+(X,X)$, $G_{(i')}^+(X^*,X^*)$ and $G_{(i')}^+(X,X)^*$) is $G_{(i')}(\varnothing,\varnothing)$ (resp. $G_{(i')}(X^*,X^*)$, $G_{(i')}(X^*,X^*)$ and $G_{(i')}(X^*,X^*)$). Using the fact that $H^+$ is a subgroup of index $2$ of $H$, we directly get that $H$ is equal to $G_{(i')}(\varnothing,\varnothing)$ when $H^+ = G_{(i')}^+(\varnothing,\varnothing)$ and that $H$ is equal to $G_{(i')}(X^*,X^*)$ when $H^+ = G_{(i')}^+(X^*,X^*)$. For the other cases, we have:
\begin{itemize}
\item If $H^+ = G_{(i')}^+(X,X)$, then the normalizer of $H^+$ is $G_{(i')}(X^*,X^*)$. To get $H$, we must observe the extensions $H^+(\nu)$ of $H^+$ by an element $\nu \in G_{(i')}(X^*,X^*)$ that does not preserve the types and such that $\nu^2 \in H^+$. There are two possibilities: either $\Sgn_{(i')}(\nu, S_X(v)) = 1$ for each $v \in V(T)$ or $\Sgn_{(i')}(\nu, S_X(v)) = -1$ for each $v \in V(T)$ (we cannot have $\Sgn_{(i')}(\nu, S_X(v)) = 1$ for each $v \in V_0(T)$ and $\Sgn_{(i')}(\nu, S_X(v)) = -1$ for each $v \in V_1(T)$ since this would imply that $\nu^2 \not \in H^+$). In the first case we get $H^+(\nu) = G_{(i)}(X,X)$. In the second case, define a new legal coloring $i''$ by $i''|_{V_0(T)} = i'|_{V_0(T)}$ and $i''|_{V_1(T)} = \tau \circ i'|_{V_1(T)}$ where $\tau \in \Sym(d)$ is an odd permutation. In this way, $H^+ = G_{(i')}^+(X,X) = G_{(i'')}^+(X,X)$ and $H^+(\nu) = G_{(i'')}(X,X)$.
\item If $H^+ = G_{(i')}^+(X,X)^*$, then the normalizer of $H^+$ is $G_{(i')}(X^*,X^*)$. Here also, we observe the extensions $H^+(\nu)$. In this case, all $\Sgn_{(i')}(\nu, S_X(v))$ with $v \in V_0(T)$ must be equal and all $\Sgn_{(i')}(\nu, S_X(v))$ with $v \in V_1(T)$ must be equal, but there is no additional condition since each such $\nu$ satisfies $\nu^2 \in H^+$. Replacing $i'$ by $i''$ as above if necessary, we can assume that $\Sgn_{(i')}(\nu, S_X(v)) = 1$ for each $v \in V_0(T)$. Then, if $\Sgn_{(i')}(\nu, S_X(v)) = 1$ for each $v \in V_1(T)$ we obtain $H^+(\nu) = G_{(i')}(X,X)^*$. On the contrary, if $\Sgn_{(i')}(\nu, S_X(v)) = -1$ for each $v \in V_1(T)$, then we get $H^+(\nu) = G_{(i')}'(X,X)^*$.
\end{itemize}
In each case, $H$ is conjugate in $\Aut(T)^+$ to a group belonging to $\mathcal{G}'_{(i)}$.
\item[(i)] Suppose that there exist two different groups $H, H' \in \mathcal{G}_{(i)}'$ which are conjugate in $\Aut(T)$. Then $H^+$ and $H^{\prime +}$ are also conjugate, and by Theorem~\ref{maintheorem:classification'} (i) this implies that $H^+ = H^{\prime +}$. Since the groups in $\underline{\mathcal{G}}_{(i)}$ are pairwise distinct (Proposition~\ref{proposition:different}), the only possibility is to have $H = G_{(i)}(X,X)^*$ and $H' = G_{(i)}'(X,X)^*$ (or the contrary) for some $X \subset_f \N$. However, $G_{(i)}(X,X)^*$ and $G_{(i)}'(X,X)^*$ are not conjugate in $\Aut(T)$. Indeed, if $H^{(\infty)}$ denotes the intersection of all normal cocompact closed subgroups of $H$, then we have $(G_{(i)}(X,X)^*)^{(\infty)} = (G_{(i)}'(X,X)^*)^{(\infty)} = G_{(i)}^+(X,X)$ but $\bigslant{G_{(i)}(X,X)^*}{G_{(i)}^+(X,X)} \cong (\C_2)^2$ while $\bigslant{G_{(i)}'(X,X)^*}{G_{(i)}^+(X,X)} \cong \C_4$.
\qedhere
\end{enumerate}
\end{proof}

Before proving Corollary~\ref{maincorollary:Theta'}, recall that $\Theta \subset \Nz$ is the set of integers $m \geq 6$ such that each finite $2$-transitive group on $\{1,\ldots,m\}$ contains $\Alt(m)$.


\begin{primecorollary}{maincorollary:Theta}\label{maincorollary:Theta'}
Let $T$ be the $(d_0,d_1)$-semiregular tree with $d_0,d_1 \in \Theta$, let $i$ be a legal coloring of $T$ and let $H \in \mathcal{H}_T$. Then $H$ is conjugate in $\Aut(T)^+$ to a group belonging to $\underline{\mathcal{G}}_{(i)}$ or $\mathcal{G}'_{(i)}$ (when $d_0=d_1$).
\end{primecorollary}

\begin{proof}
Since $H$ is $2$-transitive on $\partial T$, $H(v)$ is $2$-transitive on $E(v)$ for each $v \in V(T)$ (see Lemma~\ref{corollary:2-transitive}). By definition of $\Theta$, this implies that $\underline{H}(x) \cong F_0 \geq \Alt(d_0)$ for each $x \in V_0(T)$ and $\underline{H}(y) \cong F_1 \geq \Alt(d_1)$ for each $y \in V_1(T)$. The conclusion follows from Theorem~\ref{maintheorem:classification'} (ii) and Corollary~\ref{maincorollary:classification'} (ii).
\end{proof}


\begin{primecorollary}{maincorollary:vertextransitive}
Let $T$ be the $d$-regular tree with $d \geq 6$, let $i$ be a legal coloring of $T$ and let $H$ be a vertex-transitive closed subgroup of $\Aut(T)$. If $\underline{H}(v) \cong F \geq \Alt(d)$ for each $v \in V(T)$, then $H$ is discrete or $H$ is conjugate in $\Aut(T)^+$ to a group belonging to $\mathcal{G}'_{(i)}$.
\end{primecorollary}

\begin{proof}
By~\cite{Burger}*{Propositions~3.3.1 and~3.3.2}, the hypotheses directly imply that $H$ is discrete or $2$-transitive on $\partial T$. The conclusion follows from Corollary~\ref{maincorollary:classification'} (ii).
\end{proof}


\section{Another example on the \texorpdfstring{$(4,d_1)$}{(4,d_1)}-semiregular tree}\label{section:counterexample}
Let $T$ be the $(4,d_1)$-semiregular tree with $d_1 \geq 4$. In this section, we construct a (non-linear) group $G \in \mathcal{H}^+_T$ for which there is no legal coloring $i$ of $T$ such that $G \supseteq \Alt_{(i)}(T)^+$. This group will therefore be different from all the groups defined in Section~\ref{section:simple}.

To avoid any confusion, we use the letter $j$ for the legal colorings of trees that will help to construct our group and the letter $i$ will only be used for other legal colorings appearing in the results.

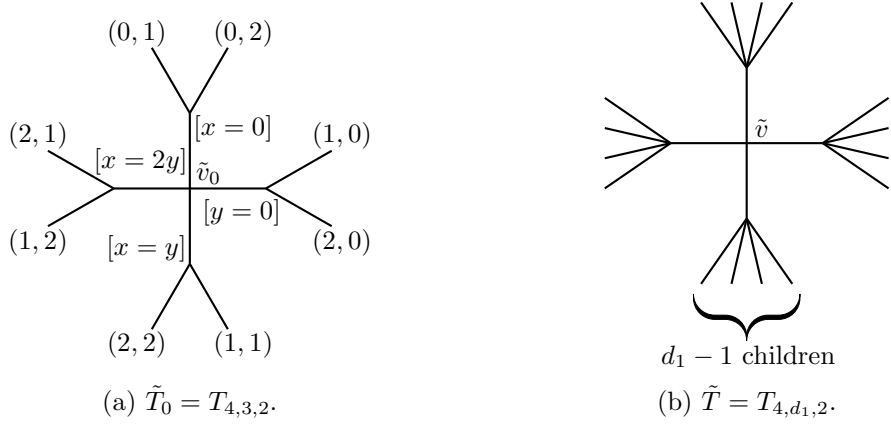
\begin{figure}
\begin{subfigure}{.5\textwidth}
\centering
\begin{pspicture*}(-2.4,-2.4)(2.4,2.5)
\fontsize{10pt}{10pt}\selectfont
\psset{unit=1cm}

\rput(0.25,0.2){$\tilde{v}_0$}

\psline(0,0)(1,0) \rput(0.68,-0.32){$[y = 0]$}
\psline(0,0)(-1,0) \rput(-0.65,0.35){$[x = 2y]$}
\psline(0,0)(0,1) \rput(0.57,0.8){$[x = 0]$}
\psline(0,0)(0,-1) \rput(-0.57,-0.8){$[x = y]$}

\psline(1,0)(1.866,0.5) \rput(2,0.7){$(1,0)$}
\psline(1,0)(1.866,-0.5) \rput(2,-0.72){$(2,0)$}
\psline(-1,0)(-1.866,0.5) \rput(-2,0.7){$(2,1)$}
\psline(-1,0)(-1.866,-0.5) \rput(-2,-0.72){$(1,2)$}
\psline(0,1)(0.5,1.866) \rput(0.7,2.05){$(0,2)$}
\psline(0,1)(-0.5,1.866) \rput(-0.7,2.05){$(0,1)$}
\psline(0,-1)(0.5,-1.866) \rput(0.7,-2.07){$(1,1)$}
\psline(0,-1)(-0.5,-1.866) \rput(-0.7,-2.07){$(2,2)$}
\end{pspicture*}
\caption{$\tilde{T}_0 = T_{4,3,2}$.}\label{picture:firstT}
\end{subfigure}%
\begin{subfigure}{.5\textwidth}
\centering
\begin{pspicture*}(-1.9,-3)(1.9,1.9)
\fontsize{10pt}{10pt}\selectfont
\psset{unit=1cm}

\rput(0.2,0.2){$\tilde{v}$}

\psline(0,0)(1,0)
\psline(0,0)(-1,0)
\psline(0,0)(0,1)
\psline(0,0)(0,-1)

\psline(1,0)(1.866,0.2) 
\psline(1,0)(1.866,-0.2)
\psline(1,0)(1.866,0.6) 
\psline(1,0)(1.866,-0.6)
\psline(-1,0)(-1.866,0.2)
\psline(-1,0)(-1.866,-0.2)
\psline(-1,0)(-1.866,0.6)
\psline(-1,0)(-1.866,-0.6)
\psline(0,1)(0.2,1.866)
\psline(0,1)(-0.2,1.866)
\psline(0,1)(0.6,1.866)
\psline(0,1)(-0.6,1.866)
\psline(0,-1)(0.2,-1.866) 
\psline(0,-1)(-0.2,-1.866)
\psline(0,-1)(0.6,-1.866) 
\psline(0,-1)(-0.6,-1.866)

\psbrace[rot=90,ref=lC,nodesepB=6pt,nodesepA=-32pt](-0.7,-2)(0.7,-2){$d_1-1$ children}

\end{pspicture*}
\caption{$\tilde{T} = T_{4,d_1,2}$.}\label{picture:secondT}
\end{subfigure}
\caption{Construction of the group $\tilde{G}$.}
\end{figure}

First consider the rooted tree $\tilde{T}_0 = T_{4,3,2}$, i.e. the rooted tree of depth~$2$ where the root $v_0$ has $4$ children and each child of $v_0$ has $3-1=2$ children. Let $\psi$ be a bijection between $(\F_3)^2 \setminus \{(0,0)\}$ and the set of eight vertices of $\tilde{T}_0$ at distance $2$ from $v_0$, such that two such vertices $x$ and $y$ have the same parent if and only if $\psi^{-1}(x)$ and $\psi^{-1}(y)$ are a multiple of one another (see Figure~\ref{picture:firstT}). The four children of $v_0$ thus correspond to the four lines of $(\F_3)^2$, or in other words to the four elements of the projective line over $\F_3$. The natural action of $\SL(2,\F_3)$ on $(\F_3)^2 \setminus \{(0,0)\}$ induces via $\psi$ an action of $\SL(2,\F_3)$ on $\tilde{T}_0$. Let $\tilde{G}_0$ be the image of $\SL(2,\F_3)$ in $\Aut(\tilde{T}_0)$ defined in this way. It is clear that the pointwise stabilizer of $B(v_0,1)$ in $\tilde{G}_0$ corresponds to the two matrices $\left(\begin{smallmatrix}1 & 0 \\ 0 & 1\end{smallmatrix}\right)$ and $\left(\begin{smallmatrix} -1 & 0 \\ 0 & -1\end{smallmatrix}\right)$. Hence, $\underline{\tilde{G}_0}(v_0) \cong \PSL(2,\F_3)$ which is in turn isomorphic to $\Alt(4)$.

Now consider the rooted tree $\tilde{T} = T_{4,d_1,2}$, i.e. the rooted tree of depth~$2$ where the root $v$ has $4$ children and each child of $v$ has $d_1-1$ children (see Figure~\ref{picture:secondT}). Fix a legal coloring $\tilde{j}$ of $\tilde{T}$ and a legal coloring $\tilde{j}_0$ of $\tilde{T}_0$ with $\tilde{j}_0(v_0) = \tilde{j}(v)$ and let $\alpha$ be the bijection from $B(v,1)$ to $B(v_0,1)$ preserving the colorings. We define the map $f \colon \Aut(\tilde{T}) \to \Aut(\tilde{T}_0)$ by saying that, if $g \in \Aut(\tilde{T})$, then $f(g) \in \Aut(\tilde{T}_0)$ is the unique automorphism of $\tilde{T}_0$ such that $f(g)(\alpha(x)) = \alpha(g(x))$ for each $x \in B(v,1)$ and $\sigma_{(\tilde{j}_0)}(f(g),\alpha(x))$ has the same parity as $\sigma_{(\tilde{j})}(g,x)$ for each $x \in S(v,1)$. Then consider $\tilde{G} = f^{-1}(\tilde{G}_0) \leq \Aut(\tilde{T})$.

It is clear from the definition of $\tilde{G}$ that $\underline{\tilde{G}}(v) \cong \Alt(4)$, and the next lemma shows that $\tilde{G}$ never contains $\Alt_{(\tilde{i})}(\tilde{T})$ for a legal coloring $\tilde{i}$ of $\tilde{T}$.


\begin{lemma}\label{lemma:G}
There does not exist a legal coloring $\tilde{i}$ of $\tilde{T}$ such that $\tilde{G} \supseteq \Alt_{(\tilde{i})}(\tilde{T})$.
\end{lemma}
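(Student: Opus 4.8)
The plan is to argue by contradiction. Suppose that some legal coloring $\tilde{i}$ of $\tilde{T}$ satisfies $\tilde{G} \supseteq \Alt_{(\tilde{i})}(\tilde{T})$; I will exhibit inside $\tilde{G}_0 \cong \SL(2,\F_3)$ a subgroup isomorphic to $\Alt(4)$, which is impossible. The whole point is that $\tilde{G}_0$ is a copy of $\SL(2,\F_3)$ rather than $\PSL(2,\F_3)$, and this non-split feature is exactly what an alternating subgroup cannot absorb.

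First I would produce an explicit copy of $\Alt(4)$ inside $\Alt_{(\tilde{i})}(\tilde{T})$. The key preliminary observation is that, in any legal coloring of $\tilde{T}$, all four children $c_1,\dots,c_4$ of the root $v$ have their grandchildren colored by the same set of $d_1-1$ colors, namely $\{1,\dots,d_1\}\setminus\{\tilde{i}(v)\}$ (since $v$ lies in $S(c_k,1)$ for every $k$ and $\tilde{i}|_{S(c_k,1)}$ is a bijection onto $\{1,\dots,d_1\}$). Consequently, for each $\pi\in\Sym(4)$ there is a unique automorphism $g_\pi$ of $\tilde{T}$ permuting the children by $\pi$ and sending grandchildren to grandchildren preserving $\tilde{i}$-colors. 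These satisfy $g_\pi g_{\pi'}=g_{\pi\pi'}$, have trivial (hence even) local action at every child, and local action of sign $\sgn(\pi)$ at $v$; therefore $L:=\{g_\pi \suchthat \pi\in\Alt(4)\}$ is a subgroup of $\Alt_{(\tilde{i})}(\tilde{T})$ isomorphic to $\Alt(4)$.

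Next I would push $L$ forward through $f$. Since $f$ is a homomorphism and $\tilde{G}=f^{-1}(\tilde{G}_0)$, the inclusion $L\subseteq\Alt_{(\tilde{i})}(\tilde{T})\subseteq\tilde{G}$ gives $f(L)\subseteq\tilde{G}_0$. By definition of $f$ and $\alpha$, the automorphism $f(g_\pi)$ acts on the four children of $v_0$ as $\pi$ (through $\alpha$), so $f|_L$ is injective and $f(L)\cong\Alt(4)$ is a subgroup of order $12$ of $\tilde{G}_0\cong\SL(2,\F_3)$.

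Finally I would invoke the group theory of $\SL(2,\F_3)$ to close the contradiction: any order-two element of $\SL(2,\F_q)$ with $q$ odd is a determinant-one matrix with eigenvalues $\pm1$ whose product is $1$, forcing it to equal $-I$; hence $\SL(2,\F_3)$ has a unique involution, whereas $\Alt(4)$ has three. Thus $\Alt(4)$ does not embed in $\SL(2,\F_3)$, contradicting $f(L)\cong\Alt(4)\leq\tilde{G}_0$. (Equivalently, an index-$2$ subgroup is ruled out because $\SL(2,\F_3)^{\mathrm{ab}}\cong\C_3$.) The step requiring the most care is the first one: checking that the color-respecting maps $g_\pi$ are genuinely well defined for an \emph{arbitrary} coloring $\tilde{i}$ (which rests on the coincidence of the four grandchild color-sets) and that they indeed lie in $\Alt_{(\tilde{i})}(\tilde{T})$; once $L$ is in place and $f$ is seen to detect $\pi$ faithfully, the contradiction is immediate.
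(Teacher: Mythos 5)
Your proof is correct, but it takes a genuinely different route from the paper's. The paper transfers the hypothetical coloring \emph{downward}: it constructs from $\tilde{i}$ a legal coloring $\tilde{i}_0$ of $\tilde{T}_0$ whose local parities match through $f$, so that $f(\Alt_{(\tilde{i})}(\tilde{T})) = \Alt_{(\tilde{i}_0)}(\tilde{T}_0) \subseteq \tilde{G}_0$; it then works with a single element $g \in \Alt_{(\tilde{i}_0)}(\tilde{T}_0)$ inducing the double transposition $([x=0]\ [y=0])([x=y]\ [x=2y])$ on the children, notes that the only elements of $\tilde{G}_0$ with this action on $B(v_0,1)$ are $\pm\left(\begin{smallmatrix}0&1\\-1&0\end{smallmatrix}\right)$, and gets the contradiction from $g^2$ acting as $\left(\begin{smallmatrix}-1&0\\0&-1\end{smallmatrix}\right)$, which has odd local action (a transposition) at each child. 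You instead transfer a whole subgroup: your color-preserving lifts $g_\pi$ give a splitting $\Alt(4) \cong L \leq \Alt_{(\tilde{i})}(\tilde{T})$ valid for an \emph{arbitrary} legal coloring (your key observation that all four children have grandchildren colored by the same set $\{1,\dots,d_1\}\setminus\{\tilde{i}(v)\}$ is exactly right, and the local-action computation — trivial at each child, sign $\sgn(\pi)$ at $v$ — checks out), and $f|_L$ then embeds $\Alt(4)$ into $\tilde{G}_0 \cong \SL(2,\F_3)$, which is impossible by the unique-involution count. Both arguments ultimately exploit the same obstruction, namely the central involution $-I$ — i.e.\ the fact that $\tilde{G}_0$ is the nonsplit extension $\SL(2,\F_3)$ of $\Alt(4)$ rather than a group containing $\Alt(4)$ — but yours isolates it as a clean abstract statement ($\Alt(4)$ does not embed in $\SL(2,\F_3)$) and avoids constructing the auxiliary coloring $\tilde{i}_0$, at the cost of the two verifications you rightly flag. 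One of these deserves a word: your argument needs $f$ to be a homomorphism (so that $f(L)$ is a subgroup and $f|_L$ an isomorphism onto its image); this is true but not stated explicitly in the paper — it follows from the uniqueness clause in the definition of $f$ together with the cocycle identity of Lemma~\ref{lemma:sigma}, since parities of local actions multiply under composition — and the paper's own proof implicitly relies on the same compatibility when it asserts $f(\Alt_{(\tilde{i})}(\tilde{T})) = \Alt_{(\tilde{i}_0)}(\tilde{T}_0)$. With that one line supplied, your proof is complete.
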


\begin{proof}
By contradiction, assume that such a coloring exists. From this one, we can construct a legal coloring $\tilde{i}_0$ of $\tilde{T}_0$ such that  $\tilde{G}_0 \supseteq \Alt_{(\tilde{i}_0)}(\tilde{T}_0)$. Indeed, it suffices to set $\tilde{i}_0|_{B(v_0,1)} = \tilde{j}_0|_{B(v_0,1)}$ and then, for each $x \in S(v_0,1)$, to define $\tilde{i}_0$ on $S(x,1) \setminus \{v_0\}$ such that $\tilde{i}_0 \tilde{j}_0|_{S(x,1)}^{-1} \in \Sym(3)$ has the same parity as $\tilde{i} \tilde{j}|_{S(\alpha^{-1}(x),1)}^{-1} \in \Sym(d_1)$. In this way, $f(\Alt_{(\tilde{i})}(\tilde{T})) = \Alt_{(\tilde{i}_0)}(\tilde{T}_0)$ and thus $\tilde{G}_0 = f(\tilde{G}) \supseteq f(\Alt_{(\tilde{i})}(\tilde{T})) = \Alt_{(\tilde{i}_0)}(\tilde{T}_0)$.

Let us name each vertex of $S(v_0,1)$ with the corresponding line in $(\F_3)^2 \setminus \{0,0\}$, i.e. with $[x=0]$, $[y=0]$, $[x=y]$ or $[x=2y]$. Let $g \in \Alt_{(\tilde{i}_0)}(\tilde{T}_0)$ be such that $g$ interchanges $[x=0]$ and $[y=0]$ and interchanges $[x=y]$ and $[x=2y]$. Since $g \in \Alt_{(\tilde{i}_0)}(\tilde{T}_0) \subseteq \tilde{G}_0$, $g$ acts as $\left(\begin{smallmatrix}0 & 1 \\ -1 & 0\end{smallmatrix}\right)$ or as $\left(\begin{smallmatrix}0 & -1 \\ 1 & 0\end{smallmatrix}\right)$ on $\tilde{T}_0$ (these are the only elements of $\tilde{G}_0$ acting as $g$ on $B(v_0,1)$). In both cases, $g^2$ acts as $\left(\begin{smallmatrix}-1 & 0 \\ 0 & -1\end{smallmatrix}\right)$ on $\tilde{T}_0$. But $\left(\begin{smallmatrix}-1 & 0 \\ 0 & -1\end{smallmatrix}\right)$ fixes $B(v_0,1)$ and acts as a transposition at each vertex of $S(v_0,1)$, so it cannot be contained in $\Alt_{(\tilde{i}_0)}(\tilde{T}_0)$. This is a contradiction with the fact that $g^2 \in \Alt_{(\tilde{i}_0)}(\tilde{T}_0)$.
\end{proof}

Using the group $\tilde{G} \leq \Aut(\tilde{T})$, we can now construct a group $G \in \mathcal{H}_T^+$ which acts locally as $\tilde{G}$. For each $x \in V_0(T)$, fix a map $J_x \colon B(x,2) \to \tilde{T}$. In this way, for each $x \in V_0(T)$ and each $g \in \Aut(T)^+$, we can define
$$\Sigma_{(J)}(g, x) = J_{g(x)} \circ g \circ J_{x}^{-1} \in \Aut(\tilde{T}).$$
This allows us to define $G \leq_{cl} \Aut(T)^+$ by
$$G := \{g \in \Aut(T)^+ \suchthat \Sigma_{(J)}(g,x) \in \tilde{G} \text{ for each $x \in V_0(T)$}\}.$$
The fact that $G$ is $2$-transitive on $\partial T$ is not completely immediate.


\begin{lemma}
The group $G$ belongs to $\mathcal{H}_T^+$.
\end{lemma}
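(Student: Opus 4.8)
The plan is to check the three requirements for membership in $\mathcal{H}_T^+$ separately: that $G$ is a closed subgroup of $\Aut(T)$, that $G\leq\Aut(T)^+$, and that $G$ acts $2$-transitively on $\partial T$. The first two are routine. That $G$ is a subgroup follows from the cocycle identity $\Sigma_{(J)}(gh,x)=\Sigma_{(J)}(g,h(x))\circ\Sigma_{(J)}(h,x)$ (established exactly as in Lemma~\ref{lemma:sigma}) together with the fact that $\tilde{G}$ is a subgroup of $\Aut(\tilde{T})$; and $G$ is closed because each defining condition $\Sigma_{(J)}(g,x)\in\tilde{G}$ depends only on $g|_{B(x,2)}$ and $\tilde{G}$ is finite, so $G$ is an intersection of clopen subsets. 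The inclusion $G\leq\Aut(T)^+$ holds by definition. Hence the whole content of the lemma is the $2$-transitivity on $\partial T$.

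For this I would apply Lemma~\ref{lemma:2-transitive} and reduce, just as in the proof of Lemma~\ref{lemma:U(Alt)transitive}, to the following stepwise statement: for every edge $e=[u,w]$ of $T$, the pointwise stabilizer $\Fix_G(T_{u,w})$ of the half-tree on the $u$-side acts transitively on $E(w)\setminus\{e\}$. Note that $\Fix_G(T_{u,w})$ automatically fixes $w$, since it fixes $u$ together with all neighbours of $u$ except $w$; and since $G$ is closed, transitivity on ends follows from this stepwise statement by the standard back-and-forth extension argument.

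I would then prove the stepwise statement by separating the two types of the vertex $w$ and reading off the local structure of $\tilde{G}$. When $w$ is of type $0$, the relevant local action is $\underline{\tilde{G}}(v)\cong\Alt(4)$, whose stabiliser of the edge toward $u$ is the cyclic group $\C_3$ acting transitively on the three forward neighbours; so one may prescribe a $3$-cycle there. An element of $\Fix_G(T_{u,w})$ fixes all children of $u$, so the parity recorded by $f$ at the back neighbour $u$ is even, and the double cover relation inside $\tilde{G}_0\cong\SL(2,\F_3)$ then pins down the forward parities (the remaining ambiguity, a global flip by $-I$, being fixed by this even value); these forward parities are realisable because $d_1-1\geq 2$. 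When $w$ is of type $1$ with back neighbour $u$ of type $0$, fixing $B(u,1)$ forces $f(\Sigma_{(J)}(g,u))\in\{\pm I\}$, and since the other neighbours of $u$ are fixed (even parity) this value must be $+I$; consequently $\Fix_G(T_{u,w})$ induces only \emph{even} permutations of the $d_1-1$ forward neighbours of $w$. This is harmless, since $\Alt(d_1-1)$ is already transitive when $d_1\geq 4$, so transitivity on $E(w)\setminus\{e\}$ still holds. Both computations use precisely the hypotheses $d_0=4$ and $d_1\geq4$.

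The step I expect to be the genuine obstacle is turning these local prescriptions into actual elements of $G$ fixing $T_{u,w}$: one must check that the prescribed action on $B(w,2)$ extends consistently across the overlapping radius-$2$ balls centred at all type-$0$ vertices. I would build such an extension greedily, by increasing distance from $w$: at each newly reached type-$0$ vertex $z$ one is free to take the top permutation trivial beyond the transported region, and then only needs to realise permutations of $z$'s children of the parities imposed by the $\SL(2,\F_3)$-constraint, the already-committed back-edge parity fixing the global flip. The freedom to carry this out at every stage is exactly what is provided by the surjectivity of $\tilde{G}\to\tilde{G}_0$ and by the largeness of the kernel $\ker f\cong\Alt(d_1-1)^4$, which allows the parities at the children to be adjusted independently; making this greedy consistency fully precise, so that no type-$0$ constraint is ever violated, is the crux of the argument.
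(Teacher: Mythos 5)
Your proposal is correct and follows essentially the same route as the paper: reduce via Lemma~\ref{lemma:2-transitive} and the closedness of $G$ to a stepwise transitivity statement for fixators, then construct the required element by extending it to larger and larger balls, your explicit $\SL(2,\F_3)$ parity/lift bookkeeping being exactly what the paper compresses into the remark that ``there is sufficient freedom in $\tilde{G}$'' when $d_1 \geq 4$. The one deviation is that you fix a half-tree rather than a finite geodesic, which is what creates the even-permutation constraint at type-$1$ vertices (the paper's weaker fixing avoids it, since the lift $-I$ remains available there), but you correctly absorb this using the transitivity of $\Alt(d_1-1)$ for $d_1 \geq 4$.
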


\begin{proof}
By Lemma~\ref{lemma:2-transitive}, it suffices to prove that $G(v)$ is transitive on $\partial T$ for each $v \in V(T)$. As $G$ is closed in $\Aut(T)$, we can simply show that the fixator in $G$ of a geodesic $(v,w)$ with $v, w \in V(T)$ always acts transitively on $E(w) \setminus \{e\}$, where $e$ is the edge of $(v,w)$ adjacent to $w$. If $x$ and $y$ are two vertices adjacent to $w$ but not on $(v,w)$, then we must find $g \in G$ fixing $(v,w)$ and such that $g(x) = y$. We can simply construct such an element $g$ by defining $g(x)=y$ and $g(e)=e$, and then by extending $g$ to larger and larger balls, so that $g$ fixes $(v,w)$ and $\Sigma_{(J)}(g,z) \in \tilde{G}$ for each $z \in V_0(T)$. One easily checks, using the fact that $d_1 \geq 4$, that there is sufficient freedom in $\tilde{G}$ to do so.
\end{proof}

Finally, as a corollary of Lemma~\ref{lemma:G} we find that $G$ is indeed not isomorphic to any group defined in Section~\ref{section:simple}.


\begin{proposition}
We have $\underline{G}(x) \cong \Alt(4)$ for each $x \in V_0(T)$ and $\underline{G}(y) \cong \Sym(d_1)$ for each $y \in V_1(T)$, but there does not exist a legal coloring $i$ of $T$ such that $G \supseteq \Alt_{(i)}(T)^+$.
\end{proposition}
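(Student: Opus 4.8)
The two statements about local actions I would dispose of first, since they are quick. For $x \in V_0(T)$ the map $g \mapsto \Sigma_{(J)}(g,x)$ is a homomorphism from the stabilizer $G(x)$ onto $\tilde{G}$ (surjectivity coming from the same extension argument, valid because $d_1 \geq 4$, that was used to show $G \in \mathcal{H}_T^+$), and it intertwines the action of $G(x)$ on $E(x)$ with the action of $\tilde{G}$ on the four neighbours of the root of $\tilde{T}$; as that action is $\underline{\tilde{G}}(v) \cong \Alt(4)$, we get $\underline{G}(x) \cong \Alt(4)$. For $y \in V_1(T)$ I would instead show that the pointwise stabilizer $G(x,y)$ of an edge $\{x,y\}$ with $x \in V_0(T)$ induces the full group $\Sym(d_1-1)$ on the remaining $d_1-1$ neighbours of $y$. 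Under $J_x$ these neighbours are exactly the children of $y' := J_x(y)$, and the crucial point is that $\tilde{G}_0$ contains $-I$, which fixes every neighbour of the root of $\tilde{T}_0$ and is odd at each of them. Hence an element of $\tilde{G}$ fixing the root and $y'$ can realize either parity on the children of $y'$, and therefore, since $d_1-1 \geq 3$, every permutation in $\Sym(d_1-1)$. As $\underline{G}(y)$ is transitive (indeed $2$-transitive, because $G \in \mathcal{H}_T^+$), a transitive group with point stabilizer $\Sym(d_1-1)$ is forced to be all of $\Sym(d_1)$, so $\underline{G}(y) \cong \Sym(d_1)$.

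The real content is the last assertion, which I would prove by contradiction, localizing to a single ball and reducing to Lemma~\ref{lemma:G}. Suppose some legal coloring $i$ of $T$ satisfies $G \supseteq \Alt_{(i)}(T)^+$, and fix $x_0 \in V_0(T)$. Since $J_{x_0} \colon B(x_0,2) \to \tilde{T}$ is a rooted-tree isomorphism, transporting $i$ gives a legal coloring $\tilde{i} := i \circ J_{x_0}^{-1}$ of $\tilde{T}$. The goal is to push the inclusion $G \supseteq \Alt_{(i)}(T)^+$ through the homomorphism $\Sigma_{(J)}(\cdot,x_0)$ so as to obtain $\tilde{G} \supseteq \Alt_{(\tilde{i})}(\tilde{T})$, directly contradicting Lemma~\ref{lemma:G}.

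The identity that makes this descent work is $\sigma_{(\tilde{i})}(\Sigma_{(J)}(g,x_0), J_{x_0}(w)) = \sigma_{(i)}(g,w)$ for $g \in G(x_0)$ and $w \in B(x_0,1)$, which I would read off from $\tilde{i} = i \circ J_{x_0}^{-1}$, from $\Sigma_{(J)}(g,x_0) = J_{x_0} \circ g \circ J_{x_0}^{-1}$, and from Lemma~\ref{lemma:sigma}. It shows that $\Sigma_{(J)}(g,x_0)$ lies in $\Alt_{(\tilde{i})}(\tilde{T})$ exactly when $g$ has even local action at $x_0$ and at its four neighbours. I would then verify that $\Sigma_{(J)}(\cdot,x_0)$ carries $\Alt_{(i)}(T)^+(x_0)$ onto $\Alt_{(\tilde{i})}(\tilde{T})$: given $\tilde{h} \in \Alt_{(\tilde{i})}(\tilde{T})$, set $g|_{B(x_0,2)} = J_{x_0}^{-1}\,\tilde{h}\,J_{x_0}$ and extend $g$ to all of $T$ with even local action at every vertex, which is possible because at each boundary vertex of $S(x_0,2)$ only the edge towards $x_0$ is constrained and the remaining $d_0-1 = 3$ edges leave room to choose an even permutation. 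Such a $g$ belongs to $\Alt_{(i)}(T)^+ \subseteq G$, so $\Alt_{(\tilde{i})}(\tilde{T}) = \Sigma_{(J)}(\Alt_{(i)}(T)^+(x_0), x_0) \subseteq \Sigma_{(J)}(G(x_0), x_0) \subseteq \tilde{G}$, which is the contradiction.

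The step I expect to be delicate is precisely this last reduction: one must be confident that the parity data built into $\tilde{G}$ through $f$ is faithfully mirrored by the local-action parities on $T$, so that $i$ localizes to $\tilde{i}$ compatibly, and that enough of $\Alt_{(\tilde{i})}(\tilde{T})$ is genuinely realized by globally alternating automorphisms of $T$ (hence the care taken over extending $g$ with even local action everywhere). Once the intertwining identity and this surjectivity onto $\Alt_{(\tilde{i})}(\tilde{T})$ are secured, the remainder is routine and the clash with Lemma~\ref{lemma:G} is immediate.
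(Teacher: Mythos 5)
Your proposal is correct and follows essentially the same route as the paper: you pull the coloring back to $\tilde{i} = i \circ J_{x_0}^{-1}$, lift elements of $\Alt_{(\tilde{i})}(\tilde{T})$ to elements of $\Alt_{(i)}(T)^+$ fixing $x_0$ via the intertwining identity for $\Sigma_{(J)}(\cdot,x_0)$, and invoke Lemma~\ref{lemma:G}; the only cosmetic difference is that you lift all of $\Alt_{(\tilde{i})}(\tilde{T})$ to deduce $\tilde{G} \supseteq \Alt_{(\tilde{i})}(\tilde{T})$, whereas the paper lifts a single witness $\tilde{g} \in \Alt_{(\tilde{i})}(\tilde{T}) \setminus \tilde{G}$ to get $g \in \Alt_{(i)}(T)^+ \setminus G$ directly. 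Your treatment of the local actions (the $-I$ parity trick at vertices of $V_1(T)$ and the order count forcing $\Sym(d_1)$) is sound and in fact more detailed than the paper's, which dismisses this as immediate from the definition of $G$.
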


\begin{proof}
The fact that $\underline{G}(x) \cong \Alt(4)$ for $x \in V_0(T)$ and $\underline{G}(y) \cong \Sym(d_1)$ for $y \in V_1(T)$ readily follows from the definition of $G$. Now consider a legal coloring $i$ of $T$. We show that $G \not \supseteq \Alt_{(i)}(T)^+$. Fix $x \in V_0(T)$ and consider the legal coloring $\tilde{i} = i \circ J_x^{-1}$ of $\tilde{T}$. By Lemma~\ref{lemma:G}, there exists $\tilde{g} \in \Alt_{(\tilde{i})}(\tilde{T})$ such that $\tilde{g} \not \in \tilde{G}$. One can then construct an element $g \in \Alt_{(i)}(T)^+$ fixing $x$ and with $\Sigma_{(J)}(g,x) = \tilde{g}$, which is therefore such that $g \in \Alt_{(i)}(T)^+ \setminus G$. Hence, we have $G \not \supseteq \Alt_{(i)}(T)^+$.
\end{proof}

\begin{appendix}


\section{Topologically isomorphic groups in \texorpdfstring{$\mathcal{H}_T$}{H_T}}\label{appendix:isomorphic}

We show in this appendix the following proposition, stating that two groups in $\mathcal{H}_T$ are topologically isomorphic if and only if they are conjugate in $\Aut(T)$. This is a folklore result but, because of the lack in finding a suitable reference and as suggested by the referee, we give its full proof here.


\begin{proposition}\label{proposition:isomorphic}
Let $T$ be the $(d_0,d_1)$-semiregular tree with $d_0, d_1 \geq 3$ and let $H, H' \in \mathcal{H}_T$ be isomorphic as topological groups. Then $H$ and $H'$ are conjugate in $\Aut(T)$.
\end{proposition}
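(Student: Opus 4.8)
The statement asserts that abstract topological isomorphism between two boundary-$2$-transitive closed subgroups of $\Aut(T)$ forces geometric conjugacy. The plan is to reconstruct the tree $T$, together with the $H$-action on it, purely from the internal topological group structure of $H$, so that any abstract isomorphism $\phi \colon H \to H'$ necessarily carries this reconstructed action to the corresponding one for $H'$. The key is to identify vertex stabilizers intrinsically. First I would recall that for $H \in \mathcal{H}_T$ the vertex stabilizers $H(v)$ are precisely the maximal compact open subgroups of $H$: they are compact and open because $H$ is closed in $\Aut(T)$ acting on a locally finite tree, and maximality (together with the fact that every compact open subgroup fixes a vertex or inverts an edge) follows from $2$-transitivity on $\partial T$, which makes the action cocompact and the stabilizers genuinely maximal. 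Since a topological isomorphism sends maximal compact open subgroups to maximal compact open subgroups, $\phi$ induces a bijection between the conjugacy data of vertex (and edge) stabilizers of $H$ and those of $H'$.

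The second step is to recover the combinatorial tree from this family of subgroups. The vertices of $T$ are in bijection with the maximal compact open subgroups of $H$ (one must be a little careful, since when $d_0 = d_1$ there may be edge-inverting elements, so I would first pass to $H^+ = H \cap \Aut(T)^+$, which is a closed subgroup of index at most $2$ characterized topologically, and whose maximal compact open subgroups biject cleanly with $V(T)$ split into the two types). Adjacency is recovered by the condition that $H(v) \cap H(w)$ be an edge stabilizer, i.e.\ that the intersection be ``as large as possible'' among intersections of distinct vertex stabilizers; concretely two vertices are adjacent precisely when $H(v) \cap H(w)$ is open and maximal among such intersections, or equivalently when $\langle H(v), H(w)\rangle$ is the whole group while no proper containment interpolates. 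This yields an abstract graph $\mathcal{T}(H)$ isomorphic to $T$, and the isomorphism $\phi$ transports $\mathcal{T}(H)$ to $\mathcal{T}(H')$, giving a graph isomorphism $\psi \colon T \to T'$ equivariant with respect to $\phi$. Since $T$ and $T'$ are both the $(d_0,d_1)$-semiregular tree (the degrees being read off from the indices $[H(v) : H(v)\cap H(w)]$ of vertex stabilizers over edge stabilizers), $\psi$ is an isomorphism of trees.

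The third step is to promote this into an honest conjugation inside $\Aut(T)$. The tree isomorphism $\psi$ is an element of $\Aut(T)$ (after identifying $T'$ with $T$ via $\psi$), and the equivariance $\phi(h) = \psi \circ h \circ \psi^{-1}$ must be verified: by construction $\phi(h)$ and $\psi h \psi^{-1}$ induce the same permutation of the vertex set of $T'$, and since the action of $\Aut(T)$ on $T$ is faithful, an automorphism is determined by its action on vertices, so the two coincide as elements of $\Aut(T')$. Hence $\phi = \mathrm{conj}_\psi$ on $H$, which is exactly conjugacy in $\Aut(T)$.

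\textbf{Main obstacle.} The delicate point is the intrinsic topological characterization of vertex stabilizers as the maximal compact open subgroups, and in particular disentangling vertices from edges when $d_0 = d_1$, where type-interchanging elements exist and the two vertex orbits are genuinely swapped. I expect the bulk of the care to go into showing (i) that every maximal compact open subgroup is a vertex stabilizer rather than something exotic, using the Bruhat--Tits fixed-point lemma for the compact group acting on the tree and then $2$-transitivity to rule out strictly larger compact open subgroups, and (ii) that adjacency and the bipartition can be read off from the subgroup lattice in a way preserved by abstract isomorphisms, so that $\psi$ is forced to respect the combinatorial structure. Once vertex stabilizers are pinned down intrinsically, the reconstruction of $T$ and the equivariance of $\psi$ are essentially formal consequences of the faithfulness of the action.
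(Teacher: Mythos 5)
Your overall strategy is the same as the paper's: recover the vertices of $T$ intrinsically as (suitable) maximal compact subgroups, recover adjacency from the indices $[H(v) : H(v) \cap H(w)]$, and then deduce $\varphi = \mathrm{conj}_\tau$ from the fact that distinct vertices have distinct stabilizers and the action is faithful. Your adjacency criterion in its first form is exactly the paper's (adjacent vertices minimize the index, which works because $H(v)$ is transitive on each sphere around $v$, so the index is strictly increasing in $d(v,w)$), and your final equivariance step matches the paper's computation $H'_{\tau h \tau^{-1}(v)} = H'_{\varphi(h)(v)}$. However, there is one genuine gap, and it sits exactly at the point you yourself flag as the main obstacle: you dispose of the vertex-versus-edge ambiguity when $d_0 = d_1$ by passing to $H^+ = H \cap \Aut(T)^+$ and asserting that this subgroup is ``characterized topologically'', with no justification. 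This is not innocuous: $H^+$ is in general only one of several closed index-$2$ subgroups of $H$ (by the classification, $H/H^{(\infty)}$ can be $(\C_2)^2$, $\C_4$ or even $\D_8$, cf.\ Corollary~\ref{maincorollary:classification} and Lemma~\ref{lemma:normalizer}), and an abstract isomorphism has no a priori reason to carry the type-preserving one to the type-preserving one. The natural intrinsic candidates fail: the closed subgroup generated by all squares is contained in $H^+$ but does not single it out when $H/H^{(\infty)}$ has exponent $2$; and describing $H^+$ as ``the subgroup generated by the vertex stabilizers'' is circular, since identifying the vertex stabilizers is precisely what the passage to $H^+$ was meant to accomplish.

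The paper closes this gap without ever invoking $H^+$, by a concrete criterion you could substitute for your reduction: by the reference it cites (Fig\'a-Talamanca--Nebbia, Theorem 5.2), the maximal compact subgroups of $H$ are the vertex stabilizers together with, when $H \not\in \mathcal{H}_T^+$, the edge stabilizers; and $K$ is an edge stabilizer if and only if there exists another maximal compact subgroup $K'$ with $[K : K \cap K'] = 2$ (take $K' = H_v$ for $v$ an endpoint of the edge), whereas for a vertex stabilizer no such $K'$ exists, because edge-transitivity and $d_0, d_1 \geq 3$ force every index $[H_v : H_v \cap K']$ to be at least $3$. This pins down the vertex stabilizers inside $H$ itself, preserved by any topological isomorphism. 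One smaller slip: your ``equivalently, $\langle H(v), H(w)\rangle$ is the whole group'' reformulation of adjacency is vacuous, since for a group in $\mathcal{H}_T^+$ any two vertex stabilizers generate the whole group regardless of the distance between the vertices (each stabilizer is transitive on spheres around its vertex, so the generated subgroup is transitive on a type class and contains a full vertex stabilizer, hence is everything); this is harmless because your index-minimality criterion is the one that works. With the paper's vertex/edge criterion in place of the unproved characterization of $H^+$, the rest of your argument is correct and essentially reproduces the paper's proof.
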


\begin{proof}
Since $H$ acts edge-transitively on $T$ (see Lemma~\ref{corollary:2-transitive}), the vertex stabilizers $H_v$ and edge stabilizers $H_e$ in $H$ are all pairwise distinct. Moreover, $H_v$ is a maximal compact subgroup of $H$ for each $v \in V(T)$, $H_e$ is a maximal compact subgroup of $H$ for each $e \in E(T)$ if and only if $H \not \in \mathcal{H}_T^+$, and these are the only maximal compact subgroups of $H$ (see~\cite{HarmonicAnalysis}*{Theorem~5.2}). Given the group $H$ and all its compact maximal subgroups, one can also recognize which of them must be vertex stabilizers. Indeed, if $H \not \in \mathcal{H}_T^+$ and $K = H_e$ is an edge stabilizer in $H$ then there exists another maximal compact subgroup $K'$ of $H$ such that $[K : K \cap K'] = 2$ (namely $K' = H_v$ where $v$ is a vertex of $e$). On the contrary, if $K = H_v$ is a vertex stabilizer in $H$ (we do not suppose $H \not \in \mathcal{H}_T^+$ here), then there exists no maximal compact subgroup $K'$ of $H$ such that $[K : K \cap K'] = 2$, because $d_0, d_1 \geq 3$ and $H$ is edge-transitive. The vertex stabilizers in $H$ can thus be exactly identified among the subgroups of $H$, without knowing anything about the action of $H$ on $T$. The same is true for $H'$.

Now let $\varphi \colon H \to H'$ be an isomorphism of topological groups. For each $v \in V(T)$, the previous discussion shows that there is a unique vertex $\tau(v) \in V(T)$ such that $\varphi(H_v) = H'_{\tau(v)}$ and that the map $\tau \colon V(T) \to V(T)$ is a bijection. Moreover, two vertices $v, v' \in V(T)$ are neighbors in $T$ if and only if $[H_v : H_{v'} \cap H_v] \leq [H_v : H_w \cap H_v]$ for all vertices $w \neq v$. This indeed follows easily from Lemma~\ref{corollary:2-transitive}. In view of the definition of $\tau$, this implies that $v$ and $v'$ are adjacent if and only if $\tau(v)$ and $\tau(v')$ are adjacent. In other words, $\tau$ is an automorphism of $T$. We finally claim that $\tau h \tau^{-1} = \varphi(h)$ for all $h \in H$. Indeed, we have successively
\[H'_{\tau h \tau^{-1}(v)} = \varphi(H_{h \tau^{-1}(v)}) = \varphi(h H_{\tau^{-1}(v)} h^{-1}) = \varphi(h) H'_v \varphi(h)^{-1} = H'_{\varphi(h)(v)}. \qedhere\]
\end{proof}


\section{Asymptotic density of the set \texorpdfstring{$\Theta$}{Theta}}\label{appendix:theta}

In this appendix, we give an explicit expression for $\Theta$ coming from the classification of finite $2$-transitive groups and deduce that $\Theta$ is asymptotically dense in $\Nz$.


\begin{proposition}\label{proposition:theta}
The set $\Theta$ is equal to
\begin{dmath*}
\Theta = {\left\{m \in \Nz \suchthat m \geq 6\right\}} \setminus \left({\left\{p^d \suchthat p \text{ prime, } d \geq 1\right\}} \cup {\left\{\frac{p^{dr}-1}{p^d-1} \suchthat p \text{ prime, } d \geq 1, r \geq 2\right\}} \cup {\left\{2^{2d-1}\pm 2^{d-1} \suchthat d \geq 3 \right\}} \cup \{22,176,276\}\right)
\end{dmath*}
\end{proposition}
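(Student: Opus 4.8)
The plan is to translate the defining property of $\Theta$ into a question about the classification of finite $2$-transitive permutation groups, and then to read off the exceptional degrees. Recall that $m \in \Theta$ means $m \geq 6$ and \emph{every} finite $2$-transitive group on $\{1,\ldots,m\}$ contains $\Alt(m)$. Thus $m \geq 6$ fails to belong to $\Theta$ precisely when there exists a $2$-transitive group $G \leq \Sym(m)$ with $G \not\supseteq \Alt(m)$; equivalently, when $m$ is the degree of a \emph{proper} $2$-transitive group (one whose socle is not $\Alt(m)$). So the task reduces to listing exactly those degrees $m \geq 6$ that admit a proper $2$-transitive action, and verifying that this list is the union of the four families subtracted on the right-hand side.

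First I would invoke the classification of finite $2$-transitive groups (a consequence of the Classification of the Finite Simple Groups, as already acknowledged in the paper). Every finite $2$-transitive group is either of \textbf{affine type}, with socle $(\C_p)^d$ acting on $p^d$ points, or of \textbf{almost simple type}, with a simple socle acting $2$-transitively in one of the known ways. I would go through the standard list of almost simple $2$-transitive groups degree by degree: the groups $\PSL_n(q)$ acting on the projective space, giving degree $(q^n-1)/(q-1) = (p^{dr}-1)/(p^d-1)$ with $q = p^d$ and $r = n \geq 2$; the symplectic groups $\mathrm{Sp}_{2d}(2)$ acting on the two orbits of quadratic forms, giving the two degrees $2^{2d-1} \pm 2^{d-1}$; and then the finitely many sporadic entries (the Mathieu groups, the Higman--Sims group, the Conway group, and a handful of $\PSL_2$ and other exceptional actions), which after discarding those already covered and those with $m < 6$ contribute exactly the three isolated degrees $22$, $176$, $276$. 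The affine case contributes precisely the prime-power degrees $p^d$. The main bookkeeping step is to confirm that each proper $2$-transitive degree $\geq 6$ lands in one of these four families, and conversely that every integer in these families is genuinely such a degree, so no superfluous integer is removed.

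The delicate point, and the step I expect to be the main obstacle, is the careful \emph{deduplication and boundary analysis}: several of the listed families overlap (for instance every prime power $p^d$ is also of the form $(p^{dr}-1)/(p^d-1)$ for suitable parameters only in degenerate cases, and small values such as $m=6,7,8,\ldots$ sit in multiple families), and one must make sure that the sporadic list genuinely reduces to just $\{22,176,276\}$ after removing degrees below $6$ and degrees already accounted for by the affine and $\PSL$/symplectic families. I would cross-check each small sporadic $2$-transitive group against the three generic families to confirm its degree is not already excluded, and in particular verify that degrees like $11, 12, 23, 24$ (Mathieu) are either covered by the generic families or correctly fall outside the claimed exceptional set by being absorbed elsewhere, leaving only $22$, $176$, $276$ as truly new. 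This requires consulting the explicit tables (e.g.\ in \cite{Dixon}*{Section~7.7} or \cite{Cameron}) and handling the arithmetic of when $(p^{dr}-1)/(p^d-1)$ can coincide with a prime power.

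Once the four families are confirmed to be exactly the set of proper $2$-transitive degrees $\geq 6$, the stated equality for $\Theta$ follows immediately by complementation within $\{m \in \Nz \suchthat m \geq 6\}$. For the asymptotic density statement (Corollary~\ref{corollary:dense}, to be proved separately), I would note that each of the four removed families is sparse: the prime powers have density zero by the prime number theorem, the numbers $(p^{dr}-1)/(p^d-1)$ with $r \geq 2$ grow at least quadratically and hence form a density-zero set, the symplectic family is exponentially sparse, and $\{22,176,276\}$ is finite. Their union therefore has asymptotic density zero, so $\Theta$ is asymptotically dense in $\Nz$.
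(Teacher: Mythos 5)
Your proposal is correct and follows essentially the same route as the paper, which simply reads the proper $2$-transitive degrees off the classification tables (\cite{Cameron}*{Tables~7.3 and~7.4}): affine degrees $p^d$, projective degrees $(p^{dr}-1)/(p^d-1)$, symplectic degrees $2^{2d-1}\pm 2^{d-1}$, and the sporadic residue $\{22,176,276\}$ after absorbing the remaining sporadic degrees (such as $12 = (11^2-1)/(11-1)$ and $24 = (23^2-1)/(23-1)$) into the infinite families, exactly the deduplication you identify as the delicate step.
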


\begin{proof}
This is a consequence of the classification of finite $2$-transitive groups, see~\cite{Cameron}*{Tables~7.3 and~7.4}. Note that there exist some sporadic $2$-transitive groups with $m \not \in \{22,176,276\}$, but we did not write these values for $m$ since they are already contained in at least one of the infinite families.
\end{proof}


\begin{corollary}\label{corollary:dense}
The asymptotic density $D(\Theta)$ of $\Theta$ in $\Nz$ is equal to $1$, i.e.
$$\lim_{n \to +\infty}\frac{|\Theta \cap \{1,\ldots, n\}|}{n} = 1.$$
\end{corollary}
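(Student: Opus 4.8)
The plan is to show that the complement of $\Theta$ in $\{m \geq 6\}$ has asymptotic density zero, using the explicit description given by Proposition~\ref{proposition:theta}. Since $D(\Theta) = 1$ is equivalent to the complement having density $0$, and the complement is contained in the union of the four sets appearing in the formula for $\Theta$, it suffices to bound the counting function of each of these four sets and show each grows sublinearly in $n$. I would handle each family separately and then invoke subadditivity of upper density.

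First I would count the prime powers $\{p^d : p \text{ prime}, d \geq 1\}$ up to $n$. The primes themselves contribute $\pi(n) \sim n/\log n = o(n)$ by the Prime Number Theorem (or even just Chebyshev's elementary bound $\pi(n) = O(n/\log n)$, which suffices and avoids invoking PNT). The proper prime powers $p^d$ with $d \geq 2$ are even sparser: their number up to $n$ is at most $\sum_{d \geq 2} n^{1/d} \leq \sqrt{n}\log_2 n = o(n)$. Hence the whole set of prime powers has density $0$. Next, for the family $\{(p^{dr}-1)/(p^d-1) : p \text{ prime}, d \geq 1, r \geq 2\}$, I would note that each element is at least $1 + q + q^2 \geq q^2$ where $q = p^d \geq 2$; more usefully, an element of this family that is $\leq n$ has $q^{r} \leq q^{r-1}(q-1)+\cdots \leq n(q-1)+1$, so it is bounded by a value forcing $q \leq \sqrt{n}$ and $r \leq \log_2 n$, giving at most $O(\sqrt{n}\,\log n) = o(n)$ such integers (the total count of pairs $(q,r)$ with $q^r \lesssim n$ is dominated by the $r=2$ term, of order $\sqrt n$). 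The family $\{2^{2d-1} \pm 2^{d-1} : d \geq 3\}$ is geometric, so it contains at most $O(\log n)$ elements below $n$, and the finite set $\{22,176,276\}$ is of course negligible.

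Combining these, the complement $\{m \geq 6\} \setminus \Theta$ satisfies
$$|(\{m \geq 6\}\setminus\Theta) \cap \{1,\ldots,n\}| = O(\sqrt{n}\,\log n) = o(n),$$
so that $|\Theta \cap \{1,\ldots,n\}| = n - o(n)$, whence the limit of $|\Theta \cap \{1,\ldots,n\}|/n$ is $1$. The only mildly delicate point is the second family, where one must argue that $(p^{dr}-1)/(p^d-1) \leq n$ genuinely forces $q = p^d$ to be at most roughly $\sqrt{n}$ when $r = 2$ and smaller as $r$ grows; I expect this counting estimate to be the main (though still routine) obstacle, since one wants a clean bound on the number of admissible pairs $(q,r)$. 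A convenient way to organize it is to bound the count of this family by the count of perfect powers together with an elementary observation that distinct $(q,r)$ give distinct values, but in fact the crude bound $\sum_{r \geq 2} n^{1/r} = O(\sqrt n \log n)$ on the number of integers of the form $q^{r}$ (hence a fortiori on values exceeding $q^{r-1}$) already suffices. No finer number-theoretic input is needed beyond Chebyshev's bound.
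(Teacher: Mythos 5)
Your overall strategy is exactly the paper's: invoke Proposition~\ref{proposition:theta}, show each excluded family has counting function $o(n)$, and conclude by subadditivity; your treatment of the prime powers and of $\{2^{2d-1}\pm 2^{d-1}\}$ is correct. But there is a genuine error in your handling of the family $\bigl\{(p^{dr}-1)/(p^d-1)\bigr\}$. Writing $q = p^d$, the element equals $1 + q + \cdots + q^{r-1}$, so the condition that it be at most $n$ forces only $q^{r-1} \leq n$, not $q^{r} \lesssim n$. In particular, for $r = 2$ the elements are exactly the numbers $q+1$ with $q$ a prime power, and $q + 1 \leq n$ allows every prime power up to $n-1$: there are on the order of $n/\log n$ of these, not $O(\sqrt{n})$. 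Your claim that $r=2$ ``forces $q \leq \sqrt{n}$'' is false, and consequently so is your final display $\lvert(\{m \geq 6\}\setminus \Theta)\cap\{1,\ldots,n\}\rvert = O(\sqrt{n}\,\log n)$ --- which in fact contradicts your own (correct) count of order $n/\log n$ for the prime powers alone.

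The gap is easily repaired, and the repair is essentially what the paper does: bound the element below by $q^{r-1} = p^{d(r-1)}$ and split off the case $d(r-1) = 1$, i.e.\ $(d,r) = (1,2)$. That case contributes at most $\pi(n) \leq Cn/\ln n$ values, which is $o(n)$ by Chebyshev (the same input you already use for prime powers); every remaining case has $d(r-1) \geq 2$, hence $q \leq \sqrt{n}$ and $r \leq \log_2 n + 1$, yielding the $O\bigl(\sqrt{n}\,(\log n)^2\bigr)$-type bound you intended. The paper organizes this as a double sum $\sum_{d}\sum_{r} \pi\bigl(n^{1/d(r-1)}\bigr)$, truncated at $d(r-1) \leq \log_2 n$, and isolates the $(1,2)$ term to get $R(n) \leq \frac{C}{\ln n}\bigl(n + L(n)^4 \sqrt{n}\bigr)$ with $L(n) = \lfloor \log_2 n \rfloor$, so $R(n)/n \to 0$. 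With this correction the complement has size $O(n/\log n) = o(n)$ --- the true order of magnitude, dominated by prime powers and prime powers plus one --- and the corollary follows as you intended.
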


\begin{proof}
It suffices to prove that the asymptotic density of each of the three infinite families is equal to $0$. First, we have
\begin{align*}
\left|\left\{2^{2d-1}\pm 2^{d-1} \suchthat d \geq 3 \right\} \cap \{1,\ldots, n\}\right|
& \leq 2\cdot\left|\left\{d \geq 3 \suchthat 2^{2d-1} - 2^{d-1} \leq n\right\}\right|\\
& \leq 2\cdot\left|\left\{d \geq 3 \suchthat 2^{2d-2} \leq n\right\}\right|\\
& = 2\cdot\left|\left\{d \geq 3 \suchthat 2d-2 \leq \log_2(n)\right\}\right|\\
& \leq 2\cdot \log_2(n)
\end{align*}
This directly implies that $D\left(\left\{2^{2d-1}\pm 2^{d-1} \suchthat d \geq 3 \right\}\right) = 0$.

We now show that the density of $\left\{\frac{p^{dr}-1}{p^d-1} \suchthat p \text{ prime, } d \geq 1, r \geq 2\right\}$ is zero. The proof that the density of $\left\{p^d \suchthat p \text{ prime, } d \geq 1\right\}$ is zero is similar and even easier. To simplify the notation, define $R(n) := \left|\left\{\frac{p^{dr}-1}{p^d-1} \suchthat p \text{ prime, } d \geq 1, r \geq 2\right\} \cap \{1,\ldots, n\}\right|$ so that the density we must compute is $\lim_{n \to \infty}\frac{R(n)}{n}$. Since $\frac{p^{dr}-1}{p^d-1} \geq p^{d(r-1)}$, we have
\begin{align*}
R(n)
& \leq \left|\left\{(p,d,r) \suchthat p \text{ prime, } d \geq 1, r \geq 2, p^{d(r-1)} \leq n\right\}\right| \\
& \leq \sum_{d=1}^\infty \sum_{r=2}^\infty \left|\left\{p \text{ prime} \suchthat p \leq n^{\frac{1}{d(r-1)}} \right\}\right| \\
& = \sum_{d=1}^\infty \sum_{r=2}^\infty \pi(n^{\frac{1}{d(r-1)}})
\end{align*}
where $\pi(x)$ is the number of prime numbers less or equal to $x$. When $d(r-1) > \log_2(n)$, we have $n^{\frac{1}{d(r-1)}} < 2$ and hence $\pi(n^{\frac{1}{d(r-1)}}) = 0$. If $L(n) := \lfloor\log_2(n)\rfloor$, we therefore have
$$R(n) \leq \sum_{d=1}^{L(n)} \sum_{r=2}^{L(n)+1} \pi(n^{\frac{1}{d(r-1)}})$$
By the prime number theorem, we have $\displaystyle\lim_{x \to \infty}\frac{\pi(x)\ln(x)}{x} = 1$, so there exists $C > 0$ such that $\pi(x) \leq C \displaystyle\frac{x}{\ln(x)}$ for all $x > 0$. We therefore get
\begin{align*}
R(n)
& \leq C \sum_{d=1}^{L(n)} \sum_{r=2}^{L(n)+1} \frac{n^{\frac{1}{d(r-1)}}}{\ln(n^{\frac{1}{d(r-1)}})} \\
& \leq \frac{C}{\ln(n)} \sum_{d=1}^{L(n)} \sum_{r=2}^{L(n)+1} d(r-1) \cdot n^{\frac{1}{d(r-1)}}
\end{align*}
Separating the case $(d,r) = (1,2)$ from the $(L(n)^2-1)$ other cases gives
$$R(n) \leq \frac{C}{\ln(n)} \left(n + (L(n)^2-1)L(n)^2 \cdot n^{\frac{1}{2}}\right)$$
Hence,
\[\frac{R(n)}{n} \leq \frac{C}{\ln(n)} \left(1 + \frac{L(n)^4}{\sqrt{n}}\right) \to 0 \qedhere \]
\end{proof}

\end{appendix}


\begin{bibdiv}
\begin{biblist}

\bib{Banks}{article}{
author = {Banks, Christopher C.},
author = {Elder, Murray},
author = {Willis, George A.},
title = {Simple groups of automorphisms of trees determined by their actions on finite subtrees},
journal = {J. Group Theory},
volume = {18},
number = {2},
year = {2015},
pages = {235--261}
}

\bib{Burger}{article}{
author = {Burger, Marc},
author = {Mozes, Shahar},
title = {Groups acting on trees: from local to global structure},
journal = {Inst. Hautes Études Sci. Publ. Math. },
volume = {92},
year = {2000},
pages = {113--150}
}

\bib{Burger2}{article}{
author = {Burger, Marc},
author = {Mozes, Shahar},
title = {Lattices in products of trees},
journal = {Inst. Hautes Études Sci. Publ. Math. },
volume = {92},
year = {2000},
pages = {151--194}
}

\bib{Cameron}{book}{
author = {Cameron, Peter J.},
title = {Permutation Groups},
series = {London Math. Soc. Stud. Texts},
volume = {45},
publisher = {Cambridge University Press},
year = {1999},
place = {Cambridge}
}

\bib{CRW}{article}{
author = {Caprace, Pierre-Emmanuel},
author = {Reid, Colin D.},
author = {Willis, George A.},
title = {Locally normal subgroups of totally disconnected groups. Part II: Compactly generated simple groups},
note = {Preprint: \href{http://arxiv.org/abs/1401.3142}{arXiv:1401.3142}},
year = {2014}
}

\bib{Stulemeijer}{article}{
author = {Caprace, Pierre-Emmanuel},
author = {Stulemeijer, Thierry},
title = {Totally disconnected locally compact groups with a linear open subgroup},
journal = {Int. Math. Res. Not.},
volume = {24},
year = {2015},
pages = {13800–13829}
}

\bib{Dixon}{book}{
author = {Dixon, John D.},
author = {Mortimer, Brian},
title = {Permutation Groups},
series = {Grad. Texts in Math.},
volume = {163},
publisher = {Springer-Verlag},
year = {1996},
place = {New York}
}

\bib{HarmonicAnalysis}{book}{
author = {Fig\'a-Talamanca, Alessandro},
author = {Nebbia, Claudio},
title = {Harmonic analysis and representation theory for groups acting on homogeneous trees},
series = {London Math. Soc. Lecture Note Ser.},
volume = {162},
publisher = {Cambridge University Press},
year = {1991},
place = {Cambridge},
}

\bib{Goursat}{article}{
author = {Goursat, \'Edouard},
title = {Sur les substitutions orthogonales et les divisions régulières de l'espace},
journal = {Ann. Sci. \'Ec. Norm. Supér. (3)},
volume = {6},
year = {1889},
language={French},
pages = {9--102}
}

\bib{Lang}{book}{
author = {Lang, Serge},
title = {Algebra. Revised third edition},
series = {Grad. Texts in Math.},
volume = {211},
publisher = {Springer-Verlag},
year = {2002},
place = {Cambridge},
}

\bib{Smith}{article}{
author = {Smith, Simon M.},
title = {A product for permutation groups and topological groups},
note = {Preprint: \href{http://arxiv.org/abs/1407.5697}{arXiv:1407.5697}},
year = {2014}
}

\bib{Titsarbres}{article}{
author={Tits, Jacques},
title={Sur le groupe des automorphismes d'un arbre},
conference={
title={Essays on topology and related topics (Mémoires dédiés à Georges de Rham)},
},
book={
publisher={Springer, New York},
},
date={1970},
language={French},
pages={188--211}
}

\bib{Trofimov}{article}{
author = {Trofimov, Vladimir I.},
title = {Vertex stabilizers of graphs and tracks, I},
journal = {European J. Combin.},
volume = {28},
number = {2},
year = {2007},
pages = {613--640}
}

\bib{Willis}{article}{
author = {Willis, George A.},
title = {The scale and tidy subgroups for endomorphisms of totally disconnected locally compact groups},
journal = {Math. Ann.},
volume = {361},
year = {2015},
pages = {403-442}
}

\bib{Zassenhaus}{article}{
author = {Zassenhaus, Hans},
title = {Über endliche Fastkörper},
journal = {Abh. Math. Sem. Hamburg},
volume = {11},
number = {1},
year = {1935},
language={German},
pages = {187--220}
}

\end{biblist}
\end{bibdiv}
 
\end{document}